\documentclass[12pt,reqno]{amsart}
\usepackage{amsmath}
\usepackage{txfonts}
\usepackage{amsfonts}
\usepackage{a4wide}
\usepackage{amsmath,amsthm,amssymb,amscd}
\usepackage{latexsym}
\usepackage{hyperref}
\usepackage[numbers,sort&compress]{natbib}
\usepackage{hypernat}
\usepackage{color,enumitem,graphicx}
\usepackage{soul}
\allowdisplaybreaks
\numberwithin{equation}{section}

\newtheorem{theorem}{Theorem}[section]

\newtheorem{corollary}[theorem]{Corollary}
\newtheorem{lemma}[theorem]{Lemma}

\theoremstyle{definition}

\newtheorem{remark}[theorem]{Remark}

\begin{document}

\title[Existence and Uniqueness of Normalized Solutions for CNLS Systems]{Existence and Uniqueness of Normalized Multi-peak Solutions for Coupled Nonlinear Schr\"odinger Systems}
\author{Wenhao Hu \textsuperscript{1}, Benniao Li \textsuperscript{1}, Wei Long\textsuperscript{1}, Chunhua Wang\textsuperscript{2, }$^{\dagger}$ }
\thanks{ \textsuperscript{1}  \small  \it School of Mathematics and Statistics, and Jiangxi Provincial Center for Applied Mathematics,  Jiangxi Normal University, Nanchang, Jiangxi 330022, People' s Republic of China;}
\thanks{ \textsuperscript{2}  \small \it  School of Mathematics and Statistics, Key Lab NAA--MOE,
 Central China Normal University, Wuhan 430079, China}
\thanks{\small $^{\dagger}$Corresponding author. E-mail: chunhuawang@ccnu.edu.cn(C.~Wang);}
\thanks{\small Contributing authors:~huwenhao98@163.com(W.~Hu);~benniao\_li@jxnu.edu.cn(B.~Li); lwhope@jxnu.edu.cn(W.~Long).}

\maketitle

{\bf Abstract.}\ We consider the following two-component coupled nonlinear Schr\"odinger (CNLS) system:
\[
\begin{cases}
-\Delta u +(P(x) + \lambda ) u=\mu_1 u^3+\beta u v^2, & \text{in } \mathbb{R}^N,\\
-\Delta v +(Q(x) + \lambda ) v =\mu_2 v^3+\beta vu^2, & \text{in } \mathbb{R}^N
\end{cases}
\]
with the mass constraint $\int_{\mathbb{R}^N} (u^2+v^2)\,dx = \rho^2$ for $N=2,3$, where $\rho>0$ is a parameter. By employing the Lyapunov-Schmidt reduction and local Pohozaev identities, we establish the existence and local uniqueness of normalized multi-peak solutions: the result holds for sufficiently small $\rho$ when $N=3$, and for $\rho$ approaching a critical threshold when $N=2$. The main difficulty lies in that the mass constraint involves interactions among all concentration points, while a more refined characterization of such normalized solutions further requires sharp order estimates. In this work, we have discovered some new phenomena that differ from those of solutions without mass constraint and single-peak solutions.

\vskip 1mm
{\bf Keywords:}\ Coupled nonlinear Schr\"odinger systems, local Pohozaev identities,  uniqueness of normalized solutions,  asymptotic analysis.

{\bf AMS Subject Classifications (2020):}\ 35J10; 35J47; 35J60.
\section{Introduction}

It is concerned with the problem arising from the time-dependent coupled Schr\"odinger system modeling two-component Bose-Einstein condensates (BECs)\cite{ morsch2006dynamics, bloch2022non} and nonlinear optical wave propagation \cite{marin2017microresonator, cao2023spatiotemporal}:
\begin{equation} \label{eq01}
\left\{
\begin{array}{ll}
- i \partial _ { t } \Psi _ { 1 } = \triangle \Psi _ { 1 } - P ( x ) \Psi _ { 1 } + \mu _ { 1 } | \Psi _ { 1 } | ^ { 2 } \Psi _ { 1 } + \beta | \Psi _ { 2 } | ^ { 2 } \Psi _ { 1 },\quad \text{in } \mathbb{R}^N,\\
- i \partial _ { t } \Psi _ { 2 } = \triangle \Psi _ { 2 } - Q ( x ) \Psi _ { 2 } + \mu _ { 2 } | \Psi _ { 2 } | ^ { 2 } \Psi _ { 2 } + \beta | \Psi _ { 1 } | ^ { 2 } \Psi _ { 2 },\quad \text{in } \mathbb{R}^N
\end{array}
\right.
\end{equation}
with $\int_{\mathbb{R}^N}(\Psi _ { 1 }^2+\Psi _ { 2}^2)=\rho^2$, where $N=2,3$, $\mu_1,\mu_2>0$ and $\beta\in\mathbb{R}$ describe the interaction among particles in the same component and between particles of different components, $\rho>0$ is a mass constraint. Via the standing wave ansatz $\Psi_1(t,x)=e^{i\lambda t}\bar{u}(x)$ and $\Psi_2(t,x)=e^{i\lambda t}\bar{v}(x)$,  $(\bar{u},\bar{v})$ solves the following two-component coupled nonlinear Schr\"odinger system:
\begin{equation} \label{eq0}
\left\{
\begin{array}{ll}
- \triangle \bar{u}+(P(x) + \lambda )\bar{u}=\mu_1 \bar{u}^3+\beta \bar{u}\bar{v}^2,\quad \text{in } \mathbb{R}^N,\\
- \triangle \bar{v}+(Q(x) + \lambda )\bar{v}=\mu_2 \bar{v}^3+\beta \bar{v}\bar{u}^2,\quad \text{in } \mathbb{R}^N
\end{array}
\right.
\end{equation}
with the global $L^2$ mass constraint
\begin{equation}\label{rho2}
\int_{\mathbb{R}^N} (\bar{u}^2+\bar{v}^2)=\rho^2.
\end{equation}

Normalized solutions to nonlinear Schr\"odinger (NLS) equations have been a central and highly active research topic in nonlinear analysis and mathematical physics. 
The variational method is a widely used approach to normalized solutions, which has been extensively applied to both single-component NLS equations and multi-component CNLS systems. For single-component NLS equations, via constrained variational methods and the mountain-pass lemma, a series of works have established a wealth of existence results for normalized solutions including ground states, multi-bump solutions and excited states, in settings of general potentials, various nonlinearities and both mass subcritical and supercritical cases \cite{soave2020normalized,ding2022normalized,zhang2022normalized,yang2022normalized,liu2024normalized}. For multi-component CNLS systems, the related study has also attracted wide attention in recent years, and numerous works based on this method have further obtained existence, non-existence and multiplicity results for normalized solutions, covering CNLS systems, mixed systems and Sobolev critical cases \cite{bartsch2016normalized,luo2022existence,li2026normalized}.

On the other hand, many mathematicians applied the Lyapunov-Schmidt reduction method to study the existence of solutions for the elliptic problems, see \cite{CMP, guo2023excited, guo2024normalized, guo2025segregated, huang2025normalized, pellacci2021normalized, PPVV, PV} and reference therein.  In \cite{pellacci2021normalized}, Pellacci et al. investigated the normalized NLS equation in $\mathbb{R}^N$:
\[
\begin{cases}
-\Delta v + (\lambda + V(x))v = v^p\quad \text{in } \mathbb{R}^N, \\
v>0, \quad \int_{\Omega} |v|^2 dx = \rho
\end{cases}
\]
and established the existence of concentrating solutions as $\rho$ is either small (when $p<1+\frac{4}{N}$) or large (when $p>1+\frac{4}{N}$) or it approaches some critical threshold (when $p=1+\frac{4}{N}$). In \cite{guo2024normalized}, Guo et al. investigated the NLS equation with prescribed mass constraint in $\mathbb{R}^N$:
\[
\begin{cases}
-\Delta u + (\lambda + V(x))u = u^{p_\varepsilon-1}, & \text{in } \mathbb{R}^N, \\
u>0, \quad \int_{\mathbb{R}^N} |u|^2 dx = a
\end{cases}
\]
with the exponent $p_\varepsilon$ near the $L^2$-critical value, and proved the existence and local uniqueness of multi-peak concentrating normalized solutions. For multi-component CNLS system
\begin{equation}\label{eq02}
\begin{cases}
-\Delta v_i + \lambda v_i + V_i(x) v_i = \sum_{j=1}^k \beta_{ij} v_i v_j^2 \quad \text{in } \mathbb{R}^N, \, i=1,\dots,k, \\
\int_{\mathbb{R}^N} \left(v_1^2 + \cdots + v_k^2\right) dx = \rho,
\end{cases}
\end{equation}
there is a rich literature on its normalized solutions. In the special case $k=2$, $N=2$ and $\rho=1$, Guo and Yang \cite{guo2023excited} proved the existence of normalized multi-peak solutions under $C^2$ potentials with common non-degenerate critical points . Further existence and local uniqueness results for this special case can be found in  \cite{guo2021existence,guo2025segregated}. For $N=1,2,3$, the existence of normalized single-peak solutions  was proved in \cite{huang2025normalized} when $\rho$ is regarded as the parameter, while the existence and local uniqueness of normalized multi-peak solutions was not discussed therein.

In this paper, we mainly investigate existence and local uniqueness of normalized multi-peak solutions for system \eqref{eq02} in $N=1, 2,3$.  In the following, we focus on the case $k=2$, since the proof for the general multi-component case is analogous to that for the two-component case. Set
\begin{equation*}
\varepsilon:=\lambda^{-\frac12},\ \ u:=\varepsilon \bar{u},\ \ v:=\varepsilon \bar{v},
\end{equation*}
then \eqref{eq0}-\eqref{rho2} is equivalent to the singularly perturbed system:
\begin{equation} \label{eq1}
\left\{
\begin{array}{ll}
-\varepsilon ^2 \triangle u+\big( \varepsilon ^2P(x) +1 \big) u=\mu_1 u^3+\beta uv^2,\quad \text{in } \mathbb{R}^N,\vspace{0.12cm}\\
-\varepsilon ^2 \triangle v+\big( \varepsilon ^2Q(x) +1\big) v=\mu_2 v^3+\beta vu^2,\quad \text{in } \mathbb{R}^N
\end{array}
\right.
\end{equation}
with the constraint:
\begin{equation*}
\varepsilon^{-2}\int_{\mathbb{R}^N} (u^2+v^2)=\rho^2.
\end{equation*}

We first recall the following key preliminary results and definitions. From \cite{Uniqueness}, we know that $w$ is the unique solution to
\begin{equation}\label{eqw}
\begin{cases}
 - \Delta w + w = w ^ { 3 } ,\ \ w > 0 \ \ \ \text{in}\ \ \mathbb{R} ^ { N } , \\
 w ( 0 ) = \max \limits_ { x \in \mathbb{R}  ^ { N } } w ( x ) ,\ \ w ( x ) \in H ^ { 1 } ( \mathbb{R} ^ { N } ) .
\end{cases}
\end{equation}
Moreover, $w(x)=w(|x|)$ satisfies that $w^{\prime}(r)<0$ and for any $r\in(0,+\infty)$,
\begin{align*}
 \lim _ { r \to \infty } w \left( r \right) e ^ { r } r^{\frac{N-1}{2}} = C > 0 ,\ \ \ \lim _ { r \rightarrow \infty } \frac { w ( r ) } { w ^ { \prime } ( r ) } = - 1 .
\end{align*}

Now, we consider the limiting problem with respect to \eqref{eq1}:
\begin{equation}\label{eq3}
\begin{cases}
-\Delta u + u = \mu_1  u^3 + \beta u v^2, & \text{in } \mathbb{R}^N, \\
-\Delta v + v = \mu_2  v^3 + \beta v u^2, & \text{in } \mathbb{R}^N, \\
u(y), v(y) > 0, & \text{in } \mathbb{R}^N, \\
u(0) = \max_{x \in \mathbb{R}^N} u(x), & v(0) = \max_{x \in \mathbb{R}^N} v(x).
\end{cases}
\end{equation}
It is easy to check that
\begin{align*}
\left(w^{*}(x),w^{\star}(x)\right):= \left( \sqrt { \frac { \beta - \mu_2 } { \beta ^ { 2 } - \mu_1 \mu_2 } } w(x) , \sqrt { \frac { \beta - \mu_1 } { \beta ^ { 2 } - \mu_1 \mu_2 } } w(x) \right) = \left(\sigma_{1} w(x), \sigma_{2} w(x)\right)
\end{align*}
solves \eqref{eq3}, provided $\beta \in \mathcal{M}:= (-\sqrt{\mu_1\mu_2}, \min\{\mu_1, \mu_2\}) \cup (\max\{\mu_1, \mu_2\}, +\infty)$.

To state our results, we here impose some assumptions on the potentials $P(x)$ and $Q(x)$:
\begin{itemize}
\item[$(H_{1})$]   $P(x)$ and $Q(x)$ belong to $C^{2}(\mathbb{R}^{N})$;
\item[$(H_{2})$] $P(x)$ and $Q(x)$ have $k$ distinct common non-degenerate critical points $\xi_l$ with $P(\xi_{l})=Q(\xi_{l})$, and for each $l=1,\cdots,k$, $\det\left( (\beta -\mu_2)\frac{\partial^2P(\xi_l)}{\partial y_n\partial y_m}+(\beta -\mu_1)\frac{\partial^2Q(\xi_l)}{\partial y_n\partial y_m} \right)_{1\leq n,m\leq N}\ne 0$.
\end{itemize}

Here we assume that $P(\xi_{l})=Q(\xi_{l})=0$ and in $B_{\tau}(\xi_{l})$, $P(x)$ and $Q(x)$ satisfy
\begin{equation*}
\begin{split}
&P(x)=  \sum_{i=1}^{N}p_{li}(x_{i}-\xi_{l, i})^{2}+ O(|x-\xi_{l}|^{3}), \\
&  Q(x) = \sum_{i=1}^{N} q_{li}(x_{i}-\xi_{l, i})^{2}+ O(|x-\xi_{l}|^{3}).
\end{split}
\end{equation*}

 \begin{itemize}
\item[$(H_{3})$] When $N=2$, $P(x)$ and $Q(x)$ satisfy $\sum_{l=1}^k \sum_{i=1}^{2} (\sigma_1^2 p_{li} + \sigma_2^2 q_{li}) \neq 0$.
\end{itemize}

\begin{remark}
We would like to point out that the assumption $(H_{2})$ coincides with
the assumption that the global potential is non-degenerate when $k=2$ in \cite{huang2025normalized}.
Also in Remark 1.2 in \cite{huang2025normalized}, Huang et al. mentioned that in the case of the single equation, once they fix the non-degenerate critical point $\xi_{0}$ of $V,$  they can assume (without loss of generality) that $V (\xi_{0}) = 0$ up to replacing $\lambda$ with the new parameter $\lambda-V (\xi_{0}).$ This no longer holds in the case of the system, because if the single parameter $\lambda$ is replaced
by the parameters $\lambda_{i}=\lambda -V_{i}(\xi_{0}),$ they are different unless all the potentials have the same value at the point $\xi_{0}$.
Hence, similarly we also assume that $P(\xi_{l})=Q(\xi_{l})=0.$
\end{remark}

 \begin{remark} The assumption $(H_2)$ indicates the requirement for the relation of both potentials at each critical point, while the assumption $(H_3)$ reveals the influence of the global interaction of all critical points, which comes from the global constraint.  It is quite different from the Schr\"odinger systems without the mass constraint\cite{pomponio2006coupled, li2024infinitely, LT, wang2025infinitely} and the issue about the single-peak solution \cite{huang2025normalized}.
\end{remark}

To prove the existence of normalized solutions for \eqref{eq0}, we need to get the solutions of \eqref{eq1} with the more specific form.  Let $ \big(W_l^{*}, W_l^{\star}\big)$ be the unique solution of the following linear system
\begin{equation}\label{eq2}
\begin{cases}
-\Delta u + u - 3\mu_1  (w^{*})^2 u - \beta (w^{\star})^2 u - 2\beta w^{*} w^{\star} v = - \sum_{i=1}^{N}p_{li}x^{2}_{i}w^{*}, \\
-\Delta v + v - 3\mu_2  (w^{\star})^2 v - \beta (w^{*})^2 v - 2\beta w^{*} w^{\star} u = - \sum_{i=1}^{N} q_{li}x^{2}_{i}w^{\star}, \\
u, v \in H^1(\mathbb{R}^N), \quad l = 1, \cdots, k.
\end{cases}
\end{equation}

With the scaling transformation and potential assumptions in place, we next introduce a key definition that underpins our subsequent Lyapunov-Schmidt reduction and uniqueness analysis:

Define $H:= \Big\{(u,v) : (u,v)\in H^1(\mathbb{R}^N)\times H^1(\mathbb{R}^N)\Big\}$, and endow it with the following norm:
\[\|(u,v)\|^2_H = ((u,v),(u,v))_{H} = \|u\|^2_{\varepsilon,P} + \|v\|^2_{\varepsilon,Q},\]
where
\[\|u\|^2_{\varepsilon,P} = (u,u)_{\varepsilon,P} := \int_{\mathbb{R}^N} (\varepsilon^2|\nabla u|^2 + (\varepsilon^2P(x)+1)u^2)\]
and
\[\|v\|^2_{\varepsilon,Q} = (v,v)_{\varepsilon,Q} := \int_{\mathbb{R}^N} (\varepsilon^2|\nabla v|^2 + (\varepsilon^2Q(x)+1)v^2).\]

Set
$$K_{\varepsilon} =span \left\{ \Big( \frac{\partial w_{\varepsilon, \xi_{\varepsilon,l}}^{*} \left( x \right)}{\partial x_j} , \frac{\partial w_{\varepsilon,  \xi_{\varepsilon,l}}^{\star} \left( x \right)}{\partial x_j} \Big), l=1, \cdots, k, j=1,\cdots,N \right\} $$
and
$$ E _ { \varepsilon } := K^{\bot}_{\varepsilon} = \left\{  ( \varphi , \psi ) \in H : \left( ( \varphi , \psi ) , \Big( \frac { \partial w _ { \varepsilon, \xi_{\varepsilon,l}} ^ { * } ( x ) } { \partial x _ { j } } , \frac { \partial w _ { \varepsilon , \xi_{\varepsilon,l} } ^ { \star } ( x ) } { \partial x _ { j } } \Big) \right) _ { H } = 0 , l = 1 , \cdots , k , j = 1 , \cdots , N \right\} ,$$
where
\[
w_{\varepsilon, \xi_{\varepsilon, l}}^{*}(x) := w^{*}\left(\frac{x - \xi_{\varepsilon, l}}{\varepsilon}\right), \quad w_{\varepsilon, \xi_{\varepsilon, l}}^{\star}(x) := w^{\star}\left(\frac{x - \xi_{\varepsilon, l}}{\varepsilon}\right).
\]

Since in \cite{huang2025normalized} Huang et al. only constructed the single-peak solutions and Zheng study the multi-peak solutions without mass constraint in \cite{1},
we revisit the existence of normalized multi-peak solutions to system \eqref{eq0}--\eqref{rho2}.
\begin{theorem} \label{pro11}
Suppose \( (H_1) \) and \( (H_2) \) hold.  For \( \mu_1, \mu_2 > 0 \), there exists a decreasing sequence \( \{\beta_n\}_{n=1}^{+\infty} \) (depending only on \( \mu_1 \) and \( \mu_2 \)) as well as constants \( \tilde{\rho}_0, \tilde{\delta}_0  > 0 \). If \( \beta \in \mathcal{M} \setminus \left( \{0\} \cup \{\beta_n\}_{n=1}^{+\infty} \right) \), then
\begin{itemize}
\item[(1)] \it{ Case \(N=3\)} \ \  For any \(
\rho \in (0,\tilde{\rho}_0]\);
\item [(2)]  \it{ Case \(N=2\)} \ \  If $\sum_{l=1}^k  \sum_{i=1}^{2} (\sigma_1^2 p_{li}+ \sigma_2^2 q_{li})>0$, for $\rho^{2} \in (\rho_{0}^{2}-\tilde\delta, \rho_{0}^{2})$;
If $\sum_{l=1}^k  \sum_{i=1}^{2} (\sigma_1^2 p_{li}+ \sigma_2^2 q_{li})< 0$, for $\rho^{2} \in (\rho_{0}^{2}, \rho_{0}^{2}+\tilde\delta),$ \end{itemize}
 the system \eqref{eq0}--\eqref{rho2} admits a solution of the form
\begin{align}\label{uv-}
\bigl(\bar{u}_{\lambda_{\rho}},\bar{v}_{\lambda_{\rho}}\bigr)
&=\sqrt{\lambda_{\rho}}\Bigg(
\sum_{l=1}^{k}\Bigl(w_{\lambda_{\rho},\xi_{\rho,l}}^{*}
      +\lambda_{\rho}^{-2}W_{\lambda_{\rho},\xi_{\rho,l}}^{*}\Bigr)+\varphi_{\rho},
\sum_{l=1}^{k}\Bigl(w_{\lambda_{\rho},\xi_{\rho,l}}^{\star}
      +\lambda_{\rho}^{-2}W_{\lambda_{\rho},\xi_{\rho,l}}^{\star}\Bigr)+\psi_{\rho}\Bigg),
\end{align}
where
\[
\rho_0^2 = k\int_{\mathbb{R}^2}\left( (w^*)^2 + (w^\star)^2 \right) \mathrm{d}x.
\]

Here, for each \(l=1,\dots,k\), \( \xi_{\rho,l} \in B_{\delta}(\xi_l) \)~for some sufficiently small \( \delta > 0 \), and \( (\varphi_\rho, \psi_\rho) \in E_{\lambda_\rho} \). The rescaled profiles appearing in the above expression are defined as
\[
\begin{array}{ll}
w_{\lambda_{\rho},\xi_{\rho,l}}^{*}(x):=w^{*}\bigl(\sqrt{\lambda_{\rho}}(x-\xi_{\rho,l})\bigr), & W_{\lambda_{\rho},\xi_{\rho,l}}^{*}(x):=W_{l}^{*}\bigl(\sqrt{\lambda_{\rho}}(x-\xi_{\rho,l})\bigr),\\[2mm]
w_{\lambda_{\rho},\xi_{\rho,l}}^{\star}(x):=w^{\star}\bigl(\sqrt{\lambda_{\rho}}(x-\xi_{\rho,l})\bigr), & W_{\lambda_{\rho},\xi_{\rho,l}}^{\star}(x):=W_{l}^{\star}\bigl(\sqrt{\lambda_{\rho}}(x-\xi_{\rho,l})\bigr).
\end{array}
\]
\end{theorem}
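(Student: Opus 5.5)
The proof has two stages. First, for each small $\varepsilon>0$, we solve the unconstrained singularly perturbed system \eqref{eq1} by Lyapunov--Schmidt reduction. Then we choose $\varepsilon=\lambda^{-1/2}$ so that the mass constraint holds.

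\textbf{Approximate solution.} Take
\[
U_{\varepsilon,\xi}=\sum_{l=1}^k\Bigl(w^*_{\varepsilon,\xi_l}+\varepsilon^{4}W^*_{\varepsilon,\xi_l}\Bigr),\qquad
V_{\varepsilon,\xi}=\sum_{l=1}^k\Bigl(w^\star_{\varepsilon,\xi_l}+\varepsilon^{4}W^\star_{\varepsilon,\xi_l}\Bigr),
\]
where $\xi=(\xi_1,\dots,\xi_k)$ with $\xi_l\in B_\delta(\xi_l^0)$ and $(W_l^*,W_l^\star)$ solves \eqref{eq2}. In the rescaled variable $y=(x-\xi_l)/\varepsilon$ one has $\varepsilon^2P(x)=\varepsilon^4\sum_i p_{li}y_i^2+O(\varepsilon^5|y|^3)$ near $\xi_l$, so the correction $(W_l^*,W_l^\star)$ cancels the $\varepsilon^4$ term of the error. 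The excluded values $\beta_n$ and $\beta\neq0$ are exactly the values for which the linearized operator of \eqref{eq3} at $(w^*,w^\star)$ is nondegenerate: its kernel is spanned by $\partial_j(w^*,w^\star)$. This is the known result used in \cite{huang2025normalized} and \cite{1}, and it makes \eqref{eq2} uniquely solvable in the space orthogonal to the kernel, with polynomial-weighted exponential decay.

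\textbf{Reduction.} On $E_\varepsilon$, show by the standard contradiction/blow-up argument that the projected linearized operator $L_\varepsilon$ satisfies $\|L_\varepsilon(\varphi,\psi)\|\ge c\|(\varphi,\psi)\|_H$, uniformly in $\xi$. The peaks interact only at order $e^{-c/\varepsilon}$ since the $\xi_l^0$ are distinct. Next, estimate the error $\|R_\varepsilon\|=O(\varepsilon^{N/2}(\varepsilon^5+\varepsilon|\xi-\xi^0|^2\cdots))$. A contraction then gives a unique $(\varphi_{\varepsilon,\xi},\psi_{\varepsilon,\xi})\in E_\varepsilon$ with norm $o(\varepsilon^{4+N/2})$, depending continuously on $(\varepsilon,\xi)$.

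\textbf{Finite-dimensional problem.} It remains to make the Lagrange multipliers vanish. Use local Pohozaev identities on $B_d(\xi_l)$, or equivalently test against $\partial_{x_j}(U,V)$. This reduces the problem to
\[
\int\Bigl(\partial_jP\,u^2+\partial_jQ\,v^2\Bigr)\Big|_{B_d(\xi_l)}=O(e^{-c/\varepsilon}).
\]
Expanding gives $\varepsilon^N\bigl(\sigma_1^2D^2P(\xi_l^0)+\sigma_2^2D^2Q(\xi_l^0)\bigr)(\xi_l-\xi_l^0)\int w^2+o(\varepsilon^N|\xi_l-\xi^0_l|)+O(\varepsilon^{N+2})=0$. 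Since $\sigma_1^2,\sigma_2^2$ are proportional to $\beta-\mu_2$ and $\beta-\mu_1$, $(H_2)$ makes this matrix invertible, so Brouwer's degree yields $\xi_{\varepsilon,l}$ with $|\xi_{\varepsilon,l}-\xi^0_l|=O(\varepsilon^2)$. The solution depends continuously on $\varepsilon$; if needed, select it via the local uniqueness of the fixed point.

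\textbf{Mass constraint.} This is the main step. Compute
\[
F(\varepsilon):=\varepsilon^{-2}\int(u_\varepsilon^2+v_\varepsilon^2)=\varepsilon^{N-2}\Bigl[k\int\bigl((w^*)^2+(w^\star)^2\bigr)+2\varepsilon^4\sum_l\int\bigl(w^*W_l^*+w^\star W_l^\star\bigr)+o(\varepsilon^4)\Bigr].
\]
For $N=3$, $F(\varepsilon)=\varepsilon\rho_*^2(1+O(\varepsilon^4))$ is continuous with $F(\varepsilon)\to0$. By the intermediate value theorem, every $\rho^2\in(0,\tilde\rho_0^2]$ is attained, and $\lambda_\rho=\varepsilon^{-2}$. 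Undoing the scaling gives \eqref{uv-}.

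For $N=2$, $F(\varepsilon)=\rho_0^2+\varepsilon^4 A+o(\varepsilon^4)$ with $A=2\sum_l\int(w^*W_l^*+w^\star W_l^\star)$. The key computation is to identify the sign of $A$. Test \eqref{eq2} against $(x\cdot\nabla w^*+w^*,\ x\cdot\nabla w^\star+w^\star)$. In $N=2$ the cubic nonlinearity is $L^2$-critical, so $\partial_t[t\,w(t\cdot)]$ at $t=1$ gives $L_0(w+x\cdot\nabla w)=-2w$ for the linearized operator $L_0$. Integrating by parts then gives
\[
\int\bigl(w^*W_l^*+w^\star W_l^\star\bigr)=c\sum_i\bigl(\sigma_1^2p_{li}+\sigma_2^2q_{li}\bigr)\int|x|^2w^2
\]
for an explicit $c>0$ (up to a sign I would fix carefully). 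The corrections from $\varphi,\psi$ and from $\xi_\varepsilon-\xi^0$ are $o(\varepsilon^4)$, using the $O(\varepsilon^2)$ location estimate. Then $(H_3)$ gives $A\neq0$, and the sign of $A$ decides whether $\rho^2$ approaches $\rho_0^2$ from below or above. The intermediate value theorem on $\varepsilon\in(0,\varepsilon_0]$ then produces $\lambda_\rho$, which tends to infinity as $\rho^2\to\rho_0^2$. The main obstacles are the sharp $o(\varepsilon^4)$ control of the remainder in $F$ and obtaining a continuous branch in $\varepsilon$.
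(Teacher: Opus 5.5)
Your architecture matches the paper's. You construct unconstrained multi-peak solutions of \eqref{eq1}, which the paper does not reprove but imports as Theorem~\ref{pro1} from \cite{1}. You expand $\varepsilon^{-2}\int(u_\varepsilon^2+v_\varepsilon^2)$ and apply the intermediate value theorem in $\varepsilon$.

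\textbf{The gap: continuity in $\varepsilon$.} The IVT step rests on continuity of $\varepsilon\mapsto(u_\varepsilon,v_\varepsilon)$, and you only assert it.
\begin{itemize}
\item The contraction step gives a unique $(\varphi_{\varepsilon,\xi},\psi_{\varepsilon,\xi})$ depending continuously on $(\varepsilon,\xi)$.
\item However, you obtain the peak positions $\xi_\varepsilon$ by Brouwer degree. That gives existence of a zero of the reduced map, but neither uniqueness nor a continuous selection.
\item The ``local uniqueness of the fixed point'' concerns $\varphi$ for fixed $\xi$, not $\xi$ itself.
\end{itemize}
So, as written, $F(\varepsilon)$ is not even a well-defined function of $\varepsilon$.

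This is exactly what the paper devotes Section~2 to. It proves local uniqueness of solutions of the form \eqref{uv} (Theorem~\ref{main}) as follows:
\begin{itemize}
\item it blows up the normalized difference $(\eta^{(1)}_\varepsilon,\eta^{(2)}_\varepsilon)$;
\item it uses nondegeneracy of $(w^*,w^\star)$ to reduce the limit to $\sum_h a_{l,h}\,\partial_h(w^*,w^\star)$;
\item it kills the coefficients $a_{l,h}$ with the local Pohozaev identity and $(H_2)$.
\end{itemize}
Continuity in $L^2\times L^2$ (Corollary~\ref{con1}) then follows from compactness plus uniqueness of the limit. You need this argument or one of two alternatives:
\begin{itemize}
\item uniqueness of the zero of the reduced map in $B_{C\varepsilon^2}(\xi^0)$ via an implicit-function argument, which requires $C^1$ control of $\xi\mapsto\varphi_{\varepsilon,\xi}$;
\item a Leray--Schauder continuation argument giving a connected set of zeros $(\varepsilon,\xi)$ joining $\varepsilon=a$ to $\varepsilon=b$. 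Since $F$ is continuous on that set, its image is an interval and the IVT applies.
\end{itemize}

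\textbf{The sign in $N=2$.} Your scaling identity is the right tool, and it makes the paper's appeal to \cite{huang2025normalized} for $\int wz_0<0$ self-contained. However, the constant comes out negative. Test \eqref{eq2} against $\vartheta_0=(w^*+x\cdot\nabla w^*,\,w^\star+x\cdot\nabla w^\star)$. Use $\bar{\mathcal Q}\vartheta_0=-2(w^*,w^\star)$ and $\int_{\mathbb R^2}x_i^2w(w+x\cdot\nabla w)=-\int_{\mathbb R^2}x_i^2w^2$. This gives
\[
\int_{\mathbb R^2}\bigl(w^*W_l^*+w^\star W_l^\star\bigr)=-\frac14\sum_{i=1}^2\bigl(\sigma_1^2p_{li}+\sigma_2^2q_{li}\bigr)\int_{\mathbb R^2}|x|^2w^2 ,
\]
so $c=-\frac14$. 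Hence $F(\varepsilon)<\rho_0^2$ exactly when $\sum_l\sum_i(\sigma_1^2p_{li}+\sigma_2^2q_{li})>0$, as in case (2). With $c>0$ the two cases would be interchanged.
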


Based on the existence result, we present the local uniqueness of the solutions.

\begin{theorem}\label{main1}
Assume  \( (H_1) \)-\( (H_3) \) hold. Suppose
\(
(\lambda_{1,\rho},\bar{u}_{\lambda_{1,\rho}}^{(1)},\bar{v}_{\lambda_{1,\rho}}^{(1)})\) and \( (\lambda_{2,\rho},\bar{u}_{\lambda_{2,\rho}}^{(2)},\bar{v}_{\lambda_{2,\rho}}^{(2)})
\)
are two pairs of solutions of \eqref{eq0}--\eqref{rho2}, which satisfy \eqref{uv-}.
Then, these two solutions must coincide. More precisely:
    \[
    (\bar{u}_{\lambda_{1,\rho}}^{(1)},\bar{v}_{\lambda_{1,\rho}}^{(1)})=(\bar{u}_{\lambda_{2,\rho}}^{(2)},\bar{v}_{\lambda_{2,\rho}}^{(2)}).
    \]
    \end{theorem}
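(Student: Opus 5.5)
We argue by contradiction, with the standard blow-up method based on local Pohozaev identities (Cao–Li–Luo / Deng–Lin–Yan type). Two steps come before the blow-up.

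\textbf{Step 1: the two solutions share the same $\lambda$.} Write $\varepsilon_j=\lambda_{j,\rho}^{-1/2}$ for $j=1,2$. We expand the mass constraint \eqref{rho2} using the form \eqref{uv-}. For $N=3$, the leading term is
\[
\rho^2=\varepsilon_j^{-2}\int(u^2+v^2)=\varepsilon_j\Big(\rho_0'^2+O(\varepsilon_j^2)\Big),\qquad \rho_0'^2=k\int_{\mathbb{R}^3}\big((w^*)^2+(w^\star)^2\big).
\]
For $N=2$, it is
\[
\rho^2=\rho_0^2+\varepsilon_j^{4}\,2\sum_{l}\int\big(w^*W_l^*+w^\star W_l^\star\big)+o(\varepsilon_j^4).
\]
The coefficient here is, up to a nonzero constant, $\sum_l\sum_i(\sigma_1^2p_{li}+\sigma_2^2q_{li})$. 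To see this, test \eqref{eq2} against $x\cdot\nabla w+w$, which lies in the kernel-derivative direction of the scaling. By $(H_3)$ this coefficient is nonzero. Hence, in both cases, $\rho\mapsto\varepsilon$ is a strictly monotone relation at leading order.

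This alone does not give $\varepsilon_1=\varepsilon_2$; it only gives $|\varepsilon_1-\varepsilon_2|=o(\varepsilon_1^{m})$ for a suitable $m$. We therefore keep the parameter difference as an unknown in the blow-up. Concretely, we rescale both solutions to a common $\varepsilon=\varepsilon_1$ and treat $\lambda_1-\lambda_2$ as an additional linear term.

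\textbf{Step 2: localization of the peaks.} Using the local Pohozaev identity obtained by multiplying by $\partial_{x_i}u$ and $\partial_{x_i}v$ on $B_\delta(\xi_l)$, we get
\[
\int \big(\partial_iP\,u^2+\partial_iQ\,v^2\big)=O(e^{-c/\varepsilon}).
\]
Combined with $(H_2)$, this yields $|\xi^{(j)}_{\rho,l}-\xi_l|=O(\varepsilon^2)$, and also the same order for $|\xi^{(1)}_{\rho,l}-\xi^{(2)}_{\rho,l}|$ relative to the norm of the difference.

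\textbf{Step 3: blow-up of the normalized difference.} Suppose the two solutions differ. Set
\[
(\eta_1,\eta_2)=\frac{(u^{(1)}-u^{(2)},\,v^{(1)}-v^{(2)})}{\|u^{(1)}-u^{(2)}\|_\infty+\|v^{(1)}-v^{(2)}\|_\infty+|\lambda_1-\lambda_2|\,\varepsilon^{-\gamma}},
\]
with $\gamma$ chosen so that the parameter term is of comparable size. This quotient satisfies a linear system whose coefficients are the averaged derivatives of the nonlinearity, plus a source term $c_\varepsilon(u^{(2)},v^{(2)})$ coming from the $\lambda$-difference.

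Near each $\xi_l$ we blow up at scale $\varepsilon$. The limit $(\bar\eta_1,\bar\eta_2)$ solves the linearized system at $(w^*,w^\star)$, with possibly an extra multiple $a_l$ of $(w^*,w^\star)$ on the right. By the nondegeneracy of $(w^*,w^\star)$, which is exactly why $\beta\notin\{\beta_n\}\cup\{0\}$ is assumed, we get
\[
(\bar\eta_1,\bar\eta_2)=\sum_i b_{l,i}\,\partial_i(w^*,w^\star)+a\,\big(x\cdot\nabla+1\big)(w^*,w^\star).
\]

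\textbf{Step 4: killing the coefficients.} The $b_{l,i}$ vanish by a second Pohozaev identity. We take the difference of the local Pohozaev identities for the two solutions and divide by the normalizing quantity; the leading term is then
\[
\sum_m\Big((\beta-\mu_2)\partial_{im}P(\xi_l)+(\beta-\mu_1)\partial_{im}Q(\xi_l)\Big)b_{l,m},
\]
up to a positive constant. The matrix here is the one in $(H_2)$, so it is invertible and $b_{l,i}=0$.

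The coefficient $a$ is common to all peaks because $\lambda$ is global. It must vanish by differentiating the mass constraint: subtracting the two constraints gives $\int(u^{(1)}+u^{(2)})\eta_1+\dots=0$. At leading order this equals $a\int(w^*)^2+(w^\star)^2\cdot(\text{nonzero})$ for $N=3$. For $N=2$ that leading term vanishes, since $\int w(x\cdot\nabla w+w)=0$ in dimension two. One must then go to order $\varepsilon^4$, where the $W_l$ corrections and $(H_3)$ give the nonzero factor.

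\textbf{Main obstacle.} Once all coefficients vanish, $\eta\to0$ locally uniformly near the peaks. Away from the peaks, a maximum-principle argument shows $\eta$ decays. This contradicts the normalization $\|\eta\|_\infty=1$.

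I expect the sharp $N=2$ order estimate in Step 4 to be the main obstacle. It requires expanding $\eta$ to second order, namely $\eta=a(\cdots)+\varepsilon^2(\cdots)$, with errors $o(\varepsilon^4)$ controlled uniformly in the sum over all $k$ peaks.
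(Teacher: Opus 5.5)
Your outline matches the paper wherever it is complete:
\begin{itemize}
\item blow-up at scale $\lambda_{1,\rho}^{-1/2}$, with limit kernel spanned by $\partial_j(w^*,w^\star)$ and $\vartheta_0=(\vartheta_0^{(1)},\vartheta_0^{(2)})=(1+x\cdot\nabla)(w^*,w^\star)$;
\item translation coefficients removed by the differenced translation Pohozaev identity and $(H_2)$;
\item for $N=3$, the scaling coefficient removed by the differenced mass constraint, since $\int w(w+x\cdot\nabla w)=-\frac12\int w^2$ there (Lemmas~\ref{lem55}--\ref{lem57}).
\end{itemize}
Putting the suitably weighted $\lambda$-difference into the normalization is a legitimate substitute for Lemma~\ref{lem53} and the nonlocal operator $\mathcal{Q}$. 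It also makes the scaling coefficient common to all peaks for free. You must then check that the normalized $\lambda$-difference tends to a fixed multiple of $a$, hence to $0$, because with your normalization $\|\eta\|_\infty$ is not $1$.

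The genuine gap is $N=2$. You leave it open, and the route you sketch cannot be closed with what Steps 1--3 supply.
\begin{itemize}
\item Your mass expansion gives only $|\varepsilon_1-\varepsilon_2|=o(\varepsilon)$, and at best $O(\varepsilon^2)$ with the true $O(\varepsilon^5)$ remainder. So $s-1:=\sqrt{\lambda_{2,\rho}/\lambda_{1,\rho}}-1$ is only $O(\varepsilon)$. Step 2 likewise gives only $\sqrt{\lambda_{1,\rho}}\,|\xi^{(1)}_{\rho,l}-\xi^{(2)}_{\rho,l}|=O(\varepsilon)$.
\item Now $w(y)-s\,w(sy)=-(s-1)\vartheta_0-\frac12(s-1)^2\partial_s^2[s\,w(sy)]_{s=1}+\cdots$, and your normalizing quantity is only bounded below by a multiple of $|s-1|$. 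Hence the blown-up $\eta$ contains a second-order scaling piece of size $O(|s-1|)$, and $u^{(1)}+u^{(2)}-2w$ contains the first-order piece $(s-1)\vartheta_0$.
\item Both enter the differenced mass identity at order $|s-1|$, which you only know to be $O(\varepsilon)$. That is far above the $\varepsilon^4$ term you want to isolate. In dimension two these pieces cancel only through the exact invariance $\int(s\,w(sy))^2\,dy=\int w^2$, which you would have to build in explicitly; translations behave the same way.
\end{itemize}
So the expansion $\eta=a(\cdots)+\varepsilon^2(\cdots)+o(\varepsilon^4)$ is not available. Even granting it, you would still have to compute the $\varepsilon^4$-corrector of $\eta$. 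You would then need to check that its pairing with $w$, plus $2a\sum_l\int(W_l^*\vartheta_0^{(1)}+W_l^\star\vartheta_0^{(2)})$, is a nonzero multiple of the $(H_3)$ sum. You assert this but do not show it.

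The paper supplies two ingredients you are missing.

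First, it proves a sharp expansion of the multiplier, $\lambda_\rho(\rho^2-\rho_0^2)=A\lambda_\rho^{-1}+C\lambda_\rho^{-2}+O(\lambda_\rho^{-3})$, with $A$ as in \eqref{A}, nonzero by $(H_3)$ (Lemma~\ref{lem37}). This comes from the scaling Pohozaev identity summed over all peaks, together with the constraint and Lemmas~\ref{lemww} and~\ref{lemwW}; it is inverted in Lemma~\ref{lem51}. Combined with Lemma~\ref{lem355}, it makes $\lambda_{1,\rho}-\lambda_{2,\rho}$ and $\xi^{(1)}_{\rho,l}-\xi^{(2)}_{\rho,l}$ small enough that every term quadratic in the difference is negligible. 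This is where $(H_3)$ first enters, and it is exactly the sharp characterization of $\lambda_\rho$ that your Step 1 lacks.

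Second, to kill $a_{l,0}$ (Lemma~\ref{lem58}) the paper does not push the mass identity to higher order. It differences the scaling Pohozaev identities (multiplying by $\langle x-\xi^{(1)}_{\rho,l},\nabla\bar u\rangle$), sums over $l$, and uses the constraint; see \eqref{58-3}. Because $P+\frac12\langle\nabla P,x-\xi^{(1)}_{\rho,l}\rangle$ and its $Q$-analogue vanish to second order at $\xi_l$, the potential term gains an extra factor $\lambda_{1,\rho}^{-1}$. Hence only the leading profile $a_{l,0}\vartheta_0$ of $\tilde\kappa$ is needed. It yields \eqref{58-44}, a nonzero multiple of $\lambda_{1,\rho}^{-5/2}a_{l,0}\sum_l\sum_i(\sigma_1^2p_{li}+\sigma_2^2q_{li})\int_{\mathbb{R}^2}|x|^2w(w+x\cdot\nabla w)$. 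All remaining terms are $O(\lambda_{1,\rho}^{-7/2})$ by the first ingredient, so no second-order corrector of the blow-up limit is ever computed.
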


\begin{remark}
Since both $N=1$ and $N=3$ are not the mass-critical cases, the existence and local uniqueness of solutions for \eqref{eq0}--\eqref{rho2} for $N=1$ as $\rho\rightarrow \infty$ can be obtained via an analogous proof  to that for the $N=3$ case. Thus, we only consider the cases $N=2,3$.
\end{remark}

\begin{remark}
Just by the similar argument as that of Theorem \ref{main1} and make some minor modifications, we can prove the normalized single-peak solutions for the system with $k(k\geq 2)$ equations obtained by Huang et al. in \cite{huang2025normalized} are also local unique.
\end{remark}

To prove local uniqueness of solutions, we primarily apply some local Pohozaev identities combined with a contradiction argument (see \cite{DLY,LY,GPY,GILY}).
There exist two main technical difficulties in our analysis. First, in the unconstrained case, each concentration point can be analyzed independently with no need for global conditions such as $(H_3)$. However, under the global mass constraint, the interactions between concentration points cannot be neglected. Second, our solution expansion demands sharp order estimates---even a slight error in these estimates will invalidate the proof of local uniqueness. More precisely, for mass-critical case, we need a sharp characterization of the parameter $\lambda_\rho$ to prove the local uniqueness, and to this end, the non-vanishing condition $(H_3)$ is required.

This paper is organized as follows. Section 2 is devoted to establishing continuity of multi-peak solutions for system \eqref{eq1} without the mass constraint by means of local Pohozaev idnentities. We will give the detailed proofs of Theorems \ref{pro11} and \ref{main1}  in Section~3 and Section~4, respectively.

\section{Continuity of multi-peak solutions without constraint}

In this section, we establish the continuity of solutions for \eqref{eq0} in the unconstrained case with respect to $\lambda$, which brings from the uniqueness of these solutions. Thus, we aim to prove the uniqueness of solutions for \eqref{eq0} in the following. At first, let us introduce the existence result:

\begin{theorem}[See \cite{1}] \label{pro1}
Suppose that the functions \( P(x) \) and \( Q(x) \) satisfy conditions \( (H_1) \) and \( (H_2) \). For \( \mu_1, \mu_2 > 0 \), there exists a decreasing sequence \( \{\beta_n\}_{n=1}^{+\infty} \) (depending only on \( \mu_1 \) and \( \mu_2 \)) as well as a constant \( \varepsilon_0 > 0 \). If
\[
\beta \in \bigl( -\sqrt{\mu_1\mu_2}, 0 \bigr) \cup \bigl( 0, \min\{\mu_1, \mu_2\} \bigr) \cup \bigl( \max\{\mu_1, \mu_2\}, +\infty \bigr)
\]
and \( \beta \notin \{\beta_n\}_{n=1}^{+\infty} \), then for all $\varepsilon\in (0,\varepsilon_0]$, there exist $\xi_{\varepsilon,l}\in B_{\delta}(\xi_l)$, $l=1,\cdots,k$ and $(\varphi_{\varepsilon},\psi_{\varepsilon})\in E_{\varepsilon}$ so that \eqref{eq1} has a solution of the following form:
\begin{equation}\label{uv}
(u_{\varepsilon},v_{\varepsilon}) = \left( \sum_{l=1}^k \left( w_{\varepsilon, \xi_{\varepsilon,l}}^{*} + \varepsilon^4 W_{\varepsilon, \xi_{\varepsilon,l}}^{*} \right) + \varphi_{\varepsilon}, \sum_{l=1}^k \left( w_{\varepsilon, \xi_{\varepsilon,l}}^{\star} + \varepsilon^4 W_{\varepsilon, \xi_{\varepsilon,l}}^{\star} \right) + \psi_{\varepsilon} \right).
\end{equation}
Moreover, $(\varphi_{\varepsilon},\psi_{\varepsilon})$ satisfies $\|(\varphi_{\varepsilon},\psi_{\varepsilon})\|_{H} = O(\varepsilon^{5+\frac{N}{2}})$.
\end{theorem}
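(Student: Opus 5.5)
We will follow the standard Lyapunov--Schmidt scheme for \eqref{eq1}, working in the space $H$ with the splitting $H=K_\varepsilon\oplus E_\varepsilon$. The peaks $\xi_{\varepsilon,l}$ will range over the small balls $|\xi_{\varepsilon,l}-\xi_l|\le C_0\varepsilon^2$, and $C_0$ will be fixed at the end. The approximate solution is
\[
(U_\xi,V_\xi)=\Bigl(\sum_{l=1}^k\bigl(w^{*}_{\varepsilon,\xi_{\varepsilon,l}}+\varepsilon^4W^{*}_{\varepsilon,\xi_{\varepsilon,l}}\bigr),\ \sum_{l=1}^k\bigl(w^{\star}_{\varepsilon,\xi_{\varepsilon,l}}+\varepsilon^4W^{\star}_{\varepsilon,\xi_{\varepsilon,l}}\bigr)\Bigr).
\]
We look for a solution of the form $(U_\xi+\varphi,V_\xi+\psi)$ with $(\varphi,\psi)\in E_\varepsilon$. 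The problem then splits into two parts: a projected equation on $E_\varepsilon$, solved for $(\varphi,\psi)$, and a finite-dimensional reduced equation on $K_\varepsilon$, solved for $\xi$.

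\textbf{Step 1 (linear theory).} Let $L_\varepsilon$ be the linearization of \eqref{eq1} at $(U_\xi,V_\xi)$, projected onto $E_\varepsilon$. We show that $\|L_\varepsilon(\varphi,\psi)\|\ge c\|(\varphi,\psi)\|_H$ on $E_\varepsilon$, with $c$ uniform in $\varepsilon$ and $\xi$.
\begin{itemize}
\item The key ingredient is the nondegeneracy of $(w^*,w^\star)$ for \eqref{eq3}: the kernel of the linearized operator in $H^1\times H^1$ is spanned by $(\partial_jw^*,\partial_jw^\star)$, $j=1,\dots,N$.
\item This holds precisely for $\beta\in\mathcal M\setminus(\{0\}\cup\{\beta_n\})$. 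In the proportional direction the linearized system decouples into a scalar problem $-\Delta\phi+\phi-3w^2\phi$. In the transverse direction it decouples into $-\Delta\phi+\phi-\gamma(\beta)w^2\phi$, and this operator has extra kernel exactly when $\gamma(\beta)$ hits one of the eigenvalues $\gamma_n$ of the weighted problem $-\Delta\phi+\phi=\gamma w^2\phi$. This produces the exceptional sequence $\beta_n$.
\item Given nondegeneracy, the uniform estimate follows by the usual blow-up contradiction argument near each peak, combined with exponential decay away from the peaks.
\end{itemize}
Since $L_\varepsilon$ is a compact perturbation of the identity, it is then invertible on $E_\varepsilon$.

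\textbf{Step 2 (error estimate).} We write $P$ near $\xi_{\varepsilon,l}$ as
\[
P(\xi_{\varepsilon,l}+\varepsilon y)=P(\xi_{\varepsilon,l})+\varepsilon\nabla P(\xi_{\varepsilon,l})\cdot y+\varepsilon^2\sum_i p_{li}y_i^2+O(\varepsilon^3|y|^3),
\]
and similarly for $Q$.
\begin{itemize}
\item Because $P(\xi_l)=\nabla P(\xi_l)=0$, the constant and linear terms are $O(\varepsilon^4)$ and $O(\varepsilon^2)$ respectively, so their contributions are $O(\varepsilon^6)$ and $O(\varepsilon^5)$ after multiplying by $\varepsilon^2$ and $\varepsilon^3$.
\item The $\varepsilon^4$ quadratic term is cancelled exactly by the correction $\varepsilon^4(W^*_l,W^\star_l)$, by the definition \eqref{eq2}.
\item The remaining contributions are the cubic Taylor remainder, the nonlinear terms in $\varepsilon^4W$, and the exponentially small peak interactions. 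Together with the terms above, these give a residual $R_\varepsilon$ with $\|R_\varepsilon\|=O(\varepsilon^{5+N/2})$, where the factor $\varepsilon^{N/2}$ comes from the scaling of the $L^2$ norm.
\end{itemize}
A contraction mapping argument in a ball of radius $C\varepsilon^{5+N/2}$ in $E_\varepsilon$ then gives a unique $(\varphi_\varepsilon,\psi_\varepsilon)(\xi)$ with $\|(\varphi_\varepsilon,\psi_\varepsilon)\|_H=O(\varepsilon^{5+N/2})$, depending continuously on $\xi$.

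\textbf{Step 3 (reduced problem; the main obstacle).} It remains to make the $K_\varepsilon$-components vanish. Concretely, for each $l$ and $j$ we need
\[
\int\bigl(\text{equation residual}\bigr)\cdot\bigl(\partial_jw^*_{\varepsilon,\xi_{\varepsilon,l}},\partial_jw^\star_{\varepsilon,\xi_{\varepsilon,l}}\bigr)=0 .
\]
\begin{itemize}
\item We expand each projection, equivalently by a local Pohozaev identity on $B_\delta(\xi_{\varepsilon,l})$. The leading term is $c\,\varepsilon^{N+2}\bigl(\sigma_1^2\partial_jP+\sigma_2^2\partial_jQ\bigr)(\xi_{\varepsilon,l})$ with $c>0$.
\item Linearizing at $\xi_l$, the coefficient matrix is proportional to $(\beta-\mu_2)D^2P(\xi_l)+(\beta-\mu_1)D^2Q(\xi_l)$. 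This matrix is invertible by $(H_2)$, because $\sigma_i^2$ are proportional to $\beta-\mu_2$ and $\beta-\mu_1$ respectively.
\item The delicate point is showing that all other contributions are $O(\varepsilon^{N+4})$. These include the $W$-terms, odd terms that must be seen to cancel by symmetry, the $(\varphi,\psi)$-terms, and the interactions with other peaks, which are exponentially small. With this bound, the reduced map has the form
\[
\xi_{\varepsilon,l}-\xi_l=\varepsilon^2\,\Phi_l(\xi),\qquad |\Phi_l|\le C,
\]
where each $\Phi_l$ is continuous.
\end{itemize}
Brouwer's fixed point theorem on the product of the balls $|\xi_{\varepsilon,l}-\xi_l|\le C_0\varepsilon^2$, with $C_0$ large, then yields $\xi_{\varepsilon,l}\in B_\delta(\xi_l)$ solving the reduced equations. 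The corresponding $(U_\xi+\varphi_\varepsilon,V_\xi+\psi_\varepsilon)$ is a solution of the form \eqref{uv}.

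I expect the sharp control of the remainder in the projected equations to be the hardest part, since everything hinges on pushing all non-principal terms to order $\varepsilon^{N+4}$.
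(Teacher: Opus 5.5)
Your proposal is correct and is the standard Lyapunov--Schmidt construction behind this result. The paper does not prove the theorem itself but quotes it from the cited reference, and your route is that standard argument: nondegeneracy of $(w^*,w^\star)$ exactly off $\{0\}\cup\{\beta_n\}$ via the transverse weighted eigenvalue problem, uniform invertibility on $E_\varepsilon$, a contraction giving $\|(\varphi_\varepsilon,\psi_\varepsilon)\|_H=O(\varepsilon^{5+\frac N2})$, and a reduced problem governed by $(\beta-\mu_2)D^2P(\xi_l)+(\beta-\mu_1)D^2Q(\xi_l)$ and solved by Brouwer. Your Step~3 is the same local Pohozaev computation the paper carries out in Lemmas~\ref{lem35} and~\ref{lem355}, and the one point to state explicitly is that the $O(\varepsilon^{N+4})$ remainder there, which comes from the cubic Taylor remainder of $P,Q$ rather than from differentiating them under $(H_1)$, has a constant independent of $C_0$; this independence is what lets the Brouwer step close.
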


To obtain the existence of normalized solutions with constraints, we then proceed to establish the uniqueness results stated below.

\begin{theorem}\label{main}
Under the conditions of Theorem \ref{pro1}, suppose that \( (u^{(1)}_{\varepsilon},v^{(1)}_{\varepsilon}) \) and \( (u^{(2)}_{\varepsilon},v^{(2)}_{\varepsilon}) \) are two different solutions of \eqref{eq1}, as shown below:
\begin{equation*}
(u^{(i)}_{\varepsilon},v^{(i)}_{\varepsilon}) = \left( \sum_{l=1}^k \left( w_{\varepsilon, \xi^{(i)}_{\varepsilon,l}}^{*} + \varepsilon^4 W_{\varepsilon, \xi^{(i)}_{\varepsilon,l}}^{*} \right) + \varphi_{\varepsilon}^{(i)}, \sum_{l=1}^k \left( w_{\varepsilon, \xi^{(i)}_{\varepsilon,l}}^{\star} + \varepsilon^4 W_{\varepsilon, \xi^{(i)}_{\varepsilon,l}}^{\star} \right) + \psi_{\varepsilon}^{(i)} \right), \ \ \ i=1,2,
\end{equation*}
satisfying
\begin{equation*}
\xi^{(i)}_{\varepsilon,l} \rightarrow \xi_l, \ \ \ \  l = 1 , \cdots , k ,
\end{equation*}
as \( \varepsilon \rightarrow 0 \). If \( \xi_l \) is a common isolated non-degenerate critical point of \( P(x) \) and \( Q(x) \) for \( l=1,\cdots,k \), then there exists \( \varepsilon_0 > 0 \) such that for any \( \varepsilon \in (0,\varepsilon_0 ] \), the two solution pairs \( (u^{(1)}_{\varepsilon},v^{(1)}_{\varepsilon}) \) and \( (u^{(2)}_{\varepsilon},v^{(2)}_{\varepsilon}) \)  to system \eqref{eq1} coincide. This coincidence, in turn, implies the uniqueness of the solution to the system.
\end{theorem}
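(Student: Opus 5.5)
We argue by contradiction, following the local Pohozaev approach of \cite{DLY,LY,GPY}. Suppose $(u^{(1)}_\varepsilon,v^{(1)}_\varepsilon)\neq(u^{(2)}_\varepsilon,v^{(2)}_\varepsilon)$ along a sequence $\varepsilon\to0$. Set
\[
(\eta_\varepsilon,\zeta_\varepsilon):=\frac{(u^{(1)}_\varepsilon-u^{(2)}_\varepsilon,\,v^{(1)}_\varepsilon-v^{(2)}_\varepsilon)}{\|u^{(1)}_\varepsilon-u^{(2)}_\varepsilon\|_{L^\infty}+\|v^{(1)}_\varepsilon-v^{(2)}_\varepsilon\|_{L^\infty}},
\]
so that $\|\eta_\varepsilon\|_{L^\infty}+\|\zeta_\varepsilon\|_{L^\infty}=1$. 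Subtracting the two systems shows that $(\eta_\varepsilon,\zeta_\varepsilon)$ solves a linear system
\[
-\varepsilon^2\Delta\eta+(\varepsilon^2P+1)\eta=C^{11}_\varepsilon\eta+C^{12}_\varepsilon\zeta,
\]
with an analogous second equation. The coefficients are the difference quotients
\[
C^{11}_\varepsilon=\mu_1\big((u^{(1)})^2+u^{(1)}u^{(2)}+(u^{(2)})^2\big)+\beta (v^{(1)})^2,\qquad C^{12}_\varepsilon=\beta u^{(2)}(v^{(1)}+v^{(2)}),
\]
and so on. These coefficients decay exponentially away from the points $\xi^{(i)}_{\varepsilon,l}$.

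\textbf{Step 1: localization and the local limit.} For each $l$, rescale around $\xi^{(1)}_{\varepsilon,l}$ by setting $\tilde\eta_{\varepsilon,l}(y)=\eta_\varepsilon(\varepsilon y+\xi^{(1)}_{\varepsilon,l})$, and similarly for $\zeta$. We first need the a priori estimate $|\xi^{(1)}_{\varepsilon,l}-\xi^{(2)}_{\varepsilon,l}|=o(\varepsilon)$. In fact we need a sharper estimate, and we would get it from the local Pohozaev identity for each solution in $B_\delta(\xi_l)$. That identity gives $\int (\partial_j P)\, u^2+(\partial_j Q)\, v^2=O(\text{exp. small})$. Expanding with $(H_2)$ forces $|\xi^{(i)}_{\varepsilon,l}-\xi_l|=O(\varepsilon^2)$ or better, since the linear term in $\xi-\xi_l$ carries the nondegenerate matrix $(\beta-\mu_2)D^2P+(\beta-\mu_1)D^2Q$, up to positive constants. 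Elliptic estimates then give $(\tilde\eta_{\varepsilon,l},\tilde\zeta_{\varepsilon,l})\to(\eta_l,\zeta_l)$ in $C^1_{loc}$, where $(\eta_l,\zeta_l)$ solves the linearization of \eqref{eq3} at $(w^*,w^\star)$. For $\beta\notin\{\beta_n\}$ this linearization is nondegenerate, which is the non-degeneracy result behind Theorem \ref{pro1}. Hence
\[
(\eta_l,\zeta_l)=\sum_{j=1}^N b_{l,j}\big(\partial_j w^*,\partial_j w^\star\big).
\]

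\textbf{Step 2: killing the kernel coefficients.} This is the key step and the main obstacle. We apply the local Pohozaev identity (multiplying by $\partial_j u$, $\partial_j v$ and integrating over $B_\delta(\xi_l)$) to both solutions and subtract. This yields
\[
\int_{B_\delta(\xi_l)}\Big(\partial_jP\,(u^{(1)}+u^{(2)})\eta_\varepsilon+\partial_jQ\,(v^{(1)}+v^{(2)})\zeta_\varepsilon\Big)=O(e^{-c/\varepsilon}).
\]
Next expand $\partial_jP(x)=\sum_m\partial_{jm}P(\xi_l)(x-\xi_{l})_m+O(|x-\xi_l|^2)$, write $x-\xi_l=\varepsilon y+(\xi^{(1)}_{\varepsilon,l}-\xi_l)$, and insert the limit from Step 1. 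The leading term is of order $\varepsilon^{N+1}$ and equals a constant times
\[
\sum_m\Big((\beta-\mu_2)\partial_{jm}P(\xi_l)+(\beta-\mu_1)\partial_{jm}Q(\xi_l)\Big)b_{l,m},
\]
using $\int y_m\,w\,\partial_n w=-\tfrac12\delta_{mn}\int w^2$. The difficulty is to show that the other contributions are $o(\varepsilon^{N+1})$. These are the terms from $\xi^{(1)}_{\varepsilon,l}-\xi_l$, the $O(|x-\xi_l|^2)$ remainder, and the corrections $\varepsilon^4W$, $\varphi$, $\psi$. For this we need the sharp location estimate from Step 1 together with $\|(\varphi,\psi)\|_H=O(\varepsilon^{5+N/2})$. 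Note that the $\varepsilon^4 W$ terms are even in $y$ against odd weights at leading order. Once this is established, $(H_2)$ gives $b_{l,m}=0$ for all $l$ and $m$.

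\textbf{Step 3: contradiction.} With all $b_{l,m}=0$ we have $(\eta_\varepsilon,\zeta_\varepsilon)\to0$ uniformly on $\bigcup_l B_{R\varepsilon}(\xi^{(1)}_{\varepsilon,l})$ for every $R>0$. Outside these balls the coefficients $C^{ij}_\varepsilon$ are small compared with $1$. The maximum principle (or a comparison with $e^{-|x-\xi_l|/(2\varepsilon)}$) then gives $\|\eta_\varepsilon\|_{L^\infty}+\|\zeta_\varepsilon\|_{L^\infty}=o(1)$, which contradicts the normalization. Therefore the two solutions coincide for $\varepsilon$ small.
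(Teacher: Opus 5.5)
Your proposal is correct and follows essentially the same route as the paper. The steps match: the $O(\varepsilon^2)$ location estimate from the single-solution local Pohozaev identity; blow-up of the normalized difference into the span of $(\partial_j w^*,\partial_j w^\star)$; the subtracted local Pohozaev identity read at order $\varepsilon^{N+1}$ to kill the kernel coefficients via $(H_2)$; and a comparison argument away from the peaks. The one difference there is that you keep the full Hessians, which uses the determinant condition in $(H_2)$ directly, while the paper works with the diagonal coefficients $p_{lj},q_{lj}$.

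The error terms you single out in Step 2 are easier than you suggest. After rescaling, dominated convergence with $|\tilde\eta_{\varepsilon,l}|\le 1$ and the exponential decay of $u^{(i)},v^{(i)}$ already gives the limit. So the corrections $\varepsilon^4W$, $\varphi$, $\psi$ need no sharp estimates, and this is exactly how the paper simply lets $\varepsilon\to0$ after dividing by $\varepsilon$.
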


\subsection{Some necessary estimates}

We first establish the local Pohozaev identity in the following lemma.

\begin{lemma}
Let $(u_{\varepsilon}, v_{\varepsilon})$ be a nontrivial solution of \eqref{eq1}. Then for any bounded domain $\Omega \subset \mathbb{R}^N$ and for $ j = 1 , \cdots , N,$ we have,
\begin{align}\label{p4}
&\varepsilon^{2}\int _ { \Omega } \Bigg(\frac { \partial P(x) } { \partial x _ { j } } u _ {\varepsilon } ^ { 2 }+\frac { \partial Q(x) } { \partial x _ { j } } v _ {\varepsilon } ^ { 2 }\Bigg) \notag\\
=&-\beta\int _ { \partial \Omega } v _ { \varepsilon } ^ { 2 } u _ { \varepsilon } ^ { 2 } n _ { j } d S
- 2 \varepsilon ^ { 2 } \int _ { \partial \Omega } \Bigg(\frac { \partial u _ { \varepsilon } } { \partial x _ { j } } \frac { \partial u _ { \varepsilon } } { \partial n }+\frac { \partial v _ { \varepsilon } } { \partial x _ { j } } \frac { \partial v _ { \varepsilon } } { \partial n }\Bigg) d S + \varepsilon ^ { 2 } \int _ { \partial \Omega } (| \nabla u _ { \varepsilon } | ^ { 2 } n _ { j }+| \nabla v _ { \varepsilon } | ^ { 2 } n _ { j } )d S \notag\\
&+ \int _ { \partial \Omega }\left[(\varepsilon^{2}P(x)+1) u _ { \varepsilon } ^ { 2 } n _ { j }+(\varepsilon^{2}Q(x)+1)v _ { \varepsilon } ^ { 2 } n _ { j }\right] d S - \frac { 1 } { 2 } \Bigg(\mu_1\int _ { \partial \Omega } u _ { \varepsilon } ^ { 4 } n _ { j } d S +\mu_2\int _ { \partial \Omega } v _ { \varepsilon } ^ { 4 } n _ { j } d S\Bigg ),
\end{align}
where \( n_j \) (\( j=1,\dots,N \)) denotes the \( j \)-th component of the outward unit normal vector to \( \partial\Omega \).
\end{lemma}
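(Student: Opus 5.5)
Multiply the first equation of \eqref{eq1} by $\partial u_\varepsilon/\partial x_j$ and the second by $\partial v_\varepsilon/\partial x_j$, integrate each over $\Omega$, add the two identities, and integrate by parts term by term. Solutions of \eqref{eq1} are $C^2$ by standard elliptic regularity, since $P,Q\in C^2$ by $(H_1)$; so every boundary integral below is meaningful on a bounded domain, taking $\partial\Omega$ smooth enough for the divergence theorem.

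For the Laplacian term, I use the pointwise identity
\[
-\Delta u\,\partial_j u=-\operatorname{div}(\nabla u\,\partial_j u)+\tfrac12\partial_j|\nabla u|^2 .
\]
After multiplying by $\varepsilon^2$ and integrating, this gives
\[
\varepsilon^2\int_{\partial\Omega}\Big(-\frac{\partial u_\varepsilon}{\partial x_j}\frac{\partial u_\varepsilon}{\partial n}+\tfrac12|\nabla u_\varepsilon|^2n_j\Big)dS,
\]
and the same for $v_\varepsilon$.

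For the potential term, write $(\varepsilon^2P+1)u\,\partial_ju=\tfrac12\partial_j\big((\varepsilon^2P+1)u^2\big)-\tfrac12\varepsilon^2\partial_jP\,u^2$. Its integral is therefore
\[
\tfrac12\int_{\partial\Omega}(\varepsilon^2P+1)u_\varepsilon^2n_j\,dS-\tfrac{\varepsilon^2}{2}\int_\Omega\partial_jP\,u_\varepsilon^2 ,
\]
and analogously with $Q$ for $v_\varepsilon$.

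On the right-hand side, $\mu_1u^3\partial_ju=\tfrac{\mu_1}{4}\partial_j(u^4)$, and likewise for $\mu_2 v^3\partial_j v$. The key observation is that the coupling terms combine:
\[
\beta uv^2\partial_ju+\beta vu^2\partial_jv=\tfrac{\beta}{2}\partial_j(u^2v^2).
\]
This is exactly where adding the two equations, rather than treating them separately, is needed. These three terms integrate to
\[
\tfrac{\mu_1}{4}\int_{\partial\Omega}u_\varepsilon^4n_j,\qquad \tfrac{\mu_2}{4}\int_{\partial\Omega}v_\varepsilon^4n_j,\qquad \tfrac{\beta}{2}\int_{\partial\Omega}u_\varepsilon^2v_\varepsilon^2n_j .
\]

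Equating both sides leaves a single interior integral, $-\tfrac{\varepsilon^2}{2}\int_\Omega(\partial_jP\,u_\varepsilon^2+\partial_jQ\,v_\varepsilon^2)$; all other terms are boundary integrals. Move the boundary terms to the other side and multiply by $-2$. This reproduces \eqref{p4} term by term:
\begin{itemize}
\item the $-2\varepsilon^2\,\partial_j u\,\partial_n u$ boundary terms for $u_\varepsilon$ and $v_\varepsilon$;
\item the $+\varepsilon^2|\nabla u_\varepsilon|^2 n_j$ and $+\varepsilon^2|\nabla v_\varepsilon|^2 n_j$ terms;
\item the $+(\varepsilon^2P+1)u_\varepsilon^2n_j$ and $+(\varepsilon^2Q+1)v_\varepsilon^2n_j$ terms;
\item $-\tfrac12\mu_1\int_{\partial\Omega}u_\varepsilon^4n_j-\tfrac12\mu_2\int_{\partial\Omega}v_\varepsilon^4n_j$;
\item $-\beta\int_{\partial\Omega}u_\varepsilon^2v_\varepsilon^2n_j$.
\end{itemize}

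There is no real obstacle. The only points requiring care are the sign and factor bookkeeping in the final multiplication by $-2$, and the observation that the coupling terms combine into a total derivative only after the two equations are summed.
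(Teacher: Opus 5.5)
Your proposal is correct and is essentially the paper's argument: you use the same pointwise identities for the Laplacian and potential terms. The only cosmetic difference is that you add the two equations before integrating the coupling terms, obtaining the total derivative $\frac{\beta}{2}\partial_j(u_\varepsilon^2v_\varepsilon^2)$ directly; the paper instead derives the two single-equation identities separately and then merges the interior coupling integrals $\beta\int_\Omega \partial_j(u_\varepsilon^2)v_\varepsilon^2$ and $\beta\int_\Omega \partial_j(v_\varepsilon^2)u_\varepsilon^2$ by one integration by parts, and your final sign bookkeeping correctly reproduces \eqref{p4}.
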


\begin{proof}
Multiplying the first equation of \eqref{eq1} by $ \frac { \partial u _ { \varepsilon } } { \partial x _ { j } }$ and integrating over $\Omega$, we have
\begin{align}\label{p1}
-\varepsilon^{2}\int_{\Omega}\frac{\partial u_{\varepsilon}}{\partial x_{j}}\Delta u_{\varepsilon}+\int_{\Omega}(\varepsilon^{2}P(x)+1)\frac{\partial u_{\varepsilon}}{\partial x_{j }}u_{\varepsilon}
&=\mu_1 \int_{\Omega}\frac{\partial u_{\varepsilon}}{\partial x_{j}}u_{\varepsilon}^{3}+\beta\int_{\Omega}\frac{\partial u_{\varepsilon}}{\partial x _{j}}u_{\varepsilon}v_{\varepsilon}^{2}\notag\\
&=\frac{\mu_1 }{4}\int_{\partial \Omega} u_{\varepsilon}^{4}n_{j}dS + \frac{\beta}{2}\int_{\Omega}\frac{\partial u_{\varepsilon}^{2}}{\partial x_{j}}v_{\varepsilon}^{2}.
\end{align}
By Green's formulas and integration by parts formula, we get
\begin{align}\label{3111}
-\int_{\Omega}\frac{\partial u_{\varepsilon}}{\partial x_{j}}\Delta u_{\varepsilon}&=-\int_{\partial\Omega}\frac{\partial u_{\varepsilon}}{\partial x_{j}}\frac{\partial u_{\varepsilon}}{\partial n}dS+\int_{\Omega}\nabla \left(\frac{\partial u_{\varepsilon}}{\partial x_j}\right)\nabla u_{\varepsilon}\notag\\
&=-\int_{\partial\Omega}\frac{\partial u_{\varepsilon}}{\partial x_{j}}\frac{\partial u_{\varepsilon}}{\partial n}dS+\frac{1}{2}\int_{\partial\Omega}|\nabla u_{\varepsilon}|^{2}n_{j}dS
\end{align}
and
\begin{align}\label{3112}
\int _ { \Omega } (\varepsilon^{2}P(x)+1) \frac { \partial u _ {\varepsilon } } { \partial x _ { j } } u _ {\varepsilon } = \frac { 1 } { 2 } \int _ { \partial \Omega }(\varepsilon^{2}P(x)+1)u _ {\varepsilon } ^ { 2 } n _ { j } d S - \frac { \varepsilon^{2} } { 2 } \int _ { \Omega } \frac {\partial P(x)} { \partial x _ { j } } u _ {\varepsilon } ^ { 2 } .
\end{align}
According to \eqref{p1}, \eqref{3111} and \eqref{3112}, we derive that
\begin{align}\label{p2}
\varepsilon^{2}\int _ { \Omega } \frac { \partial P(x) } { \partial x _ { j } } u _ {\varepsilon } ^ { 2 } + \beta \int _ { \Omega } \frac { \partial u _ { \varepsilon } ^ { 2 } } { \partial x _ { j } } v _ {\varepsilon } ^ { 2 }
=& - 2 \varepsilon ^ { 2 } \int _ { \partial \Omega } \frac { \partial u _ { \varepsilon } } { \partial x _ { j } } \frac { \partial u _ { \varepsilon } } { \partial n } d S + \varepsilon ^ { 2 } \int _ { \partial \Omega } | \nabla u _ { \varepsilon } | ^ { 2 } n _ { j } d S \notag\\
&+ \int _ { \partial \Omega } (\varepsilon^{2}P(x)+1) u _ { \varepsilon } ^ { 2 } n _ { j } d S - \frac { \mu_1 } { 2 } \int _ { \partial \Omega } u _ { \varepsilon } ^ { 4 } n _ { j } d S .
\end{align}
Similarly, by using the second equation of \eqref{eq1}, we obtain
\begin{align}\label{p3}
\varepsilon^{2}\int _ { \Omega } \frac { \partial Q(x) } { \partial x _ { j } } v _ {\varepsilon } ^ { 2 } + \beta \int _ { \Omega } \frac { \partial v _ { \varepsilon } ^ { 2 } } { \partial x _ { j } } u _ {\varepsilon } ^ { 2 }
=& - 2 \varepsilon ^ { 2 } \int _ { \partial \Omega } \frac { \partial v _ { \varepsilon } } { \partial x _ { j } } \frac { \partial v _ { \varepsilon } } { \partial n } d S + \varepsilon ^ { 2 } \int _ { \partial \Omega } | \nabla v _ { \varepsilon } | ^ { 2 } n _ { j } d S \notag\\
&+ \int _ { \partial \Omega } (\varepsilon^{2}Q(x)+1) v _ { \varepsilon } ^ { 2 } n _ { j } d S - \frac { \mu_2 } { 2 } \int _ { \partial \Omega } v _ { \varepsilon } ^ { 4 } n _ { j } d S .
\end{align}
Note that
\begin{align}\label{p5}
\int _ { \Omega } \frac { \partial v _ { \varepsilon } ^ { 2 } } { \partial x _ { j } } u _ { \varepsilon } ^ { 2 }  = - \int _ { \Omega } \frac { \partial u _ { \varepsilon } ^ { 2 } } { \partial x _ { j } } v _ { \varepsilon } ^ { 2 }  + \int _ { \partial \Omega } v _ { \varepsilon } ^ { 2 } u _ { \varepsilon } ^ { 2 } n _ { j } d S .
\end{align}
Combining \eqref{p2}, \eqref{p3} and \eqref{p5}, we obtain \eqref{p4}.
\end{proof}

For the sake of subsequent proofs, we first present the decay estimates for $W_{l}^{*}$ and $W_{l}^{\star}$. Notably, Zheng in \cite{1} have rigorously proven the decay estimates of $W_{l}^{*}$ and $W_{l}^{\star}$ in $\mathbb{R}^3$. By extending their technical approach, we can analogously derive the corresponding decay estimates of $W_{l}^{*}$ and $W_{l}^{\star}$ in $\mathbb{R}^N.$
\begin{lemma}[Lemma 3.1.3, \cite{1}] \label{le3.1.3}
	For $( W_{l}^{*}\left(y\right), W_{l}^{\star}\left(y\right) ) \in C^2( \mathbb{R}^N ) \times C^2(\mathbb{R}^N)$ with $l=1, \cdots, k$ and for a fixed sufficiently small $\tau >0$, there exists a constant $C>0$ such that
\begin{equation*}
\left| W_{l}^{*}\left(y\right) \right| \leq C e^{-\left(1-\tau\right) \left|y\right|}, \,\,\,\,\,\, \left| W_{l}^{\star}\left(y\right) \right| \leq C e^{-\left(1-\tau\right) \left|y\right|},\,\, \text{in}\,\,\,\, \mathbb{R}^N.
\end{equation*}
\end{lemma}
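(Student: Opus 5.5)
The lemma is a decay estimate for the solution $(W_l^{*},W_l^{\star})$ of the linear system \eqref{eq2}. The right-hand side of \eqref{eq2} is $-\sum_i p_{li}y_i^2 w^{*}$ and $-\sum_i q_{li}y_i^2 w^{\star}$. Since $w$ decays like $e^{-|y|}|y|^{-(N-1)/2}$, this forcing is bounded by $C|y|^2 e^{-|y|}\le C_\tau e^{-(1-\tau/2)|y|}$. The coefficients $3\mu_1(w^*)^2+\beta(w^\star)^2$, $2\beta w^*w^\star$, etc. also decay exponentially. So outside a large ball $B_R$ the system is a small perturbation of two decoupled equations $-\Delta u+u=f$.

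\textbf{Step 1: regularity and decay at infinity.} First note that $(W_l^*,W_l^\star)\in H^1\times H^1$. Standard elliptic regularity (bootstrap with $W^{2,p}$ estimates, the coefficients being smooth and bounded) gives $C^2$ regularity and boundedness. Local $L^\infty$ estimates on unit balls $B_1(y)$ in terms of the $L^2(B_2(y))$ norm then show $W_l^*,W_l^\star\to0$ as $|y|\to\infty$.

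\textbf{Step 2: Kato inequality.} Set $\Phi=|W_l^*|+|W_l^\star|$. In the distributional sense, using $\Delta|u|\ge \operatorname{sgn}(u)\Delta u$, we get
\[
-\Delta \Phi+\Phi\le C\big((w^*)^2+(w^\star)^2+w^*w^\star\big)\Phi+Ce^{-(1-\tau/2)|y|}.
\]
Choose $R$ large so that the potential term is at most $\frac{\tau}{4}\Phi$ on $|y|\ge R$. This leaves
\[
-\Delta\Phi+(1-\tfrac\tau4)\Phi\le Ce^{-(1-\tau/2)|y|}\quad\text{for } |y|\ge R.
\]

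\textbf{Step 3: comparison.} Take the barrier $\bar\Phi=Me^{-(1-\tau)|y|}$. A direct computation gives
\[
-\Delta\bar\Phi+(1-\tfrac\tau4)\bar\Phi=\Big(1-\tfrac\tau4-(1-\tau)^2+\tfrac{(N-1)(1-\tau)}{|y|}\Big)\bar\Phi\ge c_\tau \bar\Phi ,
\]
with $c_\tau>0$ for small $\tau$. Since $e^{-(1-\tau/2)|y|}\le e^{-(1-\tau)|y|}$, choosing $M$ large makes $\bar\Phi$ a supersolution on $|y|\ge R$ that dominates $\Phi$ on $|y|=R$; this uses the boundedness from Step 1. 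Both $\Phi$ and $\bar\Phi$ vanish at infinity. The maximum principle applied to $\Phi-\bar\Phi$ on the exterior domain then yields $\Phi\le \bar\Phi$ there. Inside $B_R$ the bound follows from boundedness of $\Phi$.

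\textbf{Main obstacle.} The delicate point is the coupled, non-monotone structure. The coupling coefficient $2\beta w^*w^\star$ can have either sign, so one cannot apply a cooperative-system comparison directly. I handle this by working with the sum of absolute values, as in Step 2, so that all coupling terms are absorbed as small potentials outside $B_R$. Justifying Kato's inequality for this sum in the weak sense is routine since the functions are $C^2$. The lemma only asks for existence of $(W_l^*,W_l^\star)$ in $H^1$, and this is guaranteed by the nondegeneracy assumption $\beta\notin\{\beta_n\}$ used in Theorem \ref{pro1}.
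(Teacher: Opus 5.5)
Your proof is correct. The paper gives no argument of its own for this lemma: it cites \cite{1} and asserts that the $\mathbb{R}^3$ proof there extends to $\mathbb{R}^N$. The underlying method is the same exponential-barrier comparison you use; see also the proof of Lemma~\ref{lem33}.

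The one place you depart from the usual route is the Kato-inequality device for $\Phi=|W_l^*|+|W_l^\star|$, and it is not needed. The coupling coefficient $2\beta w^*w^\star$ decays like $e^{-2|y|}$, and $W_l^\star$ is bounded by your Step 1. So the term $2\beta w^*w^\star W_l^\star$ can simply be moved into the forcing, where it is $O(e^{-2|y|})$. Each scalar equation then has potential at least $1-\tau/4$ outside $B_R$ (for $R$ large, whatever the sign of $\beta$), and you can compare $\pm W_l^*$ and $\pm W_l^\star$ separately with $Me^{-(1-\tau)|y|}$ using the classical maximum principle for $C^2$ functions. The non-cooperative sign of $\beta$ is therefore not the obstacle you describe.

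Your version works equally well. Its cost is a distributional maximum principle, which is fine here: $\Phi$ is Lipschitz, and $(\Phi-\bar\Phi-\epsilon)^+$ is an admissible compactly supported $H^1$ test function on $\{|y|>R\}$.

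A minor point on your closing remark: existence of an $H^1$ solution of \eqref{eq2} needs more than nondegeneracy. You also need that the right-hand side is even, so that it is orthogonal to the odd kernel elements $\partial_{x_j}(w^*,w^\star)$.
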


Next, we will make some estimates for the solution shown in \eqref{uv}.
\begin{lemma}\label{lem32}
Let $(u_{\varepsilon},v_{\varepsilon})$ be a solution constructed in Theorem \ref{pro1}. Then the following statements hold.
\begin{itemize}
\item[(i)] For any given small $\tau>0$, there exists a suffiiciently large constant $R > 0$, such that
\[
|u_{\varepsilon}(x)|\leq\tau,\quad |v_{\varepsilon}(x)|\leq\tau,\quad\forall x\in\mathbb{R}^N\setminus\cup_{l = 1}^k B_{R\varepsilon}(\xi_{\varepsilon,l}).
\]
\item[(ii)] There exists $C>0$ such that
\[
\|(u_{\varepsilon},v_{\varepsilon})\|^2_H \leq C\varepsilon^N.
\]
\end{itemize}
\end{lemma}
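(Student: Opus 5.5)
We prove both parts by working directly from the decomposition \eqref{uv} in Theorem \ref{pro1}, in which $u_\varepsilon$ is written as $\sum_{l=1}^k \bigl(w^{*}_{\varepsilon,\xi_{\varepsilon,l}}+\varepsilon^4 W^{*}_{\varepsilon,\xi_{\varepsilon,l}}\bigr)+\varphi_\varepsilon$, and similarly for $v_\varepsilon$. We then estimate each piece separately.

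\emph{Part (ii).} We first use the triangle inequality in $H$. After the change of variables $x=\xi_{\varepsilon,l}+\varepsilon y$, the norm of the leading profile becomes
\[
\|w^{*}_{\varepsilon,\xi_{\varepsilon,l}}\|^2_{\varepsilon,P}=\varepsilon^N\int_{\mathbb{R}^N}\bigl(|\nabla w^{*}|^2+(\varepsilon^2P(\xi_{\varepsilon,l}+\varepsilon y)+1)(w^{*})^2\bigr)dy .
\]
To bound this by $C\varepsilon^N$, we need control of $P$ on the region that matters. We use that $P$ is bounded near $\xi_l$ by $(H_1)$. Away from $\xi_l$, the exponential decay of $w$ absorbs $\varepsilon^2P$, assuming $P$ grows at most polynomially, or $P$ is bounded as is implicitly assumed for $H$ to be well defined. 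The correction terms are handled the same way using Lemma \ref{le3.1.3}, and they give $O(\varepsilon^{8+N})$. Finally, Theorem \ref{pro1} gives $\|(\varphi_\varepsilon,\psi_\varepsilon)\|_H^2=O(\varepsilon^{10+N})$. Summing over the finitely many $l$ yields (ii).

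\emph{Part (i).} The profile terms are easy. For $x\notin\cup_l B_{R\varepsilon}(\xi_{\varepsilon,l})$ we have $|x-\xi_{\varepsilon,l}|/\varepsilon\ge R$ for every $l$. Hence
\[
\sum_l \bigl(w^{*}+\varepsilon^4|W^{*}_l|\bigr)\bigl((x-\xi_{\varepsilon,l})/\varepsilon\bigr)\le Ck e^{-(1-\tau)R},
\]
by the decay of $w$ and Lemma \ref{le3.1.3}. This is at most $\tau/2$ once $R$ is large.

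The remainder $\varphi_\varepsilon$ is the main obstacle, because an $H^1$ bound alone gives no pointwise control. We therefore obtain an $L^\infty$ bound for $\varphi_\varepsilon$ from the equation it satisfies. Subtracting the approximate solution, we see that $\varphi_\varepsilon$ solves a linear equation of the form
\[
-\varepsilon^2\Delta\varphi+(\varepsilon^2P+1)\varphi=f_\varepsilon .
\]
Here $f_\varepsilon$ collects the error of the approximation, the linear terms in $(\varphi,\psi)$ with bounded coefficients, and the superlinear terms.

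We rescale by setting $\tilde\varphi(y)=\varphi_\varepsilon(\varepsilon y)$. Then $\|\tilde\varphi\|_{H^1(\mathbb{R}^N)}\le C\varepsilon^{-N/2}\|(\varphi_\varepsilon,\psi_\varepsilon)\|_H=O(\varepsilon^5)$. Since $N\le3$, the cubic nonlinearity is subcritical, so we can apply local elliptic estimates (Moser iteration, or $W^{2,2}$ bounds combined with Sobolev embedding $H^2\subset L^\infty$) on unit balls $B_1(z)$, uniformly in $z$. This gives $\|\tilde\varphi\|_{L^\infty}\le C\|\tilde\varphi\|_{H^1}+o(1)=o(1)$, and similarly for $\psi$.

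Adding the two estimates gives $|u_\varepsilon|,|v_\varepsilon|\le\tau$ outside the balls for $\varepsilon$ small. If a bound valid for all $\varepsilon\in(0,\varepsilon_0]$ is required, we shrink $\varepsilon_0$ accordingly.

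An alternative route to (i) is to work with the full solution. Near the peaks $u_\varepsilon,v_\varepsilon$ are bounded, since they are close to $w$. In the exterior region, once the solution is small, the equation gives $-\varepsilon^2\Delta u+\tfrac12 u\le0$ there. The comparison principle then yields smallness and in fact exponential decay. The delicate point is to obtain a first uniform $L^\infty$ bound, and that again comes from the rescaled local elliptic estimates above.
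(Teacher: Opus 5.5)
Your proposal is correct and follows essentially the paper's route. For (i), the paper also rescales $\tilde\varphi_\varepsilon(x)=\varphi_\varepsilon(\varepsilon x)$, uses $\|\tilde\varphi_\varepsilon\|_{H^1}=O(\varepsilon^5)$, and applies Moser iteration on arbitrary unit balls with $\|f_\varepsilon\|_{L^2(B_1)}=o(1)$; for (ii), it expands $\|U_\varepsilon+\varphi_\varepsilon\|^2_{\varepsilon,P}$ and bounds the cross term by Young's inequality, which is equivalent to your triangle-inequality argument. Your explicit treatment of the profile terms in (i), and your remark that bounding $\varepsilon^2P+1$ after the change of variables needs $P$ bounded (or of controlled growth), make precise points the paper uses only implicitly.
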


\begin{proof}
To show $(i)$, we just need to show that as $ \varepsilon \rightarrow 0$, $| | \varphi_\varepsilon | |_{ L ^ \infty ( \mathbb{R}^N)}\rightarrow 0$, $| | \psi_\varepsilon | | _ { L ^ \infty ( \mathbb{R}^N)} \rightarrow 0$. For convenience, we let $\tilde{\varphi}_\varepsilon (x)=\varphi_\varepsilon (\varepsilon x)$, $\tilde{\psi}_\varepsilon (x)=\psi_\varepsilon (\varepsilon x)$ and write
\begin{align*}
(U_\varepsilon(x),V_\varepsilon(x))= \left(\sum _ { l = 1 } ^ { k } \left(w^{*}_{\varepsilon,\xi_{\varepsilon,l}}+\varepsilon^4 W^{*}_{\varepsilon,\xi_{\varepsilon,l}}\right),\sum _ { l = 1 } ^ { k } \left(w^{\star}_{\varepsilon,\xi_{\varepsilon,l}}+\varepsilon^4 W^{\star}_{\varepsilon,\xi_{\varepsilon,l}}\right)\right),
\end{align*}
in addition,
\begin{align*}
\tilde{U}_{\varepsilon}(x)=U_\varepsilon(\varepsilon x),\quad \tilde{V}_{\varepsilon}(x)=V_\varepsilon(\varepsilon x).
\end{align*}
Then by \eqref{eq1},  we know that
\begin{align} \label{-phi}
-\triangle \tilde{\varphi}_\varepsilon= &
-(\varepsilon^2 P(\varepsilon x)+1)(\tilde{U}_{\varepsilon}+\tilde{\varphi}_\varepsilon)+ \mu_1 (\tilde{U}_{\varepsilon}+\tilde{\varphi}_\varepsilon)^3 \notag\\
&+\beta (\tilde{U}_{\varepsilon}+\tilde{\varphi}_\varepsilon) (\tilde{V}_{\varepsilon}+\tilde{\psi}_\varepsilon)^2 + \triangle \tilde{U}_{\varepsilon}\notag\\
=&-(\varepsilon^2 P(\varepsilon x)+1)\tilde{\varphi}_\varepsilon+\mu_1\big[3(\tilde{U}_{\varepsilon})^2 \tilde{\varphi}_\varepsilon+3\tilde{U}_{\varepsilon}\tilde{\varphi}_\varepsilon^2+\tilde{\varphi}_\varepsilon^3\big]\notag\\
&+\beta \big(2\tilde{U}_{\varepsilon}\tilde{V}_{\varepsilon}\tilde{\psi}_\varepsilon+\tilde{U}_{\varepsilon}\tilde{\psi}_\varepsilon^2
+\tilde{\varphi}_\varepsilon (\tilde{V}_{\varepsilon})^2+2 \tilde{V}_{\varepsilon}\tilde{\varphi}_\varepsilon\tilde{\psi}_\varepsilon+ \tilde{\varphi}_\varepsilon\tilde{\psi}_\varepsilon^2\big)+Z_\varepsilon =:f_\varepsilon,
\end{align}
where
\begin{align*}
Z_\varepsilon=\triangle \tilde{U}_{\varepsilon} -(\varepsilon^2 P(\varepsilon x)+1)\tilde{U}_{\varepsilon}+\mu_1 \tilde{U}_{\varepsilon}^3 +\beta \tilde{U}_{\varepsilon} \tilde{V}_{\varepsilon}^2.
\end{align*}
By \eqref{eq3}, \eqref{eq2}, and Lemma \ref{le3.1.3}, we readily establish that $Z_\varepsilon=o(1)$. Additionally, Theorem \ref{pro1} yields that $||\varphi_{\varepsilon}||_{\varepsilon,P} = O(\varepsilon^{5+\frac{N}{2}})$ and $||\psi_{\varepsilon}||_{\varepsilon,Q} = O(\varepsilon^{5+\frac{N}{2}})$. Then by the Hypothesis $(H_1)$, we have
\begin{align*}
||\varphi_{\varepsilon}||^2_{\varepsilon,P}&=\int _ { \mathbb{R} ^ { N } } \left(\varepsilon ^ { 2 } | \nabla \varphi_{\varepsilon} | ^ { 2 } + \left( \varepsilon ^ { 2 } P \left( x \right) + 1 \right) \varphi_{\varepsilon} ^ { 2 }\right)\\
&=\varepsilon^N \int _ { \mathbb{R} ^ { N } } \left( | \nabla \tilde{\varphi}_\varepsilon | ^ { 2 } + \left( \varepsilon ^ { 2 } P \left( \varepsilon x \right) + 1 \right) \tilde{\varphi}_\varepsilon ^ { 2 }\right)\\
&\geq C\varepsilon^N ||\tilde{\varphi}_\varepsilon||_{H^1(\mathbb{R}^N)}^2.
\end{align*}
Therefore $||\tilde{\varphi}_\varepsilon||_{H^1(\mathbb{R}^N)}=O(\varepsilon^5)$ and similarly, $||\tilde{\psi}_\varepsilon||_{H^1(\mathbb{R}^N)}=O(\varepsilon^5)$. By virtue of the Sobolev inequalities, we can establish that
\begin{align} \label{Lp1}
||\tilde{\varphi}_\varepsilon||_{L^p(\mathbb{R}^N)}\leq C||\tilde{\varphi}_\varepsilon||_{H^{1}(\mathbb{R}^N)}=O(\varepsilon^5)\ \  \text{for}\ \ 2\leq p\leq 2^*
\end{align}
and
\begin{align} \label{Lp2}
||\tilde{\psi}_\varepsilon||_{L^p(\mathbb{R}^N)}\leq C||\tilde{\psi}_\varepsilon||_{H^{1}(\mathbb{R}^N)}=O(\varepsilon^5)\ \  \text{for}\ \ 2\leq p\leq 2^*,
\end{align}
where
\[
2^* =
\begin{cases}
6, & \text{ when }~ N=3, \\
\infty, & \text{ when }~ N=2.
\end{cases}
\]
Using Condition $(H_1)$ and \eqref{Lp1}, for any fixed $B_1 \subset \mathbb{R}^N$, we have
\begin{align}\label{lem14}
\int_{B_1} (\varepsilon^2 P(\varepsilon x)+1)^2\tilde{\varphi}_\varepsilon^2 \leq C\int_{B_1} \tilde{\varphi}_\varepsilon^2=O(\varepsilon^{10}).
\end{align}
Moreover, by Lemma \ref{le3.1.3}, we get
\begin{align}\label{lem15}
&\mu_1^2 \int_{B_1}\big(3\tilde{U}_{\varepsilon}^2 \tilde{\varphi}_\varepsilon+3\tilde{U}_{\varepsilon}\tilde{\varphi}_\varepsilon^2 +\tilde{\varphi}_\varepsilon^3\big)^2\notag\\
=&\mu_1^2 \int_{B_1}\big (9\tilde{U}_{\varepsilon}^4 \tilde{\varphi}_\varepsilon^2+15\tilde{U}_{\varepsilon}^2\tilde{\varphi}_\varepsilon^4 +\tilde{\varphi}_\varepsilon^6+18\tilde{U}_{\varepsilon}^3 \tilde{\varphi}_\varepsilon^3+6\tilde{U}_{\varepsilon}\tilde{\varphi}_\varepsilon^5\big)= O(\varepsilon^{10}).
\end{align}
Similarly, we can obtain that
\begin{align}\label{lem16}
\beta^2 \int_{B_1} \left(2\tilde{U}_{\varepsilon}\tilde{V}_{\varepsilon}\tilde{\psi}_\varepsilon+\tilde{U}_{\varepsilon}\tilde{\psi}_\varepsilon^2+\tilde{\varphi}_\varepsilon \tilde{V}_{\varepsilon}^2+2\tilde{V}_{\varepsilon}\tilde{\varphi}_\varepsilon\tilde{\psi}_\varepsilon+ \tilde{\varphi}_\varepsilon\tilde{\psi}_\varepsilon^2\right)^2=O(\varepsilon^{10}).
\end{align}
By \eqref{-phi} and \eqref{lem14}-\eqref{lem16}, we can know that $||f_\varepsilon||_{L^2(B_1)}=o(1)$. Therefore, according to the Moser iteration (Theorem 4.1, \cite{han2011elliptic}), we have
\begin{align*}
\sup_{B_{\theta}} \tilde{\varphi}_\varepsilon\leq C\left(\frac{1}{(1-\theta)^{\frac32}}||\tilde{\varphi}_\varepsilon||_{L^2(B_1)}+||f_\varepsilon||_{L^2(B_1)}\right)\leq C||\tilde{\varphi}_\varepsilon||_{L^2(\mathbb{R}^N)}+o(1)=o(1),\ \ \ \forall \theta\in(0,1).
\end{align*}
By the same argument, we can check that $\sup_{B_{\theta}} \tilde{\psi}_\varepsilon=o(1)$.

Next, we will prove that $(ii)$ holds. Since
\begin{align} \label{310}
\|u_\varepsilon\|^2_{\varepsilon,P} &= \|U_\varepsilon +\varphi_\varepsilon \|^2_{\varepsilon,P} \notag\\
&= \int_{\mathbb{R}^N} \left(\varepsilon^2|\nabla (U_\varepsilon +\varphi_\varepsilon)|^2 + (\varepsilon^2P(x)+1)(U_\varepsilon +\varphi_\varepsilon)^2\right) \notag\\
&= \int_{\mathbb{R}^N} \left(\varepsilon^2|\nabla U_\varepsilon|^2 + (\varepsilon^2P(x)+1)U_\varepsilon^2\right)+\int_{\mathbb{R}^N} \left(\varepsilon^2|\nabla \varphi_\varepsilon|^2 + (\varepsilon^2P(x)+1)\varphi_\varepsilon^2\right) \notag \\
&\ \ \ \ +2\int_{\mathbb{R}^N} \left(\varepsilon^2 \nabla U_\varepsilon \nabla \varphi_\varepsilon + (\varepsilon^2P(x)+1)U_\varepsilon \varphi_\varepsilon\right)
\end{align}
and by Theorem \ref{pro1}, we have
\begin{align} \label{311}
\int_{\mathbb{R}^N} \left(\varepsilon^2|\nabla \varphi_\varepsilon|^2 + (\varepsilon^2P(x)+1)\varphi_\varepsilon^2\right) \leq C\varepsilon^{10+N}.
\end{align}
Meanwhile,
\begin{align} \label{312}
&\int_{\mathbb{R}^N} \left(\varepsilon^2|\nabla U_\varepsilon (x)|^2 + (\varepsilon^2P(x)+1)U_\varepsilon^2 (x)\right) dx \notag\\
= &\varepsilon^N\int_{\mathbb{R}^N} \left(|\nabla U_\varepsilon (\varepsilon z+\xi_{\varepsilon,l})|^2 + (\varepsilon^2P(\varepsilon z+\xi_{\varepsilon,l})+1)U_\varepsilon^2 (\varepsilon z+\xi_{\varepsilon,l})\right) dz \notag\\
\leq & C \varepsilon^N\int_{\mathbb{R}^N} \left(|\nabla U_\varepsilon (\varepsilon z+\xi_{\varepsilon,l})|^2 + U_\varepsilon^2 (\varepsilon z+\xi_{\varepsilon,l})\right) dz\leq  C \varepsilon^N.
\end{align}
From Young's inequality and \eqref{311}, \eqref{312}, we know that
\begin{align} \label{313}
&2\int_{\mathbb{R}^N} \left(\varepsilon^2 \nabla U_\varepsilon \nabla \varphi_\varepsilon + (\varepsilon^2P(x)+1)U_\varepsilon \varphi_\varepsilon\right)\notag\\
\leq & \int_{\mathbb{R}^N} \left(\varepsilon^2|\nabla U_\varepsilon|^2 + (\varepsilon^2P(x)+1)U_\varepsilon^2\right)+\int_{\mathbb{R}^N} \left(\varepsilon^2|\nabla \varphi_\varepsilon|^2 + (\varepsilon^2P(x)+1)\varphi_\varepsilon^2\right) \notag\\
\leq & C (\varepsilon^N + \varepsilon^{10+N})\leq C \varepsilon^N.
\end{align}
Combining \eqref{310}-\eqref{313}, we obtain that $\|u_\varepsilon\|^2_{\varepsilon,P} \leq C \varepsilon^N$. And similarly, $\|v_\varepsilon\|^2_{\varepsilon,P} \leq C \varepsilon^N$. Therefore, $(ii)$ holds.

\end{proof}

\begin{lemma}\label{lem33}
Suppose that $(u_{\varepsilon},v_{\varepsilon})$ be a solution constructed in Theorem \ref{pro1}.  For any $\alpha\in(0,1)$, there exists a constant $C>0$ such that
\begin{equation}\label{lem11}
u _ { \varepsilon } ( x ) \leq C \sum _ { l = 1 } ^ { k } e ^ { - \frac { \sqrt { \alpha } | x - \xi _ { \varepsilon,l } | } { \varepsilon } } ,\ \ \ v _ { \varepsilon } ( x ) \leq C \sum _ { l = 1 } ^ { k } e ^ { - \frac { \sqrt { \alpha } | x - \xi _ { \varepsilon,l } | } { \varepsilon } } ,\ \ \ \ \forall x \in \mathbb{R } ^ { N }.
\end{equation}
Moreover, we have
\begin{equation}\label{lem12}
 | \nabla u _ { \varepsilon } ( x ) | \leq C e ^ { - \frac { \sqrt { \alpha } } { 4 \varepsilon } } ,\ \ \ | \nabla v _ { \varepsilon } ( x ) | \leq C e ^ { - \frac { \sqrt { \alpha } } { 4 \varepsilon } } ,\ \ \ \ \forall x \in \partial B _ { \delta } ( \xi _ { \varepsilon , l } ) , l = 1 , \cdots , k .
 \end{equation}
\end{lemma}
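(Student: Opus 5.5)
The plan is a comparison-principle argument outside the concentration balls, using Lemma \ref{lem32}(i) as the input, followed by local elliptic estimates in rescaled variables for the gradient. Throughout, $u_\varepsilon,v_\varepsilon$ are treated as positive solutions (as in the construction of Theorem \ref{pro1}). Near the peaks the solution is bounded, which is what makes this work.

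\textbf{Step 1: a linear inequality outside the balls.} Fix $\alpha\in(0,1)$. Choose $\tau>0$ so small that
\[
(\mu_1+|\beta|)\tau^2+\varepsilon^2\sup_{B}|P|\le \tfrac{1-\alpha}{2}
\]
on the relevant region. Here $P,Q$ are used only locally bounded; a global lower bound on $\varepsilon^2P$, as implicit in the norm $\|\cdot\|_{\varepsilon,P}$, makes $\varepsilon^2P\ge -o(1)$. By Lemma \ref{lem32}(i) there is $R>0$ with $u_\varepsilon,v_\varepsilon\le\tau$ on
\[
\Omega_\varepsilon:=\mathbb{R}^N\setminus\bigcup_{l=1}^k B_{R\varepsilon}(\xi_{\varepsilon,l}).
\]
On $\Omega_\varepsilon$ the first equation then gives
\[
-\varepsilon^2\Delta u_\varepsilon+\Big(1-\varepsilon^2P-\mu_1u_\varepsilon^2-\beta v_\varepsilon^2\Big)u_\varepsilon=0,
\]
so that $-\varepsilon^2\Delta u_\varepsilon+\alpha u_\varepsilon\le 0$. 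The same holds for $v_\varepsilon$.

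\textbf{Step 2: comparison with a barrier.} Take the supersolution
\[
\Phi(x)=C\sum_{l=1}^k e^{-\sqrt{\alpha}|x-\xi_{\varepsilon,l}|/\varepsilon}.
\]
Each summand satisfies
\[
-\varepsilon^2\Delta\phi+\alpha\phi=\frac{\sqrt{\alpha}\,\varepsilon(N-1)}{|x-\xi|}\,\phi\ge0
\]
away from its center. Choosing $C=\tau e^{\sqrt{\alpha}R}$ gives $\Phi\ge\tau\ge u_\varepsilon$ on $\partial\Omega_\varepsilon$. Both $u_\varepsilon$ and $\Phi$ vanish at infinity, since $u_\varepsilon\in H^1$ solves an elliptic equation and hence decays. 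The maximum principle for $-\varepsilon^2\Delta+\alpha$ on the unbounded domain $\Omega_\varepsilon$ then gives $u_\varepsilon\le\Phi$ there. Inside the balls, $u_\varepsilon$ is uniformly bounded: by the decomposition \eqref{uv}, boundedness of $w,W_l$, and the $L^\infty$ smallness of $\varphi_\varepsilon$ proved in Lemma \ref{lem32}. Meanwhile $\Phi\ge Ce^{-\sqrt{\alpha}R}$ there, so after enlarging $C$, \eqref{lem11} holds on all of $\mathbb{R}^N$.

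\textbf{Step 3: gradient bound.} Take $x_0\in\partial B_\delta(\xi_{\varepsilon,l})$ and rescale $\tilde u(y)=u_\varepsilon(x_0+\varepsilon y)$ on $B_1(0)$. Then
\[
-\Delta\tilde u=\big(\mu_1\tilde u^2+\beta\tilde v^2-1-\varepsilon^2P\big)\tilde u
\]
with bounded coefficients. Standard $L^p$/Schauder interior estimates give
\[
|\nabla\tilde u(0)|\le C\sup_{B_1}(|\tilde u|+|\tilde v|).
\]
For small $\delta$ and $\varepsilon$, the points of $B_\varepsilon(x_0)$ are at distance at least $\delta/2$ from every $\xi_{\varepsilon,m}$, because the $\xi_{\varepsilon,m}\to\xi_m$ are distinct. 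By \eqref{lem11} the supremum is therefore at most $Ce^{-\sqrt{\alpha}\delta/(2\varepsilon)}$. Hence
\[
|\nabla u_\varepsilon(x_0)|=\varepsilon^{-1}|\nabla\tilde u(0)|\le C\varepsilon^{-1}e^{-\sqrt{\alpha}\delta/(2\varepsilon)}\le Ce^{-\sqrt{\alpha}/(4\varepsilon)},
\]
where the $\varepsilon^{-1}$ is absorbed by the slack in the exponent. This uses $\delta\ge 1/2$ in the normalization of the statement; otherwise one rewrites the exponent with $\delta$, or works in units where $\delta$ is fixed, so the constant $1/4$ should be read as $\delta/4$ up to normalization.

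\textbf{Main obstacle.} The delicate point is Step 1–2: making the coefficient $1-\varepsilon^2P-\mu_1u^2-\beta v^2\ge\alpha$ uniform on the unbounded exterior region. This needs the uniform smallness from Lemma \ref{lem32}(i), which relies on the Moser estimate being uniform in the center of $B_1$, together with control of $\varepsilon^2P$ at infinity. I would handle the latter by assuming $P,Q$ bounded below, which is implicit in the norm, and let $\varepsilon\to0$.
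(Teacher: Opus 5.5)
Your proposal is correct and is essentially the paper's argument: the coefficient is $\ge\alpha$ outside $\bigcup_{l}B_{R\varepsilon}(\xi_{\varepsilon,l})$ by Lemma~\ref{lem32}(i), you compare with the barrier $\sum_l e^{-\sqrt{\alpha}|x-\xi_{\varepsilon,l}|/\varepsilon}$, and you finish with local $L^p$/Sobolev estimates for the gradient; the differences are inessential (the paper gets its uniform bound $M$ by Moser iteration from Lemma~\ref{lem32}(ii) rather than from the ansatz, and does the $W^{2,p}$ estimate at scale $\delta$ rather than $\varepsilon$). Your reading of the gradient exponent is right, since the paper's own proof also only yields $e^{-\sqrt{\alpha}\delta/(4\varepsilon)}$; just correct the sign in your Step 1 display to $1+\varepsilon^2P$, for which only a lower bound on $P,Q$ is needed (the paper uses this tacitly as well).
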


\begin{proof}
Write
\begin{equation*}
  \left\{\begin{array}{ll}
		-\varepsilon ^2 \triangle u_{\varepsilon}+\left(\big( \varepsilon ^2P\left( x \right) +1 \big) -\mu_1  u_{\varepsilon}^2-\beta v_{\varepsilon}^2\right)u_{\varepsilon}=0,\,\,\,\,x\in \mathbb{R } ^ { N },\vspace{0.12cm}\\
		-\varepsilon ^2 \triangle v_{\varepsilon}+\left(\big( \varepsilon ^2Q\left( x \right) +1 \big) -\mu_2  v_{\varepsilon}^2-\beta u_{\varepsilon}^2\right)v_{\varepsilon}=0,\,\,\,\,x\in\mathbb{R } ^ { N }.\\
	\end{array} \right.
\end{equation*}
According to the condition $(H_1)$ and conclusion $(i)$ in Lemma \ref{lem32}, for any $\alpha\in(0,1)$, there exists $R>0$ such that
\begin{equation*}
\left( \varepsilon ^2 P\left( x \right) +1 \right) -\mu_1  u_{\varepsilon}^2-\beta v_{\varepsilon}^2\geq\alpha,\ \ \left( \varepsilon ^2Q\left( x \right) +1 \right) -\mu_2  v_{\varepsilon}^2-\beta u_{\varepsilon}^2\geq\alpha,
\end{equation*}
for $ x \in \mathbb{R}^{N} \backslash \cup _ { l = 1 } ^ { k } B _ { R\varepsilon } ( \xi _ { \varepsilon , l } ) $. Therefore, we have
\begin{equation}
  \left\{\begin{array}{ll}
		-\varepsilon ^2 \triangle u_{\varepsilon}+\alpha u_{\varepsilon}\leq0,\,\, \text{in} \,\, \mathbb{R } ^ { N } \backslash \cup _ { j = 1 } ^ { k } B _ { R\varepsilon } ( \xi _ { \varepsilon , l } ),\\
		-\varepsilon ^2 \triangle v_{\varepsilon}+\alpha v_{\varepsilon}\leq0,\,\, \text{in} \,\, \mathbb{R } ^ { N } \backslash \cup _ { j = 1 } ^ { k } B _ { R\varepsilon } ( \xi _ { \varepsilon , l } ).\\
	\end{array} \right.
\label{<=}
\end{equation}
Letting
\begin{equation*}
L_\varepsilon \vartheta =-\varepsilon ^2 \triangle \vartheta+\alpha \vartheta,
\end{equation*}
then it follows that
\begin{align}\label{L}
L _ { \varepsilon } e ^ { - \frac { \sqrt { \alpha } r } { \varepsilon } } &= - \varepsilon ^ { 2 } \left( \frac { \alpha } { \varepsilon ^ { 2 } } - \frac { ( N - 1 ) } { r } \frac { \sqrt { \alpha } } { \varepsilon } \right) e ^ { - \frac { \sqrt { \alpha } r } { \varepsilon } } + \alpha e ^ { - \frac { \sqrt { \alpha } r } { \varepsilon } } \notag \\
&=\varepsilon\frac{N-1}{r}\sqrt{\alpha}e ^ { - \frac { \sqrt { \alpha } r } { \varepsilon } }> 0 ,
\end{align}
where $r=|x-\xi_{\varepsilon,l}|$. Set
\begin{align*}
\tilde{u}_\varepsilon (x)= u_\varepsilon(\varepsilon x), \ \ \ \tilde{v}_\varepsilon (x)= v_\varepsilon(\varepsilon x).
 \end{align*}
Subsequently, based on the Conclusion $(ii)$ in Lemma \ref{lem32}, it can be deduced that
\begin{align*}
\int_{\mathbb{R}^N} (|\nabla \tilde{u}_\varepsilon|^2 + \tilde{u}_\varepsilon^2)\leq C,\ \ \ \int_{\mathbb{R}^N} (|\nabla \tilde{v}_\varepsilon|^2 + \tilde{v}_\varepsilon^2)\leq C.
 \end{align*}
Moreover, $\tilde{u}_\varepsilon$,  $\tilde{v}_\varepsilon$ satisfies
\begin{equation*}
\begin{cases}
-\triangle \tilde{u}_\varepsilon + \left[(\varepsilon^2 P(\varepsilon x)+1)-\mu_1 \tilde{u}_\varepsilon^2 -\beta  \tilde{v}_\varepsilon^2\right]\tilde{u}_\varepsilon=0,\,\,\,\,x\in \mathbb{R } ^ { N },\vspace{0.12cm}\\
-\triangle \tilde{v}_\varepsilon + \left[(\varepsilon^2 Q(\varepsilon x)+1)-\mu_2 \tilde{v}_\varepsilon^2 -\beta  \tilde{u}_\varepsilon^2\right]\tilde{v}_\varepsilon=0\,\,\,\,\,x\in \mathbb{R } ^ { N }.
\end{cases}
\end{equation*}
 Let
 \begin{align*}
\tilde{f}_\varepsilon:=(\varepsilon^2 P(\varepsilon x)+1)-\mu_1 \tilde{u}_\varepsilon^2 -\beta  \tilde{v}_\varepsilon^2,\\
\tilde{g}_\varepsilon:=(\varepsilon^2 Q(\varepsilon x)+1)-\mu_2 \tilde{v}_\varepsilon^2 -\beta  \tilde{u}_\varepsilon^2.
 \end{align*}
By Sobolev embedding theorem, we know that
\begin{align*}
||\tilde{u}_\varepsilon||_{L^p(\mathbb{R}^N)}\leq C||\tilde{u}_\varepsilon||_{H^1(\mathbb{R}^N)}\leq C, \ \ \ ||\tilde{v}_\varepsilon||_{L^p(\mathbb{R}^N)}\leq C||\tilde{v}_\varepsilon||_{H^1(\mathbb{R}^N)}\leq C, \ \ \text{for}\ \ 2\leq p\leq 2^*.
 \end{align*}
Therefore, through H\"older inequality, we have
 \begin{align*}
&\int_{B_1} \left((\varepsilon^2 P(\varepsilon x)+1)-\mu_1 \tilde{u}_\varepsilon^2 -\beta  \tilde{v}_\varepsilon^2\right)^2\\
\leq&C \int_{B_1} \left(\mu_1^2 \tilde{u}_\varepsilon^4+ |\beta|^2  \tilde{v}_\varepsilon^4+  \mu_1 \tilde{u}_\varepsilon^2 + |\beta|  \tilde{v}_\varepsilon^2 +\mu_1  |\beta| \tilde{u}_\varepsilon^2  \tilde{v}_\varepsilon^2 \right)+C\\
\leq&C \left(\int_{B_1} \left(\mu_1^2 \tilde{u}_\varepsilon^4+ |\beta|^2  \tilde{v}_\varepsilon^4+  \mu_1 \tilde{u}_\varepsilon^2 + |\beta|  \tilde{v}_\varepsilon^2\right)+\mu_1  |\beta|\left(\int_{B_1} \tilde{u}_\varepsilon^4\right)^{\frac12}\left(\int_{B_1} \tilde{v}_\varepsilon^4\right)^{\frac12}\right)+C\leq C,
 \end{align*}
where $B_1$ is an arbitrary unit ball in $\mathbb{R}^N$. This means that $||\tilde{f}_\varepsilon||_{L^2(B_1)}\leq C$ and similarly, $||\tilde{g}_\varepsilon||_{L^2(B_1)}\leq C$. Using Moser iteration argument, we get $|\tilde{u}_\varepsilon|\leq C$, $|\tilde{v}_\varepsilon|\leq C$ for some constant $C$ independent of $\varepsilon$. Therefore, we can see that $|u_\varepsilon|$, $|v_\varepsilon|\leq M$ for some $M>0$. Let
\begin{align*}
 p _ { \varepsilon } ( x ) = M e ^ { \sqrt { \alpha } R } \sum _ { j = 1 } ^ { k } e ^ { - \frac { \sqrt { \alpha } | x - \xi _ { \varepsilon, l } | } { \varepsilon } } - u _ { \varepsilon } ( x ) ,\ \ \ \ q _ { \varepsilon } ( x ) = M e ^ { \sqrt { \alpha } R } \sum _ { j = 1 } ^ { k } e ^ { - \frac { \sqrt { \alpha } | x - \xi _ { \varepsilon, l } | } { \varepsilon } } - v_ { \varepsilon } ( x ).
 \end{align*}
Combining with \eqref{<=} and \eqref{L}, we have
\begin{align*}
L_\varepsilon p_\varepsilon\geq 0,\ \ \ \ L_\varepsilon q_\varepsilon\geq 0,\ \ \ \ \text{in}\ \ \mathbb{R } ^ { N } \backslash \cup _ { l = 1 } ^ { k } B _ { R\varepsilon } ( \xi _ { \varepsilon , l} ) .
 \end{align*}
Particularly, on $ \partial B _ {R\varepsilon   } ( \xi _ { \varepsilon , l } )$ , we conclude that
\begin{align*}
p _ { \varepsilon } \geq M - \max_{x\in\mathbb{R}^N} u _ { \varepsilon } ( x ) \geq 0 ,\ \ \ \ q _ { \varepsilon } \geq M - \max_{x\in\mathbb{R}^N} v _ { \varepsilon } ( x ) \geq 0.
\end{align*}
Hence, by the comparison theorem, we get
\begin{align*}
p _ { \varepsilon } \geq 0,\ \ \ \ q _ { \varepsilon }\geq 0,\ \ \ \ \text{in}\ \ \mathbb{R } ^ { N } \backslash \cup _ { l = 1 } ^ { k } B _ { R\varepsilon} ( \xi _ { \varepsilon , l } ) ,
\end{align*}
while in $ B _ { R\varepsilon } ( \xi _ { \varepsilon , l } )$, we have the estimate
\begin{align*}
 M e ^ { \sqrt { \alpha } R } \sum _ { j = 1 } ^ { k } e ^ { - \frac { \sqrt { \alpha } | x - \xi _ { \varepsilon, l } | } { \varepsilon } } \geq M\geq u_\varepsilon(x),\ \ \ \ M e ^ { \sqrt { \alpha } R } \sum _ { j = 1 } ^ { k } e ^ { - \frac { \sqrt { \alpha } | x - \xi _ { \varepsilon, l } | } { \varepsilon } } \geq M\geq v_\varepsilon(x).
\end{align*}
This completes the proof of \eqref{lem11}.

 On the other hand, from $L^p$ estimate that whenever $z\in\partial B_{\delta}(\xi_{\varepsilon,l})$, we have
\begin{align*}
|| u_\varepsilon ||_{ W ^ { 2,p } ( B_{\frac{1}{4}\delta}(z) )}
\leq& C  || u_\varepsilon ||_{ L ^ { p } ( B_{\frac{1}{2}\delta}(z) )} + \frac{C}{\varepsilon^2}|| (\varepsilon^2 P(x)+1)u_\varepsilon-\mu_1 u_\varepsilon^3 -\beta u_\varepsilon v_\varepsilon^2 || _ { L ^ { p } ( B_{\frac{1}{2}\delta}(z) ) } \\
 \leq& \frac{C}{\varepsilon^2} e ^ { - \frac { \sqrt { \alpha } \delta } { 2 \varepsilon } }\leq e ^ { - \frac { \sqrt { \alpha } \delta } { 4 \varepsilon } }
\end{align*}
and
\begin{align*}
|| v_\varepsilon ||_{ W ^ { 2,p }  ( B_{\frac{1}{4}\delta}(z) )} \leq& C  || v_\varepsilon ||_{ L ^ { p } ( B_{\frac{1}{2}\delta}(z) )} + \frac{C}{\varepsilon^2}|| (\varepsilon^2 P(x)+1)v_\varepsilon-\mu_1 v_\varepsilon^3 -\beta v_\varepsilon u_\varepsilon^2 || _ { L ^ { p } ( B_{\frac{1}{2}\delta}(z) ) } \\
 \leq& \frac{C}{\varepsilon^2} e ^ { - \frac { \sqrt { \alpha } \delta } { 2 \varepsilon } }\leq e ^ { - \frac { \sqrt { \alpha } \delta } { 4 \varepsilon } }.
\end{align*}
 Taking $p>N$, \eqref{lem12} then follows from the above equations, and the Sobolev embedding theorem.
\end{proof}

Similarly, we can obtain some estimates of $(\varphi_{\varepsilon},\psi_{\varepsilon})$ in \eqref{uv}.

\begin{lemma}\label{lem34}
For $(\varphi_{\varepsilon},\psi_{\varepsilon})$ in \eqref{uv}, there exist constants $C>0$ large and $\tau>0$ small, such that
$$|\varphi_{\varepsilon}|\leq C\sum_{l=1}^k e^{-\frac{\tau|x-\xi_{\varepsilon,l}|}{\varepsilon}},\ \ \ \ |\psi_{\varepsilon}|\leq C\sum_{l=1}^k e^{-\frac{\tau|x-\xi_{\varepsilon,l}|}{\varepsilon}}, \ \ \ \ \forall x\in \mathbb{R}^N$$
and
$$|\nabla\varphi_{\varepsilon}|\leq C\sum_{l=1}^k e^{-\frac{\tau}{\varepsilon}},\ \ \ \ |\nabla\psi_{\varepsilon}|\leq C\sum_{l=1}^k e^{-\frac{\tau}{\varepsilon}}, \ \ \ \ \forall x\in \partial B_{\delta}(\xi_{\varepsilon,l}),\ \ l=1,\cdots,k. $$

\end{lemma}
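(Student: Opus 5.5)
The plan is to reduce everything to the decay estimates already proved for $u_\varepsilon,v_\varepsilon$ in Lemma \ref{lem33} and for $W_l^*,W_l^\star$ in Lemma \ref{le3.1.3}. By \eqref{uv},
\[
\varphi_\varepsilon=u_\varepsilon-\sum_{l=1}^k\big(w^*_{\varepsilon,\xi_{\varepsilon,l}}+\varepsilon^4W^*_{\varepsilon,\xi_{\varepsilon,l}}\big),\qquad
\psi_\varepsilon=v_\varepsilon-\sum_{l=1}^k\big(w^\star_{\varepsilon,\xi_{\varepsilon,l}}+\varepsilon^4W^\star_{\varepsilon,\xi_{\varepsilon,l}}\big).
\]
So $|\varphi_\varepsilon|\le |u_\varepsilon|+\sum_l\big(|w^*_{\varepsilon,\xi_{\varepsilon,l}}|+\varepsilon^4|W^*_{\varepsilon,\xi_{\varepsilon,l}}|\big)$, and similarly for $\psi_\varepsilon$. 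Choose any $\alpha\in(0,1)$ in \eqref{lem11} and a small $\tau_0$ in Lemma \ref{le3.1.3}. The asymptotics of $w$ recalled after \eqref{eqw} give $w(y)\le Ce^{-|y|}$. Combining these, each term is bounded by $C\sum_l e^{-\tau|x-\xi_{\varepsilon,l}|/\varepsilon}$ with $\tau:=\min\{\sqrt\alpha,1-\tau_0\}$. This gives the pointwise bound directly; no new maximum-principle argument is needed beyond the one already done in Lemma \ref{lem33}.

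For the gradient bound, fix $x\in\partial B_\delta(\xi_{\varepsilon,l})$. Since the $\xi_{\varepsilon,l}$ stay near the distinct points $\xi_l$ and $\delta$ is small, we have $|x-\xi_{\varepsilon,m}|\ge\delta$ for every $m$. We then use
\[
|\nabla\varphi_\varepsilon|\le|\nabla u_\varepsilon|+\sum_m\Big(\varepsilon^{-1}\big|\nabla w^*\big(\tfrac{x-\xi_{\varepsilon,m}}{\varepsilon}\big)\big|+\varepsilon^{3}\big|\nabla W^*_m\big(\tfrac{x-\xi_{\varepsilon,m}}{\varepsilon}\big)\big|\Big).
\]
The first term is bounded by $Ce^{-\sqrt\alpha\delta/(4\varepsilon)}$ by \eqref{lem12}; as written there the bound reads $e^{-\sqrt\alpha/(4\varepsilon)}$, and the $\delta$-dependence is absorbed into $\tau$. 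For $w$, the relation $w/w'\to-1$ gives $|\nabla w(y)|\le Ce^{-|y|}$, so the second term is at most $C\varepsilon^{-1}e^{-\delta/\varepsilon}$.

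The only genuinely new ingredient is a gradient decay for $W_l^*,W_l^\star$, since Lemma \ref{le3.1.3} bounds only the functions themselves. I expect this to be the main obstacle. To handle it, I would apply interior $W^{2,p}$ (or Schauder) estimates to the linear system \eqref{eq2} on unit balls $B_1(y)$. The right-hand side $\sum_i p_{li}y_i^2w^*$ and the coefficients $(w^*)^2,w^*w^\star$ decay like $|y|^2e^{-|y|}$, and $W_l$ itself decays by Lemma \ref{le3.1.3}. Taking $p>N$ and using Sobolev embedding then yields $|\nabla W_l^*(y)|+|\nabla W_l^\star(y)|\le Ce^{-(1-2\tau_0)|y|}$. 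Hence the third term is $O(\varepsilon^3e^{-(1-2\tau_0)\delta/\varepsilon})$.

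Finally, we shrink $\tau$ so that $\tau<\min\{\sqrt\alpha\delta/4,(1-2\tau_0)\delta/2\}$. This lets the polynomial factors $\varepsilon^{-1}$ and $\varepsilon^3$ be absorbed, because $\varepsilon^{-1}e^{-c/\varepsilon}\le Ce^{-c'/\varepsilon}$ for $c'<c$. This gives $|\nabla\varphi_\varepsilon|,|\nabla\psi_\varepsilon|\le Ce^{-\tau/\varepsilon}$ on $\partial B_\delta(\xi_{\varepsilon,l})$, and the same $\tau$ also serves in the first estimate since it only got smaller.
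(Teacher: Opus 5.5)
Your proof is correct, but it is not quite the route the paper indicates. The paper gives no separate proof. Its ``Similarly'' points to rerunning the argument of Lemma \ref{lem33} on the equation \eqref{-phi} satisfied by $\varphi_\varepsilon$: a comparison-principle argument outside $\cup_l B_{R\varepsilon}(\xi_{\varepsilon,l})$, followed by $L^p$ estimates on balls centred on $\partial B_\delta(\xi_{\varepsilon,l})$.

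You never touch that equation. You write $\varphi_\varepsilon=u_\varepsilon-U_\varepsilon$ and combine the decay of $u_\varepsilon$ already proved in Lemma \ref{lem33} with the explicit decay of $w$ and of $W_l$ from Lemma \ref{le3.1.3}. The only new input is a $C^1$ decay bound for $W_l^*,W_l^\star$, which you correctly get from interior $W^{2,p}$ estimates for \eqref{eq2} plus Sobolev embedding. You also correctly avoid Lemma \ref{lem35}, which itself uses the present lemma, by relying only on $\xi_{\varepsilon,l}\in B_\delta(\xi_l)$.

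What your route buys is a cleaner argument. A direct comparison on \eqref{-phi} has to cope with $\varphi_\varepsilon$ having no sign, and with the inhomogeneous term $Z_\varepsilon$ and the coupling terms. You trade all of that for a routine estimate on the fixed profiles $W_l$. What you give up is smallness in $\varepsilon$ of the constant $C$, which a direct comparison could inherit from $\|\varphi_\varepsilon\|_{L^\infty}=o(1)$. The lemma does not claim this, and its later uses only exploit the exponential decay, so your version suffices.

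One small point to tidy. As stated, \eqref{lem11} bounds $u_\varepsilon$ only from above, but you use it for $|u_\varepsilon|$. Either note that the constructed solution is nonnegative, which the proof of Lemma \ref{lem33} tacitly assumes anyway when it writes $-\varepsilon^2\Delta u_\varepsilon+\alpha u_\varepsilon\le 0$. Or observe that the same comparison argument applies to $|u_\varepsilon|$ via Kato's inequality.
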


To establish a local uniqueness result, we need to derive an estimate for $|\xi_{\varepsilon,l}-\xi_l|$.

\begin{lemma}\label{lem35}
If $(u_{\varepsilon},v_{\varepsilon}) $ is a solution to \eqref{eq1}, then
\begin{equation}\label{xx}
|\xi_{\varepsilon,l}-\xi_l| = O(\varepsilon^2).
\end{equation}
\end{lemma}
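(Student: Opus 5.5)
We will apply the local Pohozaev identity \eqref{p4} on $\Omega=B_\delta(\xi_{\varepsilon,l})$ for each fixed $l$ and each $j=1,\dots,N$. First we dispose of the boundary terms. By Lemma \ref{lem33}, $u_\varepsilon,v_\varepsilon$ and their gradients are bounded by $Ce^{-\sqrt{\alpha}\delta/(4\varepsilon)}$ on $\partial B_\delta(\xi_{\varepsilon,l})$; here we use that the other peaks $\xi_{\varepsilon,m}$, $m\ne l$, stay a fixed distance away from this sphere, since the $\xi_m$ are distinct and $\delta$ is small. Hence every boundary integral in \eqref{p4} is $O(e^{-\theta/\varepsilon})$ for some $\theta>0$, and we obtain
\begin{equation*}
\int_{B_\delta(\xi_{\varepsilon,l})}\Big(\frac{\partial P}{\partial x_j}u_\varepsilon^2+\frac{\partial Q}{\partial x_j}v_\varepsilon^2\Big)=O\big(e^{-\theta/\varepsilon}\big).
\end{equation*}

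Next we expand the left-hand side. Write $x=\xi_{\varepsilon,l}+\varepsilon y$ and Taylor expand around $\xi_l$. By $(H_1)$ and $\nabla P(\xi_l)=\nabla Q(\xi_l)=0$,
\[
\partial_jP(x)=\sum_m \partial^2_{jm}P(\xi_l)\,(\xi_{\varepsilon,l}-\xi_l+\varepsilon y)_m+O\big(|\xi_{\varepsilon,l}-\xi_l|^2+\varepsilon^2|y|^2\big),
\]
and similarly for $Q$. We substitute $u_\varepsilon=w^*_{\varepsilon,\xi_{\varepsilon,l}}+\varepsilon^4W^*_{\varepsilon,\xi_{\varepsilon,l}}+\varphi_\varepsilon$ (the other peaks are exponentially small in the ball) and likewise for $v_\varepsilon$. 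The leading contribution is $\varepsilon^N\int_{\mathbb{R}^N} w^2$ multiplied by the vector
\[
\big(\sigma_1^2D^2P(\xi_l)+\sigma_2^2D^2Q(\xi_l)\big)(\xi_{\varepsilon,l}-\xi_l).
\]
The terms linear in $\varepsilon y$ vanish against the radial profile $w^2$ by oddness. The cross terms with $\varepsilon^4 W^*$ give only $O(\varepsilon^{N+4})\cdot O(|\xi_{\varepsilon,l}-\xi_l|+\varepsilon)$. For the error $\varphi_\varepsilon$, Cauchy–Schwarz together with Theorem \ref{pro1} gives
\[
\int|\nabla P|\,|w^*_\varepsilon\varphi_\varepsilon|\le C\big(|\xi_{\varepsilon,l}-\xi_l|+\varepsilon\big)\varepsilon^{N/2}\|\varphi_\varepsilon\|_{L^2}=O\big(\varepsilon^{N+5}(|\xi_{\varepsilon,l}-\xi_l|+\varepsilon)\big).
\]
The quadratic remainders are $O(\varepsilon^N(|\xi_{\varepsilon,l}-\xi_l|^2+\varepsilon^2))$.

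Since $\sigma_1^2=(\beta-\mu_2)/(\beta^2-\mu_1\mu_2)$ and $\sigma_2^2=(\beta-\mu_1)/(\beta^2-\mu_1\mu_2)$, the matrix $\sigma_1^2D^2P(\xi_l)+\sigma_2^2D^2Q(\xi_l)$ is a nonzero multiple of the matrix in $(H_2)$, and is therefore invertible. Dividing by $\varepsilon^N$ gives
\[
|\xi_{\varepsilon,l}-\xi_l|\le C\big(|\xi_{\varepsilon,l}-\xi_l|^2+\varepsilon^2\big).
\]
Because $\xi_{\varepsilon,l}\to\xi_l$, the quadratic term is absorbed into the left-hand side, which yields \eqref{xx}.

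The main obstacle is bookkeeping the quadratic remainder precisely. The $O(\varepsilon^2|y|^2)$ term from the third-order Taylor expansion of $\partial_jP$ must be controlled. Using only $C^2$ regularity, $\partial_jP(x)-\sum_m\partial^2_{jm}P(\xi_l)(x-\xi_l)_m=o(|x-\xi_l|)$, which would yield only $o(\varepsilon)$. To get $O(\varepsilon^2)$, we use the stated expansion $P=\sum_i p_{li}(x_i-\xi_{l,i})^2+O(|x-\xi_l|^3)$, understood at the derivative level so that $\partial_jP=2p_{lj}(x_j-\xi_{l,j})+O(|x-\xi_l|^2)$. The $O(|x-\xi_l|^2)$ remainder then integrates against $w^2(y)$ to give $O(\varepsilon^N(|\xi_{\varepsilon,l}-\xi_l|^2+\varepsilon^2))$ thanks to the exponential decay of $w$.
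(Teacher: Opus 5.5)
Your proposal is correct and follows the paper's argument. The shared steps are: the local Pohozaev identity on $B_\delta(\xi_{\varepsilon,l})$ with exponentially small boundary terms; a Taylor expansion of $\nabla P,\nabla Q$ in which the linear-in-$y$ term vanishes by oddness of $w^2$; Cauchy--Schwarz with $\|\varphi_\varepsilon\|=O(\varepsilon^{5+N/2})$ to control the $\varphi_\varepsilon$ cross terms; and invertibility of $(\beta-\mu_2)D^2P(\xi_l)+(\beta-\mu_1)D^2Q(\xi_l)$ from $(H_2)$.

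The only difference is cosmetic. You expand about $\xi_l$ and absorb the quadratic term, which needs $\xi_{\varepsilon,l}$ close to $\xi_l$ ($\delta$ small), just as the paper's final appeal to $(H_2)$ does. The paper instead expands about $\xi_{\varepsilon,l}$, obtains $(\beta-\mu_2)\nabla P(\xi_{\varepsilon,l})+(\beta-\mu_1)\nabla Q(\xi_{\varepsilon,l})=O(\varepsilon^2)$, and then inverts. Both versions tacitly use a quadratic remainder in the gradient expansion, which, as you correctly flag, goes beyond $C^2$.
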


\begin{proof}
On the one hand , by Lemma \ref{lem33}, we know that there exists $\theta>0$, such that
$$ | u _ { \varepsilon } ( x ) | + | \nabla u _ { \varepsilon } ( x ) | \leq C e ^ { - \frac { \theta } { \varepsilon } } ,\ \ \  \forall x \in \partial B _ { \delta } ( \xi _ { \varepsilon , l } ) ,\ \ l = 1 , \cdots , k .$$
Therefore, using the local Pohozaev identity \eqref{p4}, we have
\begin{align}\label{353}
\int _ { B_{\delta}(\xi_{\varepsilon,l}) } \left(\frac { \partial P(x) } { \partial x _ { j } } u _ {\varepsilon } ^ { 2 }+\frac { \partial Q(x) } { \partial x _ { j } } v _ {\varepsilon } ^ { 2 }\right)=O\left(\varepsilon^{-2} e ^ { - \frac { \tau } { \varepsilon } }\right),
\end{align}
where $\tau>0$ is a small constant.

On the other hand, according to Lemma \ref{lem34}, we can see that
\begin{align}\label{354}
 &\int _ { B_{\delta}(\xi_{\varepsilon,l}) } \left(\frac { \partial P(x) } { \partial x _ { j } }-\frac { \partial P(\xi_{\varepsilon,l}) } { \partial x _ { j } }\right) u _ {\varepsilon } ^ { 2 }\notag\\
=&\int_{B_\delta(\xi_{\varepsilon,l})} \sum_{i=1}^N \frac{\partial^2 P(\xi_{\varepsilon,l})}{\partial x_i \partial x_j} (x_i - (\xi_{\varepsilon,l})_i) u_\varepsilon^2  + O\left( \int_{B_\delta(\xi_{\varepsilon,l})} |x - \xi_{\varepsilon,l}|^2 u_\varepsilon^2  \right)\notag\\
=&\int_{B_\delta(\xi_{\varepsilon,l})} \sum_{i=1}^N \frac{\partial^2 P(\xi_{\varepsilon,l})}{\partial x_i \partial x_j} (x_i - (\xi_{\varepsilon,l})_i) (U_{\varepsilon,l}^2+2U_{\varepsilon,l}\varphi_{\varepsilon}+\varphi_{\varepsilon}^2) +O(\varepsilon^{N+2})\notag\\
&+O\Big(\int_{B_\delta(\xi_{\varepsilon,l})} |x - \xi_{\varepsilon,l}| \Big( \sum _ { h \neq l }^k U _ { \varepsilon ,  h  } ^ { 2 } + 2 \sum _ { h \neq l}^k U _ { \varepsilon , h } U _ { \varepsilon ,  l  } + \sum _ { h \neq l }^k U _ { \varepsilon ,h  } \varphi _ { \varepsilon } \Big)\Big)\notag\\
=&\int_{B_\delta(\xi_{\varepsilon,l})} \sum_{i=1}^N \frac{\partial^2 P(\xi_{\varepsilon,l}) }{\partial x_i \partial x_j}(x_i - (\xi_{\varepsilon,l})_i) (U_{\varepsilon,l}^2+2U_{\varepsilon,l}\varphi_{\varepsilon}+\varphi_{\varepsilon}^2) +O(\varepsilon^{N+2}).
\end{align}
Since $U_{\varepsilon,l}$ is an even function with respect to $\xi_{\varepsilon,l}$, we have
\begin{align}\label{355}
\int_{B_\delta(\xi_{\varepsilon,l})} \sum_{i=1}^N \frac{\partial^2 P}{\partial x_i \partial x_j}(\xi_{\varepsilon,l}) (x_i - (\xi_{\varepsilon,l})_i) U_{\varepsilon,l}^2=0.
\end{align}
By H\"older inequality, we have
\begin{align}\label{356}
&\left|\int_{B_\delta(\xi_{\varepsilon,l})} \sum_{i=1}^N \frac{\partial^2 P}{\partial x_i \partial x_j}(\xi_{\varepsilon,l}) (x_i - (\xi_{\varepsilon,l})_i) U_{\varepsilon,l}\varphi_{\varepsilon} \right|\notag\\
\leq&C\left(\int_{B_\delta(\xi_{\varepsilon,l})} \left(\sum_{i=1}^N \frac{\partial^2 P}{\partial x_i \partial x_j}(\xi_{\varepsilon,l}) (x_i - (\xi_{\varepsilon,l})_i)\right)^2 U_{\varepsilon,l}^2\right)^{\frac{1}{2}}||\varphi_{\varepsilon}||_{L^2(\mathbb{R}^N)}= O\big(\varepsilon^{N+6}\big)
\end{align}
and similarly,
\begin{align}\label{357}
&\left|\int_{B_\delta(\xi_{\varepsilon,l})} \sum_{i=1}^N \frac{\partial^2 P}{\partial x_i \partial x_j}(\xi_{\varepsilon,l}) (x_i - (\xi_{\varepsilon,l})_i) \varphi_{\varepsilon}^2 \right|= O\big(\varepsilon^{N+10}\big).
\end{align}
Combining \eqref{354}-\eqref{357}, we get
\begin{align}\label{358}
 \int _ { B_{\delta}(\xi_{\varepsilon,l}) } \left(\frac { \partial P(x) } { \partial x _ { j } }-\frac { \partial P(\xi_{\varepsilon,l}) } { \partial x _ { j } }\right) u _ {\varepsilon } ^ { 2 }=O\big(\varepsilon^{N+2}\big).
\end{align}
Similar to \eqref{358}, we can also deduce that
\begin{align}\label{359}
 \int _ { B_{\delta}(\xi_{\varepsilon,l}) } \left(\frac { \partial Q(x) } { \partial x _ { j } }-\frac { \partial Q(\xi_{\varepsilon,l}) } { \partial x _ { j } }\right) v _ {\varepsilon } ^ { 2 }=O\big(\varepsilon^{N+2}\big).
\end{align}
By combining \eqref{353}, \eqref{358} and \eqref{359}, we can obtain
\begin{align*}
\int _ { B_{\delta}(\xi_{\varepsilon,l}) } \left(\frac { \partial P(\xi_{\varepsilon,l}) } { \partial x _ { j } } u _ {\varepsilon } ^ { 2 }+ \frac { \partial Q(\xi_{\varepsilon,l}) } { \partial x _ { j } } v _ {\varepsilon } ^ { 2 }\right)=O\big(\varepsilon^{N+2}\big).
\end{align*}
Substituting the expression of \( (u_\varepsilon, v_\varepsilon) \), we obtain
\begin{align*}
\int _ { B_{\delta}(\xi_{\varepsilon,l}) } \left(\frac { \partial P(\xi_{\varepsilon,l}) } { \partial x _ { j } } (w_{\varepsilon, \xi_{\varepsilon, l}}^{*})^2 + \frac { \partial Q(\xi_{\varepsilon,l}) } { \partial x _ { j } }( w_{\varepsilon, \xi_{\varepsilon, l}}^{\star})^2 \right)=O\big(\varepsilon^{N+2}\big),
\end{align*}
which means that
\begin{align*}
 \int _ { B_{\delta \varepsilon^{-1}}(0) }\left(\frac{\beta-\mu_2} { \beta ^ { 2 } - \mu_1 \mu_2 } \frac { \partial P(\xi_{\varepsilon,l}) } { \partial x _ { j } } + \frac{\beta-\mu_1} { \beta ^ { 2 } - \mu_1 \mu_2 } \frac { \partial Q(\xi_{\varepsilon,l}) } { \partial x _ { j } }  \right) w^2=O\big(\varepsilon^2\big).
\end{align*}
Therefore, we obtain
\begin{align*}
(\beta-\mu_2) \frac { \partial P(\xi_{\varepsilon,l}) } { \partial x _ { j } } + (\beta-\mu_1) \frac { \partial Q(\xi_{\varepsilon,l}) } { \partial x _ { j } }=O(\varepsilon^2),
\end{align*}
from which it follows that
\begin{equation}\label{26-1}
 (\beta-\mu_2) \nabla P( \xi_{\varepsilon,l} ) +  (\beta-\mu_1) \nabla Q( \xi_{\varepsilon,l} ) = O(\varepsilon^2).
\end{equation}
Based on \eqref{26-1} and combined with condition $(H_2)$, we can obtain
\begin{equation*}
|\xi_{\varepsilon,l}-\xi_l| = O(\varepsilon^2).
\end{equation*}

\end{proof}

In the subsequent uniqueness proof of the normalized solution, the conclusion of Lemma \ref{lem35} is insufficient to support the involved calculation process. Hence, we present a more precise estimate.

\begin{lemma}\label{lem355}
It follows that
\begin{equation}\label{xxx}
\xi_{\varepsilon,l}-\xi_l =C_{l} \varepsilon^2+O(\varepsilon^{5}) \quad \text{for}\quad l=1,2,\cdots,k,
\end{equation}
where each \( C_l \) is a fixed constant vector.
\end{lemma}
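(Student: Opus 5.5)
We plan to sharpen the proof of Lemma \ref{lem35} by carrying the local Pohozaev identity \eqref{p4} on $B_\delta(\xi_{\varepsilon,l})$ to the next orders. Since the boundary terms are exponentially small by Lemmas \ref{lem33} and \ref{lem34}, the identity reads
\[
\int_{B_\delta(\xi_{\varepsilon,l})}\Big(\frac{\partial P}{\partial x_j}u_\varepsilon^2+\frac{\partial Q}{\partial x_j}v_\varepsilon^2\Big)=O\big(e^{-\tau/\varepsilon}\big),\qquad j=1,\dots,N.
\]
We Taylor expand $\nabla P$, $\nabla Q$ around $\xi_l$ rather than around $\xi_{\varepsilon,l}$, using the local forms of $P,Q$. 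Set $x=\xi_{\varepsilon,l}+\varepsilon y$ and $d_l:=\xi_{\varepsilon,l}-\xi_l=O(\varepsilon^2)$. Then $\partial_jP(x)=2p_{lj}(d_{l,j}+\varepsilon y_j)+O(|d_l|^2+\varepsilon^2|y|^2)$, and the $O$-term is further split into its exact cubic part $\frac12\partial^3P(\xi_l)[\cdot,\cdot]$ plus a remainder.

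\textbf{Step 1: substitute the expansion.} Write $u_\varepsilon=\sum_h(w^*_{\varepsilon,\xi_{\varepsilon,h}}+\varepsilon^4W^*_{\varepsilon,\xi_{\varepsilon,h}})+\varphi_\varepsilon$, and similarly for $v_\varepsilon$. Interactions from the other peaks $h\neq l$ are exponentially small. After the change of variables and division by $\varepsilon^N$, the terms are as follows.
\begin{itemize}
\item The term linear in $d_l$ is $2(\sigma_1^2p_{lj}+\sigma_2^2q_{lj})d_{l,j}\int w^2$, plus an $O(\varepsilon^4|d_l|)$ correction coming from $W$.
\item The term $\varepsilon y_j$ against the even part $w^2$ vanishes. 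Its pairing with $2\varepsilon^4 wW_l$ also vanishes, because $W_l^*$ and $W_l^\star$ are even: the right-hand side of \eqref{eq2} is even and the solution is unique.
\item The cubic Taylor term $\varepsilon^2 y_ay_b$ against $w^2$ gives a nonzero constant $A_{l,j}\varepsilon^2$ determined by third derivatives of $P,Q$ at $\xi_l$.
\item The next Taylor order $\varepsilon^3y^3$ is odd, so it vanishes against $w^2$. The surviving terms are $O(\varepsilon^4)$ from fourth-order Taylor terms (here we would want $C^4$, or else we accept a weaker remainder, the constraint being $(H_1)$) and $O(\varepsilon^{6})$ from $W$.
\item Terms containing $\varphi_\varepsilon$ are $O(\varepsilon\cdot\varepsilon^{5})=O(\varepsilon^6)$ by Cauchy--Schwarz, as in \eqref{356}, using $\|\tilde\varphi_\varepsilon\|_{L^2}=O(\varepsilon^5)$.
\end{itemize}

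\textbf{Step 2: solve for $d_l$.} Assemble the $N$ equations into the linear system
\[
2\,\mathrm{diag}\big(\sigma_1^2p_{lj}+\sigma_2^2q_{lj}\big)\,d_l\int w^2=-A_l\varepsilon^2+O(\varepsilon^{5})+O(|d_l|^2).
\]
The matrix is invertible by $(H_2)$, since it is a positive multiple of $(\beta-\mu_2)D^2P(\xi_l)+(\beta-\mu_1)D^2Q(\xi_l)$. Inverting gives $d_l=C_l\varepsilon^2+R_l$, with $C_l$ fixed and $R_l=O(\varepsilon^4)$ at first pass. Lemma \ref{lem35} gives $|d_l|^2=O(\varepsilon^4)$ a priori. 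To upgrade the error to $O(\varepsilon^5)$, substitute $d_l=C_l\varepsilon^2+R_l$ back into the quadratic terms. Those terms produce an explicit $\varepsilon^4$ constant, which must be absorbed into the expansion or shown to vanish by parity.

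\textbf{Main obstacle.} The hardest step is exactly the terms of order $\varepsilon^4$: the quartic Taylor terms, the $|d_l|^2$ terms, and the cross term $d_l\cdot\varepsilon y$ against the $W$-correction. We must show that these either vanish by parity or are only $O(\varepsilon^5)$, which requires precise bookkeeping of which integrands are even and of the regularity of $P,Q$. We would first try to show that every even-order contribution at order $\varepsilon^4$ enters with an odd weight $y_j$ and therefore cancels. If that fails, we would absorb a deterministic $\varepsilon^4$ term into the statement; it is harmless for later use, since only the fact that $C_l$ is independent of the particular solution matters for uniqueness. Finally, we would check that the $\varphi_\varepsilon$ contributions really reach $O(\varepsilon^5)$ after dividing by $\varepsilon^N$, using the bound $\|(\varphi_\varepsilon,\psi_\varepsilon)\|_H=O(\varepsilon^{5+N/2})$ from Theorem \ref{pro1}.
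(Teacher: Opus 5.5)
Your strategy is the paper's: local Pohozaev identity on $B_\delta(\xi_{\varepsilon,l})$, removal of the $(\varphi_\varepsilon,\psi_\varepsilon)$ terms by Cauchy--Schwarz, Taylor expansion of $\nabla P,\nabla Q$ about $\xi_l$, and inversion of the diagonal coefficient $2(\sigma_1^2p_{lj}+\sigma_2^2q_{lj})\int w^2$. That coefficient is a nonzero multiple of the $(H_2)$ matrix, but not necessarily a positive one, since $\beta^2-\mu_1\mu_2<0$ on the lower interval of $\mathcal M$.

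\textbf{The gap.} It is exactly the one you flag, and your first plan for closing it cannot work: the order-$\varepsilon^4$ contributions carry \emph{even} weights, so parity does not remove them. Write $d_l=\xi_{\varepsilon,l}-\xi_l$ and $x=\xi_l+d_l+\varepsilon y$. Up to the factor $\sigma_1^2$ and the analogous $Q$-terms, there are three such contributions.
\begin{enumerate}
\item The third-order Taylor term of $\partial_jP$ contributes $\frac12\sum_{a,b}\frac{\partial^3P(\xi_l)}{\partial x_j\partial x_a\partial x_b}d_{l,a}d_{l,b}\int w^2$, which contains no $y$ at all.
\item The fourth-order term has even part $\frac{\varepsilon^2}{2N}\nabla(\Delta\partial_jP)(\xi_l)\cdot d_l\int|y|^2w^2$. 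Your Step 1 calls this order odd, but only its $\varepsilon^3y^3$ piece is.
\item The fifth-order term gives an $\varepsilon^4\int y_ay_by_cy_e w^2$ moment.
\end{enumerate}

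\textbf{These do not cancel.} Take $P=Q=\sum_ip_{li}(x_i-\xi_{l,i})^2+a(x_1-\xi_{l,1})^3$ near $\xi_l$, with all $p_{li}\neq0$ and $a\neq0$. This is compatible with $(H_2)$ because $2\beta\neq\mu_1+\mu_2$ on $\mathcal M$. After dividing by $(\sigma_1^2+\sigma_2^2)\varepsilon^N$, the $j=1$ balance is exactly
\[
2p_{l1}d_{l,1}\int_{\mathbb{R}^N}w^2+3a\Big(d_{l,1}^2\int_{\mathbb{R}^N}w^2+\varepsilon^2\int_{\mathbb{R}^N}y_1^2w^2\Big)=O(\varepsilon^5),
\]
where the right side collects the $\varphi_\varepsilon,\psi_\varepsilon$, $W_l$ and interaction terms. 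Hence
\[
d_{l,1}=C\varepsilon^2-\frac{3a}{2p_{l1}}C^2\varepsilon^4+O(\varepsilon^5),\qquad C=-\frac{3a\int y_1^2w^2}{2p_{l1}\int w^2}\neq0 .
\]
So the bound $O(\varepsilon^5)$ with a single fixed $C_l$ is not merely unproved by your argument; it fails in general. What the method actually gives, for $P,Q$ smooth enough, is $\xi_{\varepsilon,l}-\xi_l=C_l\varepsilon^2+D_l\varepsilon^4+O(\varepsilon^5)$ with a deterministic $D_l$. That smoothness is far beyond $(H_1)$, which does not even supply the third derivatives entering $C_l$. Your fallback of carrying a deterministic $\varepsilon^4$ term is therefore the right repair, but it changes the statement rather than proving it.

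\textbf{The paper's proof has the same defect.} In its expansion of $\int_{B_\delta(\xi_{\varepsilon,l})}\frac{\partial P}{\partial x_j}U_\varepsilon^2$ it keeps only three terms: the one linear in $\xi_{\varepsilon,l,j}-\xi_{l,j}$, the term $\frac{\varepsilon^{N+2}}{2N}\Delta\frac{\partial P(\xi_l)}{\partial x_j}\int|x|^2(w^*)^2$, and the $W_l^*$ correction. It then asserts the rest is $O(\varepsilon^{N+5})$, silently discarding exactly the $\varepsilon^{N+4}$ contributions above.

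\textbf{What you have right.} Elsewhere your bookkeeping is correct, and in one place sharper than the paper's. Weighting Cauchy--Schwarz by $|\partial_jP|^2\le C(|d_l|^2+\varepsilon^2|y|^2)$ gives $O(\varepsilon^{N+6})$ for the $U_\varepsilon\varphi_\varepsilon$ terms, versus the paper's $O(\varepsilon^{N+5})$. The $W_l$ terms are indeed $O(\varepsilon^{N+6})$ by evenness. Your remark that a deterministic $\varepsilon^4$ term is harmless downstream is also sound wherever only differences of concentration points of two solutions are used, since there it contributes only $D_l(\varepsilon_1^4-\varepsilon_2^4)$.
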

\begin{proof}
By using the local Pohozaev identity \eqref{p4}, we have
\begin{align}\label{3541-0}
\int _ { B_{\delta}(\xi_{\varepsilon,l}) } \left(\frac { \partial P(x) } { \partial x _ { j } } u _ {\varepsilon } ^ { 2 }+\frac { \partial Q(x) } { \partial x _ { j } } v _ {\varepsilon } ^ { 2 }\right)=O\big(\varepsilon^{-2} e ^ { - \frac { \tau } { \varepsilon } }\big),
\end{align}
where $\tau>0$ is a small constant. Noting that $\|(\varphi_{\varepsilon}, \psi_{\varepsilon})\|_{\varepsilon} =O(\varepsilon^{\frac{N}{2}+5}),$ by H\"older inequality and Sobolev imbedding theorem, it follows that
\begin{align}\label{3541-1}
&\int _ { B_{\delta}(\xi_{\varepsilon,l}) } \left(\frac { \partial P(x) } { \partial x _ { j } }(2U_\varepsilon\varphi_\varepsilon+ \varphi_\varepsilon ^2)+
\frac { \partial Q(x) } { \partial x _ { j } } (2V_\varepsilon\psi_\varepsilon+\psi_\varepsilon ^2)\right)
\notag\\
&\leq  \left(\int _ { B_{\delta}(\xi_{\varepsilon,l}) }\Big|\frac { \partial P(x) } { \partial x _ { j } }\Big| U_\varepsilon ^2\right)^{\frac{1}{2}}
\left(\int _ { B_{\delta}(\xi_{\varepsilon,l}) }\Big|\frac { \partial P(x) } { \partial x _ { j } }\Big| \varphi_\varepsilon ^2\right)^{\frac{1}{2}}
+\left(\int _ { B_{\delta}(\xi_{\varepsilon,l}) }\Big|\frac { \partial Q(x) } { \partial x _ { j } }\Big| V_\varepsilon ^2\right)^{\frac{1}{2}}
\left(\int _ { B_{\delta}(\xi_{\varepsilon,l}) }\Big|\frac { \partial Q(x) } { \partial x _ { j } }\Big| \psi_\varepsilon ^2\right)^{\frac{1}{2}}+
\notag\\
&+C\|(\varphi_{\varepsilon}, \psi_{\varepsilon})\|_{\varepsilon} ^{2}\notag\\
&\leq C\varepsilon^{\frac{N}{2}}\|(\varphi_{\varepsilon}, \psi_{\varepsilon})\|_{\varepsilon}++C\|(\varphi_{\varepsilon}, \psi_{\varepsilon})\|_{\varepsilon} ^{2}\notag\\
&
=O(\varepsilon^{N+5}).
\end{align}
Thus, combining \eqref{3541-0} with \eqref{3541-1}, we have

\begin{align}\label{3541}
	\int _ { B_{\delta}(\xi_{\varepsilon,l}) } \left(\frac { \partial P(x) } { \partial x _ { j } } U_\varepsilon ^2+\frac { \partial Q(x) } { \partial x _ { j } } V_\varepsilon ^2\right)=O(\varepsilon^{N+5}).
	\end{align}
Applying Taylor expansion and Lemma \ref{lem35}, we obtain
\begin{align}\label{3542}
&\int _ { B_{\delta}(\xi_{\varepsilon,l}) } \frac { \partial P(x) } { \partial x _ { j } } U_\varepsilon ^2 \notag\\
=&2p_{lj} ( \xi _ { \varepsilon , l , j} -\xi _ {l ,j } ) \varepsilon^N \int _ { \mathbb{R}^N } (w^*+\varepsilon^4 W_l^*)^2 + \frac{1}{2N}\triangle \frac { \partial P(\xi_{l}) } { \partial x _ { j } } \varepsilon^{N+2} \int _ { \mathbb{R}^N } |x|^2 (w^*+\varepsilon^4 W_l^*)^2+O(\varepsilon^{N+5})\notag\\
=&2\varepsilon^N  p_{lj} ( \xi _ { \varepsilon , l , j } -\xi _ {l ,j } )  \int _ { \mathbb{R}^N } (w^*)^2 +  \frac{\varepsilon^{N+2} }{2N}\triangle \frac { \partial P(\xi_{l}) } { \partial x _ { j } } \int _ { \mathbb{R}^N } |x|^2 (w^*)^2 \notag\\
&+ 4 \varepsilon^{N+4} p_{lj} ( \xi _ { \varepsilon , l , j } -\xi _ {l ,j } ) \int _ { \mathbb{R}^N } w^*W_l^*+O(\varepsilon^{N+5}).
\end{align}
Similarly, we also have
\begin{align}\label{3543}
\int _ { B_{\delta}(\xi_{\varepsilon,l}) } \frac { \partial Q(x) } { \partial x _ { j } } V_\varepsilon ^2
=&2\varepsilon^N  q_{lj} ( \xi _ { \varepsilon , l , j} -\xi _ {l ,j } )  \int _ { \mathbb{R}^N } (w^\star)^2 +  \frac{\varepsilon^{N+2} }{2N}\triangle \frac { \partial Q(\xi_{l}) } { \partial x _ { j } } \int _ { \mathbb{R}^N } |x|^2 (w^\star)^2 \notag\\
&+ 4 \varepsilon^{N+4}  q_{lj} ( \xi _ { \varepsilon , l ,j } -\xi _ {l ,j } ) \int _ { \mathbb{R}^N } w^\star W_l^\star+O(\varepsilon^{N+5}).
\end{align}
Substituting \eqref{3542} and \eqref{3543} into \eqref{3541}, and summing over \(j = 1, \dots, N\), it follows that \eqref{xxx} holds.
\end{proof}

\subsection{Proof of Theorem \ref{main}}
In this subsection, we will prove Theorem \ref{main} by contradiction, employing the Pohozaev identity combined with blow-up analysis.

 Suppose that there are two different solutions $(u^{(1)}_{\varepsilon},v^{(1)}_{\varepsilon})$ and $(u^{(2)}_{\varepsilon},v^{(2)}_{\varepsilon})$ to \eqref{eq1}. We define
\begin{equation*}
(\eta^{(1)}_\varepsilon,  \eta^{(2)}_\varepsilon) =\left(\frac{u^{(1)}_{\varepsilon}-u^{(2)}_{\varepsilon}}{||u^{(1)}_{\varepsilon}-u^{(2)}_{\varepsilon}||_{L^{\infty}(\mathbb{R}^N)}+||v^{(1)}_{\varepsilon}-v^{(2)}_{\varepsilon}||_{L^{\infty}(\mathbb{R}^N)}}, \frac{v^{(1)}_{\varepsilon}-v^{(2)}_{\varepsilon}}{||u^{(1)}_{\varepsilon}-u^{(2)}_{\varepsilon}||_{L^{\infty}(\mathbb{R}^N)}+||v^{(1)}_{\varepsilon}-v^{(2)}_{\varepsilon}||_{L^{\infty}(\mathbb{R}^N)}} \right).
\end{equation*}
Then, $(\eta^{(1)}_\varepsilon, \eta^{(2)}_\varepsilon)$ satisfies
 \begin{equation} \label{eqeta}
  \left\{\begin{array}{ll}
		-\varepsilon ^2 \triangle \eta^{(1)}_\varepsilon+\left( \varepsilon ^2P\left( x \right) +1 -\mu_1  ((u^{(1)}_{\varepsilon})^2+u^{(1)}_{\varepsilon}u^{(2)}_{\varepsilon}+(u^{(2)}_{\varepsilon})^2)\right)\eta^{(1)}_\varepsilon\\
		\ \ \ \ \ \ \ \ \ \ \ \ \ \ \ \ \ \ \ \ \ \ \ \ \ \ \ \ \ \ \ \ \ \ \ \ \ \  -\beta\left[\eta^{(1)}_\varepsilon(v_{\varepsilon}^{(2)})^2
+u^{(1)}_{\varepsilon}\eta^{(2)}_\varepsilon(v_{\varepsilon}^{(1)}+v_{\varepsilon}^{(2)})\right]=0,\,\,\,\,x\in\mathbb{R}^N,\\
-\varepsilon ^2 \triangle \eta^{(2)}_\varepsilon+\left( \varepsilon ^2Q\left( x \right) +1 -\mu_2 ((v^{(1)}_{\varepsilon})^2+v^{(1)}_{\varepsilon}v^{(2)}_{\varepsilon}+(v^{(2)}_{\varepsilon})^2)\right) \eta^{(2)}_\varepsilon\\
		\ \ \ \ \ \ \ \ \ \ \ \ \ \ \ \ \ \ \ \ \ \ \ \ \ \ \ \ \ \ \ \ \ \ \ \ \ \
- \beta \left[(u_\varepsilon^{(2)})^2 \eta_\varepsilon^{(2)}+ v_\varepsilon^{(1)} \eta_\varepsilon^{(1)} (u_\varepsilon^{(1)} + u_\varepsilon^{(2)}) \right] =0 \,\,\,\,x\in\mathbb{R}^N.
	\end{array} \right.
\end{equation}
Since
$$\|\eta^{(1)}_\varepsilon\|_{L^{\infty}(\mathbb{R}^N)}+ \|\eta^{(2)}_\varepsilon\|_{L^{\infty}(\mathbb{R}^N)}=1,$$
a contradiction arises if we can show that $\|\eta^{(1)}_\varepsilon\|_{L^{\infty}(\mathbb{R}^N)}\to 0$ and $\|\eta^{(2)}_\varepsilon\|_{L^{\infty}(\mathbb{R}^N)}\to 0$ as $\varepsilon\to 0$. To establish this, we decompose the proof into two parts, we first carry out the estimation of $\eta^{(1)}_\varepsilon$ and $\eta^{(2)}_\varepsilon$ in $\mathbb{R}^N \setminus \bigcup^k_{l=1} B_{R\varepsilon}(\xi^{(1)}_{\varepsilon,l})$. For this part of the estimation, we appeal to the following lemma.

\begin{lemma}\label{lem36}
There exist constants $C>0$ and small $\tau>0$, such that
\begin{align}\label{361}
| \eta^{(i )}_\varepsilon ( x ) | \leq C \sum _ { l = 1 } ^ { k } e ^ { -  \frac { \tau| x - \xi^{(1)}_{\varepsilon,l} | } { \varepsilon } },\ \ \ \forall x \in \mathbb{R}^N \backslash \bigcup^k_{l=1} B_{R\varepsilon}(\xi^{(1)}_{\varepsilon,l}),\ \ \ i=1,2,
\end{align}
and
\begin{align}\label{362}
| \nabla \eta^{(i )}_\varepsilon| \leq C e ^ { - \frac { \tau } { \varepsilon } },\ \ \ \forall x \in \partial B_{R\varepsilon}(\xi^{(1)}_{\varepsilon,l}),\ \ \ i=1,2,\ \ \ l=1,\cdots,k.
\end{align}
\end{lemma}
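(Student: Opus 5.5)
We will prove Lemma \ref{lem36} by the same comparison argument used for Lemma \ref{lem33}, now applied to the linear system \eqref{eqeta} satisfied by $(\eta^{(1)}_\varepsilon,\eta^{(2)}_\varepsilon)$. The interior and boundary control will come from the normalization $\|\eta^{(1)}_\varepsilon\|_{L^\infty}+\|\eta^{(2)}_\varepsilon\|_{L^\infty}=1$ together with the smallness of $u^{(i)}_\varepsilon,v^{(i)}_\varepsilon$ away from the peaks.

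\emph{Step 1.} First we check that the two solutions concentrate at essentially the same points. By Lemma \ref{lem35}, $|\xi^{(1)}_{\varepsilon,l}-\xi^{(2)}_{\varepsilon,l}|=O(\varepsilon^2)=o(\varepsilon)$. Hence, after enlarging $R$, Lemma \ref{lem32}(i) and Lemma \ref{lem33} apply to both $(u^{(1)}_\varepsilon,v^{(1)}_\varepsilon)$ and $(u^{(2)}_\varepsilon,v^{(2)}_\varepsilon)$ on $\Omega_\varepsilon:=\mathbb{R}^N\setminus\bigcup_l B_{R\varepsilon}(\xi^{(1)}_{\varepsilon,l})$. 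In particular, all coefficients $\mu_1((u^{(1)}_\varepsilon)^2+u^{(1)}_\varepsilon u^{(2)}_\varepsilon+(u^{(2)}_\varepsilon)^2)$, $\beta (v^{(2)}_\varepsilon)^2$, $\beta u^{(1)}_\varepsilon(v^{(1)}_\varepsilon+v^{(2)}_\varepsilon)$, and their analogues in the second equation, are bounded by $C\tau_0$ there, for any prescribed small $\tau_0$.

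\emph{Step 2.} The main obstacle is the coupling terms: $\eta^{(1)}_\varepsilon$ and $\eta^{(2)}_\varepsilon$ need not have a sign, and the off-diagonal coefficients can have either sign. We handle this by working with the sum of absolute values. Kato's inequality gives $-\varepsilon^2\Delta|\eta|\le \operatorname{sgn}(\eta)(-\varepsilon^2\Delta\eta)$ in the distributional sense. Set $\Theta_\varepsilon:=|\eta^{(1)}_\varepsilon|+|\eta^{(2)}_\varepsilon|$. Applying Kato to each equation of \eqref{eqeta} and adding, we obtain on $\Omega_\varepsilon$
\[
-\varepsilon^2\Delta\Theta_\varepsilon+(1-C\varepsilon^2-C\tau_0)\Theta_\varepsilon\le 0 .
\]
Here we use that $P,Q$ are bounded below near the relevant region; otherwise we use that $\varepsilon^2P,\varepsilon^2Q\ge -C\varepsilon^2$, which holds assuming $P,Q$ are bounded below as implicitly used in Lemma \ref{lem33}. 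Thus $-\varepsilon^2\Delta\Theta_\varepsilon+\alpha\Theta_\varepsilon\le0$ on $\Omega_\varepsilon$ for some $\alpha\in(0,1)$.

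\emph{Step 3.} We now compare with the same barrier as in Lemma \ref{lem33}. By \eqref{L}, $L_\varepsilon e^{-\sqrt\alpha|x-\xi^{(1)}_{\varepsilon,l}|/\varepsilon}>0$. On $\partial B_{R\varepsilon}(\xi^{(1)}_{\varepsilon,l})$ we have $\Theta_\varepsilon\le1$, and $\Theta_\varepsilon\to0$ at infinity since $\eta^{(i)}_\varepsilon\in H^1$ and is continuous and decaying. The maximum principle then yields $\Theta_\varepsilon\le e^{\sqrt\alpha R}\sum_l e^{-\sqrt\alpha|x-\xi^{(1)}_{\varepsilon,l}|/\varepsilon}$ on $\Omega_\varepsilon$, which is \eqref{361} with $\tau=\sqrt\alpha$.

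\emph{Step 4.} For the gradient bound \eqref{362}, we apply local $L^p$ estimates to \eqref{eqeta} exactly as at the end of Lemma \ref{lem33}. We read the boundary where \eqref{362} is evaluated as $\partial B_\delta$: at $\partial B_{R\varepsilon}$ only an $O(\varepsilon^{-1})$ gradient bound is available. For $z$ on that sphere, the ball $B_{\delta/2}(z)$ lies at distance $\ge\delta/2$ from all peaks. There the coefficients are bounded and \eqref{361} gives $|\eta^{(i)}_\varepsilon|\le Ce^{-\tau\delta/(2\varepsilon)}$. Hence $\|\eta^{(i)}_\varepsilon\|_{W^{2,p}(B_{\delta/4}(z))}\le C\varepsilon^{-2}e^{-\tau\delta/(2\varepsilon)}$, and the embedding $W^{2,p}\hookrightarrow C^1$ for $p>N$ gives $|\nabla\eta^{(i)}_\varepsilon|\le Ce^{-\tau'/\varepsilon}$ after shrinking $\tau$.
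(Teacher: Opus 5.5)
Your proof is correct and follows the paper's overall plan. You use Lemma \ref{lem35} to put the peaks of both solutions within $O(\varepsilon^2)$ of each other. You use Lemmas \ref{lem32}--\ref{lem33} to make the coefficients of \eqref{eqeta} small off $\bigcup_l B_{R\varepsilon}(\xi^{(1)}_{\varepsilon,l})$. You then compare with the exponential barrier of \eqref{L} and finish with interior $L^p$ estimates.

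The one real difference is how the coupling is handled. The paper keeps the two scalar equations separate. It treats $\beta u^{(1)}_\varepsilon\eta^{(2)}_\varepsilon(v^{(1)}_\varepsilon+v^{(2)}_\varepsilon)$ and its partner as forcing terms, bounded using $|\eta^{(2)}_\varepsilon|\le 1$ and the smallness of $u^{(i)}_\varepsilon,v^{(i)}_\varepsilon$. You instead absorb the cross terms into the zeroth-order coefficient by applying Kato's inequality to $\Theta_\varepsilon=|\eta^{(1)}_\varepsilon|+|\eta^{(2)}_\varepsilon|$.

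\begin{itemize}
\item Your route needs only smallness of the coupling coefficients, not a decay rate for a forcing term. It also removes the sign problem in one step. The price is working with distributional subsolutions.
\item The paper's route uses only classical comparison for $\phi\pm\eta^{(i)}_\varepsilon$. To actually close it, one needs the pointwise decay $C\sum_l e^{-2\sqrt{\alpha}|x-\xi_{\varepsilon,l}|/\varepsilon}$ of the forcing from Lemma \ref{lem33}, together with a barrier rate $\tau<2\sqrt{\alpha}$. The constant bound $Ce^{-\theta R}$ written in the paper is not enough on its own.
\end{itemize}

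Your reading of \eqref{362} on $\partial B_\delta(\xi^{(1)}_{\varepsilon,l})$ instead of $\partial B_{R\varepsilon}$ is the right one. That is where the bound is used, in \eqref{p3117}, and it matches \eqref{532}. On $\partial B_{R\varepsilon}$, such a bound would force the gradient of the blow-up limit \eqref{3116} to vanish on $\partial B_R(0)$. Since $w'(R)<0$, that would give $a_{l,h}=0$, which is exactly what the argument is trying to prove.
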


\begin{proof}
By Lemma \ref{lem35}, we have $|\xi_{\varepsilon,l}^{(1)}-\xi_{\varepsilon,l}^{(2)}|=O(\varepsilon^2)$. Since
\begin{align*}
|y-\xi_{\varepsilon,l}^{(2)}|\geq |y-\xi_{\varepsilon,l}^{(1)}|-|\xi_{\varepsilon,l}^{(1)}-\xi_{\varepsilon,l}^{(2)}|\geq R\varepsilon+O(\varepsilon^2),\quad \forall y \in \mathbb{R}^N \backslash \bigcup^k_{l=1} B_{R\varepsilon}(\xi^{(1)}_{\varepsilon,l}),
\end{align*}
 we obtain
\begin{align*}
\big( \varepsilon ^2P\left( x \right) +1 \big)-\left[\mu_1  ((u^{(1)}_{\varepsilon})^2+u^{(1)}_{\varepsilon}u^{(2)}_{\varepsilon}+(u^{(2)}_{\varepsilon})^2)+\beta(v_{\varepsilon}^{(2)})^2\right]\geq \frac12
\end{align*}
and similarly,
\begin{align*}
\big( \varepsilon ^2Q\left( x \right) +1 \big) -\left[\mu_2 ((v^{(1)}_{\varepsilon})^2+v^{(1)}_{\varepsilon}v^{(2)}_{\varepsilon}+(v^{(2)}_{\varepsilon})^2) + \beta (u_\varepsilon^{(2)})^2 \right]\geq \frac12.
\end{align*}
For any small $\bar{\tau}>0$, based on Lemma \ref{lem33} and \eqref{xx}, we can conclude that there is a large $R>0$ such that
\begin{align*}
\left|\mu_1  ((u^{(1)}_{\varepsilon})^2+u^{(1)}_{\varepsilon}u^{(2)}_{\varepsilon}+(u^{(2)}_{\varepsilon})^2)+\beta v_{\varepsilon}^{(2)})^2\right|\leq \bar{\tau},\ \ \ \forall x \in \mathbb{R}^N \backslash \bigcup^k_{l=1} B_{R\varepsilon}(\xi^{(1)}_{\varepsilon,l})
\end{align*}
and
\begin{align*}
\big|\beta u^{(1)}_{\varepsilon}\eta^{(2)}_\varepsilon(v_{\varepsilon}^{(1)}+v_{\varepsilon}^{(2)})\big|\leq C \sum _ { l = 1 } ^ { k } e ^ { -  \theta R } .
\end{align*}
 With the application of \eqref{eqeta} together with the comparison theorem, the proof of \eqref{361} can be accomplished.

By employing the $L^p$ estimate for elliptic equations, \eqref{362} can be validated.
\end{proof}

Next, we estimate $\eta^{(1)}_\varepsilon$, $\eta^{(2)}_\varepsilon$ in $B_{R\varepsilon}(\xi^{(1)}_{\varepsilon,l})$, $l=1,\cdots,k$. We aim to obtain $\eta^{(1)}_\varepsilon=o(1)$ and $\eta^{(2)}_\varepsilon=o(1)$ in $B_{R\varepsilon}(\xi^{(1)}_{\varepsilon,l})$, $l=1,\cdots,k$.  Let
\begin{align*}
\tilde{\eta}^{(i)}_{\varepsilon,l}(x)=\eta^{(i)}_\varepsilon\big(\varepsilon x + \xi_{\varepsilon,l}^{(1)}\big),\ \ \ l=1,\cdots,k.
\end{align*}

We know that
\begin{align}\label{3113}
u_{\varepsilon}^{(1)}\big(\varepsilon x + \xi_{\varepsilon,l}^{(1)}\big)\rightarrow w^{*}(x)
\end{align}
uniformly in $B_{R}(0)$ for any $R>0$.

On the other hand, from \eqref{xx}, we have
\begin{align}\label{3114}
u_{\varepsilon}^{(2)}\big(\varepsilon x + \xi_{\varepsilon,l}^{(1)}\big)\rightarrow w^{*}(x)
\end{align}
uniformly in $B_{R}(0)$ for any $R>0$. Similarly, we can also conclude that
\begin{align}\label{31115}
v_{\varepsilon}^{(1)}\big(\varepsilon x + \xi_{\varepsilon,l}^{(1)}\big)\rightarrow w^{\star}(x),\ \ \ v_{\varepsilon}^{(2)}\big(\varepsilon x + \xi_{\varepsilon,l}^{(1)}\big)\rightarrow w^{\star}(x)
\end{align}
uniformly in $B_{R}(0)$ for any $R>0$.

 Based on the $L^p$ estimate, the Sobolev embedding theorem and the Schauder estimate, we get that $\tilde{\eta}^{(i)}_{\varepsilon,l}\rightarrow \eta^{(i)}_{l}$, $i=1,2$, in $C^{2}(B_R (0))$ for any $R>0$. Combining \eqref{3113}, \eqref{3114} and \eqref{31115}, $(\eta^{(1)}_{l}, \eta^{(2)}_{l})$ satisfies
\begin{equation*}
  \left\{\begin{array}{ll}
		- \Delta \eta^{(1)}_l  + \eta^{(1)}_l =3\mu_1 (w^{*})^2 \eta^{(1)}_l +\beta\left( (w^{\star})^2 \eta^{(1)}_l+2w^{*}w^{\star} \eta^{(2)}_l\right),\,\,\,\,x\in \mathbb{R}^N,\vspace{0.12cm}\\
		-\Delta \eta^{(2)}_l  + \eta^{(2)}_l =3\mu_2 (w^{\star})^2 \eta^{(2)}_l +\beta \left( (w^{*})^2 \eta^{(2)}_l+2w^{*}w^{\star} \eta^{(1)}_l\right),\,\,\,\,x\in \mathbb{R}^N.
	\end{array} \right.
\end{equation*}
Therefore, according to the non-degeneracy of $(w ^ { * },w ^ { \star })$, there exist constants $a_{l,j}$, $l=1,\cdots,k$, $j=1,\cdots,N$, such that
\begin{align}\label{3116}
(\eta^{(1)}_{l},\eta^{(2)}_{l})=\sum_{h=1}^N a_{l,h}  \left( \frac { \partial w ^ { * } } { \partial x _ { h } } , \frac { \partial w ^ { \star } } { \partial x _ { h } } \right) .
\end{align}
Applying \eqref{p4} to $(u_\varepsilon^{(i)},v_\varepsilon^{(i)})$, we have
\begin{align}\label{p3117}
&\varepsilon^{2}\int _ {  B_{\delta}(\xi_{\varepsilon,l}^{(1)}) } \left(\frac { \partial P(x) } { \partial x _ { j } }\left( u _ {\varepsilon }^{(1)}+u _ {\varepsilon }^{(2)}\right)\eta^{(1)}_\varepsilon +\frac { \partial Q(x) } { \partial x _ { j } } \left( v _ {\varepsilon }^{(1)}+v _ {\varepsilon }^{(2)}\right)\eta^{(2)}_\varepsilon\right) \notag\\
=&-\beta\int _ { \partial  B_{\delta}(\xi_{\varepsilon,l}^{(1)}) }  \left( \eta _ { \varepsilon } ^ { ( 2 ) } ( v _ { \varepsilon } ^ { ( 1 ) } + v _ { \varepsilon } ^ { ( 2 ) } ) ( u _ { \varepsilon } ^ { ( 1 ) } ) ^ { 2 } + ( v _ { \varepsilon } ^ { ( 2 ) } ) ^ { 2 } \eta _ { \varepsilon } ^ { ( 1 ) } ( u _ { \varepsilon } ^ { ( 1 ) } + u _ { \varepsilon } ^ { ( 2 ) } ) \right) n _ { j } d S \notag\\
&- 2 \varepsilon ^ { 2 } \int _ { \partial  B_{\delta}(\xi_{\varepsilon,l}^{(1)}) }  \Big( \frac { \partial \eta _ { \varepsilon } ^ { ( 1 ) } } { \partial x _ { j } } \frac { \partial u _ { \varepsilon } ^ { ( 1 ) } } { \partial n } + \frac { \partial u _ { \varepsilon } ^ { ( 2 ) } } { \partial x _ { j } } \frac { \partial \eta _ { \varepsilon } ^ { ( 1 ) } } { \partial n } + \frac { \partial \eta _ { \varepsilon } ^ { ( 2 ) } }{ \partial x _ { j } } \frac { \partial v _ { \varepsilon } ^ { ( 1 ) } }{ \partial n } +\frac { \partial v _ { \varepsilon } ^ { ( 2 ) } } { \partial x _ { j } } \frac { \partial \eta _ { \varepsilon } ^ { ( 2 ) } } { \partial n } \Big) d S \notag\\
&+ \varepsilon ^ { 2 } \int _ { \partial  B_{\delta}(\xi_{\varepsilon,l}^{(1)}) } \left(  \nabla \eta _ { \varepsilon } ^ { ( 1 ) }  \nabla ( u _ { \varepsilon } ^ { ( 1 ) } + u _ { \varepsilon } ^ { ( 2 ) } )  n _ { j } +  \nabla \eta _ { \varepsilon } ^ { ( 2 ) }  \nabla ( v _ { \varepsilon } ^ { ( 1 ) } + v _ { \varepsilon } ^ { ( 2 ) } )  n _ { j } \right)d S \notag\\
&+ \int _ { \partial  B_{\delta}(\xi_{\varepsilon,l}^{(1)}) }\left((\varepsilon^{2}P(x)+1) \left( u _ {\varepsilon }^{(1)}+u _ {\varepsilon }^{(2)}\right)\eta^{(1)}_\varepsilon n _ { j }+(\varepsilon^{2}Q(x)+1) \left( v _ {\varepsilon }^{(1)}+v _ {\varepsilon }^{(2)}\right)\eta^{(2)}_\varepsilon n _ { j }\right) d S\notag\\
& - \frac { 1 } { 2 } \Bigg(\mu_1\int _ { \partial  B_{\delta}(\xi_{\varepsilon,l}^{(1)}) }  \eta _ { \varepsilon } ^ { ( 1 ) } \left( ( u _ { \varepsilon } ^ { ( 1 ) } ) ^ { 3 } + ( u _ { \varepsilon } ^ { ( 1 ) } ) ^ { 2 } u _ { \varepsilon } ^ { ( 2 ) } + u _ { \varepsilon } ^ { ( 1 ) } ( u _ { \varepsilon } ^ { ( 2 ) } ) ^ { 2 } + ( u _ { \varepsilon } ^ { ( 2 ) } ) ^ { 3 } \right) n _ { j } d S \notag\\
&+\mu_2\int _ { \partial  B_{\delta}(\xi_{\varepsilon,l}^{(1)}) } \eta _ { \varepsilon } ^ { ( 2 ) } \left( ( v _ { \varepsilon } ^ { ( 1 ) } ) ^ { 3 } + ( v _ { \varepsilon } ^ { ( 1 ) } ) ^ { 2 } v _ { \varepsilon } ^ { ( 2 ) } + v _ { \varepsilon } ^ { ( 1 ) } ( v _ { \varepsilon } ^ { ( 2 ) } ) ^ { 2 } + ( v _ { \varepsilon } ^ { ( 2 ) } ) ^ { 3 } \right) n _ { j } d S \Bigg).
\end{align}

Based on the above analysis, we proceed to present the proof of Theorem \ref{main}.

\begin{proof}[Proof of Theorem \ref{main}]
According to Lemma \ref{lem36} and \eqref{p3117}, we conclude that
\begin{align*}
\int _ {  B_{\delta}(\xi_{\varepsilon,l}^{(1)}) } \left(\frac { \partial P(x) } { \partial x _ { j } }\left( u _ {\varepsilon }^{(1)}+u _ {\varepsilon }^{(2)}\right)\eta^{(1)}_\varepsilon +\frac { \partial Q(x) } { \partial x _ { j } } \left( v _ {\varepsilon }^{(1)}+v _ {\varepsilon }^{(2)}\right)\eta^{(2)}_\varepsilon\right)=O(\varepsilon^{-2}e^{-\frac{\tau}{\varepsilon}}),\ \ \ j=1,\cdots,N,
\end{align*}
where $\tau>0$ is a small constant. Therefore, we have
\begin{align}\label{3701}
\int _ {  B_{\delta \varepsilon^{-1}}(0) } \Bigg(&\frac { \partial P(\varepsilon x+\xi_{\varepsilon,l}^{(1)}) } { \partial x _ { j } }\Big( u _ {\varepsilon }^{(1)}(\varepsilon x+\xi_{\varepsilon,l}^{(1)})+u _ {\varepsilon }^{(2)}(\varepsilon x+\xi_{\varepsilon,l}^{(1)})\Big)\tilde{\eta}^{(1)}_{\varepsilon,l}\notag\\
&+\frac { \partial Q(\varepsilon x+\xi_{\varepsilon,l}^{(1)}) } { \partial x _ { j } } \left( v _ {\varepsilon }^{(1)}(\varepsilon x+\xi_{\varepsilon,l}^{(1)})+v _ {\varepsilon }^{(2)}(\varepsilon x+\xi_{\varepsilon,l}^{(1)})\right)\tilde{\eta}^{(2)}_{\varepsilon,l}\Bigg)=O\big(\varepsilon^{-(N+2)}e^{-\frac{\tau}{\varepsilon}}\big).
\end{align}
Since $|\xi_{\varepsilon,l}^{(i)}-\xi_l| = O(\varepsilon^2)$, we can obtain that
\begin{align*}
\frac { \partial P(\xi_{\varepsilon,l}^{(1)}) } { \partial x _ { j } }=\frac { \partial P(x_{l}) } { \partial x _ { j } }+O(|\xi_{\varepsilon,l}^{(1)}-\xi_l|) = O(\varepsilon^2)
\end{align*}
and
\begin{align*}
\frac { \partial Q(\xi_{\varepsilon,l}^{(1)}) } { \partial x _ { j } }=\frac { \partial Q(x_{l}) } { \partial x _ { j } }+O(|\xi_{\varepsilon,l}^{(1)}-\xi_l|) = O(\varepsilon^2).
\end{align*}
This implies
\begin{align}\label{tho49}
\int _ {  B_{\delta \varepsilon^{-1}}(0) } \Bigg(&\frac { \partial P(\xi_{\varepsilon,l}^{(1)}) } { \partial x _ { j } }\Big( u _ {\varepsilon }^{(1)}(\varepsilon x+\xi_{\varepsilon,l}^{(1)})+u _ {\varepsilon }^{(2)}(\varepsilon x+\xi_{\varepsilon,l}^{(1)})\Big)\tilde{\eta}^{(1)}_{\varepsilon,l} \notag\\
&+\frac { \partial Q(\xi_{\varepsilon,l}^{(1)}) } { \partial x _ { j } } \left( v _ {\varepsilon }^{(1)}(\varepsilon x+\xi_{\varepsilon,l}^{(1)})+v _ {\varepsilon }^{(2)}(\varepsilon x+\xi_{\varepsilon,l}^{(1)})\right)\tilde{\eta}^{(2)}_{\varepsilon,l}\Bigg)=O(\varepsilon^{2}).
\end{align}
Combining \eqref{3701} and \eqref{tho49}, we have
\begin{align*}
\int _ {  B_{\delta \varepsilon^{-1}}(0) } \Bigg(&\Bigg(\frac { \partial P(\varepsilon x+\xi_{\varepsilon,l}^{(1)}) } { \partial x _ { j } }-\frac { \partial P(\xi_{\varepsilon,l}^{(1)}) } { \partial x _ { j } }\Bigg)\Big( u _ {\varepsilon }^{(1)}(\varepsilon x+\xi_{\varepsilon,l}^{(1)})+u _ {\varepsilon }^{(2)}(\varepsilon x+\xi_{\varepsilon,l}^{(1)})\Big)\tilde{\eta}^{(1)}_{\varepsilon,l}\\
&+\Bigg(\frac { \partial Q(\varepsilon x+\xi_{\varepsilon,l}^{(1)}) } { \partial x _ { j } }-\frac { \partial Q(\xi_{\varepsilon,l}^{(1)}) } { \partial x _ { j } }\Bigg) \left( v _ {\varepsilon }^{(1)}(\varepsilon x+\xi_{\varepsilon,l}^{(1)})+v _ {\varepsilon }^{(2)}(\varepsilon x+\xi_{\varepsilon,l}^{(1)})\right)\tilde{\eta}^{(2)}_{\varepsilon,l}\Bigg)=O(\varepsilon^{2}).
\end{align*}
By using the Taylor expansion, we obtain that
\begin{align}\label{3702}
\varepsilon\sum_{h=1}^N \int _ {  B_{\delta \varepsilon^{-1}}(0) } \Bigg(&\frac { \partial^2 P(\xi_{\varepsilon,l}^{(1)}) } { \partial x _ { j }\partial x _ { h } } x _ { h } \Big( u _ {\varepsilon }^{(1)}(\varepsilon x+\xi_{\varepsilon,l}^{(1)})+u _ {\varepsilon }^{(2)}(\varepsilon x+\xi_{\varepsilon,l}^{(1)})\Big)\tilde{\eta}^{(1)}_{\varepsilon,l}\notag\\
&+\frac { \partial^2 Q(\xi_{\varepsilon,l}^{(1)}) } { \partial x _ { j }\partial x _ { h } } x _ { h } \left( v _ {\varepsilon }^{(1)}(\varepsilon x+\xi_{\varepsilon,l}^{(1)})+v _ {\varepsilon }^{(2)}(\varepsilon x+\xi_{\varepsilon,l}^{(1)})\right)\tilde{\eta}^{(2)}_{\varepsilon,l}\Bigg)=O(\varepsilon^{2}).
\end{align}
Letting $\varepsilon\rightarrow 0$ in \eqref{3702}, by \eqref{3113}-\eqref{3116}, we get
\begin{align*}
 \int _ {  \mathbb{R}^N } \Bigg(p_{lj} x _ { j } w^{*} \sum_{i=1}^3 a_{l,i}  \frac { \partial w^{*} } { \partial x _ { i } }+q_{lj} x _ { j } w^{\star} \sum_{i=1}^3 a_{l,i}  \frac { \partial w^{\star} } { \partial x _ { i } } \Bigg)=0.
\end{align*}
Therefore,
\begin{align}\label{3703}
\frac{1} { \beta ^ { 2 } - \mu_1 \mu_2 } \left(( \beta - \mu_2 )p_{lj} +( \beta - \mu_1 )q_{lj} \right) a_{l,j} \int _ {  \mathbb{R}^N } x _ { j } w   \frac { \partial w } { \partial x _ { j } }=0,\ \ j=1,\cdots,N.
\end{align}
But
\begin{align*}
\int _ {  \mathbb{R}^N } x _ { j } w   \frac { \partial w } { \partial x _ { j } }=\frac12\int _ {  \mathbb{R}^N } x _ { j }   \frac { \partial w^2 } { \partial x _ { j } }=-\frac12\int _ {  \mathbb{R}^N } w^2<0,
\end{align*}
and according to \eqref{3703}, we have
\begin{align*}
\left(( \beta - \mu_2 )p_{lj} +( \beta - \mu_1) q_{lj} \right) a_{l,j}=0.
\end{align*}
 By condition $(H_2)$, we obtain that $a_{l,j}=0$, $j=1,\cdots,N$, $l=1,\cdots,k$.
\end{proof}
To summarize the foregoing, we have demonstrated that $\eta^{(i)}_\varepsilon=o(1)$ in $B_{R\varepsilon}(\xi_{\varepsilon,l}^{(1)})$, $l=1,\cdots,N$. This combining with Lemma \ref{lem36}, we have $||\eta^{(1)}_\varepsilon||_{L^{\infty}(\mathbb{R}^N)}+||\eta^{(2)}_\varepsilon||_{L^{\infty}(\mathbb{R}^N)}=o(1)$. This contradicts $||\eta^{(1)}_\varepsilon||_{L^{\infty}(\mathbb{R}^N)}+||\eta^{(2)}_\varepsilon||_{L^{\infty}(\mathbb{R}^N)}=1$.

We now proceed to prove the continuity of $(u,v)$ with respect to $\varepsilon$ in the space \(L^2 (\mathbb{R}^N)\times L^2(\mathbb{R}^N)\).

\begin{corollary}\label{con1}
Let $(u_{\varepsilon}, v_{\varepsilon}) \in L^2(\mathbb{R}^N) \times L^2(\mathbb{R}^N)$ denote the solutions to system \eqref{eq1} corresponding to the parameter $\varepsilon > 0$. For any fixed $\hat{\varepsilon} > 0$, the solutions satisfy the following convergence:
\begin{equation*}
(u_{\varepsilon}, v_{\varepsilon}) \to (u_{\hat{\varepsilon}}, v_{\hat{\varepsilon}}) \quad \text{in}\ \ L^2(\mathbb{R}^N) \times L^2(\mathbb{R}^N), \quad \text{as}\ \ \varepsilon \to \hat{\varepsilon}.
\end{equation*}
This convergence directly implies the continuity of the solution pair $(u_{\varepsilon}, v_{\varepsilon})$ with respect to the parameter $\varepsilon$ in $L^2(\mathbb{R}^N) \times L^2(\mathbb{R}^N)$.
\end{corollary}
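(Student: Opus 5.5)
The plan is to combine the uniqueness statement of Theorem \ref{main} with a compactness argument in the parameter $\varepsilon$. Here $(u_\varepsilon,v_\varepsilon)$ denotes, for each $\varepsilon\in(0,\varepsilon_0]$, the unique solution of \eqref{eq1} of the form \eqref{uv} concentrating at $\xi_1,\dots,\xi_k$. It exists by Theorem \ref{pro1} and is unique by Theorem \ref{main}. Fix $\hat\varepsilon\in(0,\varepsilon_0]$ and take any sequence $\varepsilon_n\to\hat\varepsilon$. It suffices to show that every subsequence of $(u_{\varepsilon_n},v_{\varepsilon_n})$ has a further subsequence converging in $L^2(\mathbb{R}^N)\times L^2(\mathbb{R}^N)$ to $(u_{\hat\varepsilon},v_{\hat\varepsilon})$.

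\textbf{Step 1: uniform bounds.} By Lemma \ref{lem32}(ii), $\|(u_{\varepsilon_n},v_{\varepsilon_n})\|_H^2\le C\varepsilon_n^N$, which is uniformly bounded in $H^1\times H^1$ since $\varepsilon_n$ stays away from $0$ and $\infty$. By the Moser iteration in the proof of Lemma \ref{lem33}, the solutions are also uniformly bounded in $L^\infty$. The concentration points $\xi_{\varepsilon_n,l}\in B_\delta(\xi_l)$ lie in a compact set, and the remainders satisfy $\|(\varphi_{\varepsilon_n},\psi_{\varepsilon_n})\|_H=O(\varepsilon_n^{5+N/2})$. After passing to a subsequence, $\xi_{\varepsilon_n,l}\to\hat\xi_l$ and $(\varphi_{\varepsilon_n},\psi_{\varepsilon_n})\rightharpoonup(\hat\varphi,\hat\psi)$ weakly in $H$.

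\textbf{Step 2: strong $L^2$ convergence.} The main obstacle is compactness on all of $\mathbb{R}^N$. Lemma \ref{lem33} gives the uniform exponential decay
\[
u_{\varepsilon_n}(x)+v_{\varepsilon_n}(x)\le C\sum_{l=1}^k e^{-\sqrt{\alpha}|x-\xi_{\varepsilon_n,l}|/\varepsilon_n},
\]
with constants uniform in $n$ because $\varepsilon_n$ lies in a compact subinterval of $(0,\varepsilon_0]$. Hence the $L^2$ tails outside a large ball are uniformly small. On bounded sets, Rellich's theorem, or local $C^{2}$ convergence obtained from $L^p$ and Schauder estimates, gives strong convergence. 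Together these show $(u_{\varepsilon_n},v_{\varepsilon_n})\to(\hat u,\hat v)$ in $L^2\times L^2$, and locally in $C^2$. Passing to the limit in \eqref{eq1} shows that $(\hat u,\hat v)$ solves \eqref{eq1} with $\varepsilon=\hat\varepsilon$.

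\textbf{Step 3: identifying the limit.} The profile pieces $w^*_{\varepsilon_n,\xi_{\varepsilon_n,l}}+\varepsilon_n^4W^*_{\varepsilon_n,\xi_{\varepsilon_n,l}}$ converge strongly to the corresponding functions with $(\hat\varepsilon,\hat\xi_l)$. The orthogonality conditions defining $E_{\varepsilon}$ pass to the limit, so $(\hat\varphi,\hat\psi)\in E_{\hat\varepsilon}$, and the norm bound $O(\hat\varepsilon^{5+N/2})$ persists by weak lower semicontinuity. Thus $(\hat u,\hat v)$ has the form \eqref{uv} with points $\hat\xi_l\in \overline{B_\delta(\xi_l)}$. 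Letting $\delta$ shrink with $\varepsilon_0$ as in Lemma \ref{lem35}, these are admissible points for the uniqueness class. Theorem \ref{main} then gives $(\hat u,\hat v)=(u_{\hat\varepsilon},v_{\hat\varepsilon})$. Since the limit is independent of the subsequence, the whole family converges, which proves the corollary.

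The delicate point is in this last step. It must be checked that the limit lies in the class where Theorem \ref{main} applies, namely that Theorem \ref{main}'s uniqueness holds for each fixed small $\hat\varepsilon$ among solutions of the form \eqref{uv}. The proof of Theorem \ref{main} by contradiction along $\varepsilon\to0$ yields exactly this once $\varepsilon_0$ is fixed.
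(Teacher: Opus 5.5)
Your proposal is correct and follows essentially the paper's route. Both arguments extract a subsequence along which the concentration points and the remainders $(\varphi_{\varepsilon_n},\psi_{\varepsilon_n})$ converge, pass to the limit in \eqref{eq1} to obtain a solution of the form \eqref{uv} at $\hat\varepsilon$, and identify it with $(u_{\hat\varepsilon},v_{\hat\varepsilon})$ via Theorem \ref{main}; the paper phrases this as a contradiction rather than a subsequence argument. Your proof also makes explicit two points the paper leaves implicit: the upgrade to strong $L^2(\mathbb{R}^N)$ convergence through uniform exponential decay, and the check that the limit remainder lies in $E_{\hat\varepsilon}$.
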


\begin{proof}
 Suppose that $(u_{\varepsilon_n},v_{\varepsilon_n})$ does not converge to $(u_{\hat{\varepsilon}},v_{\hat{\varepsilon}})$ as $\varepsilon_n \rightarrow\hat{\varepsilon}$. According to Theorem \ref{pro1}, we have
\begin{equation*}
\left\| \left( \varphi_{\varepsilon_n}, \psi_{\varepsilon_n} \right) \right\|_{H} \leq C\varepsilon_n ^{5+\frac{N}{2}}.
\end{equation*}
Therefore, there exist subsequence of $\{( \varphi_{\varepsilon_n}, \psi_{\varepsilon_n})\}$ (we still denote as $\{( \varphi_{\varepsilon_n}, \psi_{\varepsilon_n})\}$), and $\{( \hat{\varphi}, \hat{\psi})\}\in H^1 (\mathbb{R}^N)\times H^1 (\mathbb{R}^N)$, such that
\begin{align}\label{4140}
&\varphi_{\varepsilon_n} \rightharpoonup \hat{\varphi}, \ \ \text{in} \ \ H^1(\mathbb{R}^N), \ \ \ \psi_{\varepsilon_n}\rightharpoonup \hat{\psi}, \ \ \text{in} \ \ H^1(\mathbb{R}^N),\notag\\
&\varphi_{\varepsilon_n} \rightarrow \hat{\varphi}, \ \ \text{in} \ \ L^q_{loc}(\mathbb{R}^N), \ \ \ \psi_{\varepsilon_n}\rightarrow \hat{\psi}, \ \ \text{in} \ \ L^q_{loc}(\mathbb{R}^N),\ \ q\in[2,2^*)
\end{align}
and
\begin{equation}\label{4141}
||( \hat{\varphi}, \hat{\psi})||_{H}\leq C\hat{\varepsilon} ^{5+\frac{N}{2}}.
\end{equation}
Noting that $|\xi_{\varepsilon,l}-\xi_l|=O(\varepsilon^2)$, there exists $\hat{x}_l\in B_\delta (\xi_l)$ such that
\begin{equation}\label{4142}
\xi_{\varepsilon_n,l}\rightarrow \hat{x}_l,\ \ \text{as} \ \ n\rightarrow \infty.
\end{equation}

On the one hand, by \eqref{4140} and \eqref{4142}, letting $n\rightarrow \infty$, we get
 \begin{equation}\label{eq41} \left\{\begin{array}{ll}
		-\hat{\varepsilon} ^2 \triangle \hat{u}+\left( \hat{\varepsilon}^2 P\left( x \right) +1 \right) \hat{u}=\mu_1  \hat{u}^3+\beta \hat{u}\hat{v}^2,\,\, \text{in} \,\,\mathbb{R}^N,\\
		-\hat{\varepsilon} ^2 \triangle \hat{v}+\left( \hat{\varepsilon}^2 Q\left( x \right) +1 \right) \hat{v}=\mu_2  \hat{v}^3+\beta \hat{v}\hat{u}^2,\,\, \text{in} \,\,\mathbb{R}^N,\\
	\end{array} \right.
\end{equation}
where
\begin{align}
\hat{u}=\sum_{l=1}^k \left( w^{*}\left(\frac{x - \hat{x}_l}{\hat{\varepsilon}}\right) + \hat{\varepsilon}^4 W_l^{*}\left(\frac{x - \hat{x}_l}{\hat{\varepsilon}}\right) \right) + \hat{\varphi},\\
\hat{v}=\sum_{l=1}^k \left( w^{\star}\left(\frac{x - \hat{x}_l}{\hat{\varepsilon}}\right) + \hat{\varepsilon}^4 W_l^{\star}\left(\frac{x - \hat{x}_l}{\hat{\varepsilon}}\right) \right) + \hat{\psi}.
\end{align}
Additionally, it is further crucial to note that \( (\hat{u},\hat{v})\neq (u_{\hat{\varepsilon}},v_{\hat{\varepsilon}}) \), for if these two pairs were identical, this would immediately contradict our initial assumption that $(u_{\varepsilon_n},v_{\varepsilon_n})$ does not converge to \( (u_{\hat{\varepsilon}},v_{\hat{\varepsilon}}) \).
On the other hand, by Lemma \ref{lem34} and \eqref{4142}, we obtain that
\begin{align*}
|\varphi_{\varepsilon_n}(x)|\leq C\sum_{l=1}^k e^{-\frac{\tau|x-\hat{x}_l+(\hat{x}_l-\xi_{\varepsilon_n,l})|}{\hat{\varepsilon}+(\varepsilon_n-\hat{\varepsilon})}}\leq C\sum_{l=1}^k e^{-\frac{\tau|x-\hat{x}_l|}{\hat{\varepsilon}}}\quad \text{and} \quad |\psi_{\varepsilon_n}(x)|\leq C\sum_{l=1}^k e^{-\frac{\tau|x-\hat{x}_l|}{\hat{\varepsilon}}},
\end{align*}
 which implies that
\begin{align}\label{4147}
|\hat{\varphi}(x)|\leq C\sum_{l=1}^k e^{-\frac{\tau|x-\hat{x}_l|}{\hat{\varepsilon}}}\ \ \ \text{and}\ \ \ |\hat{\psi}(x)|\leq C\sum_{l=1}^k e^{-\frac{\tau|x-\hat{x}_l|}{\hat{\varepsilon}}}.
\end{align}
By combining \eqref{4141} and \eqref{eq41}-\eqref{4147}, and invoking Theorem \ref{main}, we conclude that $(\hat{u},\hat{v})=(u_{\hat{\varepsilon}},v_{\hat{\varepsilon}})$.  This leads to a contradiction.
\end{proof}

\section{Existence and uniqueness of normalized solutions}
In this section, we proceed to prove Theorem \ref{pro11} and Theorem \ref{main1}.
\subsection{Existence of normalized solutions}
\subsubsection{The non-critical case (N=3)}
To ensure that the solution obtained in Theorem \ref{pro1} satisfies \eqref{rho2}, we make the following setting:
\begin{equation*}
(\tilde{u}_\varepsilon, \tilde{v}_\varepsilon):=\left(\frac{\varepsilon \rho u_\varepsilon}{\left(||u_{\varepsilon}||_{L^2(\mathbb{R}^3)}^2+ ||v_{\varepsilon}||_{L^2(\mathbb{R}^3)}^2\right)^{\frac12}}, \frac{\varepsilon \rho v_\varepsilon}{\left(||u_{\varepsilon}||_{L^2(\mathbb{R}^3)}^2+||v_{\varepsilon}||_{L^2(\mathbb{R}^3)}^2\right)^{\frac12}}\right),
\end{equation*}
then $(\tilde{u}_\varepsilon,\tilde{v}_\varepsilon)$ satisfies the system
 \begin{equation*}
  \left\{\begin{array}{ll}
		-\varepsilon ^2 \triangle \tilde{u}_{\varepsilon}+\left( \varepsilon ^2P\left( x \right) +1 \right) \tilde{u}_{\varepsilon}=\frac{ ||u_{\varepsilon}||_{L^2(\mathbb{R}^3)}^2+ ||v_{\varepsilon}||_{L^2(\mathbb{R}^3)}^2}{\varepsilon^2 \rho^2} (\mu_1  \tilde{u}_{\varepsilon}^3+\beta \tilde{u}_{\varepsilon}\tilde{v}_{\varepsilon}^2),\,\, \text{in} \,\,\mathbb{R}^3,\\
		-\varepsilon ^2 \triangle \tilde{v}_{\varepsilon}+\left( \varepsilon ^2Q\left( x \right) +1 \right) \tilde{v}_{\varepsilon}=\frac{||u_{\varepsilon}||_{L^2(\mathbb{R}^3)}^2 +||v_{\varepsilon}||_{L^2(\mathbb{R}^3)}^2}{\varepsilon^2 \rho^2}  (\mu_2  \tilde{v}_{\varepsilon}^3+\beta \tilde{v}_{\varepsilon}\tilde{u}_{\varepsilon}^2),\,\, \text{in} \,\,\mathbb{R}^3,\\
\varepsilon^{-2}\int_{\mathbb{R}^3} (\tilde{u}_{\varepsilon}^2+\tilde{v}_{\varepsilon}^2)=\rho^2.\\
	\end{array} \right.
\end{equation*}
Our goal is to prove that there exists $\varepsilon$, such that
\begin{equation*}
\frac{||u_{\varepsilon}||_{L^2(\mathbb{R}^3)}^2+||v_{\varepsilon}||_{L^2(\mathbb{R}^3)}^2} {\varepsilon^2 \rho^2}=1
\end{equation*}
 Denote
\begin{equation*}
F(\varepsilon):=\frac{||u_{\varepsilon}||_{L^2(\mathbb{R}^3)}^2+||v_{\varepsilon}||_{L^2(\mathbb{R}^3)}^2} {\varepsilon^2 \rho^2}.
\end{equation*}
By Corollary \ref{con1}, $F(\varepsilon)$ is a continuous function in $(0,\varepsilon_0)$. Through the previous estimates, we obtain that
\begin{align}\label{Fvarepslion}
F(\varepsilon)&=\varepsilon^{-2}\rho^{-2}\int_{\mathbb{R}^3}(u_\varepsilon^2+v_\varepsilon^2)\notag\\
&=\varepsilon^{-2}\rho^{-2}\int_{\mathbb{R}^3}\left(\left(\sum_{l=1}^k \left( w_{\varepsilon, \xi_{\varepsilon,l}}^{*} + \varepsilon^4 W_{\varepsilon, \xi_{\varepsilon,l}}^{*} \right) + \varphi_{\varepsilon}\right)^2+ \left(\sum_{l=1}^k \left( w_{\varepsilon, \xi_{\varepsilon,l}}^{\star} + \varepsilon^4 W_{\varepsilon, \xi_{\varepsilon,l}}^{\star} \right) + \psi_{\varepsilon}\right)^2 \right)\notag\\
&=\varepsilon^{-2}\rho^{-2}\int_{\mathbb{R}^3}\left(\sum_{l=1}^k\left( w_{\varepsilon, \xi_{\varepsilon,l}}^{*}\right)^2+\sum_{l=1}^k\left( w_{\varepsilon, \xi_{\varepsilon,l}}^{\star}\right)^2\right)+O(\rho^{-2}\varepsilon^5)\notag\\
&=\varepsilon \rho^{-2}\rho_0^2+O(\rho^{-2}\varepsilon^5).
\end{align}
Then we have
\begin{align*}
F\left(\frac{\rho^2}{2\rho_0^2}\right)&=\frac{1}{2}+O(\rho^{-2}\varepsilon^5)<1
\end{align*}
and
\begin{align*}
F\left(\frac{3\rho^2}{2\rho_0^2}\right)&=\frac{3}{2}+O(\rho^{-2}\varepsilon^5)>1.
\end{align*}
It follows from Corollary \ref{con1} that $F(\varepsilon)$ is continuous. By intermediate value theorem, we know that there exists $\tilde{\varepsilon}\in\left(\frac{\rho^2}{2\rho_0^2},\frac{3\rho^2}{2\rho_0^2}\right)$ such that $F(\tilde{\varepsilon})=1$.

\begin{remark}
From the expression of \( F(\varepsilon) \) in \eqref{Fvarepslion}, we note the following key observation for \( N=2 \): the leading-order estimate of \( F(\varepsilon) \) is a constant (i.e., independent of \( \varepsilon \)) at the leading order. This, in turn, precludes the possibility of comparing \( F(\varepsilon) \) with 1 by selecting a sufficiently small \( \varepsilon \). Moreover, this property further implies that for the system \eqref{eq0}-\eqref{rho2} to admit a solution, \( \rho \) must be close to a specific critical threshold.
\end{remark}
\subsubsection{The critical case (N=2)}
In this case, we set
\begin{align*}
\bar{F}(\varepsilon)&:=\varepsilon^{-2}\int_{\mathbb{R}^2} (u_{\varepsilon}^2+v_{\varepsilon}^2) = \rho_0^2 +2\varepsilon^4 \sum_{l=1}^k \int_{\mathbb{R}^2} \left(w^{*}W_l^{*}+w^{\star}W_l^{\star}\right) +O(\varepsilon^5).
\end{align*}
For convenience, we write
\begin{align*}
\sigma_1=\sqrt { \frac { \beta - \mu_2 } { \beta ^ { 2 } - \mu_1 \mu_2 } },\quad \sigma_2=\sqrt { \frac { \beta - \mu_1 } { \beta ^ { 2 } - \mu_1 \mu_2 } }.
\end{align*}
By \eqref{eqw} and \eqref{eq3}, we have
 \begin{equation*} \left\{\begin{array}{ll}
\mu_1 \sigma_1^2+\beta\sigma_2^2=1,\\
\mu_2 \sigma_2^2+\beta\sigma_1^2=1.
	\end{array} \right.
\end{equation*}
Multiplying the first equation in \eqref{eq2} by $\sigma_1$, the second by $\sigma_2$, and adding the resulting equations yields:
\begin{equation*}
-\triangle z + z - 3w^2 z =-\sum_{i=1}^{2} (\sigma_1^2 p_{li}x^{2}_{i}+ \sigma_2^2 q_{li}x^{2}_{i}) w,
\end{equation*}
where
\begin{equation*}
z=\sigma_1 W_l^{*}+\sigma_2 W_l^{\star}.
\end{equation*}

Assume that $z_{0}$ is the solution of
\begin{equation*}
-\triangle z + z - 3w^2 z =-|x|^{2}w,
\end{equation*}
then
\begin{align*}
&\quad\int_{\mathbb{R}^2} \left(w^{*}W_l^{*}+w^{\star}W_l^{\star}\right)
= \int_{\mathbb{R}^2} w z = \frac{1}{2} \sum_{i=1}^{2} (\sigma_1^2 p_{li}+ \sigma_2^2 q_{li}) \int_{\mathbb{R}^{2}} wz_{0}.
\end{align*}

Therefore,
\begin{align*}
\bar{F}(\varepsilon) = \rho_0^2 +\varepsilon^4 \sum_{l=1}^k  \sum_{i=1}^{2} (\sigma_1^2 p_{li}+ \sigma_2^2 q_{li}) \int_{\mathbb{R}^{2}} wz_{0}+O(\varepsilon^5).
\end{align*}

As mentioned by the work of \cite{huang2025normalized}, we know that
\[\int_{\mathbb{R}^{2}} wz_{0} <0. \]

If $\sum_{l=1}^k  \sum_{i=1}^{2} (\sigma_1^2 p_{li}+ \sigma_2^2 q_{li})>0$, for $\rho^{2} \in (\rho_{0}^{2}-\tilde\delta, \rho_{0}^{2})$, there exists $ \tilde\varepsilon$, such that $\bar{F}(\tilde\varepsilon)= \rho^{2}$.

If $\sum_{l=1}^k  \sum_{i=1}^{2} (\sigma_1^2 p_{li}+ \sigma_2^2 q_{li})< 0$, for $\rho^{2} \in (\rho_{0}^{2}, \rho_{0}^{2}+\tilde\delta)$, there exists $ \tilde\varepsilon$, such that $\bar{F}(\tilde\varepsilon)= \rho^{2}$.

To establish the uniqueness of the solution, it is necessary to derive a more detailed estimate of \( \lambda_\rho \) for \( N=2 \). To this end, we firstly state two integral identities.
\begin{lemma}\label{lemww}
  It holds that
  \begin{align*}
  2\int_{\mathbb{R}^2} \Big( (w^{*})^2 +(w^{\star})^2  \Big) = \int_{\mathbb{R}^2} \Big( \mu_1(w^{*})^4 +\mu_2 (w^{\star})^4 +2\beta (w^{*})^2 (w^{\star})^2 \Big),
  \end{align*}
  and
    \begin{align*}
  \int_{\mathbb{R}^2} \Big( |\nabla w^{*}|^2 +|\nabla w^{\star}|^2  \Big) = \int_{\mathbb{R}^2} \Big( (w^{*})^2 +(w^{\star})^2  \Big) .
  \end{align*}
\end{lemma}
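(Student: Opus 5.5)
The plan is to reduce both identities to scalar integral identities for the ground state $w$ of \eqref{eqw} with $N=2$. We use the explicit form $(w^{*},w^{\star})=(\sigma_1 w,\sigma_2 w)$ together with the algebraic relations already recorded above:
\begin{equation*}
\mu_1\sigma_1^2+\beta\sigma_2^2=1,\qquad \mu_2\sigma_2^2+\beta\sigma_1^2=1 .
\end{equation*}

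First I derive the two scalar identities for $w$. Multiplying $-\Delta w+w=w^3$ by $w$ and integrating over $\mathbb{R}^2$ gives the Nehari identity
\begin{equation*}
\int_{\mathbb{R}^2}|\nabla w|^2+\int_{\mathbb{R}^2}w^2=\int_{\mathbb{R}^2}w^4 .
\end{equation*}
Next I multiply by $x\cdot\nabla w$ and integrate over $B_R(0)$, then let $R\to\infty$. The boundary terms vanish because $w$ and $\nabla w$ decay exponentially, as recalled after \eqref{eqw}. This yields the Pohozaev identity
\begin{equation*}
\frac{N-2}{2}\int_{\mathbb{R}^N}|\nabla w|^2+\frac N2\int_{\mathbb{R}^N}w^2=\frac N4\int_{\mathbb{R}^N}w^4 .
\end{equation*}
For $N=2$ the gradient term drops out, so $\int_{\mathbb{R}^2}w^4=2\int_{\mathbb{R}^2}w^2$. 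Substituting into the Nehari identity gives $\int_{\mathbb{R}^2}|\nabla w|^2=\int_{\mathbb{R}^2}w^2$.

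For the first claimed identity, the left side equals $2(\sigma_1^2+\sigma_2^2)\int w^2$. The right side equals
\begin{equation*}
\bigl(\mu_1\sigma_1^4+\mu_2\sigma_2^4+2\beta\sigma_1^2\sigma_2^2\bigr)\int w^4 .
\end{equation*}
I regroup the coefficient as $\sigma_1^2(\mu_1\sigma_1^2+\beta\sigma_2^2)+\sigma_2^2(\mu_2\sigma_2^2+\beta\sigma_1^2)$, which equals $\sigma_1^2+\sigma_2^2$ by the algebraic relations. Then $\int w^4=2\int w^2$ closes the identity. The second identity is immediate, since both sides carry the common factor $\sigma_1^2+\sigma_2^2$ and $\int|\nabla w|^2=\int w^2$.

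Alternatively, one could run the Nehari and Pohozaev arguments directly on the system \eqref{eq3}, testing with $(w^{*},w^{\star})$ and with $(x\cdot\nabla w^{*},x\cdot\nabla w^{\star})$. This gives the same conclusion without using the explicit form, but the scalar route is shorter.

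The only point requiring care is justifying the vanishing of the boundary terms in the Pohozaev computation on $B_R$. This follows from the exponential decay of $w$ and $w'$, which makes the boundary integrals $O(R^2e^{-2R})\to0$. There is no real obstacle here.
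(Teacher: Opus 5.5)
Your proof is correct. It rests on the same two ingredients as the paper, the Nehari identity and the Pohozaev identity in dimension $2$, but you apply them to a different object.

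The paper works directly on the system \eqref{eq3}. It tests with $(w^{*},w^{\star})$ and with $(x\cdot\nabla w^{*},x\cdot\nabla w^{\star})$ and sums. The system Pohozaev identity with $N=2$ is exactly the first claimed identity. Combining it with the system Nehari identity gives the second.

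You instead prove the scalar facts $\int_{\mathbb{R}^2}w^4=2\int_{\mathbb{R}^2}w^2$ and $\int_{\mathbb{R}^2}|\nabla w|^2=\int_{\mathbb{R}^2}w^2$. You then transfer them through the defining form $(w^{*},w^{\star})=(\sigma_1w,\sigma_2w)$. The relations $\mu_1\sigma_1^2+\beta\sigma_2^2=\mu_2\sigma_2^2+\beta\sigma_1^2=1$ collapse the quartic coefficient to $\sigma_1^2+\sigma_2^2$, and that regrouping is verified correctly.

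What each route buys:
\begin{itemize}
\item The paper's route never uses the synchronized form. It therefore holds for any sufficiently decaying solution of \eqref{eq3}, and it avoids the coefficient bookkeeping.
\item Your route reduces everything to standard scalar identities for $w$. It also makes transparent that each side of each identity is just $(\sigma_1^2+\sigma_2^2)$ times a scalar quantity.
\end{itemize}

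The alternative you mention in your last paragraph is precisely the paper's proof. Your treatment of the boundary terms via the exponential decay of $w$ and $w'$ is adequate for either version.
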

\begin{proof}
Multiplying the first equation of system \eqref{eq3} by \( w^* \), the second equation by \( w^\star \), integrating each product over \( \mathbb{R}^N \), and summing the resulting equations,  this yields
  \begin{align*}
  \int_{\mathbb{R}^N} \Big(  |\nabla w^{*}|^2 +|\nabla w^{\star}|^2+(w^{*})^2 +(w^{\star})^2  \Big) = \int_{\mathbb{R}^N} \Big( \mu_1(w^{*})^4 +\mu_2 (w^{\star})^4 +2\beta (w^{*})^2 (w^{\star})^2 \Big).
  \end{align*}
The second integral relation is obtained by: multiplying the first equation of system \eqref{eq3} by \( x \cdot \nabla w^* \), multiplying the second equation by \( x \cdot \nabla w^\star \), integrating each product over \( \mathbb{R}^N \), summing the two integrated equations. This gives:
  \begin{align*}
  (N-2) \int_{\mathbb{R}^2} \Big(  |\nabla w^{*}|^2 +|\nabla w^{\star}|^2\Big) + N\int_{\mathbb{R}^2} \Big( (w^{*})^2 +(w^{\star})^2  \Big) =\frac N 2 \int_{\mathbb{R}^2} \Big( \mu_1(w^{*})^4 +\mu_2 (w^{\star})^4 +2\beta (w^{*})^2 (w^{\star})^2 \Big).
  \end{align*}

  This result follows directly from these two integral relations (with \( N=2 \)).
\end{proof}

In addition, we have the following key lemma.

\begin{lemma}\label{lemwW}
It holds that
\begin{align}\label{eqwW}
\sum_{l=1}^k\int_{\mathbb{R}^2} \bigl( \nabla w^{*} \cdot \nabla W_{l}^{*}+\nabla w^{\star} \cdot \nabla W_{l}^{\star} +  w^{*} W_{l}^{*}+ w^{\star} W_{l}^{\star}   \bigr) = \frac14 \sum_{l=1}^k \sum_{i=1}^{2}\big(\sigma_1^2 p_{li} + \sigma_2^2  q_{li}\big) \int_{\mathbb{R}^2} |x|^2 w^2 .
\end{align}
\end{lemma}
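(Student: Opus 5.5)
The plan is to test the limiting system \eqref{eq3} against $(W_l^{*},W_l^{\star})$ and then use the self-adjointness of the linearized operator around $w$. Everything reduces to the scalar combination $z=\sigma_1 W_l^{*}+\sigma_2 W_l^{\star}$ already introduced in the critical-case existence argument. Integrations by parts on $\mathbb{R}^2$ are justified by the exponential decay of $w$ and of $W_l^{*},W_l^{\star}$ (Lemma \ref{le3.1.3}), together with standard elliptic regularity for the gradients. Fix $l$; summing over $l$ at the end is trivial.

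\emph{Step 1.} Multiply the first equation of \eqref{eq3}, satisfied by $(w^*,w^\star)$, by $W_l^{*}$, and the second by $W_l^{\star}$. Integrating gives
\begin{align*}
\int_{\mathbb{R}^2}\bigl(\nabla w^{*}\cdot\nabla W_l^{*}+w^{*}W_l^{*}+\nabla w^{\star}\cdot\nabla W_l^{\star}+w^{\star}W_l^{\star}\bigr)
=\int_{\mathbb{R}^2}\bigl(\mu_1(w^*)^3+\beta w^*(w^\star)^2\bigr)W_l^{*}+\bigl(\mu_2(w^\star)^3+\beta w^\star(w^*)^2\bigr)W_l^{\star}.
\end{align*}
Substitute $w^*=\sigma_1 w$ and $w^\star=\sigma_2 w$, and use $\mu_1\sigma_1^2+\beta\sigma_2^2=1$ and $\mu_2\sigma_2^2+\beta\sigma_1^2=1$. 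The right-hand side then collapses to
\[
\int_{\mathbb{R}^2} w^3\,(\sigma_1W_l^{*}+\sigma_2W_l^{\star})=\int_{\mathbb{R}^2} w^3 z .
\]

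\emph{Step 2.} Recall that $z$ solves
\[
-\Delta z+z-3w^2z=-f_l w,\qquad f_l(x):=\sum_{i=1}^2(\sigma_1^2p_{li}+\sigma_2^2q_{li})x_i^2 .
\]
Testing this equation with $w$ gives
\[
\int_{\mathbb{R}^2}(\nabla z\cdot\nabla w+zw)-3\int_{\mathbb{R}^2} w^3z=-\int_{\mathbb{R}^2} f_l w^2 .
\]
Testing \eqref{eqw} with $z$ gives
\[
\int_{\mathbb{R}^2}(\nabla w\cdot\nabla z+wz)=\int_{\mathbb{R}^2} w^3z .
\]
Subtracting the two identities yields $-2\int w^3 z=-\int f_l w^2$, that is,
\[
\int_{\mathbb{R}^2} w^3 z=\tfrac12\int_{\mathbb{R}^2} f_l w^2 .
\]

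\emph{Step 3.} Since $w$ is radial in $\mathbb{R}^2$, we have $\int x_1^2w^2=\int x_2^2w^2=\frac12\int|x|^2w^2$. Therefore
\[
\tfrac12\int_{\mathbb{R}^2} f_l w^2=\tfrac14\sum_{i=1}^2(\sigma_1^2p_{li}+\sigma_2^2q_{li})\int_{\mathbb{R}^2}|x|^2w^2 .
\]
Combining Steps 1–3 and summing over $l=1,\dots,k$ gives \eqref{eqwW}.

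The argument is essentially algebraic. The only point requiring care, which I expect to be the main obstacle, is Step 1: verifying that the coupling terms reduce exactly to $\sigma_1 w^3W_l^{*}+\sigma_2w^3W_l^{\star}$. This relies on the two algebraic identities for $\sigma_1,\sigma_2$, which hold for $\beta\in\mathcal M$. A secondary check is that the boundary terms vanish at infinity, which follows from Lemma \ref{le3.1.3} and the decay of $w$.
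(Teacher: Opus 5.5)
Your proof is correct and is essentially the paper's argument. The paper also pairs the $W_l$-system \eqref{eq2} with $(w^*,w^\star)$ and the limiting system \eqref{eq3} with $(W_l^*,W_l^\star)$, and uses the cubic homogeneity of the nonlinearity to obtain $I_l=3I_l-\int_{\mathbb{R}^2}\sum_{i}(p_{li}x_i^2(w^*)^2+q_{li}x_i^2(w^\star)^2)$. The only difference is bookkeeping: the paper regroups the coupling terms at the system level, whereas you first collapse to $z=\sigma_1W_l^*+\sigma_2W_l^\star$. Since the $l$-th summand equals $\int_{\mathbb{R}^2}(\nabla w\cdot\nabla z+wz)$, your scalar Green identity in Step 2 is exactly the paper's relation $J_l=3I_l$, written for this scalar combination.
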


\begin{proof}
For each $l = 1, \dots, k$, define
\[
I_l := \int_{\mathbb{R}^2} \left( \nabla w^* \cdot \nabla W_l^* + \nabla w^\star \cdot \nabla W_l^\star + w^* W_l^* + w^\star W_l^\star \right) dx.
\]
Integration by parts gives
\[
I_l = \int_{\mathbb{R}^2} w^* (-\Delta W_l^* + W_l^*) dx + \int_{\mathbb{R}^2} w^\star (-\Delta W_l^\star + W_l^\star) dx.
\]
Substituting the system \eqref{eq2}, we obtain
\[
I_l = J_l - \int_{\mathbb{R}^2} \sum_{i=1}^2 p_{li} x_i^2 (w^*)^2 dx - \int_{\mathbb{R}^2} \sum_{i=1}^2 q_{li} x_i^2 (w^\star)^2 dx,
\]
where
\[
\begin{aligned}
J_l &= \int_{\mathbb{R}^2} w^* \left[ 3\mu_1 (w^*)^2 W_l^* + \beta (w^\star)^2 W_l^* + 2\beta w^* w^\star W_l^\star \right] dx \\
&\quad + \int_{\mathbb{R}^2} w^\star \left[ 3\mu_2 (w^\star)^2 W_l^\star + \beta (w^*)^2 W_l^\star + 2\beta w^* w^\star W_l^* \right] dx\\
&=\int_{\mathbb{R}^2} w^* \left[ 3\mu_1 (w^*)^2 W_l^* + 3\beta (w^\star)^2 W_l^*  \right] dx
+ \int_{\mathbb{R}^2} w^\star \left[ 3\mu_2 (w^\star)^2 W_l^\star +3 \beta (w^*)^2 W_l^\star \right] dx.
\end{aligned}
\]
Using the system \eqref{eq3}, we have
\[
w^*(\mu_1 (w^*)^2 + \beta (w^\star)^2) = -\Delta w^* + w^*, \quad w^\star(\mu_2 (w^\star)^2 + \beta (w^*)^2) = -\Delta w^\star + w^\star
\]
and
\[
J_l = 3\int_{\mathbb{R}^2} (-\Delta w^* + w^*) W_l^* dx + 3\int_{\mathbb{R}^2} (-\Delta w^\star + w^\star) W_l^\star dx = 3I_l.
\]
Thus,
\[
I_l = 3I_l - \int_{\mathbb{R}^2} \sum_{i=1}^2 p_{li} x_i^2 (w^*)^2 dx - \int_{\mathbb{R}^2} \sum_{i=1}^2 q_{li} x_i^2 (w^\star)^2 dx,
\]
which implies
\[
2I_l = \sigma_1^2 \int_{\mathbb{R}^2} \sum_{i=1}^2 p_{li} x_i^2 w^2 dx + \sigma_2^2 \int_{\mathbb{R}^2} \sum_{i=1}^2 q_{li} x_i^2 w^2 dx.
\]
By radial symmetry of $w$, $\int_{\mathbb{R}^2} x_1^2 w^2 dx = \int_{\mathbb{R}^2} x_2^2 w^2 dx = \frac{1}{2}\int_{\mathbb{R}^2} |x|^2 w^2 dx$, so
\[
I_l = \frac{1}{4} \left( \sigma_1^2 \sum_{i=1}^2 p_{li} + \sigma_2^2 \sum_{i=1}^2 q_{li} \right) \int_{\mathbb{R}^2} |x|^2 w^2 dx,
\]
and therefore, we get \eqref{eqwW}.
\end{proof}
By Lemma \ref{lemww} and Lemma \ref{lemwW}, we derive a more refined estimate of \( \lambda_\rho \).

\begin{lemma}\label{lem37}
For \( N=2 \), it follows that
\begin{align}\label{32-0}
 \lambda_\rho \Big(\rho^2 - \rho_0^2\Big)
  =& A  \lambda_\rho^{-1}+C \lambda_\rho^{-2} +O(\lambda_\rho^{-3}),
\end{align}
where
\begin{align}\label{A}
A:= -\frac34 \sum_{l=1}^k \sum_{i=1}^{2}\big(\sigma_1^2 p_{li} + \sigma_2^2  q_{li}\big) \int_{\mathbb{R}^2} |x|^2 w^2.
\end{align}
\end{lemma}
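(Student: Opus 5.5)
The plan is to work in the rescaled variables of \eqref{eq1} with $\varepsilon=\lambda_\rho^{-1/2}$, so that \eqref{32-0} reads
\[
\rho^2-\rho_0^2=A\varepsilon^4+C\varepsilon^6+O(\varepsilon^8),\qquad \rho^2=\varepsilon^{-2}\int_{\mathbb{R}^2}(u_\varepsilon^2+v_\varepsilon^2).
\]
A direct expansion of $\bar F(\varepsilon)$, as in the critical-case computation, is not precise enough. There the term $2\varepsilon^{-2}\int (U_\varepsilon\varphi_\varepsilon+V_\varepsilon\psi_\varepsilon)$ is only known to be $O(\varepsilon^5)$ from $\|(\varphi_\varepsilon,\psi_\varepsilon)\|_H=O(\varepsilon^{5+\frac N2})$. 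I will therefore rewrite the mass through the two global identities for \eqref{eq1} in $N=2$, the Nehari identity and the Pohozaev identity, so that the leading contribution of $(\varphi_\varepsilon,\psi_\varepsilon)$ cancels.

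\textbf{Step 1 (global identities).} I multiply the equations of \eqref{eq1} by $(u_\varepsilon,v_\varepsilon)$ and by $(x-\xi)\cdot\nabla(u_\varepsilon,v_\varepsilon)$, integrate over $\mathbb{R}^2$ and add. This is the same computation as in Lemma \ref{lemww}, now with potentials, and I take $\xi=\xi_l$ on a neighbourhood of each peak, or simply split $\mathbb{R}^2$ into the balls $B_\delta(\xi_{\varepsilon,l})$ and use the local Pohozaev identity \eqref{p4} with exponentially small boundary terms by Lemma \ref{lem33}. In dimension $2$ the quartic terms cancel between the two identities and I obtain
\[
\int(u_\varepsilon^2+v_\varepsilon^2)=\varepsilon^2\int\bigl(|\nabla u_\varepsilon|^2+|\nabla v_\varepsilon|^2\bigr)+\varepsilon^2\int\Bigl(\bigl(2P+\tfrac12 (x-\xi)\cdot\nabla P\bigr)u_\varepsilon^2+\bigl(2Q+\tfrac12(x-\xi)\cdot\nabla Q\bigr)v_\varepsilon^2\Bigr),
\]
up to exponentially small errors. (The exact coefficients will come out of the computation.)

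\textbf{Step 2 (expanding the identity).} I combine this identity with the Nehari identity and insert \eqref{uv}. The $O(1)$ terms are $\rho_0^2$ by Lemma \ref{lemww}. The $\varepsilon^4$ terms come from two sources: the cross terms $\int(\nabla w^*\cdot\nabla W_l^*+w^*W_l^*+\cdots)$, which Lemma \ref{lemwW} evaluates as $\frac14\sum(\sigma_1^2p_{li}+\sigma_2^2q_{li})\int|x|^2w^2$, and the potential terms $\varepsilon^{-2}\cdot\varepsilon^2\int P(\varepsilon y+\xi_{\varepsilon,l})(w^*)^2\,\varepsilon^2$. Using the quadratic expansion of $P,Q$ at $\xi_l$ and the radial symmetry of $w$, these potential terms also produce multiples of $\sum_i(\sigma_1^2p_{li}+\sigma_2^2q_{li})\int|x|^2w^2$, and the two contributions together should give the coefficient $A=-\frac34\sum\cdots$ of \eqref{A}. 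The $\varepsilon^6$ coefficient $C$ is collected from several sources:
\begin{enumerate}[label=(\roman*)]
\item the cubic and quartic Taylor terms of $P$ and $Q$;
\item the shift $\xi_{\varepsilon,l}-\xi_l=C_l\varepsilon^2+O(\varepsilon^5)$ from Lemma \ref{lem355}, which enters at order $\varepsilon^2\cdot\varepsilon^2\cdot\varepsilon^2$ via $P(\xi_{\varepsilon,l})$, $\nabla P(\xi_{\varepsilon,l})$;
\item the cross terms of $\varepsilon^4W$ with the potential.
\end{enumerate}
All of these are explicit constants. Exponential interactions between distinct peaks are $O(e^{-c/\varepsilon})$.

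\textbf{Step 3 (the remainder, the main obstacle).} The delicate point is the term linear in $(\varphi_\varepsilon,\psi_\varepsilon)$. For it, I will use the equation: the linear terms in the combined identity have the form $\langle \mathcal{L}(U_\varepsilon,V_\varepsilon),(\varphi_\varepsilon,\psi_\varepsilon)\rangle$, where $\mathcal L$ is the linearized operator. Since $(U_\varepsilon,V_\varepsilon)$ solves \eqref{eq1} up to an error $R_\varepsilon$ of size $O(\varepsilon^{5})$ in the scaled $L^2$ norm (this is exactly why $\|\varphi\|$ has that order), this pairing becomes $\int R_\varepsilon\varphi_\varepsilon$. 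Alternatively, the dilation direction $x\cdot\nabla U$ appearing in the Pohozaev identity pairs with $\varphi$ through the equation for $\varphi$, which lies in $E_\varepsilon$. Either way the linear term is bounded by $\varepsilon^{-2}\|R_\varepsilon\|\,\|\varphi_\varepsilon\|=O(\varepsilon^{-2}\varepsilon^{1+5}\varepsilon^{1+5})=O(\varepsilon^8)$ in normalized units. The quadratic terms are $O(\varepsilon^{10})$.

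The hardest part is verifying this cancellation precisely, namely that no linear $\varphi$-term of order $\varepsilon^5$ to $\varepsilon^7$ survives. If the orthogonality argument turns out not to give enough, I would first try improving the bound on $(\varphi_\varepsilon,\psi_\varepsilon)$ by one more correction term of order $\varepsilon^6$ in the ansatz, and then check that this correction contributes only to $C$.
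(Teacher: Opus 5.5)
\textbf{The constant $A$.} You assert that the $\varepsilon^4$ contributions ``should give'' $A=-\frac34\sum\cdots$. They do not, and no correct computation can give it. The $\varepsilon^4$ coefficient of $\rho^2-\rho_0^2$ is already fixed by the elementary expansion $\rho^2=\rho_0^2+2\varepsilon^4\sum_l\int(w^*W_l^*+w^\star W_l^\star)+O(\varepsilon^5)$, since the linear $\varphi$-term there is $O(\varepsilon^5)$. Pair \eqref{eq2} with the dilation mode $\vartheta_0=(w^*+x\cdot\nabla w^*,\,w^\star+x\cdot\nabla w^\star)$, which satisfies $\bar{\mathcal Q}\vartheta_0=-2(w^*,w^\star)$ by \eqref{QVV}, and use $\int_{\mathbb{R}^2}x_i^2w(w+x\cdot\nabla w)=-\int_{\mathbb{R}^2}x_i^2w^2$. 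This gives
\[
\int_{\mathbb{R}^2}\bigl(w^*W_l^*+w^\star W_l^\star\bigr)=-\frac14\sum_{i=1}^2\bigl(\sigma_1^2p_{li}+\sigma_2^2q_{li}\bigr)\int_{\mathbb{R}^2}|x|^2w^2 .
\]
Hence, with $S:=\sum_l\sum_i(\sigma_1^2p_{li}+\sigma_2^2q_{li})\int|x|^2w^2$, the correct value is $A=-\frac12 S$.

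Your own identity agrees once its coefficients are fixed. In the variables of \eqref{eq1} with $N=2$ it reads
\[
\rho^2=\int\bigl(|\nabla u_\varepsilon|^2+|\nabla v_\varepsilon|^2\bigr)-\sum_l\int_{B_\delta(\xi_{\varepsilon,l})}\Bigl[\bigl(P+(x-\xi_{\varepsilon,l})\cdot\nabla P\bigr)u_\varepsilon^2+\bigl(Q+(x-\xi_{\varepsilon,l})\cdot\nabla Q\bigr)v_\varepsilon^2\Bigr]
\]
up to exponentially small terms. The gradient cross terms then give $+\varepsilon^4S$ and the potential terms give $-\frac32\varepsilon^4S$, again $-\frac12\varepsilon^4 S$. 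The paper reaches $-\frac34$ only through a slip in \eqref{32-2}: by \eqref{eq3} the $\lambda_\rho^{-1}$ coefficient there equals $2\sum_l I_l$ (with $I_l$ as in the proof of Lemma \ref{lemwW}), not $\sum_l I_l$. So you cannot reach \eqref{A} as written, and you should not claim it.

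\textbf{Step 3.} The cancellation you invoke does not occur for your identity. After rescaling at each peak, its term linear in $(\varphi_\varepsilon,\psi_\varepsilon)$ is
\[
2\int\bigl[(\mu_1(w^*)^3+\beta w^*(w^\star)^2-w^*)\tilde\varphi+(\mu_2(w^\star)^3+\beta(w^*)^2w^\star-w^\star)\tilde\psi\bigr]+O(\varepsilon^9).
\]
This is not a pairing with the residual $R_\varepsilon$. You have identified the linearized operator applied to the ansatz with $R_\varepsilon$, but for this cubic system they differ by $-2(\mu_1U^3+\beta UV^2,\mu_2V^3+\beta VU^2)$. Moreover, $(\varphi_\varepsilon,\psi_\varepsilon)\in E_\varepsilon$ only gives orthogonality to the odd translation modes, so it says nothing about pairings with even profiles. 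Routing through the equation for $\varphi$ gives $O(\|R_\varepsilon\|)=O(\varepsilon^5)$, not $\|R_\varepsilon\|\,\|\varphi_\varepsilon\|$. This term therefore swamps $C\varepsilon^6$.

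You are right that this is the real difficulty. The paper has the same hole: the $(\varphi_\rho,\psi_\rho)$ cross terms in the quartic integrals of \eqref{32-2} are of size $\lambda_\rho\cdot\lambda_\rho^{-5/2}=\lambda_\rho^{-3/2}$, yet they are put into $O(\lambda_\rho^{-7/2})$.

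What does work is the combination of Nehari, Pohozaev and the constraint that is stationary at the profile:
\[
\rho^2=2\mathcal E_\varepsilon(u_\varepsilon,v_\varepsilon)-\sum_{l=1}^k\int_{B_\delta(\xi_{\varepsilon,l})}\Bigl[\bigl(P+\tfrac12(x-\xi_{\varepsilon,l})\cdot\nabla P\bigr)u_\varepsilon^2+\bigl(Q+\tfrac12(x-\xi_{\varepsilon,l})\cdot\nabla Q\bigr)v_\varepsilon^2\Bigr]+O(e^{-\theta/\varepsilon}),
\]
where $\mathcal E_\varepsilon$ is $\varepsilon^{-2}$ times the energy functional of \eqref{eq1}. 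Its first variation at $(U_\varepsilon,V_\varepsilon)$ is $2\varepsilon^{-2}$ times the residual plus an $O(\varepsilon^4)$ potential term. So $(\varphi_\varepsilon,\psi_\varepsilon)$ contributes only $O(\varepsilon^9)$. Since $\mathcal E_\varepsilon'(w^*,w^\star)$ is just the potential term, the $W_l$ also drop out up to $O(\varepsilon^8)$, leaving
\[
\rho^2=\rho_0^2-\frac{\varepsilon^3}{2}\sum_l\int y\cdot\bigl[\nabla P(\xi_{\varepsilon,l}+\varepsilon y)(w^*)^2+\nabla Q(\xi_{\varepsilon,l}+\varepsilon y)(w^\star)^2\bigr]\,dy+O(\varepsilon^8).
\]
This gives $A=-\frac12 S$ immediately.

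Reading off $C$ with an $O(\varepsilon^8)$ remainder then needs Taylor expansions of $P,Q$ at $\xi_l$ to considerably higher order, which your items (i)--(iii) presuppose. $(H_1)$ and the $O(|x-\xi_l|^3)$ remainder do not provide this; with only those you get $\rho^2-\rho_0^2=A\varepsilon^4+O(\varepsilon^5)$ at best.
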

\begin{proof}
Let \( (\bar{u}_{\lambda_{\rho}}, \bar{v}_{\lambda_{\rho}}) \) denote a solution to \eqref{eq0}. For each \( l = 1, \dots, k \), multiplying the first equation in \eqref{eq0} by \( \langle x - \xi_{\rho,l}, \nabla\bar{u}_{\lambda_{\rho}} \rangle \), the second equation by \( \langle x - \xi_{\rho,l}, \nabla\bar{v}_{\lambda_{\rho}} \rangle \), summing these two products, and integrating the resulting equality over \( B_{\delta}(\xi_{\rho,l}) \), we then sum the equations obtained via this procedure over \( l = 1, \dots, k \),
\begin{align}\label{32-1}
&\sum_{l=1}^k \int_{B_{\delta} (\xi_{\rho,l} )} \Bigg[\left((P(x)+\lambda_{\rho})+\frac12\langle \nabla P(x),x - \xi_{\rho,l} \rangle \right)(\bar{u}_{\lambda_{\rho}} )^2- \frac12 \mu_1 (\bar{u}_{\lambda_{\rho}} )^4 - \beta (\bar{u}_{\lambda_{\rho}} )^2 (\bar{v}_{\lambda_{\rho}} )^2 \notag\\
&\quad\quad\quad+\left((Q(x)+\lambda_{\rho})+\frac12\langle \nabla Q(x),x - \xi_{\rho,l} \rangle \right)(\bar{v}_{\lambda_{\rho}} )^2- \frac12 \mu_2 (\bar{v}_{\lambda_{\rho}} )^4 \Bigg] \notag\\
=&\sum_{l=1}^k \int_{\partial B_{\delta} (\xi_{\rho,l} )} \Bigg[-\frac{\partial \bar{u}_{\lambda_{\rho}} }{\partial n}\langle x - \xi_{\rho,l} , \nabla \bar{u}_{\lambda_{\rho}}  \rangle -\frac{\partial \bar{v}_{\lambda_{\rho}} }{\partial n}\langle x - \xi_{\rho,l} , \nabla \bar{v}_{\lambda_{\rho}}  \rangle+\frac12\Bigg(|\nabla \bar{u}_{\lambda_{\rho}} |^2+ (P(x)+\lambda_{\rho})(\bar{u}_{\lambda_{\rho}} )^2 \notag\\
&\quad\quad\quad-\frac{\mu_1}{2}( \bar{u}_{\lambda_{\rho}} )^4 +|\nabla \bar{v}_{\lambda_{\rho}} |^2+ (Q(x)+\lambda_{\rho})(\bar{v}_{\lambda_{\rho}} )^2-\frac{\mu_2}{2}( \bar{v}_{\lambda_{\rho}} )^4 - \beta ( \bar{u}_{\lambda_{\rho}} )^2 ( \bar{v}_{\lambda_{\rho}} )^2\Bigg)\langle x - \xi_{\rho,l} , n\rangle\Bigg]dS,
\end{align}
We now carry out a more precise calculation as follows
\begin{align}\label{32-2}
&\sum_{l=1}^k \int_{B_{\delta} (\xi_{\rho,l} )} \Big(\frac12 \mu_1 (\bar{u}_{\lambda_{\rho}} )^4 + \beta (\bar{u}_{\lambda_{\rho}} )^2 (\bar{v}_{\lambda_{\rho}} )^2 + \frac12 \mu_2 (\bar{v}_{\lambda_{\rho}} )^4 \Big) \notag\\
=&\lambda_{\rho}^2 \sum_{l=1}^k\int_{B_{\delta} (\xi_{\rho,l} )} \Big(\frac12 \mu_1 \big(w_{\lambda_{\rho},\xi_{\rho,l}}^{*}
      +\lambda_{\rho}^{-2}W_{\lambda_{\rho},\xi_{\rho,l}}^{*}\big)^4 + \beta \big(w_{\lambda_{\rho},\xi_{\rho,l}}^{*}
      +\lambda_{\rho}^{-2}W_{\lambda_{\rho},\xi_{\rho,l}}^{*}\big)^2 \big(w_{\lambda_{\rho},\xi_{\rho,l}}^{\star}
      +\lambda_{\rho}^{-2}W_{\lambda_{\rho},\xi_{\rho,l}}^{\star}\big)^2 \notag\\
      &\qquad\qquad+ \frac12 \mu_2
\big(w_{\lambda_{\rho},\xi_{\rho,l}}^{\star}
      +\lambda_{\rho}^{-2}W_{\lambda_{\rho},\xi_{\rho,l}}^{\star}\big)^4 \Big)+O(\lambda_\rho^{-\frac{7}{2}})  \notag\\
=&\lambda_\rho k \int_{\mathbb{R}^2} \Big( (w^{*})^2 +(w^{\star})^2  \Big) \notag\\
&+\lambda_\rho^{-1}\sum_{l=1}^k \int_{\mathbb{R}^2}  \Big(2 \mu_1 (w ^{*})^3 W_l^{*} + 2 \beta (w ^{*}  (w^{\star})^2 W_l^{*} +  w ^{\star}  (w^{*})^2 W_l^{\star} )+ 2 \mu_2 (w ^{\star})^3 W_l^{\star}  \Big)\notag\\
& +\lambda_\rho^{-3} \sum_{l=1}^k\int_{\mathbb{R}^2} \Big(3 \mu_1 (w ^{*})^2 (W_l^*)^2 + \beta \big((w ^{*})^2 (W_l^\star)^2+ (w^{\star})^2 (W_l^*)^2 + 4 w ^{*}w^{\star}W_l^* W_l^\star\big)+ 3 \mu_2 (w ^{\star})^2 (W_l^\star)^2\Big) \notag\\
& + O(\lambda_\rho^{-\frac{7}{2}}) \notag\\
=& \lambda_\rho \rho_0^2 +\lambda_{\rho}^{-1} \sum_{l=1}^k \int_{\mathbb{R}^{2}} \bigl( \nabla w^{*} \nabla W_{l}^{*}+\nabla w^{\star} \nabla W_{l}^{\star} +  w^{*} W_{l}^{*}+ w^{\star} W_{l}^{\star}   \bigr)+ C_1 \lambda_\rho^{-3} +O(\lambda_\rho^{-\frac{7}{2}}),
\end{align}
and, together with \eqref{rho2}, it follows that
\begin{align}\label{32-3}
&\sum_{l=1}^k \int_{B_{\delta} (\xi_{\rho,l} )} \Big[(P(x)+\frac12\langle \nabla P(x),x - \xi_{\rho,l} \rangle )(\bar{u}_{\lambda_{\rho}} )^2+((Q(x)+\frac12\langle \nabla Q(x),x - \xi_{\rho,l} \rangle )(\bar{v}_{\lambda_{\rho}} )^2\Big] \notag\\
=&\lambda_\rho^{-1} \sum_{l=1}^k\sum_{i=1}^{2} \Big(\sigma_1^2 p_{li} + \sigma_2^2 q_{li}\Big) \int_{\mathbb{R}^2} |x|^2 w^2 + C_3 \lambda_\rho^{-2} +O(\lambda_\rho^{-3}).
\end{align}
By substituting \eqref{32-2} and \eqref{32-3} into \eqref{32-1}, it follows that \eqref{32-0} holds.
\end{proof}

Moreover, we establish the specific relationship between the Lagrange multiplier \(\lambda_\rho\) and \(\rho\).

\begin{lemma}\label{lem51}
It follows that
\begin{align*}
\lambda_{\rho} =
\begin{cases}
\rho_0^4 \rho^{-4}+O(\rho^{-2}), &N = 3, \\
\left(\frac{A}{\rho^2 - \rho_0^2}\right)^{\frac12} \left( 1 + \frac { C } { 2 A  } \left(\frac{\rho^2 - \rho_0^2}{A}\right)^{\frac12}  - \frac { 3 C ^ { 2 } } { 8 A ^ { 3 } } (\rho^2 - \rho_0^2) + O \Big( \Big(\frac{\rho^2 - \rho_0^2}{A}\Big)  ^ { \frac32 }\Big) \right), &N = 2,
\end{cases}
\end{align*}
where the constant $A$ is the same as that of \eqref{A}.
\end{lemma}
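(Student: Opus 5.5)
The plan is to treat the two dimensions separately. In both cases the mass constraint \eqref{rho2}, or its refined Pohozaev form from Lemma \ref{lem37}, is an algebraic equation relating $\rho$ and $\lambda_\rho$, and the asymptotics of $\lambda_\rho$ follow by inverting it.

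\emph{Case $N=3$.} I would substitute the form \eqref{uv-} into \eqref{rho2} and change variables $y=\sqrt{\lambda_\rho}(x-\xi_{\rho,l})$ near each peak. Since $\bar u_{\lambda_\rho}=\sqrt{\lambda_\rho}(\cdots)$, this gives
\[
\rho^2=\lambda_\rho\cdot\lambda_\rho^{-3/2}\Big(k\int_{\mathbb{R}^3}\big((w^*)^2+(w^\star)^2\big)+O(\lambda_\rho^{-2})\Big).
\]
The error term collects three contributions:
\begin{itemize}
\item the cross terms $2\lambda_\rho^{-2}\int (w^*W_l^*+w^\star W_l^\star)$, which are finite by Lemma \ref{le3.1.3};
\item the interactions between distinct peaks, which are exponentially small because the $\xi_l$ are distinct;
\item the remainder $(\varphi_\rho,\psi_\rho)$, which is $O(\lambda_\rho^{-5/2})$ in relative size by the bound $\|(\varphi,\psi)\|_H=O(\varepsilon^{5+N/2})$ of Theorem \ref{pro1} with $\varepsilon=\lambda_\rho^{-1/2}$.
\end{itemize}
This is exactly the computation \eqref{Fvarepslion}. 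Writing $\rho_0^2$ for the three-dimensional mass $k\int_{\mathbb{R}^3}((w^*)^2+(w^\star)^2)$, it reads $\rho^2=\rho_0^2\lambda_\rho^{-1/2}(1+O(\lambda_\rho^{-2}))$. Squaring and inverting gives $\lambda_\rho=\rho_0^4\rho^{-4}(1+O(\lambda_\rho^{-2}))$. Since $\lambda_\rho\sim\rho^{-4}$, the error is $O(\rho^{-4}\cdot\rho^{8})=O(\rho^{4})$, which is much smaller than the $O(\rho^{-2})$ claimed. So this case is routine.

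\emph{Case $N=2$.} Here I start from \eqref{32-0}. Set $t=\lambda_\rho^{-1}$ and $d=\rho^2-\rho_0^2$; then \eqref{32-0} reads
\[
d=At^2+Ct^3+O(t^4).
\]
Lemma \ref{lem37} and the existence construction give $t\to0$ and $d/A>0$; this is the sign dichotomy of Theorem \ref{pro11}(2), which uses $A\neq0$ from $(H_3)$ and $\int wz_0<0$. Put $s=(d/A)^{1/2}$. Then $t^2(1+\frac CA t+O(t^2))=s^2$, so
\[
t=s\Big(1+\tfrac CA t+O(t^2)\Big)^{-1/2}.
\]
A first pass gives $t=s+O(s^2)$. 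Feeding this back and expanding the square root gives
\[
t=s-\tfrac{C}{2A}s^2+c_3s^3+\cdots.
\]
Taking reciprocals, $\lambda_\rho=s^{-1}\big(1+\tfrac{C}{2A}s+(\tfrac{C^2}{4A^2}-c_3)s^2+O(s^3)\big)$, and I would match these coefficients with the displayed formula. Formally reverting the truncated series $d=At^2+Ct^3$ produces the $-\tfrac{3C^2}{8A^3}(\rho^2-\rho_0^2)$ term.

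\emph{Main obstacle.} The delicate point is the order bookkeeping. A remainder $O(t^4)$ in \eqref{32-0} perturbs $t$ at relative order $s^2$, which is the same order as the $s^2$ coefficient in the claimed expansion. So either the $s^2$ coefficient must be read modulo that remainder, or \eqref{32-0} has to be sharpened. To sharpen it, I would first track the $O(\lambda_\rho^{-7/2})$ remainders already visible in \eqref{32-2} and \eqref{32-3}, so as to obtain $d=At^2+Ct^3+O(t^{7/2})$ or better. I would then redo the reversion with that error and confirm that everything beyond the stated terms is $O(s^3)$, that is, $O\big(((\rho^2-\rho_0^2)/A)^{3/2}\big)$.
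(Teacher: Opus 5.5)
Your $N=3$ argument is the paper's: you invert the mass expansion \eqref{Fvarepslion} with $\varepsilon=\lambda_\rho^{-1/2}$, and you get a sharper error than the stated $O(\rho^{-2})$. Your $N=2$ computation is also exactly the paper's route: put $t=\lambda_\rho^{-1}$ in \eqref{32-0}, revert the series by coefficient matching, and invert with a binomial expansion. Your diagnosis of the obstacle is correct, and it applies equally to the paper's proof. There, the coefficient $\frac{5C^2}{8A^3}$ and the error $O((\rho^2-\rho_0^2)^2)$ in \eqref{38-3} come from applying Puiseux' theorem to \eqref{38-1} as if its $O(t^4)$ remainder were absent, which is not legitimate for an asymptotic relation.

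The gap is in your proposed repair, which cannot produce the stated formula. First, $O(t^{7/2})$ is a \emph{weaker} remainder than the $O(t^4)$ you already have. The $O(\lambda_\rho^{-7/2})$ of \eqref{32-2} sits in the equation for $\lambda_\rho(\rho^2-\rho_0^2)$, so it becomes $O(t^{9/2})$ in $d$, while \eqref{32-3} only gives $O(t^4)$. More fundamentally, no bound on a remainder can fix the $s^2$ coefficient. If $d=At^2+Ct^3+Et^4+O(t^5)$, your own computation gives
\[
\lambda_\rho=\Big(\frac{A}{d}\Big)^{\frac12}\Big(1+\frac{C}{2A}\Big(\frac{d}{A}\Big)^{\frac12}+\Big(\frac{E}{2A^2}-\frac{3C^2}{8A^3}\Big)d+O\Big(\Big(\frac{d}{A}\Big)^{\frac32}\Big)\Big).
\]
So the claimed term $-\frac{3C^2}{8A^3}(\rho^2-\rho_0^2)$ is right only if $E=0$, and nothing forces that. $E$ already contains the explicit $C_1\lambda_\rho^{-3}$ of \eqref{32-2}, together with the unexpanded $\lambda_\rho^{-3}$-order terms hidden in \eqref{32-3}. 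Hence from \eqref{32-0} one can only conclude $\lambda_\rho=(A/d)^{1/2}\bigl(1+\frac{C}{2A}(d/A)^{1/2}+O(d/A)\bigr)$. Your first alternative, reading the $s^2$ term modulo the remainder, is therefore the only one available; going further requires computing $E$, not sharpening an error term.

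Separately, since you take \eqref{32-0} as input, note that the last equality of \eqref{32-2} drops a factor $2$. The preceding line equals $2\lambda_\rho^{-1}\sum_l\int[(-\Delta w^*+w^*)W_l^*+(-\Delta w^\star+w^\star)W_l^\star]$. The correct constant is therefore $A=-\frac12\sum_{l=1}^k\sum_{i=1}^2(\sigma_1^2p_{li}+\sigma_2^2q_{li})\int_{\mathbb{R}^2}|x|^2w^2$. Your $N=3$-style direct mass expansion confirms this in $N=2$, since $\int_{\mathbb{R}^2}wz_0=-\frac12\int_{\mathbb{R}^2}|x|^2w^2$ follows from $(-\Delta+1-3w^2)(w+x\cdot\nabla w)=-2w$.
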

\begin{proof}
When $N=3$, similar to \eqref{Fvarepslion}, we conclude that
\begin{align*}
\rho^2=\int_{\mathbb{R}^3}(\bar{u}_{\lambda_{\rho}}^2+\bar{v}_{\lambda_{\rho}}^2)=\lambda_{\rho}^{-\frac{1}{2}} \rho_0^2 +O(\rho^{-2}\lambda_{\rho}^{-\frac{3}{2}}).
\end{align*}
Moreover, combining with $\lambda_{\rho}^{-1/2}\in\left(\frac12 \rho_0^{-2} \rho^2 ,\frac32 \rho_0^{-2} \rho^2 \right)$, we get
\begin{align*}
\lambda_{\rho}=\rho_0^4 \rho^{-4}+O(\rho^{-2}).
\end{align*}
When $N=2$, by Lemma \ref{lem37}, we have
\begin{align}\label{38-0}
\rho^2 - \rho_0^2
  =& A \lambda_\rho^{-2} +C \lambda_\rho^{-3} +O(\lambda_\rho^{-4}).
\end{align}
Let \( t = \lambda_\rho^{-1} \), so that \( \lambda_\rho = t^{-1} \). Substituting this into \eqref{38-0}, we obtain
\begin{align}\label{38-1}
\frac{\rho^2 - \rho_0^2}{A}=  t^2 +\frac{C}{A} t^3 +O(t^4).
\end{align}
By Puiseux' Theorem (Theorem 2.1.1 in \cite{wall2004singular}),  we can expand \( t \) as a power series in \( \left(\frac{\rho^2 - \rho_0^2}{A}\right)^{\frac12} \):
\begin{align}\label{38-2}
t=\alpha_1 \left(\frac{\rho^2 - \rho_0^2}{A}\right)^{\frac12}+\alpha_2 \left(\frac{\rho^2 - \rho_0^2}{A}\right)+\alpha_3 \left(\frac{\rho^2 - \rho_0^2}{A}\right)^{\frac32}+\alpha_4 \left(\frac{\rho^2 - \rho_0^2}{A}\right)^2+\cdots.
\end{align}
Substituting \eqref{38-2} into \eqref{38-1} and equating the coefficients of like powers, we can conclude that
\begin{align}\label{38-3}
t= \left(\frac{\rho^2 - \rho_0^2}{A}\right)^{\frac12}\Big(1-\frac{C}{2A} \left(\frac{\rho^2 - \rho_0^2}{A}\right)^{\frac12}+\frac{5C^2}{8A^3} (\rho^2 - \rho_0^2)-\frac{C^3}{A^{3}} \left(\frac{\rho^2 - \rho_0^2}{A}\right)^{\frac32}+O((\rho^2 - \rho_0^2)^2)\Big).
\end{align}
Observing that \( \lambda_\rho = t^{-1} \), we use the binomial expansion
\[
(1 + D)^{-1} = 1 - D + D^2 - D^3 + \cdots,
\]
where
\[
D = -\frac{C}{2A} \left(\frac{\rho^2 - \rho_0^2}{A}\right)^{\frac12} + \frac{5C^2}{8A^3} (\rho^2 - \rho_0^2) + O\left(\left(\frac{\rho^2 - \rho_0^2}{A}\right)^{\frac32}\right),
\]
then we get
\begin{align}\label{38-4}
&\left(1-\frac{C}{2A} \left(\frac{\rho^2 - \rho_0^2}{A}\right)^{\frac12} + \frac{5C^2}{8A^3} (\rho^2 - \rho_0^2) + O\left(\left(\frac{\rho^2 - \rho_0^2}{A}\right)^{\frac32}\right)\right)^{-1}\notag\\
= & 1 + \frac { C } { 2 A  } \left(\frac{\rho^2 - \rho_0^2}{A}\right) ^ { \frac12 } - \frac { 3 C ^ { 2 } } { 8 A ^ { 3 } } (\rho^2 - \rho_0^2) + O \left( \left(\frac{\rho^2 - \rho_0^2}{A}\right) ^ {\frac32 } \right).
\end{align}
By combining \eqref{38-3} with \eqref{38-4}, we have
\begin{align*}
\lambda _ { \rho } = \left(\frac{A}{\rho^2 - \rho_0^2}\right)^{\frac12} \left( 1 + \frac { C } { 2 A  } \left(\frac{\rho^2 - \rho_0^2}{A}\right)^{\frac12}  - \frac { 3 C ^ { 2 } } { 8 A ^ { 3 } } (\rho^2 - \rho_0^2) + O \Big( \Big(\frac{\rho^2 - \rho_0^2}{A}\Big)  ^ { \frac32 }\Big) \right)  .
\end{align*}
\end{proof}

\section{Uniqueness of normalized multi-peak solutions}

In this section, we aim to prove Theorem \ref{main1} via a contradiction argument. By Theorem \ref{pro11}, we assume there exist two distinct normalized solutions \( (\bar{u}_{\lambda_{1,\rho}}^{(1)}, \bar{v}_{\lambda_{1,\rho}}^{(1)}) \) and \( (\bar{u}_{\lambda_{2,\rho}}^{(2)}, \bar{v}_{\lambda_{2,\rho}}^{(2)}) \) of the following form
\begin{align*}
(\bar{u}_{\lambda_{1,\rho}}^{(1)},\bar{v}_{\lambda_{1,\rho}}^{(1)})=&\sqrt{\lambda_{1,\rho}}\left( \sum_{l=1}^k \left( w_{\lambda_{1,\rho}, \xi_{\rho,l}^{(1)}}^{*} + \lambda_{1,\rho}^{-2} W_{\lambda_{1,\rho}, \xi_{\rho,l}^{(1)}}^{*} \right) + \varphi_{\rho}^{(1)},\sum_{l=1}^k \left( w_{\lambda_{1,\rho}, \xi_{\rho,l}^{(1)}}^{\star} + \lambda_{1,\rho}^{-2} W_{\lambda_{1,\rho}, \xi_{\rho,l}^{(1)}}^{\star} \right) + \psi_{\rho}^{(1)} \right)
 \end{align*}
and
 \begin{align*}
 (\bar{u}_{\lambda_{2,\rho}}^{(2)},\bar{v}_{\lambda_{2,\rho}}^{(2)})=&\sqrt{\lambda_{2,\rho}}\left( \sum_{l=1}^k \left( w_{\lambda_{2,\rho}, \xi_{\rho,l}^{(2)}}^{*} + \lambda_{2,\rho}^{-2} W_{\lambda_{2,\rho}, \xi_{\rho,l}^{(2)}}^{*} \right) + \varphi_{\rho}^{(2)},\sum_{l=1}^k \left( w_{\lambda_{2,\rho}, \xi_{\rho,l}^{(2)}}^{\star} + \lambda_{2,\rho}^{-2} W_{\lambda_{2,\rho}, \xi_{\rho,l}^{(2)}}^{\star} \right) + \psi_{\rho}^{(2)} \right),
\end{align*}
where
 \begin{align*}
 w_{\lambda_{i,\rho},\xi_{\rho,l}^{(i)}}^{*}(x) :=w^{*}(\sqrt{\lambda_{i,\rho}}(x-\xi_{\rho,l}^{(i)})), \quad W_{\lambda_{i,\rho},\xi_{\rho,l}^{(i)}}^{*}(x) :=W^{*}_l(\sqrt{\lambda_{i,\rho}}(x-\xi_{\rho,l}^{(i)})),\quad i=1,2,\quad l=1,2,\cdots,k
\end{align*}
and
 \begin{align*}
 w_{\lambda_{i,\rho},\xi_{\rho,l}^{(i)}}^{\star}(x) :=w^{\star}(\sqrt{\lambda_{i,\rho}}(x-\xi_{\rho,l}^{(i)})), \quad W_{\lambda_{i,\rho},\xi_{\rho,l}^{(i)}}^{\star}(x) :=W^{\star}_l(\sqrt{\lambda_{i,\rho}}(x-\xi_{\rho,l}^{(i)})),\quad i=1,2\quad l=1,2,\cdots,k.
\end{align*}

  We write
\begin{align*}
(\kappa_{\rho}^{(1)}, \kappa_{\rho}^{(2)}) =\left(\frac{\bar{u}^{(1)}_{\lambda_{1,\rho}}-\bar{u}^{(2)}_{\lambda_{2,\rho}}}{||\bar{u}^{(1)}_{\lambda_{1,\rho}} -\bar{u}^{(2)}_{\lambda_{2,\rho}}||_{L^{\infty}(\mathbb{R}^N)}+ ||\bar{v}^{(1)}_{\lambda_{1,\rho}}-\bar{v}^{(2)}_{\lambda_{2,\rho}}||_{L^{\infty}(\mathbb{R}^N)}}, \frac{\bar{v}^{(1)}_{\lambda_{1,\rho}}-\bar{v}^{(2)}_{\lambda_{2,\rho}}}{||\bar{u}^{(1)}_{\lambda_{1,\rho}}-\bar{u}^{(2)}_{\lambda_{2,\rho}}||_{L^{\infty}(\mathbb{R}^N)}+ ||\bar{v}^{(1)}_{\lambda_{1,\rho}}-\bar{v}^{(2)}_{\lambda_{2,\rho}}||_{L^{\infty}(\mathbb{R}^N)}}\right).
\end{align*}
According to system \eqref{eq0}, $\kappa_{\rho}^{(1)} $ and $\kappa_{\rho}^{(2)} $ satisfy
 \begin{equation} \label{eq-} \left\{\begin{array}{ll}
		- \triangle \kappa_{\rho}^{(1)} +(P\left( x \right) + \lambda_{1,\rho} )\kappa_{\rho}^{(1)} =\frac{(\lambda_{2,\rho}-\lambda_{1,\rho})\bar{u}_{\lambda_{2,\rho}}^{(2)}}{||\bar{u}^{(1)}_{\lambda_{1,\rho}} -\bar{u}^{(2)}_{\lambda_{2,\rho}}||_{L^{\infty}(\mathbb{R}^N)} +||\bar{v}^{(1)}_{\lambda_{1,\rho}}-\bar{v}^{(2)}_{\lambda_{2,\rho}}||_{L^{\infty}(\mathbb{R}^N)}}+ \mu_1  ((\bar{u}^{(1)}_{\lambda_{1,\rho}})^2+\bar{u}^{(1)}_{\lambda_{1,\rho}}\bar{u}^{(2)}_{\lambda_{2,\rho}}+(\bar{u}^{(2)}_{\lambda_{2,\rho}})^2)\kappa_{\rho}^{(1)} \vspace{0.12cm}\\
	\ \ \ \ 	\ \ \ \ \ \ \ \ \ \ \ \ \ \ \ \ \ \ \ \ \ \ \ \ \ \ \ \ \ \ \ \ \ \ \ \ \ \  +\beta\left[\kappa_{\rho}^{(1)} (\bar{v}^{(2)}_{\lambda_{2,\rho}})^2+\bar{u}^{(1)}_{\lambda_{1,\rho}}\kappa_{\rho}^{(2)} (\bar{v}^{(1)}_{\lambda_{1,\rho}}+\bar{v}^{(2)}_{\lambda_{2,\rho}})\right],\,\,\,\,x\in \mathbb{R}^N, \\
		- \triangle \kappa_{\rho}^{(2)} +(Q\left( x \right) + \lambda_{1,\rho} )\kappa_{\rho}^{(2)} =\frac{(\lambda_{2,\rho}-\lambda_{1,\rho})\bar{v}_{\lambda_{2,\rho}}^{(2)}}{||\bar{u}^{(1)}_{\lambda_{1,\rho}}-\bar{u}^{(2)}_{\lambda_{2,\rho}}||_{L^{\infty}(\mathbb{R}^N)}+||\bar{v}^{(1)}_{\lambda_{1,\rho}}-\bar{v}^{(2)}_{\lambda_{2,\rho}}||_{L^{\infty}(\mathbb{R}^N)}}+ \mu_2  ((\bar{v}^{(1)}_{\lambda_{1,\rho}})^2+\bar{v}^{(1)}_{\lambda_{1,\rho}}\bar{v}^{(2)}_{\lambda_{2,\rho}}+(\bar{v}^{(2)}_{\lambda_{2,\rho}})^2)\kappa_{\rho}^{(2)} \vspace{0.12cm} \\
	\ \ \ \ 	\ \ \ \ \ \ \ \ \ \ \ \ \ \ \ \ \ \ \ \ \ \ \ \ \ \ \ \ \ \ \ \ \ \ \ \ \ \  +\beta\left[\kappa_{\rho}^{(2)} (\bar{u}^{(2)}_{\lambda_{2,\rho}})^2+\bar{v}^{(1)}_{\lambda_{1,\rho}}\kappa_{\rho}^{(1)} (\bar{u}^{(1)}_{\lambda_{1,\rho}}+\bar{u}^{(2)}_{\lambda_{2,\rho}})\right],\,\,\,\,x\in \mathbb{R}^N. \\
	\end{array} \right.
\end{equation}
Moreover, by local Pohozaev identity, we have
\begin{align}\label{poho2}
&\int_{\Omega}\left(\frac{\partial P(x)}{\partial x_j}(\bar{u}_{\lambda_{1,\rho}}^{(1)}+\bar{u}_{\lambda_{2,\rho}}^{(2)})\kappa_{\rho}^{(1)} +\frac{\partial Q(x)}{\partial x_j}(\bar{v}_{\lambda_{1,\rho}}^{(1)}+\bar{v}_{\lambda_{2,\rho}}^{(2)})\kappa_{\rho}^{(2)}  \right) \notag\\
=&-2\int_{\partial\Omega}\left(\frac{\partial \bar{u}_{\lambda_{1,\rho}}^{(1)}}{\partial x_j}\frac{\partial\kappa_{\rho}^{(1)} }{\partial n}+\frac{\partial \bar{u}_{\lambda_{2,\rho}}^{(2)}}{\partial n}\frac{\partial\kappa_{\rho}^{(1)} }{\partial x_j}+\frac{\partial \bar{v}_{\lambda_{1,\rho}}^{(1)}}{\partial x_j}\frac{\partial\kappa_{\rho}^{(2)} }{\partial n}+\frac{\partial \bar{v}_{\lambda_{2,\rho}}^{(2)}}{\partial n}\frac{\partial\kappa_{\rho}^{(2)} }{\partial x_j}\right)dS \notag\\
&+\int_{\partial\Omega}\left(\nabla(\bar{u}_{\lambda_{1,\rho}}^{(1)}+\bar{u}_{\lambda_{2,\rho}}^{(2)})\nabla\kappa_{\rho}^{(1)}  n_j +\nabla(\bar{v}_{\lambda_{1,\rho}}^{(1)}+\bar{v}_{\lambda_{2,\rho}}^{(2)})\nabla\kappa_{\rho}^{(2)}  n_j\right)dS \notag\\
&+\int_{\partial\Omega}\left((P(x)+\lambda_{1,\rho})(\bar{u}_{\lambda_{1,\rho}}^{(1)}+\bar{u}_{\lambda_{2,\rho}}^{(2)})\kappa_{\rho}^{(1)} n_j+(Q(x)+\lambda_{1,\rho})(\bar{v}_{\lambda_{1,\rho}}^{(1)}+\bar{v}_{\lambda_{2,\rho}}^{(2)})\kappa_{\rho}^{(2)} n_j\right)dS \notag\\
&+\int_{\partial\Omega}\left(\frac{(\lambda_{1,\rho}-\lambda_{2,\rho})((\bar{u}_{\lambda_{2,\rho}}^{(2)})^2 +(\bar{v}_{\lambda_{2,\rho}}^{(2)})^2) n_j}{||\bar{u}^{(1)}_{\lambda_{1,\rho}}-\bar{u}^{(2)}_{\lambda_{2,\rho}}||_{L^{\infty}(\mathbb{R}^N)}+||\bar{v}^{(1)}_{\lambda_{1,\rho}}-\bar{v}^{(2)}_{\lambda_{2,\rho}}||_{L^{\infty}(\mathbb{R}^N)}}\right)dS \notag\\
&-\frac{1}{2}\int_{\partial\Omega}\left(\mu_1((\bar{u}^{(1)}_{\lambda_{1,\rho}})^3+\bar{u}^{(1)}_{\lambda_{1,\rho}}(\bar{u}^{(2)}_{\lambda_{2,\rho}})^2+(\bar{u}^{(1)}_{\lambda_{1,\rho}})^2\bar{u}^{(2)}_{\lambda_{2,\rho}} +(\bar{u}^{(2)}_{\lambda_{2,\rho}} )^3)\kappa_{\rho}^{(1)} n_j \right)dS \notag\\
&-\frac{1}{2}\int_{\partial\Omega}\left( \mu_2((\bar{v}^{(1)}_{\lambda_{1,\rho}})^3+\bar{v}^{(1)}_{\lambda_{1,\rho}}(\bar{v}^{(2)}_{\lambda_{2,\rho}})^2+(\bar{v}^{(1)}_{\lambda_{1,\rho}})^2\bar{v}^{(2)}_{\lambda_{2,\rho}} +(\bar{v}^{(2)}_{\lambda_{2,\rho}} )^3)\kappa_{\rho}^{(2)} n_j\right)dS.
\end{align}

Denoting
\begin{align*}
(\tilde{\kappa}^{(1)}_{l,\rho} (x), \tilde{\kappa}^{(2)}_{l,\rho} (x)):=\left(\kappa_{\rho}^{(1)} (\frac{x}{\sqrt{\lambda_{1,\rho}}}+\xi_{\rho,l}^{(1)}),  \kappa_{\rho}^{(2)} (\frac{x}{\sqrt{\lambda_{1,\rho}}}+\xi_{\rho,l}^{(1)})\right), \quad l=1,2,\cdots,k,
\end{align*}
then
 \begin{equation*}
  \left\{\begin{array}{ll}
- \Delta_x \tilde{\kappa}_{l,\rho}^{(1)}(x) + \Big(\frac{1}{\lambda_{1,\rho}} P( \frac{x}{\sqrt{\lambda_{1,\rho}}} + \xi_{\rho,l}^{(1)} ) + 1 \Big) \tilde{\kappa}_{l,\rho}^{(1)}(x)
= \frac{(\frac{\lambda_{2,\rho}}{\lambda_{1,\rho}} - 1) \bar{u}_{\lambda_{2,\rho}}^{(2)}( \frac{x}{\sqrt{\lambda_{1,\rho}}} + \xi_{\rho,l}^{(1)} )}{\|\bar{u}_{\lambda_{1,\rho}}^{(1)} - \bar{u}_{\lambda_{2,\rho}}^{(2)}\|_{L^\infty(\mathbb{R}^N)} + \|\bar{v}_{\lambda_{1,\rho}}^{(1)} - \bar{v}_{\lambda_{2,\rho}}^{(2)}\|_{L^\infty(\mathbb{R}^N)}} \\
\quad + \frac{\mu_1}{\lambda_{1,\rho}} \Big( \big(\bar{u}_{\lambda_{1,\rho}}^{(1)}( \frac{x}{\sqrt{\lambda_{1,\rho}}} + \xi_{\rho,l}^{(1)}) \big)^2 + \bar{u}_{\lambda_{1,\rho}}^{(1)}( \frac{x}{\sqrt{\lambda_{1,\rho}}} + \xi_{\rho,l}^{(1)} ) \bar{u}_{\lambda_{2,\rho}}^{(2)}( \frac{x}{\sqrt{\lambda_{1,\rho}}} + \xi_{\rho,l}^{(1)} ) + \big(\bar{u}_{\lambda_{2,\rho}}^{(2)}( \frac{x}{\sqrt{\lambda_{1,\rho}}} + \xi_{\rho,l}^{(1)}) \big)^2 \Big) \tilde{\kappa}_{l,\rho}^{(1)}(x) \\
 \quad + \frac{\beta}{\lambda_{1,\rho}} \Big( \tilde{\kappa}_{l,\rho}^{(1)}(x) \big(\bar{v}_{\lambda_{2,\rho}}^{(2)}( \frac{x}{\sqrt{\lambda_{1,\rho}}} + \xi_{\rho,l}^{(1)} ) \big)^2 + \bar{u}_{\lambda_{1,\rho}}^{(1)}( \frac{x}{\sqrt{\lambda_{1,\rho}}} + \xi_{\rho,l}^{(1)} ) \tilde{\kappa}_{l,\rho}^{(2)}(x) \big( \bar{v}_{\lambda_{1,\rho}}^{(1)}( \frac{x}{\sqrt{\lambda_{1,\rho}}} + \xi_{\rho,l}^{(1)} ) + \bar{v}_{\lambda_{2,\rho}}^{(2)}( \frac{x}{\sqrt{\lambda_{1,\rho}}} + \xi_{\rho,l}^{(1)} ) \big) \Big), \\
- \Delta_x \tilde{\kappa}_{l,\rho}^{(2)}(x) + \Big(\frac{1}{\lambda_{1,\rho}} Q( \frac{x}{\sqrt{\lambda_{1,\rho}}} + \xi_{\rho,l}^{(1)} ) + 1 \Big) \tilde{\kappa}_{l,\rho}^{(2)}(x) = \frac{(\frac{\lambda_{2,\rho}}{\lambda_{1,\rho}} - 1) \bar{v}_{\lambda_{2,\rho}}^{(2)}( \frac{x}{\sqrt{\lambda_{1,\rho}}} + \xi_{\rho,l}^{(1)} )}{\|\bar{u}_{\lambda_{1,\rho}}^{(1)} - \bar{u}_{\lambda_{2,\rho}}^{(2)}\|_{L^\infty(\mathbb{R}^N)} + \|\bar{v}_{\lambda_{1,\rho}}^{(1)} - \bar{v}_{\lambda_{2,\rho}}^{(2)}\|_{L^\infty(\mathbb{R}^N)}} \\
\quad + \frac{\mu_2}{\lambda_{1,\rho}} \Big( \big(\bar{v}_{\lambda_{1,\rho}}^{(1)}( \frac{x}{\sqrt{\lambda_{1,\rho}}} + \xi_{\rho,l}^{(1)} )\big)^2 + \bar{v}_{\lambda_{1,\rho}}^{(1)}( \frac{x}{\sqrt{\lambda_{1,\rho}}} + \xi_{\rho,l}^{(1)} ) \bar{v}_{\lambda_{2,\rho}}^{(2)}( \frac{x}{\sqrt{\lambda_{1,\rho}}} + \xi_{\rho,l}^{(1)} ) + \big(\bar{v}_{\lambda_{2,\rho}}^{(2)}( \frac{x}{\sqrt{\lambda_{1,\rho}}} + \xi_{\rho,l}^{(1)} )\big)^2 \Big) \tilde{\kappa}_{l,\rho}^{(2)}(x) \\
\quad + \frac{\beta}{\lambda_{1,\rho}} \Big( \tilde{\kappa}_{l,\rho}^{(2)}(x) \big( \bar{u}_{\lambda_{2,\rho}}^{(2)}( \frac{x}{\sqrt{\lambda_{1,\rho}}} + \xi_{\rho,l}^{(1)} )\big)^2 + \bar{v}_{\lambda_{1,\rho}}^{(1)}( \frac{x}{\sqrt{\lambda_{1,\rho}}} + \xi_{\rho,l}^{(1)} ) \tilde{\kappa}_{l,\rho}^{(1)}(x) ( \bar{u}_{\lambda_{1,\rho}}^{(1)}( \frac{x}{\sqrt{\lambda_{1,\rho}}} + \xi_{\rho,l}^{(1)} ) + \bar{u}_{\lambda_{2,\rho}}^{(2)}( \frac{x}{\sqrt{\lambda_{1,\rho}}} + \xi_{\rho,l}^{(1)} ) ) \Big).
	\end{array} \right.
\end{equation*}

In what follows, we estimate each term in the above expressions.

\begin{lemma}\label{lem52}
Let $\delta>0$ be a small constant. Then for any $x\in B_{\sqrt{\lambda_{1,\rho}}\delta}(0)$, it holds that
\begin{align*}
&\frac{\mu_1}{\lambda_{1,\rho}}  \Big( \big(\bar{u}_{\lambda_{1,\rho}}^{(1)}( \frac{x}{\sqrt{\lambda_{1,\rho}}} + \xi_{\rho,l}^{(1)} )\big)^2 + \bar{u}_{\lambda_{1,\rho}}^{(1)}( \frac{x}{\sqrt{\lambda_{1,\rho}}} + \xi_{\rho,l}^{(1)} ) \bar{u}_{\lambda_{2,\rho}}^{(2)}( \frac{x}{\sqrt{\lambda_{1,\rho}}} + \xi_{\rho,l}^{(1)} ) + \big( \bar{u}_{\lambda_{2,\rho}}^{(2)}( \frac{x}{\sqrt{\lambda_{1,\rho}}} + \xi_{\rho,l}^{(1)} )\big)^2 \Big)\\
=& 3\mu_1(w^{*}(x))^2+O(\lambda_{1,\rho}^{-\frac{1}{2}}w(x)x\cdot \nabla w(x)), \\
&\frac{\mu_2}{\lambda_{1,\rho}} \Big(\big( \bar{v}_{\lambda_{1,\rho}}^{(1)}( \frac{x}{\sqrt{\lambda_{1,\rho}}} + \xi_{\rho,l}^{(1)} ) \big)^2 + \bar{v}_{\lambda_{1,\rho}}^{(1)}( \frac{x}{\sqrt{\lambda_{1,\rho}}} + \xi_{\rho,l}^{(1)} ) \bar{v}_{\lambda_{2,\rho}}^{(2)}( \frac{x}{\sqrt{\lambda_{1,\rho}}} + \xi_{\rho,l}^{(1)} ) + \big( \bar{v}_{\lambda_{2,\rho}}^{(2)}( \frac{x}{\sqrt{\lambda_{1,\rho}}} + \xi_{\rho,l}^{(1)} ) \big)^2 \Big)\\
=& 3\mu_2(w^{\star}(x))^2+O(\lambda_{1,\rho}^{-\frac{1}{2}}w(x)x\cdot \nabla w(x)),
\end{align*}
furthermore,
\begin{align*}
\frac{\beta}{\lambda_{1,\rho}} \big(\bar{v}_{\lambda_{2,\rho}}^{(2)}( \frac{x}{\sqrt{\lambda_{1,\rho}}} + \xi_{\rho,l}^{(1)} )\big)^2=&  \beta (w^{\star}(x))^2+O(\lambda_{1,\rho}^{-\frac{1}{2}}w(x)x\cdot \nabla w(x)), \\
\frac{\beta}{\lambda_{1,\rho}} \big(\bar{u}_{\lambda_{2,\rho}}^{(2)}( \frac{x}{\sqrt{\lambda_{1,\rho}}} + \xi_{\rho,l}^{(1)} )\big)^2=&
   \beta (w^{*}(x))^2+O(\lambda_{1,\rho}^{-\frac{1}{2}}w(x)x\cdot \nabla w(x)) ,
\end{align*}
in addition,
\begin{align*}
&\frac{\beta}{\lambda_{1,\rho}}  \bar{u}_{\lambda_{1,\rho}}^{(1)}( \frac{x}{\sqrt{\lambda_{1,\rho}}} + \xi_{\rho,l}^{(1)} ) ( \bar{v}_{\lambda_{1,\rho}}^{(1)}( \frac{x}{\sqrt{\lambda_{1,\rho}}} + \xi_{\rho,l}^{(1)} ) + \bar{v}_{\lambda_{2,\rho}}^{(2)}( \frac{x}{\sqrt{\lambda_{1,\rho}}} + \xi_{\rho,l}^{(1)} ) ) \\
=&  2\beta w^{*}(x)w^{\star}(x)+O(\lambda_{1,\rho}^{-\frac{1}{2}}w(x)x\cdot \nabla w(x)),\\
&\frac{\beta}{\lambda_{1,\rho}}  \bar{v}_{\lambda_{1,\rho}}^{(1)}( \frac{x}{\sqrt{\lambda_{1,\rho}}} + \xi_{\rho,l}^{(1)} ) ( \bar{u}_{\lambda_{1,\rho}}^{(1)}( \frac{x}{\sqrt{\lambda_{1,\rho}}} + \xi_{\rho,l}^{(1)} ) + \bar{u}_{\lambda_{2,\rho}}^{(2)}( \frac{x}{\sqrt{\lambda_{1,\rho}}} + \xi_{\rho,l}^{(1)} ) ) \\
=& 2\beta w^{*}(x)w^{\star}(x)+O(\lambda_{1,\rho}^{-\frac{1}{2}}w(x)x\cdot \nabla w(x)).
\end{align*}
\end{lemma}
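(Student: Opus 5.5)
The plan is to substitute the ansatz \eqref{uv-} for both solutions into each coefficient. Each function is written in the blown-up variable $x$ centered at $\xi^{(1)}_{\rho,l}$, and we expand around $(w^*,w^\star)$.

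\emph{Step 1: the first solution.} Since $\bar u^{(1)}_{\lambda_{1,\rho}}=\sqrt{\lambda_{1,\rho}}(\cdots)$, we get
\[
\lambda_{1,\rho}^{-1/2}\bar u^{(1)}_{\lambda_{1,\rho}}\big(\tfrac{x}{\sqrt{\lambda_{1,\rho}}}+\xi^{(1)}_{\rho,l}\big)=w^*(x)+\lambda_{1,\rho}^{-2}W_l^*(x)+\sum_{h\ne l}(\cdots)+\tilde\varphi^{(1)}_\rho(x).
\]
For $x\in B_{\sqrt{\lambda_{1,\rho}}\delta}(0)$ the peaks with $h\neq l$ are exponentially small, of order $e^{-c\sqrt{\lambda_{1,\rho}}}$. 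The term $W_l^*$ contributes $O(\lambda_{1,\rho}^{-2}e^{-(1-\tau)|x|})$ by Lemma \ref{le3.1.3}. For $\tilde\varphi$ we use the analogue of Lemma \ref{lem34}, combined with the $H$-norm bound $O(\lambda^{-5/2-N/4})$ from Theorem \ref{pro1} and the Moser argument of Lemma \ref{lem32}. This gives a pointwise bound by a high negative power of $\lambda_{1,\rho}$ times an exponentially decaying weight.

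\emph{Step 2: the second solution.} Here the two sources of shift are handled as follows.

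\begin{itemize}
\item The dilation parameter becomes $\sqrt{\lambda_{2,\rho}/\lambda_{1,\rho}}\,x$. Lemma \ref{lem51} applies to both multipliers with the same $\rho$, and the leading term is the same. Hence $\lambda_{2,\rho}/\lambda_{1,\rho}=1+O(\lambda_{1,\rho}^{-1/2})$: for $N=2$ the difference lies in the correction terms, which are $O(1)$ relative to $\lambda\sim(\rho^2-\rho_0^2)^{-1/2}$. This is the point to double-check.
\item The center moves by $\sqrt{\lambda_{1,\rho}}(\xi^{(1)}_{\rho,l}-\xi^{(2)}_{\rho,l})$. Lemma \ref{lem355} gives $\xi^{(i)}_{\rho,l}=\xi_l+C_l\lambda_{i,\rho}^{-1}+O(\lambda^{-5/2})$, so this shift is $O(\lambda_{1,\rho}^{-1/2})$.
\end{itemize}

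A first-order Taylor expansion of $w^*$ then gives
\[
w^*(sx+y)=w^*(x)+(s-1)\,x\cdot\nabla w^*(x)+y\cdot\nabla w^*(x)+\text{second order},
\]
where $s-1=O(\lambda_{1,\rho}^{-1/2})$ and $y=O(\lambda_{1,\rho}^{-1/2})$. The second-order remainder is dominated by $\lambda^{-1}|x|^2|D^2w|$ along the segment. Because $w'(r)/w(r)\to-1$, the quantities $|\nabla w|$ and $|x||D^2w|$ are comparable to $w$ and to $|x\cdot\nabla w|$ respectively, up to $(1+|x|)$ factors. For $|x|\le\sqrt{\lambda}\delta$, the rescaled point stays within an $O(1)$-relative dilation, so $w(sx+y)\le Cw(x)e^{C\lambda^{-1/2}|x|}$. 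That factor is at most $e^{C\delta}$, hence bounded.

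\emph{Step 3: the products.} We multiply out the products, for example
\[
\lambda_{1,\rho}^{-1}\Big((\bar u^{(1)})^2+\bar u^{(1)}\bar u^{(2)}+(\bar u^{(2)})^2\Big)=3(w^*)^2+O\big(w\cdot(\text{Step 1,2 errors})\big).
\]
Each error term is of the form $\lambda^{-1/2}w(x)\,|x\cdot\nabla w(x)|$, or $\lambda^{-1/2}w|\nabla w|$, which is absorbed because $|\nabla w|\lesssim w\lesssim(1+|x|)|w'|$ and the constant $w$-part near the origin is harmless. The remaining pieces, namely the $\lambda^{-2}W$ terms and the $\tilde\varphi$ terms, are of higher order. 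The $\beta$-coupling terms are treated identically, using $w^*=\sigma_1w$ and $w^\star=\sigma_2w$.

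\emph{Main obstacle.} I expect the hard part to be justifying $\lambda_{2,\rho}/\lambda_{1,\rho}-1=O(\lambda_{1,\rho}^{-1/2})$ uniformly, together with making the error shape $w\,x\cdot\nabla w$ rigorous. Strictly, near $x=0$ the quantity $x\cdot\nabla w$ vanishes while the center-shift error $y\cdot\nabla w$ does not. I would interpret the $O(\cdot)$ in absolute value with weight $w(x)(1+|x|)|\nabla w|+w^2$. Alternatively, I would note that $|y|=O(\lambda^{-1})$ if the constants $C_l$ coincide for both solutions; they do, because $C_l$ depends only on $P,Q$ via Lemma \ref{lem355}. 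In that case the shift term is $O(\lambda^{-1})$, which is absorbed into the stated bound up to constants.
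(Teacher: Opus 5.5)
Your proposal is correct and follows the paper's proof: insert the ansatz \eqref{uv-} for both solutions, use Lemma \ref{lem51} for $\lambda_{2,\rho}/\lambda_{1,\rho}=1+O(\lambda_{1,\rho}^{-1/2})$ and Lemma \ref{lem355} for the center difference, Taylor-expand the second profile about $x$ as in \eqref{511}--\eqref{512}, discard the other peaks as exponentially small, and multiply out. Your $O(\lambda_{1,\rho}^{-1})$ bound on the rescaled shift is what those two lemmas actually give, and it is more than enough here. Your additional care addresses points the paper's proof passes over silently: you note the bounded factor $e^{C\lambda^{-1/2}|x|}$ on $B_{\sqrt{\lambda_{1,\rho}}\delta}(0)$, and you observe that the translation, $\lambda_{1,\rho}^{-2}W_l$ and $\varphi$ contributions do not literally have the pointwise shape $\lambda_{1,\rho}^{-1/2}w\,x\cdot\nabla w$ near $x=0$. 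For that reason the $O(\cdot)$ should be read with an enlarged weight, not ``absorbed up to constants''.
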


\begin{proof}
By Lemma \ref{lem355} and Lemma \ref{lem51}, we have
\begin{align*}
\frac{\lambda_{2,\rho}}{\lambda_{1,\rho}}&=1+O(\lambda_{1,\rho}^{-\frac{1}{2}})
\end{align*}
and
\begin{align*}
|\xi_{\rho,i}^{(1)}-\xi_{\rho,i}^{(2)}|=|(\xi_{\rho,i}^{(1)}-\xi_i) -(\xi_{\rho,i}^{(2)}-\xi_i)|=O(\lambda_{1,\rho}^{-2}),\quad \text{for } i=1,2,\cdots,k.
\end{align*}
Hence, for any $x\in B_{\sqrt{\lambda_{1,\rho}}\delta}(0)$, we have
\begin{align}\label{509}
&\lambda_{1,\rho}^{-\frac12}\bar{u}_{\lambda_{1,\rho}}^{(1)} \Big(\frac{x}{\sqrt{\lambda_{1,\rho}}}+\xi_{\rho,i}^{(1)}\Big)\notag\\
=&\sum_{l=1}^k \left( w_{\lambda_{1,\rho}, \xi_{\rho,l}^{(1)}}^{*}\Big(\frac{x}{\sqrt{\lambda_{1,\rho}}}+\xi_{\rho,i}^{(1)}\Big) + \lambda_{1,\rho}^{-1} W_{\lambda_{1,\rho}, \xi_{\rho,l}^{(1)}}^{*}\Big(\frac{x}{\sqrt{\lambda_{1,\rho}}}+\xi_{\rho,i}^{(1)}\Big) \right) + \varphi_{\rho}^{(1)}\Big(\frac{x}{\sqrt{\lambda_{1,\rho}}}+\xi_{\rho,i}^{(1)}\Big)\notag\\
=&w^{*}(x)+\lambda_{1,\rho}^{-1}W_l^{*}(x)+ \varphi_{\rho}^{(1)}\Big(\frac{x}{\sqrt{\lambda_{1,\rho}}}+\xi_{\rho,i}^{(1)}\Big)\notag\\
&+\sum_{l\neq i}^k \left( w^{*}(\sqrt{\lambda_{1,\rho}}\Big(\frac{x}{\sqrt{\lambda_{1,\rho}}}+\xi_{\rho,i}^{(1)}-\xi_{\rho,l}^{(1)})\Big) + \lambda_{1,\rho}^{-1} W_l^{*}\Big(\sqrt{\lambda_{1,\rho}}(\frac{x}{\sqrt{\lambda_{1,\rho}}}+\xi_{\rho,i}^{(1)}-\xi_{\rho,l}^{(1)}\Big) \right) \notag\\
=& w^{*}(x)+\lambda_{1,\rho}^{-1}W_l^{*}(x)+ \varphi_{\rho}^{(1)}\Big(\frac{x}{\sqrt{\lambda_{1,\rho}}}+\xi_{\rho,i}^{(1)}\Big)+O\Big(e^{-\theta \sqrt{\lambda_{1,\rho}}}e^{-\delta^2 |y|}\Big).
\end{align}
Similarly, we conclude that
\begin{align}\label{510}
&\lambda_{1,\rho}^{-\frac12}\bar{v}_{\lambda_{1,\rho}}^{(1)}\Big(\frac{x}{\sqrt{\lambda_{1,\rho}}}+\xi_{\rho,i}^{(1)}\Big)\notag\\
=&w^{\star}(x)+\lambda_{1,\rho}^{-1}W_l^{\star}(x)+ \psi_{\rho}^{(1)}\Big(\frac{x}{\sqrt{\lambda_{1,\rho}}}+\xi_{\rho,i}^{(1)}\Big)\notag\\
&+\sum_{l\neq i}^k \left( w^{\star}\Big(\sqrt{\lambda_{1,\rho}}(\frac{x}{\sqrt{\lambda_{1,\rho}}}+\xi_{\rho,i}^{(1)}-\xi_{\rho,l}^{(1)})\Big) + \lambda_{1,\rho}^{-1} W_l^{\star}\Big(\sqrt{\lambda_{1,\rho}}\Big(\frac{x}{\sqrt{\lambda_{1,\rho}}}+\xi_{\rho,i}^{(1)}-\xi_{\rho,l}^{(1)}\Big) \right) \notag\\
=& w^{\star}(x)+\lambda_{1,\rho}^{-1}W_l^{\star}(x)+ \psi_{\rho}^{(1)}\Big(\frac{x}{\sqrt{\lambda_{1,\rho}}}+\xi_{\rho,i}^{(1)}\Big)+O\Big(e^{-\theta \sqrt{\lambda_{1,\rho}}}e^{-\delta^2 |y|}\Big),
\end{align}
in addition,
\begin{align}\label{511}
&\lambda_{2,\rho}^{-\frac12}\bar{u}_{\lambda_{2,\rho}}^{(2)}(\frac{x}{\sqrt{\lambda_{1,\rho}}} +\xi_{\rho,i}^{(1)}) \notag\\
=&\sum_{l=1}^k \left( w_{\lambda_{2,\rho}, \xi_{\rho,l}^{(2)}}^{*}(\frac{x}{\sqrt{\lambda_{1,\rho}}}+\xi_{\rho,i}^{(1)}) + \lambda_{2,\rho}^{-1} W_{\lambda_{2,\rho}, \xi_{\rho,l}^{(2)}}^{*}(\frac{x}{\sqrt{\lambda_{1,\rho}}}+\xi_{\rho,i}^{(1)}) \right) + \varphi_{\rho}^{(2)}(\frac{x}{\sqrt{\lambda_{1,\rho}}}+\xi_{\rho,i}^{(1)})\notag\\
=&w^{*}(\sqrt{\lambda_{2,\rho}}(\frac{x}{\sqrt{\lambda_{1,\rho}}}+\xi_{\rho,i}^{(1)} -\xi_{\rho,i}^{(2)}))+\lambda_{2,\rho}^{-1}W_l^{*}(\sqrt{\lambda_{2,\rho}}(\frac{x}{\sqrt{\lambda_{1,\rho}}} +\xi_{\rho,i}^{(1)}-\xi_{\rho,i}^{(2)}))\notag\\
&+ \varphi_{\rho}^{(2)}(\frac{x}{\sqrt{\lambda_{1,\rho}}}+\xi_{\rho,i}^{(1)}) \notag\\
&+\sum_{l\neq i}^k \left( w^{*}(\sqrt{\lambda_{2,\rho}}(\frac{x}{\sqrt{\lambda_{1,\rho}}}+\xi_{\rho,i}^{(1)}-\xi_{\rho,l}^{(2)})) + \lambda_{2,\rho}^{-1} W_l^{*}(\sqrt{\lambda_{2,\rho}}(\frac{x}{\sqrt{\lambda_{1,\rho}}}+\xi_{\rho,i}^{(1)}-\xi_{\rho,l}^{(2)}) \right) \notag\\
=&    w^{*}(x)+ \Big((\frac{\sqrt{\lambda_{2,\rho}}}{\sqrt{\lambda_{1,\rho}}}-1) x +\sqrt{\lambda_{2,\rho}}(\xi_{\rho,i}^{(1)}-\xi_{\rho,l}^{(2)})\Big)\cdot \nabla w^{*}(x)+ \lambda_{1,\rho}^{-1} W_l^{*}(x) \notag\\
   &\quad+ \Big((\frac{\sqrt{\lambda_{2,\rho}}}{\sqrt{\lambda_{1,\rho}}}-1) x +\sqrt{\lambda_{2,\rho}}(\xi_{\rho,i}^{(1)}-\xi_{\rho,l}^{(2)})\Big)\cdot \nabla^2 w^{*}(x) \Big((\frac{\sqrt{\lambda_{2,\rho}}}{\sqrt{\lambda_{1,\rho}}}-1) x +\sqrt{\lambda_{2,\rho}}(\xi_{\rho,i}^{(1)}-\xi_{\rho,l}^{(2)})\Big) +O(\lambda_{2,\rho}^{-\frac32})
\end{align}
and
\begin{align}\label{512}
&\lambda_{2,\rho}^{-\frac12}\bar{v}_{\lambda_{2,\rho}}^{(2)}(\frac{x}{\sqrt{\lambda_{1,\rho}}}+\xi_{\rho,i}^{(1)})\notag\\
=&w^{\star}(\sqrt{\lambda_{2,\rho}}(\frac{x}{\sqrt{\lambda_{1,\rho}}}+\xi_{\rho,i}^{(1)}-\xi_{\rho,i}^{(2)}))+ \lambda_{2,\rho}^{-1}W_l^{\star}(\sqrt{\lambda_{2,\rho}}(\frac{x}{\sqrt{\lambda_{1,\rho}}}+ \xi_{\rho,i}^{(1)}-\xi_{\rho,i}^{(2)}))\notag\\
&+ \psi_{\rho}^{(2)}(\frac{x}{\sqrt{\lambda_{1,\rho}}}+\xi_{\rho,i}^{(1)})\notag\\
&+\sum_{l\neq i}^k \left( w^{\star}(\sqrt{\lambda_{2,\rho}}(\frac{x}{\sqrt{\lambda_{1,\rho}}}+\xi_{\rho,i}^{(1)}-\xi_{\rho,l}^{(2)})) + \lambda_{2,\rho}^{-1} W_l^{\star}(\sqrt{\lambda_{2,\rho}}(\frac{x}{\sqrt{\lambda_{1,\rho}}}+\xi_{\rho,i}^{(1)}-\xi_{\rho,l}^{(2)}) \right) \notag\\
=&   w^\star(x)+ \Big((\frac{\sqrt{\lambda_{2,\rho}}}{\sqrt{\lambda_{1,\rho}}}-1) x +\sqrt{\lambda_{2,\rho}}(\xi_{\rho,i}^{(1)}-\xi_{\rho,l}^{(2)})\Big)\cdot \nabla w^\star(x)+ \lambda_{1,\rho}^{-1} W_l^{\star}(x) \notag\\
   &\quad + \Big((\frac{\sqrt{\lambda_{2,\rho}}}{\sqrt{\lambda_{1,\rho}}}-1) x +\sqrt{\lambda_{2,\rho}}(\xi_{\rho,i}^{(1)}-\xi_{\rho,l}^{(2)})\Big)\cdot \nabla^2 w^\star(x) \Big((\frac{\sqrt{\lambda_{2,\rho}}}{\sqrt{\lambda_{1,\rho}}}-1) x +\sqrt{\lambda_{2,\rho}}(\xi_{\rho,i}^{(1)}-\xi_{\rho,l}^{(2)})\Big) +O(\lambda_{2,\rho}^{-\frac32}).
\end{align}
By \eqref{509} and \eqref{511}, we get
\begin{align*}
&\frac{\mu_1}{\lambda_{1,\rho}} \Big( \big(\bar{u}_{\lambda_{1,\rho}}^{(1)}( \frac{y}{\sqrt{\lambda_{1,\rho}}} + \xi_{\rho,l}^{(1)} ) \big)^2 + \bar{u}_{\lambda_{1,\rho}}^{(1)}\Big( \frac{y}{\sqrt{\lambda_{1,\rho}}} + \xi_{\rho,l}^{(1)} \Big) \bar{u}_{\lambda_{2,\rho}}^{(2)}\Big( \frac{y}{\sqrt{\lambda_{1,\rho}}} + \xi_{\rho,l}^{(1)} \Big) + \big(\bar{u}_{\lambda_{2,\rho}}^{(2)}( \frac{y}{\sqrt{\lambda_{1,\rho}}} + \xi_{\rho,l}^{(1)} )\big)^2 \Big)\\
=&\mu_1\Big( \big(\lambda_{1,\rho}^{-\frac12}\bar{u}_{\lambda_{1,\rho}}^{(1)}( \frac{y}{\sqrt{\lambda_{1,\rho}}} + \xi_{\rho,l}^{(1)} )\big)^2 + \sqrt{\frac{\lambda_{2,\rho}}{\lambda_{1,\rho}}}\lambda_{1,\rho}^{-\frac12}\bar{u}_{\lambda_{1,\rho}}^{(1)}( \frac{y}{\sqrt{\lambda_{1,\rho}}} + \xi_{\rho,l}^{(1)} ) \lambda_{2,\rho}^{-\frac12}\bar{u}_{\lambda_{2,\rho}}^{(2)}( \frac{y}{\sqrt{\lambda_{1,\rho}}} + \xi_{\rho,l}^{(1)} ) \\
&\quad + \big(\lambda_{1,\rho}^{-\frac12}\bar{u}_{\lambda_{2,\rho}}^{(2)}( \frac{y}{\sqrt{\lambda_{1,\rho}}} + \xi_{\rho,l}^{(1)} )\big)^2 \Big)\\
=&  3\mu_1(w^{*}(x))^2+O(\lambda_{1,\rho}^{-\frac{1}{2}}w(x)x\cdot \nabla w(x)).
\end{align*}
Similarly, there holds
\begin{align*}
&\frac{\mu_2}{\lambda_{1,\rho}} \Big( \big(\bar{v}_{\lambda_{1,\rho}}^{(1)}( \frac{x}{\sqrt{\lambda_{1,\rho}}} + \xi_{\rho,l}^{(1)} )\big)^2 + \bar{v}_{\lambda_{1,\rho}}^{(1)}( \frac{x}{\sqrt{\lambda_{1,\rho}}} + \xi_{\rho,l}^{(1)} ) \bar{v}_{\lambda_{2,\rho}}^{(2)}( \frac{x}{\sqrt{\lambda_{1,\rho}}} + \xi_{\rho,l}^{(1)} ) + \big(\bar{v}_{\lambda_{2,\rho}}^{(2)}( \frac{x}{\sqrt{\lambda_{1,\rho}}} + \xi_{\rho,l}^{(1)} )\big)^2 \Big)\\
=&   3\mu_2(w^{\star}(x))^2+O(\lambda_{1,\rho}^{-\frac{1}{2}}w(x)x\cdot \nabla w(x)).
\end{align*}
In addition, by \eqref{512}, we conclude that
\begin{align*}
\frac{\beta}{\lambda_{1,\rho}} \big(\bar{v}_{\lambda_{2,\rho}}^{(2)}( \frac{x}{\sqrt{\lambda_{1,\rho}}} + \xi_{\rho,l}^{(1)} )\big)^2=
    \beta (w^{\star}(x))^2+O(\lambda_{1,\rho}^{-\frac{1}{2}}w(x)x\cdot \nabla w(x)),
\end{align*}
similarly,
\begin{align*}
\frac{\beta}{\lambda_{1,\rho}} \big(\bar{u}_{\lambda_{2,\rho}}^{(2)}( \frac{x}{\sqrt{\lambda_{1,\rho}}} + \xi_{\rho,l}^{(1)})\big)^2=
    \beta (w^{*}(x))^2+O(\lambda_{1,\rho}^{-\frac{1}{2}}w(x)x\cdot \nabla w(x)).
\end{align*}
According to \eqref{509}, \eqref{510} and \eqref{512}, we have
\begin{align*}
&\frac{\beta}{\lambda_{1,\rho}}  \bar{u}_{\lambda_{1,\rho}}^{(1)} ( \frac{x}{\sqrt{\lambda_{1,\rho}}} + \xi_{\rho,l}^{(1)}  )  \big( \bar{v}_{\lambda_{1,\rho}}^{(1)} ( \frac{x}{\sqrt{\lambda_{1,\rho}}} + \xi_{\rho,l}^{(1)} ) + \bar{v}_{\lambda_{2,\rho}}^{(2)} ( \frac{x}{\sqrt{\lambda_{1,\rho}}} + \xi_{\rho,l}^{(1)} ) \big) \\
=& \beta \lambda_{1,\rho}^{-\frac12}\bar{u}_{\lambda_{1,\rho}}^{(1)} ( \frac{x}{\sqrt{\lambda_{1,\rho}}} + \xi_{\rho,l}^{(1)}  )  \big( \lambda_{1,\rho}^{-\frac12}\bar{v}_{\lambda_{1,\rho}}^{(1)} ( \frac{x}{\sqrt{\lambda_{1,\rho}}} + \xi_{\rho,l}^{(1)}  ) + \lambda_{1,\rho}^{-\frac12}\bar{v}_{\lambda_{2,\rho}}^{(2)} ( \frac{x}{\sqrt{\lambda_{1,\rho}}} + \xi_{\rho,l}^{(1)}  )  \big)\\
=&   2\beta w^{*}(x)w^{\star}(x)+O(\lambda_{1,\rho}^{-\frac{1}{2}}w(x)x\cdot \nabla w(x)).
\end{align*}
Similarly,
\begin{align*}
&\frac{\beta}{\lambda_{1,\rho}}  \bar{v}_{\lambda_{1,\rho}}^{(1)} ( \frac{x}{\sqrt{\lambda_{1,\rho}}} + \xi_{\rho,l}^{(1)}  )  \big( \bar{u}_{\lambda_{1,\rho}}^{(1)} ( \frac{x}{\sqrt{\lambda_{1,\rho}}} + \xi_{\rho,l}^{(1)} ) + \bar{u}_{\lambda_{2,\rho}}^{(2)} ( \frac{x}{\sqrt{\lambda_{1,\rho}}} + \xi_{\rho,l}^{(1)} ) \big) \\
=&  2\beta w^{*}(x)w^{\star}(x)+O(\lambda_{1,\rho}^{-\frac{1}{2}}w(x)x\cdot \nabla w(x)).
\end{align*}
\end{proof}

\begin{lemma}\label{lem53}
Let $\delta>0$ be a small constant. Then for any $x\in B_{\sqrt{\lambda_{1,\rho}}\delta}(0)$, it holds that for $l=1,2,\cdots,k$,
\begin{align*}
&\frac{(\frac{\lambda_{2,\rho}}{\lambda_{1,\rho}} - 1) \bar{u}_{\lambda_{2,\rho}}^{(2)}\left( \frac{x}{\sqrt{\lambda_{1,\rho}}} + \xi_{\rho,l}^{(1)} \right)}{\|\bar{u}_{\lambda_{1,\rho}}^{(1)} - \bar{u}_{\lambda_{2,\rho}}^{(2)}\|_{L^\infty(\mathbb{R}^N)} + \|\bar{v}_{\lambda_{1,\rho}}^{(1)} - \bar{v}_{\lambda_{2,\rho}}^{(2)}\|_{L^\infty(\mathbb{R}^N)}} \\
=&-\frac{2 w^{*}(x)}{k\int_{\mathbb{R}^N}\left(\left( w^{*}\right)^2+\left( w^{\star}\right)^2\right)} \sum_{i=1}^{k} \int_{ \mathbb{R}^N} (\mu_1 \tilde{\kappa}_{i,\rho}^{(1)} (w^{*}(x))^3+\mu_2 \tilde{\kappa}_{i,\rho}^{(2)} (w^{\star}(x))^3+\beta w^{*}(x)w^{\star}(x)\notag\\
&\times(w^{\star}(x) \tilde{\kappa}_{i,\rho}^{(1)}+w^{*}(x) \tilde{\kappa}_{i,\rho}^{(2)})) +O(\lambda_{1,\rho}^{-\frac12}w(x))
\end{align*}
and
\begin{align*}
&\frac{(\frac{\lambda_{2,\rho}}{\lambda_{1,\rho}} - 1) \bar{v}_{\lambda_{2,\rho}}^{(2)}\Big( \frac{x}{\sqrt{\lambda_{1,\rho}}} + \xi_{\rho,l}^{(1)} \Big)}{\|\bar{u}_{\lambda_{1,\rho}}^{(1)} - \bar{u}_{\lambda_{2,\rho}}^{(2)}\|_{L^\infty(\mathbb{R}^N)} + \|\bar{v}_{\lambda_{1,\rho}}^{(1)} - \bar{v}_{\lambda_{2,\rho}}^{(2)}\|_{L^\infty(\mathbb{R}^N)}} \\
=&- \frac{2 w^{\star}(x)}{k\int_{\mathbb{R}^N}\left(\left( w^{*}\right)^2+\left( w^{\star}\right)^2\right)} \sum_{i=1}^{k} \int_{ \mathbb{R}^N} (\mu_1 \tilde{\kappa}_{i,\rho}^{(1)} (w^{*}(x))^3+\mu_2 \tilde{\kappa}_{i,\rho}^{(2)} (w^{\star}(x))^3+\beta w^{*}(x)w^{\star}(x)\notag\\
&\times(w^{\star}(x) \tilde{\kappa}_{i,\rho}^{(1)}+w^{*}(x) \tilde{\kappa}_{i,\rho}^{(2)})) +O(\lambda_{1,\rho}^{-\frac12}w(x)).
\end{align*}

\end{lemma}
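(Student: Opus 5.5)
The two identities come from comparing the two mass constraints; the ratio $\lambda_{2,\rho}-\lambda_{1,\rho}$ over the $L^\infty$ difference is identified through a weighted $L^2$ identity, not through the $\lambda_\rho$ asymptotics.

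\emph{Step 1 (subtract the constraints).} Set $D_\rho:=\|\bar u^{(1)}_{\lambda_{1,\rho}}-\bar u^{(2)}_{\lambda_{2,\rho}}\|_{L^\infty}+\|\bar v^{(1)}_{\lambda_{1,\rho}}-\bar v^{(2)}_{\lambda_{2,\rho}}\|_{L^\infty}$. Both solutions satisfy \eqref{rho2} with the same $\rho$. Subtracting the two constraints and dividing by $D_\rho$ gives
\[
\int_{\mathbb{R}^N}\Big((\bar u^{(1)}_{\lambda_{1,\rho}}+\bar u^{(2)}_{\lambda_{2,\rho}})\kappa^{(1)}_\rho+(\bar v^{(1)}_{\lambda_{1,\rho}}+\bar v^{(2)}_{\lambda_{2,\rho}})\kappa^{(2)}_\rho\Big)=0 .
\]
This identity contains no $\lambda$-difference, so a second relation is needed to extract $(\lambda_{2,\rho}-\lambda_{1,\rho})/D_\rho$.

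\emph{Step 2 (test against the solutions).} I test the first equation of \eqref{eq-} with $\bar u^{(2)}_{\lambda_{2,\rho}}$ and the second with $\bar v^{(2)}_{\lambda_{2,\rho}}$, and add. Alternatively, one can take the difference of the equations for $(\bar u^{(i)},\bar v^{(i)})$ tested against each other. On the left, integration by parts lets me use the equation for $(\bar u^{(2)},\bar v^{(2)})$, which has multiplier $\lambda_{2,\rho}$. This produces a term $(\lambda_{1,\rho}-\lambda_{2,\rho})\int(\bar u^{(2)}\kappa^{(1)}+\bar v^{(2)}\kappa^{(2)})$ plus nonlinear terms of the form $\mu_1(\bar u^{(2)})^3\kappa^{(1)}$ and similar. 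On the right, the forcing gives exactly
\[
\frac{\lambda_{2,\rho}-\lambda_{1,\rho}}{D_\rho}\int\big((\bar u^{(2)})^2+(\bar v^{(2)})^2\big)=\frac{\lambda_{2,\rho}-\lambda_{1,\rho}}{D_\rho}\rho^2 .
\]
The remaining cubic terms are $\mu_1((\bar u^{(1)})^2+\bar u^{(1)}\bar u^{(2)}+(\bar u^{(2)})^2)\kappa^{(1)}\bar u^{(2)}$ and the $\beta$ terms. After subtracting the $(\bar u^{(2)})^3\kappa^{(1)}$ contributions from the left, what survives at leading order is $2\int(\mu_1\bar u^3\kappa^{(1)}+\mu_2\bar v^3\kappa^{(2)}+\beta\bar u\bar v(\bar v\kappa^{(1)}+\bar u\kappa^{(2)}))$. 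Here I replace $\bar u^{(1)}$ by $\bar u^{(2)}$ up to $O(D_\rho)$, with $\kappa$ bounded. The factor $2$ arises as $3-1$. The term $(\lambda_{1,\rho}-\lambda_{2,\rho})\int\bar u^{(2)}\kappa^{(1)}$ is handled with Step 1, since $\int\bar u^{(2)}\kappa^{(1)}=\tfrac12\int(\bar u^{(1)}+\bar u^{(2)})\kappa^{(1)}+O(D_\rho\cdot)$. Combined with Step 1, this yields
\[
\frac{\lambda_{2,\rho}-\lambda_{1,\rho}}{D_\rho}\,\rho^2=-2\int_{\mathbb{R}^N}\Big(\mu_1(\bar u^{(2)})^3\kappa^{(1)}_\rho+\mu_2(\bar v^{(2)})^3\kappa^{(2)}_\rho+\beta\bar u^{(2)}\bar v^{(2)}(\bar v^{(2)}\kappa^{(1)}_\rho+\bar u^{(2)}\kappa^{(2)}_\rho)\Big)+\text{h.o.t.}
\]
I expect the sign to come out as in the statement once the bookkeeping is done carefully.

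\emph{Step 3 (rescale).} I split $\mathbb{R}^N$ into $\bigcup_i B_\delta(\xi^{(1)}_{\rho,i})$ and the complement. On the complement, Lemma \ref{lem33} together with boundedness of $\kappa$ gives exponentially small contributions. In each ball I change variables $x=y/\sqrt{\lambda_{1,\rho}}+\xi^{(1)}_{\rho,i}$ and use \eqref{509}--\eqref{512}. Then $\lambda_{1,\rho}^{-1/2}\bar u^{(2)}=w^*+O(\lambda_{1,\rho}^{-1/2}|y||\nabla w|+\lambda^{-1}_{1,\rho})$ and similarly for $\bar v^{(2)}$. The cubic integral over each ball becomes $\lambda_{1,\rho}^{3/2}\lambda_{1,\rho}^{-N/2}\int(\mu_1(w^*)^3\tilde\kappa^{(1)}_{i,\rho}+\dots)$. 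The mass is $\rho^2=\lambda_{1,\rho}^{1-N/2}k\int((w^*)^2+(w^\star)^2)(1+O(\lambda^{-1/2}))$, by the same expansion as \eqref{Fvarepslion}. Dividing, and multiplying by $\bar u^{(2)}(y/\sqrt{\lambda_{1,\rho}}+\xi^{(1)}_{\rho,l})/\lambda_{1,\rho}=\lambda_{1,\rho}^{-1/2}(w^*(y)+O(\lambda^{-1/2}w))$, the powers of $\lambda_{1,\rho}$ cancel exactly. This gives the stated formula with error $O(\lambda_{1,\rho}^{-1/2}w)$; the $v$-component is identical.

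\emph{Main obstacle.} The delicate step is Step 2. Every remainder has to be shown to be $o(|\lambda_2-\lambda_1|/D_\rho)$, or else of relative size $O(\lambda^{-1/2})$. In particular, the terms of size $(\lambda_{2,\rho}-\lambda_{1,\rho})\cdot O(D_\rho)/D_\rho$ must not spoil the leading coefficient. For this I need $|\lambda_{2,\rho}-\lambda_{1,\rho}|/D_\rho$ to be a priori bounded by the cubic integral term, which Step 2 itself provides after absorbing. I also need $\lambda_2/\lambda_1=1+O(\lambda_1^{-1/2})$ from Lemma \ref{lem52}.
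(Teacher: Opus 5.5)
Your proposal is correct and follows essentially the paper's route. The paper's identity \eqref{5533}, obtained by subtracting the Nehari identities of the two solutions and reusing the equations, is exactly the first equation of \eqref{eq-} tested against $\bar u^{(2)}_{\lambda_{2,\rho}}$; the paper then adds the $u$- and $v$-identities, uses $\int((\bar u^{(2)}_{\lambda_{2,\rho}})^2+(\bar v^{(2)}_{\lambda_{2,\rho}})^2)=\rho^2$, rescales at each peak and multiplies by $\bar u^{(2)}_{\lambda_{2,\rho}}$, just as in your Step 3. The one difference is the treatment of the term $(1-\lambda_{2,\rho}/\lambda_{1,\rho})\int\bar u^{(2)}_{\lambda_{2,\rho}}\kappa^{(1)}_\rho$. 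The paper bounds it directly by $O(\lambda_{1,\rho}^{-N/2})$ using $\lambda_{2,\rho}/\lambda_{1,\rho}-1=O(\lambda_{1,\rho}^{-1/2})$. Your Step 1 instead absorbs it exactly into the coefficient of $(\lambda_{2,\rho}-\lambda_{1,\rho})/D_\rho$, which becomes $\int(\bar u^{(1)}_{\lambda_{1,\rho}}\bar u^{(2)}_{\lambda_{2,\rho}}+\bar v^{(1)}_{\lambda_{1,\rho}}\bar v^{(2)}_{\lambda_{2,\rho}})=\rho^2-\tfrac12D_\rho^2\int((\kappa^{(1)}_\rho)^2+(\kappa^{(2)}_\rho)^2)$. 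So the a priori bound on $(\lambda_{2,\rho}-\lambda_{1,\rho})/D_\rho$ that you flagged as the main obstacle is not actually needed.
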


\begin{proof}
By \eqref{eq0}, we have
\begin{align*}
\int_{\mathbb{R}^N}(|\nabla\bar{u}_{\lambda_{i,\rho}}^{(i)}|^2+(P\left( x \right) + \lambda_{i,\rho} )(\bar{u}_{\lambda_{i,\rho}}^{(i)})^2)= \int_{\mathbb{R}^N}(\mu_1 ( \bar{u}_{\lambda_{i,\rho}}^{(i)})^4+\beta (\bar{u}_{\lambda_{i,\rho}}^{(i)}\bar{v}_{\lambda_{i,\rho}}^{(i)})^2),\quad i=1,2,
\end{align*}
which deduce that
\begin{align*}
&\int_{\mathbb{R}^N} ((\nabla \bar{u}_{\lambda_{1,\rho}}^{(1)}- \nabla \bar{u}_{\lambda_{2,\rho}}^{(2)}) (\nabla \bar{u}_{\lambda_{1,\rho}}^{(1)}+ \nabla \bar{u}_{\lambda_{2,\rho}}^{(2)})+ P(x)(\bar{u}_{\lambda_{1,\rho}}^{(1)}-\bar{u}_{\lambda_{2,\rho}}^{(2)}) (\bar{u}_{\lambda_{1,\rho}}^{(1)}+\bar{u}_{\lambda_{2,\rho}}^{(2)}) \\
&\quad\quad+\lambda_{1,\rho}(\bar{u}_{\lambda_{1,\rho}}^{(1)}- \bar{u}_{\lambda_{2,\rho}}^{(2)})(\bar{u}_{\lambda_{1,\rho}}^{(1)}+\bar{u}_{\lambda_{2,\rho}}^{(2)}) +(\lambda_{1,\rho}-\lambda_{2,\rho})(\bar{u}_{\lambda_{2,\rho}}^{(2)})^2) \\
=&\int_{\mathbb{R}^N}  (\mu_1 (\bar{u}_{\lambda_{1,\rho}}^{(1)}- \bar{u}_{\lambda_{2,\rho}}^{(2)})((\bar{u}_{\lambda_{1,\rho}}^{(1)})^3 + (\bar{u}_{\lambda_{1,\rho}}^{(1)})^2\bar{u}_{\lambda_{2,\rho}}^{(2)}+ \bar{u}_{\lambda_{1,\rho}}^{(1)}(\bar{u}_{\lambda_{2,\rho}}^{(2)})^2 +(\bar{u}_{\lambda_{2,\rho}}^{(2)})^3) \\
&\quad\quad+\beta ( (\bar{u}_{\lambda_{1,\rho}}^{(1)}-\bar{u}_{\lambda_{2,\rho}}^{(2)}) (\bar{u}_{\lambda_{1,\rho}}^{(1)}+\bar{u}_{\lambda_{2,\rho}}^{(2)})(\bar{v}_{\lambda_{1,\rho}}^{(1)})^2 +(\bar{v}_{\lambda_{1,\rho}}^{(1)}- \bar{v}_{\lambda_{2,\rho}}^{(2)})(\bar{v}_{\lambda_{1,\rho}}^{(1)}+ \bar{v}_{\lambda_{2,\rho}}^{(2)})(\bar{u}_{\lambda_{2,\rho}}^{(2)})^2)).
\end{align*}
Therefore, we conclude that
\begin{align}\label{5531}
\frac{(\frac{\lambda_{2,\rho}}{\lambda_{1,\rho}}-1 ) \int_{\mathbb{R}^N} (\bar{u}_{\lambda_{2,\rho}}^{(2)})^2}{\|\bar{u}_{\lambda_{1,\rho}}^{(1)} - \bar{u}_{\lambda_{2,\rho}}^{(2)}\|_{L^\infty(\mathbb{R}^N)} + \|\bar{v}_{\lambda_{1,\rho}}^{(1)} - \bar{v}_{\lambda_{2,\rho}}^{(2)}\|_{L^\infty(\mathbb{R}^N)}}
=&\frac{1}{\lambda_{1,\rho}}\int_{\mathbb{R}^N} (\nabla \kappa_{\rho}^{(1)}  (\nabla \bar{u}_{\lambda_{1,\rho}}^{(1)}+ \nabla \bar{u}_{\lambda_{2,\rho}}^{(2)})+ (P(x)+\lambda_{1,\rho})(\bar{u}_{\lambda_{1,\rho}}^{(1)}+\bar{u}_{\lambda_{2,\rho}}^{(2)})\kappa_{\rho}^{(1)} ) \notag\\
& - \frac{\mu_1}{\lambda_{1,\rho}}\int_{\mathbb{R}^N}(\kappa_{\rho}^{(1)} ((\bar{u}_{\lambda_{1,\rho}}^{(1)})^3 + (\bar{u}_{\lambda_{1,\rho}}^{(1)})^2\bar{u}_{\lambda_{2,\rho}}^{(2)}+ \bar{u}_{\lambda_{1,\rho}}^{(1)}(\bar{u}_{\lambda_{2,\rho}}^{(2)})^2 +(\bar{u}_{\lambda_{2,\rho}}^{(2)})^3)\notag\\
&- \frac{\beta}{\lambda_{1,\rho}} \int_{\mathbb{R}^N} ( \kappa_{\rho}^{(1)}  (\bar{u}_{\lambda_{1,\rho}}^{(1)}+\bar{u}_{\lambda_{2,\rho}}^{(2)})(\bar{v}_{\lambda_{1,\rho}}^{(1)})^2 +\kappa_{\rho}^{(2)} (\bar{v}_{\lambda_{1,\rho}}^{(1)}+ \bar{v}_{\lambda_{2,\rho}}^{(2)})(\bar{u}_{\lambda_{2,\rho}}^{(2)})^2).
\end{align}
Applying system \eqref{eq0} again, we obtain
\begin{align}\label{5532}
&\frac{1}{\lambda_{1,\rho}}\int_{\mathbb{R}^N} (\nabla \kappa_{\rho}^{(1)}  (\nabla \bar{u}_{\lambda_{1,\rho}}^{(1)}+ \nabla \bar{u}_{\lambda_{2,\rho}}^{(2)})+ (P(x)+\lambda_{1,\rho}) (\bar{u}_{\lambda_{1,\rho}}^{(1)}+\bar{u}_{\lambda_{2,\rho}}^{(2)})\kappa_{\rho}^{(1)}  )\notag \\
=&\frac{1}{\lambda_{1,\rho}}\int_{\mathbb{R}^N}  ((\lambda_{1,\rho}-\lambda_{2,\rho})\bar{u}_{\lambda_{2,\rho}}^{(2)}\kappa_{\rho}^{(1)}  +\mu_1 ((\bar{u}_{\lambda_{1,\rho}}^{(1)})^3 +(\bar{u}_{\lambda_{2,\rho}}^{(2)})^3)\kappa_{\rho}^{(1)} +\beta (\bar{u}_{\lambda_{1,\rho}}^{(1)}(\bar{v}_{\lambda_{1,\rho}}^{(1)})^2 + \bar{u}_{\lambda_{2,\rho}}^{(2)} (\bar{v}_{\lambda_{2,\rho}}^{(2)})^2)\kappa_{\rho}^{(1)} .
\end{align}
Combining \eqref{5531} and \eqref{5532}, we get
\begin{align}\label{5533}
&\frac{(\frac{\lambda_{2,\rho}}{\lambda_{1,\rho}}-1 ) \int_{\mathbb{R}^N} (\bar{u}_{\lambda_{2,\rho}}^{(2)})^2}{\|\bar{u}_{\lambda_{1,\rho}}^{(1)} - \bar{u}_{\lambda_{2,\rho}}^{(2)}\|_{L^\infty(\mathbb{R}^N)} + \|\bar{v}_{\lambda_{1,\rho}}^{(1)} - \bar{v}_{\lambda_{2,\rho}}^{(2)}\|_{L^\infty(\mathbb{R}^N)}}  \notag\\
=&\int_{\mathbb{R}^N}  ((1-\frac{\lambda_{2,\rho}}{\lambda_{1,\rho}}) \bar{u}_{\lambda_{2,\rho}}^{(2)} \kappa_{\rho}^{(1)} )- \frac{\mu_1}{\lambda_{1,\rho}}\int_{\mathbb{R}^N}(\kappa_{\rho}^{(1)} ( (\bar{u}_{\lambda_{1,\rho}}^{(1)})^2\bar{u}_{\lambda_{2,\rho}}^{(2)}+ \bar{u}_{\lambda_{1,\rho}}^{(1)}(\bar{u}_{\lambda_{2,\rho}}^{(2)})^2 ))\notag \\
&-\frac{\beta}{\lambda_{1,\rho}} \int_{\mathbb{R}^N} (\bar{u}_{\lambda_{2,\rho}}^{(2)}((\bar{v}_{\lambda_{1,\rho}}^{(1)})^2-(\bar{v}_{\lambda_{2,\rho}}^{(2)})^2)\kappa_{\rho}^{(1)}  +(\bar{u}_{\lambda_{2,\rho}}^{(2)})^2(\bar{v}_{\lambda_{1,\rho}}^{(1)}+\bar{v}_{\lambda_{2,\rho}}^{(2)})\kappa_{\rho}^{(2)}  ).
\end{align}
From \eqref{511}, we obtain
\begin{align}\label{55333}
\int_{\mathbb{R}^N}  ((1-\frac{\lambda_{2,\rho}}{\lambda_{1,\rho}}) \bar{u}_{\lambda_{2,\rho}}^{(2)} \kappa_{\rho}^{(1)} )
=&\int_{\bigcup_{l=1}^{k} B_{\delta}(\xi_{\rho,l}^{(1)})} ((1-\frac{\lambda_{2,\rho}}{\lambda_{1,\rho}}) \bar{u}_{\lambda_{2,\rho}}^{(2)} \kappa_{\rho}^{(1)} )   +O(e^{-\theta\sqrt{\lambda_{1,\rho}}}) \notag\\
=&\sum_{i=1}^{k}\int_{ B_{\delta\sqrt{\lambda_{1,\rho}}}(0)} (\lambda_{1,\rho}^{-\frac{N}{2}}\lambda_{2,\rho}^{\frac12}(1-\frac{\lambda_{2,\rho}}{\lambda_{1,\rho}}) \lambda_{2,\rho}^{-\frac12}\bar{u}_{\lambda_{2,\rho}}^{(2)}(\frac{x}{\sqrt{\lambda_{1,\rho}}}+\xi_{\rho,i}^{(1)}) \tilde{\kappa}_{i,\rho}^{(1)}) +O\big(e^{-\theta\sqrt{\lambda_{1,\rho}}}\big)\notag\\
=&O(\lambda_{1,\rho}^{-\frac{N}{2}}).
\end{align}
By \eqref{509} and \eqref{511}, we have
\begin{align}\label{5534}
&\int_{\mathbb{R}^N}(\kappa_{\rho}^{(1)} ( (\bar{u}_{\lambda_{1,\rho}}^{(1)})^2\bar{u}_{\lambda_{2,\rho}}^{(2)}+ \bar{u}_{\lambda_{1,\rho}}^{(1)}(\bar{u}_{\lambda_{2,\rho}}^{(2)})^2 )\notag\\
=&\int_{\bigcup_{l=1}^{k} B_{\delta}(\xi_{\rho,l}^{(1)})} (\kappa_{\rho}^{(1)} ( (\bar{u}_{\lambda_{1,\rho}}^{(1)})^2\bar{u}_{\lambda_{2,\rho}}^{(2)}+ \bar{u}_{\lambda_{1,\rho}}^{(1)}(\bar{u}_{\lambda_{2,\rho}}^{(2)})^2 )+O(e^{-\theta\sqrt{\lambda_{1,\rho}}})\notag\\
=&\lambda_{1,\rho}^{\frac{3-N}{2}}\sum_{i=1}^{k}\int_{ B_{\delta\sqrt{\lambda_{1,\rho}}}(0)} (\tilde{\kappa}_{i,\rho}^{(1)}( (\lambda_{1,\rho}^{-\frac12}\bar{u}_{\lambda_{1,\rho}}^{(1)}(\frac{x}{\sqrt{\lambda_{1,\rho}}}+\xi_{\rho,i}^{(1)}))^2 \sqrt{\frac{\lambda_{2,\rho}}{\lambda_{1,\rho}}} \lambda_{2,\rho}^{-\frac12}\bar{u}_{\lambda_{2,\rho}}^{(2)}(\frac{x}{\sqrt{\lambda_{1,\rho}}}+\xi_{\rho,i}^{(1)}) \notag\\
&\quad\quad\quad\quad\quad+ \lambda_{1,\rho}^{-\frac12}\bar{u}_{\lambda_{1,\rho}}^{(1)}(\frac{x}{\sqrt{\lambda_{1,\rho}}}+\xi_{\rho,i}^{(1)}) (\sqrt{\frac{\lambda_{2,\rho}}{\lambda_{1,\rho}}}\lambda_{2,\rho}^{-\frac12}\bar{u}_{\lambda_{2,\rho}}^{(2)}(\frac{x}{\sqrt{\lambda_{1,\rho}}}+\xi_{\rho,i}^{(1)}))^2 ))+O(e^{-\theta\sqrt{\lambda_{1,\rho}}})\notag\\
=&2\lambda_{1,\rho}^{\frac{3-N}{2}}\sum_{i=1}^{k}\int_{ B_{\delta\sqrt{\lambda_{1,\rho}}}(0)} (\tilde{\kappa}_{i,\rho}^{(1)} (w^{*}(x))^3) + O(\lambda_{1,\rho}^{-\frac{N-2}{2}}).
\end{align}
According to \eqref{510}-\eqref{512}, we conclude that
\begin{align}\label{5535}
&\int_{\mathbb{R}^N} (\bar{u}_{\lambda_{2,\rho}}^{(2)}((\bar{v}_{\lambda_{1,\rho}}^{(1)})^2-(\bar{v}_{\lambda_{2,\rho}}^{(2)})^2)\kappa_{\rho}^{(1)}  +(\bar{u}_{\lambda_{2,\rho}}^{(2)})^2(\bar{v}_{\lambda_{1,\rho}}^{(1)}+\bar{v}_{\lambda_{2,\rho}}^{(2)})\kappa_{\rho}^{(2)}  ) \notag\\
=&\int_{\bigcup_{l=1}^{k} B_{\delta}(\xi_{\rho,l}^{(1)})} (\bar{u}_{\lambda_{2,\rho}}^{(2)}((\bar{v}_{\lambda_{1,\rho}}^{(1)})^2-(\bar{v}_{\lambda_{2,\rho}}^{(2)})^2)\kappa_{\rho}^{(1)}  +(\bar{u}_{\lambda_{2,\rho}}^{(2)})^2(\bar{v}_{\lambda_{1,\rho}}^{(1)}+\bar{v}_{\lambda_{2,\rho}}^{(2)})\kappa_{\rho}^{(2)}  ) + O(e^{-\theta\sqrt{\lambda_{1,\rho}}}) \notag\\
=&\lambda_{1,\rho}^{\frac{3-N}{2}}\sum_{i=1}^{k}\int_{ B_{\delta\sqrt{\lambda_{1,\rho}}}(0)} (\sqrt{\frac{\lambda_{2,\rho}}{\lambda_{1,\rho}}}\lambda_{2,\rho}^{-\frac12}\bar{u}_{\lambda_{2,\rho}}^{(2)}(\frac{x}{\sqrt{\lambda_{1,\rho}}}+\xi_{\rho,i}^{(1)}) ((\lambda_{1,\rho}^{-\frac12}\bar{v}_{\lambda_{1,\rho}}^{(1)}(\frac{x}{\sqrt{\lambda_{1,\rho}}}+\xi_{\rho,i}^{(1)}))^2 \notag\\
&\quad\quad\quad\quad\quad\quad-(\sqrt{\frac{\lambda_{2,\rho}}{\lambda_{1,\rho}}}\lambda_{2,\rho}^{-\frac12}\bar{v}_{\lambda_{2,\rho}}^{(2)}(\frac{x}{\sqrt{\lambda_{1,\rho}}}+\xi_{\rho,i}^{(1)}))^2)\tilde{\kappa}_{i,\rho}^{(1)} \notag\\
&\quad\quad\quad\quad \quad\quad +(\sqrt{\frac{\lambda_{2,\rho}}{\lambda_{1,\rho}}}\lambda_{2,\rho}^{-\frac12}\bar{u}_{\lambda_{2,\rho}}^{(2)}(\frac{x}{\sqrt{\lambda_{1,\rho}}}+\xi_{\rho,i}^{(1)}))^2 (\lambda_{1,\rho}^{-\frac12}\bar{v}_{\lambda_{1,\rho}}^{(1)}(\frac{x}{\sqrt{\lambda_{1,\rho}}}+\xi_{\rho,i}^{(1)}) \notag\\
&\quad\quad\quad\quad\quad\quad+\sqrt{\frac{\lambda_{2,\rho}}{\lambda_{1,\rho}}}\lambda_{2,\rho}^{-\frac12}\bar{v}_{\lambda_{2,\rho}}^{(2)}(\frac{x}{\sqrt{\lambda_{1,\rho}}}+\xi_{\rho,i}^{(1)})) \tilde{\kappa}_{i,\rho}^{(2)} ) +O(e^{-\theta\sqrt{\lambda_{1,\rho}}}) \notag\\
=& \lambda_{1,\rho}^{\frac{3-N}{2}}\sum_{i=1}^{k} \int_{ B_{\delta\sqrt{\lambda_{1,\rho}}}(0)} ((w^{*}(x))^2 w^{\star}(x) \tilde{\kappa}_{i,\rho}^{(2)})
+O(\lambda_{1,\rho}^{-\frac{N-2}{2}}).
\end{align}
Combining \eqref{5533}-\eqref{5535}, we have
\begin{align}\label{5536}
&\frac{(\frac{\lambda_{2,\rho}}{\lambda_{1,\rho}}-1 ) \int_{\mathbb{R}^N} (\bar{u}_{\lambda_{2,\rho}}^{(2)})^2}{\|\bar{u}_{\lambda_{1,\rho}}^{(1)} - \bar{u}_{\lambda_{2,\rho}}^{(2)}\|_{L^\infty(\mathbb{R}^N)} + \|\bar{v}_{\lambda_{1,\rho}}^{(1)} - \bar{v}_{\lambda_{2,\rho}}^{(2)}\|_{L^\infty(\mathbb{R}^N)}} \notag\\
=&- 2 \lambda_{1,\rho}^{-\frac{N-1}{2}} \sum_{i=1}^{k} \int_{ B_{\delta\sqrt{\lambda_{1,\rho}}}(0)} (\mu_1 \tilde{\kappa}_{i,\rho}^{(1)} (w^{*}(x))^3+\beta(w^{*}(x))^2 w^{\star}(x) \tilde{\kappa}_{i,\rho}^{(2)})+ O(\lambda_{1,\rho}^{-\frac{N}{2}}).
\end{align}
Similarly,
\begin{align}\label{5537}
&\frac{(\frac{\lambda_{2,\rho}}{\lambda_{1,\rho}}-1 ) \int_{\mathbb{R}^N} (\bar{v}_{\lambda_{2,\rho}}^{(2)})^2}{\|\bar{v}_{\lambda_{1,\rho}}^{(1)} - \bar{u}_{\lambda_{2,\rho}}^{(2)}\|_{L^\infty(\mathbb{R}^N)} + \|\bar{v}_{\lambda_{1,\rho}}^{(1)} - \bar{v}_{\lambda_{2,\rho}}^{(2)}\|_{L^\infty(\mathbb{R}^N)}} \notag \\
=&- 2 \lambda_{1,\rho}^{-\frac{N-1}{2}}\sum_{i=1}^{k} \int_{ B_{\delta\sqrt{\lambda_{1,\rho}}}(0)} (\mu_2 \tilde{\kappa}_{i,\rho}^{(2)} (w^{\star}(x))^3+\beta(w^{\star}(x))^2 w^{*}(x) \tilde{\kappa}_{i,\rho}^{(1)})+ O(\lambda_{1,\rho}^{-\frac{N}{2}}).
\end{align}
By \eqref{rho2}, \eqref{5536} and \eqref{5537}, we have
\begin{align}\label{5538}
&\frac{(\frac{\lambda_{2,\rho}}{\lambda_{1,\rho}}-1 ) \rho^2 }{\|\bar{v}_{\lambda_{1,\rho}}^{(1)} - \bar{u}_{\lambda_{2,\rho}}^{(2)}\|_{L^\infty(\mathbb{R}^N)} + \|\bar{v}_{\lambda_{1,\rho}}^{(1)} - \bar{v}_{\lambda_{2,\rho}}^{(2)}\|_{L^\infty(\mathbb{R}^N)}} \notag \\
=&- 2 \lambda_{1,\rho}^{-\frac{N-1}{2}} \sum_{i=1}^{k} \int_{ B_{\delta\sqrt{\lambda_{1,\rho}}}(0)} (\mu_1 \tilde{\kappa}_{i,\rho}^{(1)} (w^{*}(x))^3+\mu_2 \tilde{\kappa}_{i,\rho}^{(2)} (w^{\star}(x))^3+\beta w^{*}(x)w^{\star}(x)(w^{\star}(x) \tilde{\kappa}_{i,\rho}^{(1)}+w^{*}(x) \tilde{\kappa}_{i,\rho}^{(2)}))\notag\\
&+ O(\lambda_{1,\rho}^{-\frac{N}{2}}).
\end{align}
Then by combining \eqref{511} and \eqref{5538}, we get
\begin{align*}
&\frac{(\frac{\lambda_{2,\rho}}{\lambda_{1,\rho}}-1 ) \bar{u}_{\lambda_{2,\rho}}^{(2)}(\frac{x}{\sqrt{\lambda_{1,\rho}}} +\xi_{\rho,i}^{(1)}) }{\|\bar{v}_{\lambda_{1,\rho}}^{(1)} - \bar{u}_{\lambda_{2,\rho}}^{(2)}\|_{L^\infty(\mathbb{R}^N)} + \|\bar{v}_{\lambda_{1,\rho}}^{(1)} - \bar{v}_{\lambda_{2,\rho}}^{(2)}\|_{L^\infty(\mathbb{R}^N)}} \notag \\
=&- \lambda_{2,\rho}^{-\frac12}\bar{u}_{\lambda_{2,\rho}}^{(2)}(\frac{x}{\sqrt{\lambda_{1,\rho}}} +\xi_{\rho,i}^{(1)})\frac{2  \lambda_{2,\rho}^{\frac12}}{\rho^{2}}\lambda_{1,\rho}^{-\frac{N-1}{2}} \sum_{i=1}^{k} \int_{ B_{\delta\sqrt{\lambda_{1,\rho}}}(0)} (\mu_1 \tilde{\kappa}_{i,\rho}^{(1)} (w^{*}(x))^3+\mu_2 \tilde{\kappa}_{i,\rho}^{(2)} (w^{\star}(x))^3+\beta w^{*}(x)w^{\star}(x)\notag\\
&\times(w^{\star}(x) \tilde{\kappa}_{i,\rho}^{(1)}+w^{*}(x) \tilde{\kappa}_{i,\rho}^{(2)}))
+ \rho^{-2}\lambda_{2,\rho}^{\frac12}\lambda_{2,\rho}^{-\frac12}\bar{u}_{\lambda_{2,\rho}}^{(2)}(\frac{x}{\sqrt{\lambda_{1,\rho}}} +\xi_{\rho,i}^{(1)})O(\lambda_{1,\rho}^{-\frac{N}{2}})\notag\\
=&- \frac{2w^{*}(x)}{k\int_{\mathbb{R}^N}\left(\left( w^{*}\right)^2+\left( w^{\star}\right)^2\right)} \sum_{i=1}^{k} \int_{ \mathbb{R}^N} (\mu_1 \tilde{\kappa}_{i,\rho}^{(1)} (w^{*}(x))^3+\mu_2 \tilde{\kappa}_{i,\rho}^{(2)} (w^{\star}(x))^3+\beta w^{*}(x)w^{\star}(x)\notag\\
&\times(w^{\star}(x) \tilde{\kappa}_{i,\rho}^{(1)}+w^{*}(x) \tilde{\kappa}_{i,\rho}^{(2)})) +O(\lambda_{1,\rho}^{-\frac12}w(x)).
\end{align*}
Similarly,
\begin{align*}
&\frac{(\frac{\lambda_{2,\rho}}{\lambda_{1,\rho}}-1 ) \bar{v}_{\lambda_{2,\rho}}^{(2)}(\frac{x}{\sqrt{\lambda_{1,\rho}}} +\xi_{\rho,i}^{(1)}) }{\|\bar{v}_{\lambda_{1,\rho}}^{(1)} - \bar{u}_{\lambda_{2,\rho}}^{(2)}\|_{L^\infty(\mathbb{R}^N)} + \|\bar{v}_{\lambda_{1,\rho}}^{(1)} - \bar{v}_{\lambda_{2,\rho}}^{(2)}\|_{L^\infty(\mathbb{R}^N)}} \notag \\
=&- \frac{2w^{\star}(x)}{k\int_{\mathbb{R}^N}\left(\left( w^{*}\right)^2+\left( w^{\star}\right)^2\right)} \sum_{i=1}^{k} \int_{ \mathbb{R}^N} (\mu_1 \tilde{\kappa}_{i,\rho}^{(1)} (w^{*}(x))^3+\mu_2 \tilde{\kappa}_{i,\rho}^{(2)} (w^{\star}(x))^3+\beta w^{*}(x)w^{\star}(x)\notag\\
&\times(w^{\star}(x) \tilde{\kappa}_{i,\rho}^{(1)}+w^{*}(x) \tilde{\kappa}_{i,\rho}^{(2)})) +O(\lambda_{1,\rho}^{-\frac12}w(x)).
\end{align*}
\end{proof}

Next, we give the estimate of $\kappa_{\rho}^{(1)} $ and $\kappa_{\rho}^{(2)} $ in $\mathbb{R}^3\backslash \cup^{k}_{l=1} B_{\frac{R}{\sqrt{\lambda_{1,\rho}}}}(\xi_{\rho,l}^{(1)})$.
\begin{lemma}\label{lem54}
There exist constants $C>0$ and $\tau>0$, such that
\begin{align}\label{531}
| \kappa_{\rho}^{(i)} | \leq C \sum _ { l = 1 } ^ { k } e ^ { -   \tau\sqrt{\lambda_{1,\rho}}| x - \xi^{(1)}_{\rho,l} |  },\ \ \ \forall x \in \mathbb{R}^N\backslash \cup^{k}_{l=1} B_{\frac{R}{\sqrt{\lambda_{1,\rho}}}}(\xi_{\rho,l}^{(1)}),\ \ \ i=1,2
\end{align}
and
\begin{align}\label{532}
| \nabla \kappa_{\rho}^{(i )}| \leq C e ^ { -  \tau \sqrt{\lambda_{1,\rho}}},\ \ \ \forall x \in \partial B_{\delta}(\xi^{(1)}_{\rho,l}),\ \ \ i=1,2,\ \ \ l=1,\cdots,k.
\end{align}
\end{lemma}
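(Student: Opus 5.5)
The plan is to follow the proof of Lemma \ref{lem36}, now for the system \eqref{eq-}. There are two differences. The first is the scaling: $\varepsilon$ is replaced by $\lambda_{1,\rho}^{-1/2}$, so we work with the operator $-\Delta+(P(x)+\lambda_{1,\rho})$, which is large. The second is an extra inhomogeneous term proportional to $(\lambda_{2,\rho}-\lambda_{1,\rho})\bar u^{(2)}_{\lambda_{2,\rho}}$.

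\textbf{Step 1: coefficient bounds outside the peaks.} By Lemma \ref{lem52} we have $\lambda_{2,\rho}/\lambda_{1,\rho}=1+O(\lambda_{1,\rho}^{-1/2})$ and $|\xi^{(1)}_{\rho,l}-\xi^{(2)}_{\rho,l}|=O(\lambda_{1,\rho}^{-2})$. Applying Lemma \ref{lem33} to both solutions (after the rescaling $\varepsilon=\lambda^{-1/2}$, $u=\varepsilon\bar u$), we get $\bar u^{(i)},\bar v^{(i)}\le C\sqrt{\lambda_{1,\rho}}\sum_l e^{-\sqrt{\alpha}\sqrt{\lambda_{1,\rho}}|x-\xi^{(1)}_{\rho,l}|}$. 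Hence on $\mathbb{R}^N\setminus\cup_l B_{R/\sqrt{\lambda_{1,\rho}}}(\xi^{(1)}_{\rho,l})$ we may take $R$ large so that all the potential-type coefficients $\mu_1(\cdots)$, $\beta(\bar v^{(2)})^2$ and so on are at most $\bar\tau\lambda_{1,\rho}$. Since $P$ is bounded on the relevant region, it follows that
\[
-\Delta|\kappa^{(1)}_\rho|+\tfrac12\lambda_{1,\rho}|\kappa^{(1)}_\rho|\le |f_\rho| \quad\text{(in the weak/Kato sense)}.
\]
Here $f_\rho$ collects the coupling term $\beta\bar u^{(1)}\kappa^{(2)}_\rho(\bar v^{(1)}+\bar v^{(2)})$ and the $\lambda$-difference term, and the same inequality holds for $\kappa^{(2)}_\rho$.

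\textbf{Step 2: control of the source terms.} The coupling term is bounded by $C\lambda_{1,\rho}\sum_l e^{-2\sqrt\alpha\sqrt{\lambda_{1,\rho}}|x-\xi^{(1)}_{\rho,l}|}$, using $\|\kappa^{(2)}_\rho\|_\infty\le1$. For the $\lambda$-difference term we use \eqref{5538} from the proof of Lemma \ref{lem53}. It shows that $(\lambda_{2,\rho}/\lambda_{1,\rho}-1)/(\|\cdot\|_\infty+\|\cdot\|_\infty)=O(\lambda_{1,\rho}^{-(N-1)/2})$, since $\rho$ is fixed and the integrals of $\tilde\kappa$ against $w^3$ are bounded. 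Therefore
\[
\frac{(\lambda_{2,\rho}-\lambda_{1,\rho})\bar u^{(2)}}{\|\cdot\|}=O\Big(\lambda_{1,\rho}^{\frac{3-N}{2}}\cdot\sqrt{\lambda_{1,\rho}}\sum_l e^{-\sqrt\alpha\sqrt{\lambda_{1,\rho}}|x-\xi_{\rho,l}^{(1)}|}\Big).
\]
Dividing by $\lambda_{1,\rho}$, this is again small times an exponential. Both sources are thus bounded by $C\lambda_{1,\rho}\,\bar\tau\sum_l e^{-\sqrt{\alpha}\sqrt{\lambda_{1,\rho}}|x-\xi_{\rho,l}^{(1)}|}$ on the exterior region.

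\textbf{Step 3: comparison.} We use the supersolution
\[
\Phi=M\sum_l e^{-\tau\sqrt{\lambda_{1,\rho}}|x-\xi^{(1)}_{\rho,l}|},
\]
with $\tau<\min\{\sqrt\alpha,1/2\}$. As in \eqref{L}, we have $(-\Delta+\tfrac12\lambda_{1,\rho})\Phi\ge(\tfrac12-\tau^2)\lambda_{1,\rho}\Phi$, and this dominates the source for $M$ large, since $e^{-\sqrt\alpha r}\le e^{-\tau r}$. On the spheres $\partial B_{R/\sqrt{\lambda_{1,\rho}}}$ we have $|\kappa|\le1\le\Phi$ once $M\ge e^{\tau R}$, and both $\Phi$ and $\kappa$ vanish at infinity. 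The maximum principle on the exterior domain then gives \eqref{531}.

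\textbf{Step 4: the gradient bound.} To get \eqref{532}, apply interior $L^p$ estimates on $B_{\delta/4}(z)$ for $z\in\partial B_\delta(\xi^{(1)}_{\rho,l})$:
\[
\|\kappa\|_{W^{2,p}(B_{\delta/4}(z))}\le C\|\kappa\|_{L^p(B_{\delta/2}(z))}+C\|\Delta\kappa\|_{L^p(B_{\delta/2}(z))}.
\]
By \eqref{531} and Step 2, the right-hand side is $O(\lambda_{1,\rho}^{m}e^{-\tau\sqrt{\lambda_{1,\rho}}\delta/2})$ for some fixed $m$. Sobolev embedding with $p>N$ then gives $|\nabla\kappa|\le Ce^{-\tau'\sqrt{\lambda_{1,\rho}}}$, after shrinking $\tau$.

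The main obstacle is Step 2, namely controlling the nonlocal term carrying $(\lambda_{2,\rho}-\lambda_{1,\rho})$ relative to the $L^\infty$ normalization. Without \eqref{5538} this ratio could a priori be large. In particular, for $N=2$ the prefactor $\lambda_{1,\rho}^{(3-N)/2}$ has to be absorbed by $\lambda_{1,\rho}$ on the left-hand side, and this requires the exponent bookkeeping above to be checked carefully.
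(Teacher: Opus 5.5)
Your proposal is correct and follows essentially the paper's own argument. The paper likewise uses positivity of the zeroth-order coefficient outside $\cup_l B_{R/\sqrt{\lambda_{1,\rho}}}(\xi^{(1)}_{\rho,l})$, controls the $(\lambda_{2,\rho}-\lambda_{1,\rho})$ term through the identity behind Lemma \ref{lem53}, compares with $M\sum_l e^{-\tau\sqrt{\lambda_{1,\rho}}|x-\xi^{(1)}_{\rho,l}|}$, and then obtains \eqref{532} from $L^p$ estimates and Sobolev embedding.

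There is one bookkeeping slip in your Step 2: for $N=3$, $\rho$ is not fixed but tends to $0$ with $\rho^2\sim\rho_0^2\lambda_{1,\rho}^{-1/2}$. Hence \eqref{5538} makes the quotient of $\lambda_{2,\rho}/\lambda_{1,\rho}-1$ by the $L^\infty$ normalization of order $\lambda_{1,\rho}^{-1/2}$ in both dimensions, not $\lambda_{1,\rho}^{-(N-1)/2}$. The $\lambda$-difference source is therefore of size $C\lambda_{1,\rho}\sum_l e^{-\sqrt{\alpha}\sqrt{\lambda_{1,\rho}}|x-\xi^{(1)}_{\rho,l}|}$ with $C$ not small, in both dimensions. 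Your comparison with $M$ large still absorbs this, so the argument goes through.
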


\begin{proof}
Choosing large enough $R>0$, for $x\in \mathbb{R}^N\backslash \cup^{k}_{l=1} B_{\frac{R}{\sqrt{\lambda_{1,\rho}}}}(\xi_{\rho,l}^{(1)})$, we have
\begin{align*}
1+\frac{1}{\lambda_{1,\rho}}P(x)-\frac{\mu_1}{\lambda_{1,\rho}}  ((\bar{u}^{(1)}_{\lambda_{1,\rho}})^2 +\bar{u}^{(1)}_{\lambda_{1,\rho}}\bar{u}^{(2)}_{\lambda_{2,\rho}} +(\bar{u}^{(2)}_{\lambda_{2,\rho}})^2)-\frac{\beta}{\lambda_{1,\rho}}(\bar{v}^{(2)}_{\lambda_{2,\rho}})^2\geq\frac12
\end{align*}
and
\begin{align*}
1+\frac{1}{\lambda_{1,\rho}}Q(x)-\frac{\mu_2}{\lambda_{1,\rho}}  ((\bar{v}^{(1)}_{\lambda_{1,\rho}})^2 +\bar{v}^{(1)}_{\lambda_{1,\rho}}\bar{v}^{(2)}_{\lambda_{2,\rho}} +(\bar{v}^{(2)}_{\lambda_{2,\rho}})^2)-\frac{\beta}{\lambda_{1,\rho}}(\bar{u}^{(2)}_{\lambda_{2,\rho}})^2\geq\frac12.
\end{align*}
According to Lemma \ref{lem33}, Lemma \ref{lem51} and Lemma \ref{lem53}, we can conclude that
\begin{align*}
\frac{\lambda_{1,\rho}^{-1}|\lambda_{2,\rho}-\lambda_{1,\rho}|\bar{u}_{\lambda_{2,\rho}}^{(2)}}{||\bar{u}^{(1)}_{\lambda_{1,\rho}} -\bar{u}^{(2)}_{\lambda_{2,\rho}}||_{L^{\infty}(\mathbb{R}^N)} +||\bar{v}^{(1)}_{\lambda_{1,\rho}}-\bar{v}^{(2)}_{\lambda_{2,\rho}}||_{L^{\infty}(\mathbb{R}^N)}}\leq
C  \sum _ { l = 1 } ^ { k } e ^ { - \tau \sqrt {\lambda_{1,\rho}} | x - \xi_{\rho,l} |  }
\end{align*}
and
\begin{align*}
\frac{\lambda_{1,\rho}^{-1}|\lambda_{2,\rho}-\lambda_{1,\rho}|\bar{v}_{\lambda_{2,\rho}}^{(2)}}{||\bar{u}^{(1)}_{\lambda_{1,\rho}} -\bar{u}^{(2)}_{\lambda_{2,\rho}}||_{L^{\infty}(\mathbb{R}^N)} +||\bar{v}^{(1)}_{\lambda_{1,\rho}}-\bar{v}^{(2)}_{\lambda_{2,\rho}}||_{L^{\infty}(\mathbb{R}^N)}}\leq
C  \sum _ { l = 1 } ^ { k } e ^ { - \tau \sqrt {\lambda_{1,\rho}} | x - \xi_{\rho,l} |  }.
\end{align*}
Moreover,
\begin{align*}
\Big|\frac{\beta}{\lambda_{1,\rho}} \bar{u}^{(1)}_{\lambda_{1,\rho}} (\bar{v}^{(1)}_{\lambda_{1,\rho}}+\bar{v}^{(2)}_{\lambda_{2,\rho}}) \kappa_{\rho}^{(2)}\Big |\leq C \sum _ { l = 1 } ^ { k } e ^ { - \tau \sqrt {\lambda_{1,\rho}} | x - \xi_{\rho,l} |  }
\end{align*}
and
\begin{align*}
\Big|\frac{\beta}{\lambda_{1,\rho}} \bar{v}^{(1)}_{\lambda_{1,\rho}} (\bar{u}^{(1)}_{\lambda_{1,\rho}}+\bar{u}^{(2)}_{\lambda_{2,\rho}}) \kappa_{\rho}^{(1)} \Big|\leq C \sum _ { l = 1 } ^ { k } e ^ { - \tau \sqrt {\lambda_{1,\rho}} | x - \xi_{\rho,l} |  }.
\end{align*}
Employing the comparison principle, we establish the result \eqref{531}. Subsequently, by $L^p$ estimate and the Sobolev embedding theorem, it follows that $\kappa_{\rho}^{(1)}$ and $\kappa_{\rho}^{(2)}$ satisfy \eqref{532}.
\end{proof}

We now estimate $\kappa_{\rho}^{(i)} $ in $\cup^{k}_{l=1} B_{\frac{R}{\sqrt{\lambda_{1,\rho}}}}(\xi_{\rho,l}^{(1)})$, $i=1,2$, $l=1,2,\cdots,k$.  Since $|\tilde{\kappa}_{l,\rho}^{(i)}|\leq 1$ for $i=1,2$ and $l=1,2,\cdots,k$, we suppose
\begin{align*}
\tilde{\kappa}_{l,\rho}^{(i)} \rightarrow \tilde{\kappa}_{l}^{(i)},\quad \text{in}\quad C_{loc}^{1}(\mathbb{R}^N).
\end{align*}
Define $L_{mn}$, $m,n=1,2$, and $\mathcal{Q}$ as follows
\begin{align*}
L_{11} \tilde{\kappa}_l^{(1)}= &-\triangle \tilde{\kappa}_l^{(1)}+ \tilde{\kappa}_l^{(1)}-(3\mu_1(w^{*})^2+\beta (w^{\star})^2 )\tilde{\kappa}_l^{(1)} \\
&\quad+\frac{2 w^{*} }{k\int_{\mathbb{R}^N}\left(\left( w^{*}\right)^2+\left( w^{\star}\right)^2\right)} \sum_{i=1}^{k} \int_{ \mathbb{R}^N} (\mu_1 (w^{*})^3+\beta w^{*}(w^{\star})^2 )\tilde{\kappa}_i^{(1)} ,\\
L_{22} \tilde{\kappa}_l^{(2)}=&-\triangle \tilde{\kappa}_l^{(2)}+ \tilde{\kappa}_l^{(2)} -(3\mu_2(w^{\star})^2+\beta (w^{*})^2 )\tilde{\kappa}_l^{(2)} \\
&\quad+ \frac{2 w^{\star}}{k\int_{\mathbb{R}^N}\left(\left( w^{*}\right)^2+\left( w^{\star}\right)^2\right)} \sum_{i=1}^{k} \int_{ \mathbb{R}^N} (\mu_2  (w^{\star})^3+\beta (w^{*})^2w^{\star} ) \tilde{\kappa}_i^{(2)} ,\\
L_{12} \tilde{\kappa}_l^{(2)}=& - 2\beta w^{*}w^{\star}\tilde{\kappa}_l^{(2)}+\frac{2 w^{*} }{k\int_{\mathbb{R}^N}\left(\left( w^{*}\right)^2+\left( w^{\star}\right)^2\right)} \sum_{i=1}^{k} \int_{ \mathbb{R}^N} (\mu_2 (w^{\star})^3+\beta (w^{*})^2 w^{\star} )\tilde{\kappa}_i^{(2)},\\
L_{21}\tilde{\kappa}_l^{(1)}=& -2\beta w^{*}w^{\star}\tilde{\kappa}_l^{(1)}+ \frac{2 w^{\star}}{k\int_{\mathbb{R}^N}\left(\left( w^{*}\right)^2+\left( w^{\star}\right)^2\right)} \sum_{i=1}^{k} \int_{ \mathbb{R}^N} (\mu_1 (w^{*})^3+\beta w^{*}(w^{\star})^2 )\tilde{\kappa}_i^{(1)}
\end{align*}
and
\[
\mathcal{Q} \begin{pmatrix}
\tilde{\kappa}_l^{(1)} \\
\tilde{\kappa}_l^{(2)}
\end{pmatrix} =
\begin{pmatrix}
L_{11} & L_{12} \\
L_{21} & L_{22}
\end{pmatrix}
\begin{pmatrix}
\tilde{\kappa}_l^{(1)} \\
\tilde{\kappa}_l^{(2)}
\end{pmatrix}.
\]

\begin{lemma}
If $\mathcal{Q} (\tilde{\kappa}_l^{(1)}, \tilde{\kappa}_l^{(2)})^T =0$, $l=1,2,\cdots,k$, then
\begin{align*}
\begin{pmatrix}
\tilde{\kappa}_l^{(1)} \\
\tilde{\kappa}_l^{(2)}
\end{pmatrix}
=
\begin{pmatrix}
\sum_{j=0}^{N} a_{l,j} \vartheta_j^{(1)}  \\
\sum_{j=0}^{N} a_{l,j} \vartheta_j^{(2)}
\end{pmatrix} ,\quad l=1,2,\cdots,k,
\end{align*}
where $a_{l,j}$ are some constants, $ \vartheta_j^{(1)}=\frac{\partial w^{*}}{\partial x_j}$,  $ \vartheta_j^{(2)}=\frac{\partial w^{\star}}{\partial x_j}$, $j=1,\cdots,N$. Moreover, $a_{i,0}=a_{j,0}$ for all $i,j=1,2,\cdots,k$.
\end{lemma}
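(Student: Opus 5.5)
The plan is to split the nonlocal operator $\mathcal{Q}$ into its local part and a rank-one correction that is the same for every peak. Then I would use the non-degeneracy of $(w^*,w^\star)$ to identify the solutions explicitly.

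Denote by $\mathcal{L}$ the local linearized operator
\[
\mathcal{L}\begin{pmatrix}\phi_1\\ \phi_2\end{pmatrix}=\begin{pmatrix}-\Delta\phi_1+\phi_1-(3\mu_1(w^*)^2+\beta(w^\star)^2)\phi_1-2\beta w^*w^\star\phi_2\\ -\Delta\phi_2+\phi_2-(3\mu_2(w^\star)^2+\beta(w^*)^2)\phi_2-2\beta w^*w^\star\phi_1\end{pmatrix}.
\]
Set $M:=\int_{\mathbb{R}^N}((w^*)^2+(w^\star)^2)$ and define the linear functional
\[
\ell(\phi_1,\phi_2):=\int_{\mathbb{R}^N}\Big(\mu_1(w^*)^3\phi_1+\mu_2(w^\star)^3\phi_2+\beta w^*w^\star(w^\star\phi_1+w^*\phi_2)\Big).
\]
By the definition of $L_{mn}$, the equation $\mathcal{Q}(\tilde\kappa_l^{(1)},\tilde\kappa_l^{(2)})^T=0$ reads
\[
\mathcal{L}(\tilde\kappa_l^{(1)},\tilde\kappa_l^{(2)})^T=c\,(w^*,w^\star)^T,\qquad c:=-\frac{2}{kM}\sum_{i=1}^k\ell(\tilde\kappa_i^{(1)},\tilde\kappa_i^{(2)}).
\]
The point is that the scalar $c$ does not depend on $l$. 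The limits $\tilde\kappa_l^{(i)}$ are bounded. Passing to the limit in Lemma \ref{lem54} (after rescaling), they decay exponentially, so they lie in $H^1\cap L^\infty$ and all integrals above are finite.

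Next I construct an explicit preimage of $(w^*,w^\star)$. Take $\vartheta_0:=(w^*+x\cdot\nabla w^*,\ w^\star+x\cdot\nabla w^\star)$. Differentiate the system \eqref{eq3} along the dilation $x\cdot\nabla$, and also use $\mathcal{L}(w^*,w^\star)=-2(\mu_1(w^*)^3+\beta w^*(w^\star)^2,\ \mu_2(w^\star)^3+\beta(w^*)^2w^\star)$. This gives $\mathcal{L}(x\cdot\nabla w^*,x\cdot\nabla w^\star)=-2(w^*,w^\star)+2(\ldots)$, and adding the two yields
\[
\mathcal{L}\vartheta_0=-2(w^*,w^\star)^T.
\]
Hence $(\tilde\kappa_l^{(1)},\tilde\kappa_l^{(2)})+\frac c2\vartheta_0\in\ker\mathcal{L}$. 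Since $\beta\notin\{\beta_n\}$, the non-degeneracy of $(w^*,w^\star)$ already used in \eqref{3116} gives $\ker\mathcal{L}=\mathrm{span}\{(\partial_jw^*,\partial_jw^\star)\}_{j=1}^N$ in the class of bounded decaying functions. Therefore
\[
(\tilde\kappa_l^{(1)},\tilde\kappa_l^{(2)})=a_{l,0}\vartheta_0+\sum_{j=1}^Na_{l,j}(\vartheta_j^{(1)},\vartheta_j^{(2)}),\qquad a_{l,0}=-\frac c2 .
\]
Because $c$ is independent of $l$, we get $a_{i,0}=a_{j,0}$ for all $i,j$, which is the last assertion.

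Finally, I would check that this representation is self-consistent, since $c$ is itself defined through the $\tilde\kappa_i$. First, $\ell(\partial_jw^*,\partial_jw^\star)=0$ because the integrand is odd in $x_j$. Second, integrating by parts, $\ell(\vartheta_0)=(1-\frac N4)\int(\mu_1(w^*)^4+\mu_2(w^\star)^4+2\beta(w^*)^2(w^\star)^2)$. For $N=2$, Lemma \ref{lemww} turns this into $\ell(\vartheta_0)=M$. The definition of $c$ then reads $c=-\frac{2}{kM}\cdot k\,a_{0}M=c$, an identity, so the $\vartheta_0$-direction genuinely survives (the mass-critical degeneracy). For $N=3$, the same computation gives a factor different from $1$, which forces $c=0$; the statement still holds with $a_{l,0}=0$.

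The step I expect to be the main obstacle is the rigorous identification of the kernel of $\mathcal{L}$ among the limit functions. It requires knowing that the $C^1_{loc}$ limits decay, so that the non-degeneracy result, which is stated in $H^1$, applies. For this I would rely on the uniform exponential bounds of Lemma \ref{lem54}, together with the fact that the right-hand side $c(w^*,w^\star)$ is already accounted for by $\vartheta_0$.
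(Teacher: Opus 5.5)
Your argument is correct and is essentially the paper's. The paper first separates variables to isolate a radial part $\tilde\kappa_{l,0}$. It notes that this part solves $\bar{\mathcal Q}\tilde\kappa_{l,0}=c\,(w^*,w^\star)^T$ with the same $c$ for every $l$, and concludes using $\bar{\mathcal Q}\vartheta_0=-2(w^*,w^\star)^T$ and the absence of a bounded radial kernel. You reach the same conclusion in one step by using the full kernel of $\mathcal L$ directly.

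One correction to your closing self-consistency remark: $\ell(\vartheta_0)=M$ holds in every dimension, not only for $N=2$. Since $\mathcal L$ is symmetric and $\mathcal L(w^*,w^\star)^T=-2\bigl(\mu_1(w^*)^3+\beta w^*(w^\star)^2,\ \mu_2(w^\star)^3+\beta(w^*)^2w^\star\bigr)^T$, you get $-2\ell(\vartheta_0)=\langle\vartheta_0,\mathcal L(w^*,w^\star)^T\rangle=\langle\mathcal L\vartheta_0,(w^*,w^\star)^T\rangle=-2M$. Equivalently, the Nehari and Pohozaev identities give $\int w^2=(1-\frac N4)\int w^4$ for all $N$, so for $N=3$ your factor $\frac14$ multiplies $\int(\mu_1(w^*)^4+\mu_2(w^\star)^4+2\beta(w^*)^2(w^\star)^2)=4M$. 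Hence for $N=3$ the relation is again a tautology and does not force $c=0$.

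This does not affect the lemma, which only claims the $a_{l,0}$ are equal. But it does mean the vanishing of $a_{l,0}$ for $N=3$ must come from elsewhere. In the paper it comes from the mass constraint in Lemma \ref{lem55}, via $\int_{\mathbb{R}^N}w^*\vartheta_0^{(1)}=(1-\frac N2)\int_{\mathbb{R}^N}(w^*)^2$. This integral is nonzero exactly when $N\neq 2$, and that is where the dimension dependence actually sits.
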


\begin{proof}
 Since $w^{*}$ and $w^{\star}$ are radially symmetric functions, using the technique of the separation of variables, we can obtain
\begin{align*}
\begin{pmatrix}
\tilde{\kappa}_l^{(1)} \\
\tilde{\kappa}_l^{(2)}
\end{pmatrix}
=
\begin{pmatrix}
\sum_{j=1}^{N} a_{l,j} \vartheta_j^{(1)} + \tilde{\kappa}_{l,0}^{(1)} \\
\sum_{j=1}^{N} a_{l,j} \vartheta_j^{(2)} + \tilde{\kappa}_{l,0}^{(2)}
\end{pmatrix} ,\quad l=1,2,\cdots,k,
\end{align*}
where $\tilde{\kappa}_{l,0}^{(1)}$ and $\tilde{\kappa}_{l,0}^{(2)}$ are radial functions, satisfying
\begin{align}
-\triangle \tilde{\kappa}_{l,0}^{(1)}+ \tilde{\kappa}_{l,0}^{(1)}=&(3\mu_1(w^{*})^2+\beta (w^{\star})^2 )\tilde{\kappa}_{l,0}^{(1)} -\frac{2 w^{*} }{k\int_{\mathbb{R}^N}\left(\left( w^{*}\right)^2+\left( w^{\star}\right)^2\right)} \sum_{i=1}^{k} \int_{ \mathbb{R}^N} (\mu_1 (w^{*})^3+\beta w^{*}(w^{\star})^2 )\tilde{\kappa}_i^{(1)} \notag\\
&+2\beta w^{*}w^{\star}\tilde{\kappa}_{l,0}^{(2)}-\frac{2 w^{*} }{k\int_{\mathbb{R}^N}\left(\left( w^{*}\right)^2+\left( w^{\star}\right)^2\right)} \sum_{i=1}^{k} \int_{ \mathbb{R}^N} (\mu_2 (w^{\star})^3+\beta (w^{*})^2 w^{\star} )\tilde{\kappa}_i^{(2)},\label{k10}\\
-\triangle \tilde{\kappa}_{l,0}^{(2)}+ \tilde{\kappa}_{l,0}^{(2)} =&(3\mu_2(w^{\star})^2+\beta (w^{*})^2 )\tilde{\kappa}_{l,0}^{(2)} - \frac{2 w^{\star}}{k\int_{\mathbb{R}^N}\left(\left( w^{*}\right)^2+\left( w^{\star}\right)^2\right)} \sum_{i=1}^{k} \int_{ \mathbb{R}^N} (\mu_2  (w^{\star})^3+\beta (w^{*})^2w^{\star} ) \tilde{\kappa}_i^{(2)} \notag\\
&+2\beta w^{*}w^{\star}\tilde{\kappa}_{l,0}^{(1)}- \frac{2 w^{\star}}{k\int_{\mathbb{R}^N}\left(\left( w^{*}\right)^2+\left( w^{\star}\right)^2\right)} \sum_{i=1}^{k} \int_{ \mathbb{R}^N} (\mu_1 (w^{*})^3+\beta w^{*}(w^{\star})^2 )\tilde{\kappa}_i^{(1)}.\label{k20}
\end{align}
We set $\vartheta_0^{(1)}=w^{*}+x\cdot \nabla w^{*}$, $\vartheta_0^{(2)}=w^{\star}+x\cdot \nabla w^{\star}$, and
\begin{align*}
\bar{\mathcal{Q}} \begin{pmatrix}
u \\
v
\end{pmatrix}:=
\begin{pmatrix}
\bar{L}_{11} & \bar{L}_{12} \\
\bar{L}_{21} & \bar{L}_{22}
\end{pmatrix}
\begin{pmatrix}
u \\
v
\end{pmatrix},
\end{align*}
where
\begin{align*}
\bar{L}_{11}(u)&:=-\triangle u(x) + u(x) - (3\mu_1(w^{*})^2+\beta (w^{\star})^2)u(x), \\
\bar{L}_{12}(v)&:=- 2\beta w^{*}w^{\star} v(x),\\
\bar{L}_{22}(v)&:=-\triangle v(x) + v(x) - (3\mu_2(w^{\star})^2+\beta (w^{*})^2)v(x) ,\\
\bar{L}_{21}(u)&:=- 2\beta w^{*}w^{\star} u(x).
\end{align*}
Then
\begin{align}\label{QVV}
\bar{\mathcal{Q}} \begin{pmatrix}
\vartheta_0^{(1)} \\
\vartheta_0^{(2)}
\end{pmatrix}=
\begin{pmatrix}
- 2 w^{*} \\
- 2 w^{\star}
\end{pmatrix}.
\end{align}
Since $\bar{\mathcal{Q}}$ no non-trivial bounded radially symmetric kernel, it holds
\begin{align}\label{k1k2}
\begin{pmatrix}
\tilde{\kappa}_{l,0}^{(1)}\\
\tilde{\kappa}_{l,0}^{(2)}
\end{pmatrix}=
a_{l,0}
\begin{pmatrix}
\vartheta_0^{(1)} \\
\vartheta_0^{(2)}
\end{pmatrix},\quad l=1,2,\cdots,k.
\end{align}
Therefore, we get
\begin{align*}
\begin{pmatrix}
\tilde{\kappa}_l^{(1)} \\
\tilde{\kappa}_l^{(2)}
\end{pmatrix}
=
\begin{pmatrix}
\sum_{j=0}^{N} a_{l,j} \vartheta_j^{(1)}  \\
\sum_{j=0}^{N} a_{l,j} \vartheta_j^{(2)}
\end{pmatrix} ,\quad l=1,2,\cdots,k.
\end{align*}
Since $w^{*}$ and $w^{\star}$ are even functions,  $\frac{\partial w^{*}}{\partial x_j}$ and $\frac{\partial w^{\star}}{\partial x_j}$ are odd functions, we have
\begin{align*}
\int_{ \mathbb{R}^N} (\mu_1 (w^{*})^3+\beta w^{*}(w^{\star})^2 )\tilde{\kappa}_l^{(1)} &= \int_{ \mathbb{R}^N} (\mu_1 (w^{*})^3+\beta w^{*}(w^{\star})^2 )\tilde{\kappa}_{l,0}^{(1)},\\
\int_{ \mathbb{R}^N} (\mu_2  (w^{\star})^3+\beta (w^{*})^2w^{\star} ) \tilde{\kappa}_l^{(2)} &= \int_{ \mathbb{R}^N} (\mu_2  (w^{\star})^3+\beta (w^{*})^2w^{\star} ) \tilde{\kappa}_{l,0}^{(2)},
\quad l=1,2,\cdots,k.
\end{align*}
Substituting \eqref{k1k2} into \eqref{k10} and \eqref{k20}, then combining with \eqref{QVV}, we can obtain
\begin{align*}
a_{l,0}
=&\frac{\sum_{i=1}^{k} a_{i,0}   }{k\int_{\mathbb{R}^N}\left(\left( w^{*}\right)^2+\left( w^{\star}\right)^2\right)} \Big( \int_{ \mathbb{R}^N} ((\mu_1 (w^{*})^3+\beta w^{*}(w^{\star})^2 )\vartheta_0^{(1)}\\
&\quad\quad\quad\quad\quad\quad\quad\quad\quad\quad+\int_{ \mathbb{R}^N} (\mu_2 (w^{\star})^3+\beta (w^{*})^2 w^{\star} )\vartheta_0^{(2)}\Big).
\end{align*}
Hence
\begin{align}\label{abi0}
a_{i,0}=a_{j,0}\quad \text{for all} \quad i,j=1,2,\cdots,k.
\end{align}
\end{proof}

We are aimed to prove $\begin{pmatrix}
\tilde{\kappa}_l^{(1)} \\
\tilde{\kappa}_l^{(2)}
\end{pmatrix}=0$. For this purpose, we write
$\tilde{\kappa}_{l,\rho}^{(1)}$ and $\tilde{\kappa}_{l,\rho}^{(2)}$ into
\begin{align}\label{5-31}
\begin{pmatrix}
\tilde{\kappa}_{l,\rho}^{(1)} \\
\tilde{\kappa}_{l,\rho}^{(2)}
\end{pmatrix}
=
\begin{pmatrix}
\sum_{j=0}^{N} a_{l,j,\rho} \vartheta_j^{(1)} + \hat{\kappa}_{l,\rho}^{(1)} \\
\sum_{j=0}^{N} a_{l,j,\rho} \vartheta_j^{(2)} + \hat{\kappa}_{l,\rho}^{(2)}
\end{pmatrix},
\end{align}
where
\begin{align*}
(\hat{\kappa}_{l,\rho}^{(1)}, \hat{\kappa}_{l,\rho}^{(2)})\in (\hat{E}_1, \hat{E}_2):=\{(u, v)\in H^1\times H^1:\langle(u, v),(\vartheta_j^{(1)}, \vartheta_j^{(2)})\rangle=0,j=0,1,\cdots,N\}.
\end{align*}
In fact, it is standard to prove the following results.
\begin{align}\label{Quv}
\left|\left|\mathcal{Q} (u,v)^T \right|\right| \geq C
\left|\left|(u,v)^T \right|\right|,
\end{align}
where $||\cdot||$ is the regular norm for $H^1(\mathbb{R}^N)\times H^1(\mathbb{R}^N)$.

On the other hand,
\begin{align*}
\left|\left|\mathcal{Q} (\hat{\kappa}_{l,\rho}^{(1)}, \hat{\kappa}_{l,\rho}^{(2)})^T\right|\right|=\left|\left|\mathcal{Q} (\tilde{\kappa}_{l,\rho}^{(1)}, \tilde{\kappa}_{l,\rho}^{(2)})^T\right|\right|=O\big(\lambda_{1,\rho}^{-1}\big),
\end{align*}
which, combined with \eqref{Quv}, we have
\begin{align}\label{kk1}
\left|\left|(\hat{\kappa}_{i,\rho}^{(1)}, \hat{\kappa}_{i,\rho}^{(2)})^T\right|\right|= O\big(\lambda_{1,\rho}^{-1}\big).
\end{align}

Now we assume that
\begin{align*}
 a_{l,j,\rho} \rightarrow a_{l,j}
\end{align*}
as $\rho\rightarrow 0$ for $N=3$ or $\rho\rightarrow \rho_0$ for $N=2$, and $l=1,2,\cdots,k$, $j=0,1,\cdots,N.$ Then we have the following estimates.
\subsubsection{The non-critical case (N=3)}
\begin{lemma}\label{lem55}
It holds that $a_{l,0}=0$, $l=1,2,\cdots,k$.
\end{lemma}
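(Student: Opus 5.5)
The plan is to exploit the mass constraint \eqref{rho2} directly. It is the one piece of information that distinguishes the normalized problem from the unconstrained one, and it gives exactly one scalar relation, which is enough to kill the single common radial coefficient $a_{l,0}=a_0$ (see \eqref{abi0}). Since both solutions satisfy $\int_{\mathbb{R}^N}\big((\bar u^{(i)}_{\lambda_{i,\rho}})^2+(\bar v^{(i)}_{\lambda_{i,\rho}})^2\big)=\rho^2$ for $i=1,2$, subtracting and dividing by the normalizing $L^\infty$ quantity gives
\begin{align*}
\int_{\mathbb{R}^N}\Big((\bar u^{(1)}_{\lambda_{1,\rho}}+\bar u^{(2)}_{\lambda_{2,\rho}})\kappa^{(1)}_\rho+(\bar v^{(1)}_{\lambda_{1,\rho}}+\bar v^{(2)}_{\lambda_{2,\rho}})\kappa^{(2)}_\rho\Big)=0 .
\end{align*}

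First I split $\mathbb{R}^N$ into $\bigcup_l B_\delta(\xi^{(1)}_{\rho,l})$ and its complement. On the complement, Lemma \ref{lem54} together with the exponential decay of the solutions (Lemma \ref{lem33}, rescaled) makes the contribution $O(e^{-\theta\sqrt{\lambda_{1,\rho}}})$. On each ball I rescale $x\mapsto x/\sqrt{\lambda_{1,\rho}}+\xi^{(1)}_{\rho,l}$ and use the expansions \eqref{509}--\eqref{512}, with $\lambda_{2,\rho}/\lambda_{1,\rho}=1+O(\lambda_{1,\rho}^{-1/2})$ and $|\xi^{(1)}_{\rho,l}-\xi^{(2)}_{\rho,l}|=O(\lambda_{1,\rho}^{-2})$ from the proof of Lemma \ref{lem52}. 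This gives $\lambda_{1,\rho}^{-1/2}\big(\bar u^{(1)}+\bar u^{(2)}\big)=2w^*+O(\lambda_{1,\rho}^{-1/2}(w+|x||\nabla w|))$, and similarly for $v$. After multiplying by $\lambda_{1,\rho}^{(N-1)/2}$, the identity becomes
\begin{align*}
2\sum_{l=1}^k\int_{B_{\delta\sqrt{\lambda_{1,\rho}}}(0)}\big(w^*\tilde\kappa^{(1)}_{l,\rho}+w^\star\tilde\kappa^{(2)}_{l,\rho}\big)=O(\lambda_{1,\rho}^{-1/2}).
\end{align*}
I then pass to the limit using $|\tilde\kappa^{(i)}_{l,\rho}|\le1$, local $C^1$ convergence, and the exponential decay of $w$, which allows dominated convergence. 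The result is $\sum_l\int_{\mathbb{R}^N}(w^*\tilde\kappa^{(1)}_l+w^\star\tilde\kappa^{(2)}_l)=0$.

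Next I insert the representation from the preceding lemma, $\tilde\kappa_l=\sum_{j=0}^N a_{l,j}\vartheta_j$. The terms $\vartheta_j$ with $j\ge1$ are odd while $w^*,w^\star$ are even, so those integrals vanish. Using $a_{l,0}=a_0$ for all $l$ and $w^*=\sigma_1w$, $w^\star=\sigma_2 w$, I am left with
\begin{align*}
k\,a_0(\sigma_1^2+\sigma_2^2)\int_{\mathbb{R}^N}w\,(w+x\cdot\nabla w)=k\,a_0(\sigma_1^2+\sigma_2^2)\Big(1-\frac N2\Big)\int_{\mathbb{R}^N}w^2=0 ,
\end{align*}
where the middle equality is integration by parts: $\int w\,x\cdot\nabla w=-\frac N2\int w^2$. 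For $N=3$ the factor $1-\frac N2=-\frac12$ is nonzero, and $\sigma_1^2+\sigma_2^2>0$, so $a_0=0$, which proves Lemma \ref{lem55}. For $N=2$ this factor vanishes, which explains why the critical case needs a separate, finer argument.

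The main obstacle is controlling the errors in the middle step. The cross terms coming from the shift between $\xi^{(1)}$ and $\xi^{(2)}$, from $\sqrt{\lambda_{2,\rho}/\lambda_{1,\rho}}-1$, and from $\varphi^{(i)}_\rho,\psi^{(i)}_\rho$ must all be $o(1)$ after multiplying by $\lambda_{1,\rho}^{(N-1)/2}$. This uses $\|(\varphi_\rho,\psi_\rho)\|$ small in the rescaled $L^2$ norm (Theorem \ref{pro1}), paired via Hölder with the bounded $\tilde\kappa$, and uses the $O(\lambda^{-1/2})$ scale mismatch multiplied against integrable weights $|x||\nabla w|$. I expect these estimates to go through routinely.
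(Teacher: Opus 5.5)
Your proposal is correct and follows the paper's proof of this lemma. Both use:
\begin{itemize}
\item the mass-constraint identity $\int_{\mathbb{R}^3}\big((\bar u^{(1)}_{\lambda_{1,\rho}}+\bar u^{(2)}_{\lambda_{2,\rho}})\kappa^{(1)}_\rho+(\bar v^{(1)}_{\lambda_{1,\rho}}+\bar v^{(2)}_{\lambda_{2,\rho}})\kappa^{(2)}_\rho\big)=0$;
\item localization to the balls $B_\delta(\xi^{(1)}_{\rho,l})$ followed by rescaling;
\item the vanishing of the odd $\vartheta_j$ terms for $j\ge1$;
\item the identity $\int w(w+x\cdot\nabla w)=(1-\frac N2)\int w^2$, which is nonzero for $N=3$;
\item the fact that $a_{l,0}$ does not depend on $l$.
\end{itemize}

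The only difference is minor. You pass to the limit directly by dominated convergence. The paper instead works with the finite-$\rho$ coefficients $a_{l,0,\rho}$ and discards the remainder $\hat\kappa_{l,\rho}$ using \eqref{kk1}, so your version does not need that quantitative bound.
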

\begin{proof}
By \eqref{509}-\eqref{512},  we have
\begin{align}\label{55-1}
&\int_{B_{\delta}(\xi_{\rho,l}^{(1)})} (\bar{u}_{\lambda_{1,\rho}}^{(1)}+\bar{u}_{\lambda_{2,\rho}}^{(2)})\kappa_{\rho}^{(1)} \notag\\
=&\lambda_{1,\rho}^{-\frac12}\int_{ B_{\delta\sqrt{\lambda_{1,\rho}}}(0)}  2w^{*}\left(\sum_{j=0}^{3} a_{l,j,\rho} \vartheta_j^{(1)} + \hat{\kappa}_{i,\rho}^{(1)}\right) + O(\lambda_{1,\rho}^{-1}) \notag\\
=&\lambda_{1,\rho}^{-\frac12}a_{l,0,\rho}\int_{ \mathbb{R}^3}  2w^{*}\vartheta_0^{(1)} + O(\lambda_{1,\rho}^{-1}) \notag\\
=&\lambda_{1,\rho}^{-\frac12}a_{l,0,\rho}\int_{ \mathbb{R}^3}  2w^{*} (w^{*}+x\cdot \nabla w^{*})+ O(\lambda_{1,\rho}^{-1}) \notag\\
=&\lambda_{1,\rho}^{-\frac12}a_{l,0,\rho}\left(2\int_{ \mathbb{R}^3}  (w^{*})^2+\sum^{3}_{i=1}\int_{ \mathbb{R}^3 } x_i\cdot \frac{\partial}{\partial x_i} (w^{*})^2\right)+ O(\lambda_{1,\rho}^{-1}) \notag\\
=&-\lambda_{1,\rho}^{-\frac12}a_{l,0,\rho} \int_{ \mathbb{R}^3}  (w^{*})^2 + O\big(\lambda_{1,\rho}^{-1}\big)
\end{align}
and
\begin{align}\label{55-2}
\int_{B_{\delta}(\xi_{\rho,l}^{(1)})} (\bar{v}_{\lambda_{1,\rho}}^{(1)}+\bar{v}_{\lambda_{2,\rho}}^{(2)})\kappa_{\rho}^{(2)}
=-\lambda_{1,\rho}^{-\frac12}a_{l,0,\rho} \int_{ \mathbb{R}^3}  (w^{\star})^2 + O\big(\lambda_{1,\rho}^{-1}\big).
\end{align}
Noting that
\begin{align*}
\int_{\mathbb{R}^3} \left((\bar{u}_{\lambda_{1,\rho}}^{(1)}+\bar{u}_{\lambda_{2,\rho}}^{(2)})\kappa_{\rho}^{(1)}+ (\bar{v}_{\lambda_{1,\rho}}^{(1)}+\bar{v}_{\lambda_{2,\rho}}^{(2)})\kappa_{\rho}^{(2)} \right)=\frac{\int_{\mathbb{R}^3} ((\bar{u}_{\lambda_{1,\rho}}^{(1)})^2 + (\bar{v}_{\lambda_{1,\rho}}^{(1)})^2) - \int_{\mathbb{R}^3} ((\bar{u}_{\lambda_{2,\rho}}^{(2)})^2 + (\bar{v}_{\lambda_{2,\rho}}^{(2)})^2)}{\|\bar{u}_{\lambda_{1,\rho}}^{(1)} - \bar{u}_{\lambda_{2,\rho}}^{(2)}\|_{L^\infty(\mathbb{R}^3)} + \|\bar{v}_{\lambda_{1,\rho}}^{(1)} - \bar{v}_{\lambda_{2,\rho}}^{(2)}\|_{L^\infty(\mathbb{R}^3)}} =0,
\end{align*}
we have
\begin{align}\label{55-4}
&\sum^{k}_{l=1} \int_{B_{\delta}(\xi_{\rho,l}^{(1)})} \left( (\bar{u}_{\lambda_{1,\rho}}^{(1)}+\bar{u}_{\lambda_{2,\rho}}^{(2)})\kappa_{\rho}^{(1)} + (\bar{v}_{\lambda_{1,\rho}}^{(1)}+\bar{v}_{\lambda_{2,\rho}}^{(2)})\kappa_{\rho}^{(2)} \right)\notag\\
=&\int_{\bigcup_{l=1}^k B^{c}_{\delta}(\xi_{\rho,l}^{(1)})} \left( (\bar{u}_{\lambda_{1,\rho}}^{(1)}+\bar{u}_{\lambda_{2,\rho}}^{(2)})\kappa_{\rho}^{(1)} + (\bar{v}_{\lambda_{1,\rho}}^{(1)}+\bar{v}_{\lambda_{2,\rho}}^{(2)})\kappa_{\rho}^{(2)} \right)\notag\\
=&O\Big(e^{-\theta \sqrt{\lambda_{1,\rho}}}\Big).
\end{align}
Combining \eqref{55-1}, \eqref{55-2} and \eqref{55-4}, we have
\begin{align*}
k\lambda_{1,\rho}^{-\frac12}\left(\int_{ \mathbb{R}^3} ((w^{*})^2+ (w^{\star})^2) \right) \sum^{k}_{l=1} a_{l,0,\rho} = O\big(\lambda_{1,\rho}^{-1}\big).
\end{align*}
 Then
\begin{align*}
 \sum^{k}_{l=1} a_{l,0,\rho} = o(1),
\end{align*}
and from \eqref{abi0}, we can obtain that
\begin{align*}
  a_{l,0} = 0,\quad l=1,2,\cdots,k.
\end{align*}
\end{proof}

\begin{remark}
From the computations in \eqref{55-1}, we note that for \( N = 2 \), the leading-order term in \eqref{55-1} vanishes. Accordingly, the sub-leading term includes additional terms beyond \( a_{l,0,\rho} \). This composition of the sub-leading term prevents us from establishing \( a_{l,0} = 0 \), so the proof strategy employed in Lemma \ref{lem55} is no longer applicable here.
\end{remark}

\begin{lemma}\label{lem56}
There holds that $a_{l,j}=0$, $l=1,2,\cdots,k$, $j=1,2,3$.
\end{lemma}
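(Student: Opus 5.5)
We plan to mimic the argument used for Theorem \ref{main}: apply the difference Pohozaev identity \eqref{poho2} on $\Omega=B_{\delta}(\xi^{(1)}_{\rho,l})$ for each fixed $l$ and each $j=1,2,3$.

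\textbf{Step 1: the boundary terms are negligible.} By Lemma \ref{lem54}, $\kappa^{(i)}_\rho$ and $\nabla\kappa^{(i)}_\rho$ are $O(e^{-\tau\sqrt{\lambda_{1,\rho}}})$ on $\partial B_\delta(\xi^{(1)}_{\rho,l})$. By Lemma \ref{lem33} (rescaled to the $\lambda$ variables), the same exponential smallness holds for $\bar u^{(i)},\bar v^{(i)}$ and their gradients. The term carrying $(\lambda_{1,\rho}-\lambda_{2,\rho})$ divided by the $L^\infty$ norm is controlled by Lemma \ref{lem53}, which shows this quotient is $O(1)$ after dividing by $\lambda_{1,\rho}$. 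Hence the whole right side of \eqref{poho2} is $O(e^{-\theta\sqrt{\lambda_{1,\rho}}})$.

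\textbf{Step 2: rescale and expand the left side.} Set $x=y/\sqrt{\lambda_{1,\rho}}+\xi^{(1)}_{\rho,l}$ and Taylor-expand $\partial_jP$, $\partial_jQ$ at $\xi^{(1)}_{\rho,l}$. By Lemma \ref{lem355} (translated via Lemma \ref{lem51}), $\xi^{(1)}_{\rho,l}-\xi_l=O(\lambda_{1,\rho}^{-1})$, so $\partial_jP(\xi^{(1)}_{\rho,l})$ and $\partial_jQ(\xi^{(1)}_{\rho,l})$ are $O(\lambda_{1,\rho}^{-1})$.

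The zeroth-order term therefore contributes $O(\lambda_{1,\rho}^{-1})\int(\bar u^{(1)}+\bar u^{(2)})\kappa^{(1)}_\rho$. By \eqref{55-1}, and since Lemma \ref{lem55} gives $a_{l,0,\rho}\to0$, this is $o(\lambda_{1,\rho}^{-3/2})$ times the natural scale. More precisely, using \eqref{55-1} it is $O(\lambda_{1,\rho}^{-1})\cdot O(\lambda_{1,\rho}^{-1/2}|a_{l,0,\rho}|+\lambda_{1,\rho}^{-1})$.

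The first-order term equals
$$\lambda_{1,\rho}^{-1/2}\cdot\lambda_{1,\rho}^{1/2}\lambda_{1,\rho}^{-3/2}\sum_h\int\Big(2p_{lh}\delta_{hj}y_h(\cdots)\tilde\kappa^{(1)}+2q_{lh}\delta_{hj}y_h(\cdots)\tilde\kappa^{(2)}\Big).$$
After collecting powers it is of exact order $\lambda_{1,\rho}^{-3/2}$. Insert the decomposition \eqref{5-31} and the profiles from \eqref{509}–\eqref{512}, which give $\lambda_{1,\rho}^{-1/2}(\bar u^{(1)}+\bar u^{(2)})\to 2w^*$. The remainder $\hat\kappa$ is $O(\lambda_{1,\rho}^{-1})$ by \eqref{kk1}. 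The second-order Taylor remainder contributes an extra factor $\lambda_{1,\rho}^{-1/2}$.

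\textbf{Step 3: pass to the limit.} Divide by $\lambda_{1,\rho}^{-3/2}$ and let $\rho\to0$. We obtain
$$\int_{\mathbb R^3} y_j\Big(p_{lj}w^*\sum_{h=0}^3a_{l,h}\vartheta^{(1)}_h+q_{lj}w^\star\sum_{h=0}^3a_{l,h}\vartheta^{(2)}_h\Big)=0.$$
The $h=0$ term vanishes for two reasons: $a_{l,0}=0$ by Lemma \ref{lem55}, and in any case $y_jw\,\vartheta_0$ is odd. For $h\neq j$ the integrand is odd and integrates to zero. What remains is
$$\frac{1}{\beta^2-\mu_1\mu_2}\big((\beta-\mu_2)p_{lj}+(\beta-\mu_1)q_{lj}\big)a_{l,j}\int y_jw\,\partial_jw=0.$$
Since $\int y_jw\,\partial_jw=-\frac12\int w^2\neq0$, assumption $(H_2)$ forces $a_{l,j}=0$. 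Here $(H_2)$ is read on the diagonalized Hessian, so that each diagonal coefficient is nonzero.

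\textbf{Main obstacle.} The delicate point is Step 2: verifying that every error term is genuinely $o(\lambda_{1,\rho}^{-3/2})$. The main danger is the zeroth-order contribution $\partial_jP(\xi^{(1)}_{\rho,l})\int(\cdots)\kappa$, because the displacement $\xi_{\rho,l}-\xi_l$ is only $O(\lambda^{-1})$ rather than smaller. We will handle it by using \eqref{55-1} together with $a_{l,0,\rho}\to0$, and by the cancellation of odd terms. If that bound proves insufficient, we would instead subtract the Pohozaev identity evaluated at $\xi_l$ directly, exploiting $\nabla P(\xi_l)=\nabla Q(\xi_l)=0$ and expanding around $\xi_l$. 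A second risk is the $\lambda_{1,\rho}-\lambda_{2,\rho}$ boundary term, which we will control through Lemma \ref{lem53}.
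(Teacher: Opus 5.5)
Your proposal is correct and follows essentially the same route as the paper. Both apply the difference Pohozaev identity \eqref{poho2} on $B_\delta(\xi^{(1)}_{\rho,l})$, discard the boundary terms via Lemma \ref{lem54}, rescale, Taylor-expand $\partial_jP,\partial_jQ$ at $\xi^{(1)}_{\rho,l}$, pass to the limit, and use oddness together with $(H_2)$ to kill each $a_{l,j}$.

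Your ``main obstacle'' is not actually one, and the paper does not treat it as such. We have $\partial_jP(\xi^{(1)}_{\rho,l}),\partial_jQ(\xi^{(1)}_{\rho,l})=O(\lambda_{1,\rho}^{-1})$, while the first-order term carries only a factor $\lambda_{1,\rho}^{-1/2}$ in rescaled variables. In your unrescaled $N=3$ scaling this compares $O(\lambda_{1,\rho}^{-2})$ against $\lambda_{1,\rho}^{-3/2}$. So the paper bounds the zeroth-order term crudely, and you need neither \eqref{55-1} nor Lemma \ref{lem55} here.
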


\begin{proof}
Applying Pohozaev identity \eqref{poho2}, by Lemma \ref{lem54}, we have
\begin{align*}
\int_{B_{\delta}(\xi^{(1)}_{\rho,l})}\left(\frac{\partial P(x)}{\partial x_j}(\bar{u}_{\lambda_{1,\rho}}^{(1)}+\bar{u}_{\lambda_{2,\rho}}^{(2)})\kappa_{\rho}^{(1)} +\frac{\partial Q(x)}{\partial x_j}(\bar{v}_{\lambda_{1,\rho}}^{(1)}+\bar{v}_{\lambda_{2,\rho}}^{(2)})\kappa_{\rho}^{(2)}  \right)=O(e ^ { -  \tau \sqrt{\lambda_{1,\rho}}}).
\end{align*}
Therefore, we have
\begin{align}\label{56-2}
&\int_{B_{\delta\sqrt{\lambda_{1,\rho}}}(0)} \left(\frac{\partial P(\frac{x}{\sqrt{\lambda_{1,\rho}}}+\xi_{\rho,l}^{(1)})}{\partial x_j} (\lambda_{1,\rho}^{-\frac12}\bar{u}_{\lambda_{1,\rho}}^{(1)}(\frac{x}{\sqrt{\lambda_{1,\rho}}} +\xi_{\rho,l}^{(1)}) +\lambda_{1,\rho}^{-\frac12}\bar{u}_{\lambda_{2,\rho}}^{(2)}(\frac{x}{\sqrt{\lambda_{1,\rho}}} +\xi_{\rho,l}^{(1)}))\tilde{\kappa}_{l,\rho}^{(1)}\right. \notag\\
&\quad\quad\quad\quad\quad +\left. \frac{\partial Q((\frac{x}{\sqrt{\lambda_{1,\rho}}}+\xi_{\rho,l}^{(1)}))}{\partial x_j} (\lambda_{1,\rho}^{-\frac12}\bar{v}_{\lambda_{1,\rho}}^{(1)}(\frac{x}{\sqrt{\lambda_{1,\rho}}} +\xi_{\rho,l}^{(1)}) +\lambda_{1,\rho}^{-\frac12}\bar{v}_{\lambda_{2,\rho}}^{(2)}(\frac{x}{\sqrt{\lambda_{1,\rho}}} +\xi_{\rho,l}^{(1)}))\tilde{\kappa}_{i,\rho}^{(2)}  \right)\notag\\
=&O\Big( e ^ { -  \tau \sqrt{\lambda_{1,\rho}}}\Big).
\end{align}
Since $|\xi_{\rho,i}^{(1)}-\xi_l|=O(\lambda_{1,\rho}^{-1})$, we can obtain that
\begin{align*}
\frac{\partial P(\xi_{\rho,l}^{(1)})}{\partial x_j}=\frac{\partial P(\xi_l)}{\partial x_j}+O(|\xi_{\rho,l}^{(1)}-\xi_l|)=O\big(\lambda_{1,\rho}^{-1}\big)
\end{align*}
and
\begin{align*}
\frac{\partial Q(\xi_{\rho,l}^{(1)})}{\partial x_j}=\frac{\partial Q(\xi_l)}{\partial x_j}+O(|\xi_{\rho,l}^{(1)}-\xi_l|)
=O\big(\lambda_{1,\rho}^{-1}\big),
\end{align*}
which implies that
\begin{align}\label{56-3}
\int_{B_{\delta\sqrt{\lambda_{1,\rho}}}(0)} &\left(\frac{\partial P(\xi_{\rho,l}^{(1)})}{\partial x_j} (\lambda_{1,\rho}^{-\frac12}\bar{u}_{\lambda_{1,\rho}}^{(1)}(\frac{x}{\sqrt{\lambda_{1,\rho}}}+\xi_{\rho,l}^{(1)}) +\lambda_{1,\rho}^{-\frac12}\bar{u}_{\lambda_{2,\rho}}^{(2)}(\frac{x}{\sqrt{\lambda_{1,\rho}}}+\xi_{\rho,l}^{(1)}))\tilde{\kappa}_{i,\rho}^{(1)}\right. \notag\\
&\quad +\left. \frac{\partial Q(\xi_{\rho,l}^{(1)})}{\partial x_j} (\lambda_{1,\rho}^{-\frac12}\bar{v}_{\lambda_{1,\rho}}^{(1)}(\frac{x}{\sqrt{\lambda_{1,\rho}}}+\xi_{\rho,l}^{(1)}) +\lambda_{1,\rho}^{-\frac12}\bar{v}_{\lambda_{2,\rho}}^{(2)}(\frac{x}{\sqrt{\lambda_{1,\rho}}}+\xi_{\rho,l}^{(1)}))\tilde{\kappa}_{i,\rho}^{(2)}  \right)=O\big(\lambda_{1,\rho}^{-1}\big).
\end{align}
Combining \eqref{56-2} and \eqref{56-3}, we get
\begin{align*}
&\int_{B_{\delta\sqrt{\lambda_{1,\rho}}}(0)} \left(\left(\frac{\partial P(\frac{x}{\sqrt{\lambda_{1,\rho}}}+\xi_{\rho,l}^{(1)})}{\partial x_j}- \frac{\partial P(\xi_{\rho,l}^{(1)})}{\partial x_j} \right) (\lambda_{1,\rho}^{-\frac12}\bar{u}_{\lambda_{1,\rho}}^{(1)}(\frac{x}{\sqrt{\lambda_{1,\rho}}}+\xi_{\rho,l}^{(1)}) +\lambda_{1,\rho}^{-\frac12}\bar{u}_{\lambda_{2,\rho}}^{(2)}(\frac{x}{\sqrt{\lambda_{1,\rho}}}+\xi_{\rho,l}^{(1)}))\tilde{\kappa}_{l,\rho}^{(1)}\right. \notag\\
&\quad\quad\quad\quad\quad +\left. \left(\frac{\partial Q((\frac{x}{\sqrt{\lambda_{1,\rho}}}+\xi_{\rho,l}^{(1)}))}{\partial x_j}- \frac{\partial Q(\xi_{\rho,l}^{(1)})}{\partial x_j} \right) (\lambda_{1,\rho}^{-\frac12}\bar{v}_{\lambda_{1,\rho}}^{(1)}(\frac{x}{\sqrt{\lambda_{1,\rho}}}+\xi_{\rho,l}^{(1)}) +\lambda_{1,\rho}^{-\frac12}\bar{v}_{\lambda_{2,\rho}}^{(2)}(\frac{x}{\sqrt{\lambda_{1,\rho}}}+\xi_{\rho,l}^{(1)}))\tilde{\kappa}_{l,\rho}^{(2)}  \right) \notag\\
=&O\big(\lambda_{1,\rho}^{-1}\big),
\end{align*}
which also implies that
\begin{align}\label{56-5}
\sum^{3}_{h=1}\int_{B_{\delta\sqrt{\lambda_{1,\rho}}}(0)} &\left(\frac{\partial^2 P(\xi_{\rho,l}^{(1)})}{\partial x_j \partial x_h} x_h (\lambda_{1,\rho}^{-\frac12}\bar{u}_{\lambda_{1,\rho}}^{(1)}(\frac{x}{\sqrt{\lambda_{1,\rho}}}+\xi_{\rho,l}^{(1)}) +\lambda_{1,\rho}^{-\frac12}\bar{u}_{\lambda_{2,\rho}}^{(2)}(\frac{x}{\sqrt{\lambda_{1,\rho}}}+\xi_{\rho,l}^{(1)}))\tilde{\kappa}_{l,\rho}^{(1)}\right. \notag\\
&\quad +\left. \frac{\partial^2 Q(\xi_{\rho,l}^{(1)})}{\partial x_j \partial x_h} x_h  (\lambda_{1,\rho}^{-\frac12}\bar{v}_{\lambda_{1,\rho}}^{(1)}(\frac{x}{\sqrt{\lambda_{1,\rho}}}+\xi_{\rho,l}^{(1)}) +\lambda_{1,\rho}^{-\frac12}\bar{v}_{\lambda_{2,\rho}}^{(2)}(\frac{x}{\sqrt{\lambda_{1,\rho}}}+\xi_{\rho,l}^{(1)}))\tilde{\kappa}_{l,\rho}^{(2)}  \right)=O(\lambda_{1,\rho}^{-1}).
\end{align}
Letting $\rho\rightarrow 0$ in \eqref{56-5}, we obtain that
\begin{align*}
 \int _ {  \mathbb{R}^3 } \Bigg(p_{lj} x _ { j } w^{*} \sum_{i=1}^3 a_{l,i}  \frac { \partial w^{*} } { \partial x _ { i } }+q_{lj}x _ { j } w^{\star} \sum_{i=1}^3 a_{l,i}  \frac { \partial w^{\star} } { \partial x _ { i } } \Bigg)=0.
\end{align*}

By the symmetry and Condition $(H_{2})$, we obtain that $a_{l,h}=0$, $h=1,2,3$, $l=1,\cdots,k$.
\end{proof}

\subsubsection{The critical case (N=2)}
\begin{lemma}\label{lem57}
It holds that $a_{l,j}=0$, $l=1,2,\cdots,k$, $j=1,2$.
\end{lemma}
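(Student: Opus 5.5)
We plan to argue exactly as in Lemma \ref{lem56}, since the translation modes $\vartheta_j$, $j=1,2$, are detected by the local Pohozaev identity \eqref{poho2} and do not involve the global mass constraint. The dilation mode $a_{l,0}$, which is what fails in dimension two, should drop out by parity.

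\textbf{Step 1: Pohozaev identity on each ball.} We take $\Omega=B_\delta(\xi^{(1)}_{\rho,l})$ in \eqref{poho2}. On $\partial B_\delta$, $\kappa^{(i)}_\rho$ and $\nabla\kappa^{(i)}_\rho$ are exponentially small by Lemma \ref{lem54}, and so are $\bar u^{(i)},\bar v^{(i)}$ and their gradients by Lemma \ref{lem33}. The only delicate boundary term is the one carrying the factor $(\lambda_{1,\rho}-\lambda_{2,\rho})/\|\cdot\|_{L^\infty}$. By Lemma \ref{lem53} this factor is $O(\lambda_{1,\rho})$, so that term is still $O(e^{-\tau\sqrt{\lambda_{1,\rho}}})$.

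\textbf{Step 2: rescale and subtract the gradients at the centre.} After rescaling $x\mapsto x/\sqrt{\lambda_{1,\rho}}+\xi^{(1)}_{\rho,l}$ we obtain the analogue of \eqref{56-2}. By Lemma \ref{lem355} and Lemma \ref{lem51}, $\nabla P(\xi^{(1)}_{\rho,l})$ and $\nabla Q(\xi^{(1)}_{\rho,l})$ are $O(\lambda_{1,\rho}^{-1})$, which gives the analogue of \eqref{56-3}. We then Taylor expand $\partial_jP$ and $\partial_jQ$ around $\xi^{(1)}_{\rho,l}$; the first-order term carries a factor $\lambda_{1,\rho}^{-1/2}$. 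Multiplying by $\lambda_{1,\rho}^{1/2}$ yields the analogue of \eqref{56-5} with right-hand side $O(\lambda_{1,\rho}^{-1/2})$ rather than $O(\lambda_{1,\rho}^{-1})$, which still tends to zero.

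\textbf{Step 3: pass to the limit.} We let $\rho\to\rho_0$ and use \eqref{509}--\eqref{512} together with the decomposition \eqref{5-31} and \eqref{kk1}. This gives
\[
\int_{\mathbb{R}^2}\Big(p_{lj}x_jw^*\sum_{i=0}^2a_{l,i}\vartheta_i^{(1)}+q_{lj}x_jw^\star\sum_{i=0}^2a_{l,i}\vartheta_i^{(2)}\Big)=0 .
\]
The coefficient $a_{l,0}$ multiplies $x_jw^*\vartheta^{(1)}_0$, which is odd in $x_j$ because $\vartheta^{(1)}_0$ is radial, so its integral vanishes. The same holds for $\vartheta^{(2)}_0$. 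For $i\neq j$ the integrand $x_jw\partial_iw$ is odd, so those terms vanish as well. We are left with
\[
\big((\beta-\mu_2)p_{lj}+(\beta-\mu_1)q_{lj}\big)a_{l,j}\int_{\mathbb{R}^2}x_jw\,\partial_jw=0 .
\]
Since $\int x_jw\,\partial_jw=-\tfrac12\int w^2\neq0$, and $(H_2)$ makes each diagonal coefficient nonzero, we get $a_{l,j}=0$.

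\textbf{Main obstacle.} The hard part is the error control when passing to the limit, because in $N=2$ the two solutions may have $\lambda_{2,\rho}/\lambda_{1,\rho}-1$ only of order $\lambda_{1,\rho}^{-1/2}$. We must check that the $O(\lambda_{1,\rho}^{-1/2}w\,x\cdot\nabla w)$ remainders in \eqref{511}--\eqref{512}, once multiplied by $x_h$ and integrated, still vanish in the limit. They do, because they enter only at order $\lambda_{1,\rho}^{-1/2}$, and $|\tilde\kappa^{(i)}_{l,\rho}|\le1$ together with the exponential decay of $w$ makes all the integrals finite.
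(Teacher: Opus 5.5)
Your proposal is correct and follows the paper's proof of Lemma~\ref{lem57}: you apply the local Pohozaev identity \eqref{poho2} on $B_\delta(\xi^{(1)}_{\rho,l})$, rescale, subtract the gradients at the centre, Taylor expand, and pass to the limit $\rho\to\rho_0$; the dilation mode drops out because $\int_{\mathbb{R}^2} x_j w(w+x\cdot\nabla w)=0$ by parity, and $(H_2)$ then gives $a_{l,j}=0$. Your remainder after removing the Taylor factor $\lambda_{1,\rho}^{-1/2}$ is $O(\lambda_{1,\rho}^{-1/2})$, which is more accurate than the $O(\lambda_{1,\rho}^{-1})$ stated in \eqref{57-1}; either way only $o(1)$ is needed.
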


\begin{proof}
Similar to the proof of Lemma \ref{lem56}, we can obtain that
\begin{align}\label{57-1}
\sum^{2}_{h=1}\int_{B_{\delta\sqrt{\lambda_{1,\rho}}}(0)} &\left(\frac{\partial^2 P(\xi_{\rho,l}^{(1)})}{\partial x_j \partial x_h} x_h (\lambda_{1,\rho}^{-\frac12}\bar{u}_{\lambda_{1,\rho}}^{(1)}(\frac{x}{\sqrt{\lambda_{1,\rho}}}+\xi_{\rho,l}^{(1)}) +\lambda_{1,\rho}^{-\frac12}\bar{u}_{\lambda_{2,\rho}}^{(2)}(\frac{x}{\sqrt{\lambda_{1,\rho}}}+\xi_{\rho,l}^{(1)}))\tilde{\kappa}_{l,\rho}^{(1)}\right. \notag\\
&\quad +\left. \frac{\partial^2 Q(\xi_{\rho,l}^{(1)})}{\partial x_j \partial x_h} x_h  (\lambda_{1,\rho}^{-\frac12}\bar{v}_{\lambda_{1,\rho}}^{(1)}(\frac{x}{\sqrt{\lambda_{1,\rho}}}+\xi_{\rho,l}^{(1)}) +\lambda_{1,\rho}^{-\frac12}\bar{v}_{\lambda_{2,\rho}}^{(2)}(\frac{x}{\sqrt{\lambda_{1,\rho}}}+\xi_{\rho,l}^{(1)}))\tilde{\kappa}_{l,\rho}^{(2)}  \right)dx=O\big(\lambda_{1,\rho}^{-1}\big).
\end{align}
Letting $\rho\rightarrow \rho_0$ in \eqref{57-1}, we obtain that
\begin{align*}
& \int _ {  \mathbb{R}^2 } \Bigg(p_{lj} x _ { j } w^{*} \sum_{i=1}^2 a_{l,i}  \frac { \partial w^{*} } { \partial x _ { i } }+q_{lj} x _ { j } w^{\star} \sum_{i=1}^2 a_{l,i}  \frac { \partial w^{\star} } { \partial x _ { i } } \Bigg)dx\\
+& \frac{1} { \beta ^ { 2 } - \mu_1 \mu_2 } \big[( \beta - \mu_2 ) p_{lj}  +( \beta - \mu_1 )q_{lj} \big] a_{l,0}\int _ {  \mathbb{R}^2 } \big[ x _ { j } w   (w+x\cdot \nabla w) \big]dx=0.
\end{align*}
Since
\begin{align*}
\int _ {  \mathbb{R}^2 } \big[ x _ { j } w   (w+x\cdot \nabla w) \big]dx=0,\quad h=1,2,
\end{align*}
we know
\begin{align}\label{57-2}
\frac{1} { \beta ^ { 2 } - \mu_1 \mu_2 } \big[( \beta - \mu_2 )p_{lj}  +( \beta - \mu_1 )q_{lj} \big] a_{l,j} \int _ {  \mathbb{R}^2 } x _ { j } w   \frac { \partial w } { \partial x _ { j } }=0.
\end{align}
Thus, by Condition $(H_2)$, we obtain that $a_{l,j}=0$, $j=1,2$, $l=1,\cdots,k$.
\end{proof}

\begin{lemma}\label{lem58}
There holds that $a_{l,0}=0$, $l=1,2,\cdots,k$.
\end{lemma}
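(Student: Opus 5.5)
For $N=2$ the mass-constraint argument of Lemma \ref{lem55} does not work: the leading term in \eqref{55-1} vanishes, because $\int_{\mathbb{R}^2} w(w+x\cdot\nabla w)=0$. We therefore plan to determine $a_{l,0}$ through the global Pohozaev identity with the vector field $x-\xi^{(1)}_{\rho,l}$, that is, the identity behind Lemma \ref{lem37}. We use it in its difference form and sum over $l=1,\dots,k$. By Lemma \ref{lem57}, all $a_{l,j}$ with $j\ge1$ vanish. By \eqref{abi0} we may write $a_{l,0}=a_0$ for every $l$, so it suffices to show $a_0=0$.

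\emph{Step 1 (differenced Pohozaev identity).} Write the identity \eqref{32-1} for $(\lambda_{1,\rho},\bar u^{(1)},\bar v^{(1)})$ and for $(\lambda_{2,\rho},\bar u^{(2)},\bar v^{(2)})$, both on the balls $B_\delta(\xi^{(1)}_{\rho,l})$. Subtract the two and divide by $\|\bar u^{(1)}-\bar u^{(2)}\|_{L^\infty}+\|\bar v^{(1)}-\bar v^{(2)}\|_{L^\infty}$. By Lemma \ref{lem54} and Lemma \ref{lem33}, the boundary terms are $O(e^{-\tau\sqrt{\lambda_{1,\rho}}})$.

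\emph{Step 2 (use the constraint to reduce to potential terms).} For $N=2$, the argument of Lemma \ref{lemww} applied to the full problem shows that the $\lambda$-term and the quartic terms in \eqref{32-1} combine, through the Nehari-type identity and the constraint \eqref{rho2}, into $\lambda_\rho\rho^2$ minus kinetic/quartic contributions. Since the mass is the same for both solutions, the difference of the $\lambda$-terms reduces to the factor $(\lambda_{1,\rho}-\lambda_{2,\rho})\rho^2/\|\cdot\|$. Lemma \ref{lem53} expresses this factor through $\sum_i\int(\mu_1(w^*)^3\tilde\kappa^{(1)}_{i}+\dots)$. In the limit, with $\tilde\kappa_l=a_0(\vartheta_0^{(1)},\vartheta_0^{(2)})$, that sum equals $-ka_0\int((w^*)^2+(w^\star)^2)$, computed via \eqref{QVV} and Lemma \ref{lemww}. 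Hence $(\lambda_{2,\rho}/\lambda_{1,\rho}-1)/\|\cdot\|\to c\,a_0$ with explicit $c\neq0$. The remaining terms are the potential terms $\int (P+\tfrac12\langle\nabla P,x-\xi\rangle)(\bar u^{(1)}+\bar u^{(2)})\kappa^{(1)}$ and the analogous $Q$-terms. After rescaling, and using $P(x)=\sum_i p_{li}(x_i-\xi_{l,i})^2+O(|x-\xi_l|^3)$ together with $|\xi^{(1)}_{\rho,l}-\xi_l|=O(\lambda^{-1})$ (Lemma \ref{lem355}), these are of order $\lambda_{1,\rho}^{-1}$. Their leading coefficient is $2a_0\sum_{l,i}(\sigma_1^2p_{li}+\sigma_2^2q_{li})\int x_i^2 w(w+x\cdot\nabla w)$. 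An integration by parts gives $\int|x|^2w(w+x\cdot\nabla w)=-\int|x|^2w^2$ in $\mathbb{R}^2$.

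\emph{Step 3 (matching orders).} The $\lambda$-difference term carries a factor $\lambda_{1,\rho}$ in front of $(\lambda_{2,\rho}/\lambda_{1,\rho}-1)$. By Lemma \ref{lem51}, $\lambda_{2,\rho}/\lambda_{1,\rho}-1$ is controlled through \eqref{32-0}, so we compare the differenced form of \eqref{32-0} at order $\lambda^{-1}$ with the potential terms. We then multiply everything by $\lambda_{1,\rho}$ and let $\rho\to\rho_0$. The result should be a linear identity
\[
a_0\Big(c_1+c_2\sum_{l=1}^k\sum_{i=1}^2(\sigma_1^2p_{li}+\sigma_2^2q_{li})\int_{\mathbb{R}^2}|x|^2w^2\Big)=0,
\]
where $c_1=0$ and $c_2\neq0$ come from the differentiation of $A\lambda^{-1}$ in \eqref{32-0}. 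Hypothesis $(H_3)$ then forces $a_0=0$.

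\emph{Main obstacle.} The hardest step is the bookkeeping of orders. The leading order cancels exactly, because of Lemma \ref{lemww}, and the surviving $\lambda^{-1}$ coefficient must be computed sharply. This needs the refined expansions \eqref{509}--\eqref{512}, the bound \eqref{kk1} on $\hat\kappa$ (which gives errors $O(\lambda^{-1})$ that must be shown to be $o(\lambda^{-1})$ after pairing with odd or orthogonal functions), and the precise location estimate of Lemma \ref{lem355}. We would first check that the $\hat\kappa$ contributions enter only through $\langle\bar{\mathcal Q}\hat\kappa,\vartheta_0\rangle$-type pairings, which vanish to the needed order.
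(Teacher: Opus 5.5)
Your Step 1 and your evaluation of the potential terms agree with the paper, including $\int_{\mathbb{R}^2}|x|^2w(w+x\cdot\nabla w)=-\int_{\mathbb{R}^2}|x|^2w^2$. Steps 2--3, however, have a genuine gap, and it is an order count.

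Normalize as in \eqref{58-3}: divide the differenced Pohozaev identity by $\lambda_{1,\rho}D$, where $D:=\|\bar u^{(1)}_{\lambda_{1,\rho}}-\bar u^{(2)}_{\lambda_{2,\rho}}\|_{L^\infty}+\|\bar v^{(1)}_{\lambda_{1,\rho}}-\bar v^{(2)}_{\lambda_{2,\rho}}\|_{L^\infty}$. Then the $\lambda$-difference term $\lambda_{1,\rho}^{-1}(\lambda_{1,\rho}-\lambda_{2,\rho})\rho^2/D$ and the quartic terms are each of order $\lambda_{1,\rho}^{-1/2}$, and they cancel at leading order. The potential terms that carry $(H_3)$ are only of order $\lambda_{1,\rho}^{-5/2}$ (cf.\ \eqref{58-44}). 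So the cancellation must be established to relative accuracy $o(\lambda_{1,\rho}^{-2})$.

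Using Lemma \ref{lem53} in the limit cannot give this, for two reasons:
\begin{itemize}
\item Its error is already a factor $\lambda_{1,\rho}^{-1/2}$ relative to its main term.
\item Both leading terms contain pieces like $\lambda_{1,\rho}^{-1/2}\int w^3\hat\kappa_{l,\rho}$. By \eqref{kk1} these are only $O(\lambda_{1,\rho}^{-3/2})$, and they are pairings with $w^3$, not with $\vartheta_0$ or with odd functions, so parity and orthogonality do not remove them.
\end{itemize}
Incidentally, Lemma \ref{lem53} gives $(\lambda_{2,\rho}/\lambda_{1,\rho}-1)/D=O(\lambda_{1,\rho}^{-1/2})$, not a nonzero limit $c\,a_0$.

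Step 3 does not repair this. Subtracting the two expansions \eqref{32-0} only yields
\[
2A\lambda_{1,\rho}^{-2}(1+o(1))(\lambda_{1,\rho}-\lambda_{2,\rho})=R^{(1)}-R^{(2)},
\]
where the remainders $R^{(i)}=O(\lambda_{i,\rho}^{-3})$ are not expressed through $\kappa_\rho$. After dividing by $D$, the right-hand side is uncontrolled, because it contains the contributions of $\varphi^{(1)}_\rho-\varphi^{(2)}_\rho$, which are again the $\hat\kappa$-terms. This gives an absolute bound on $\lambda_{1,\rho}-\lambda_{2,\rho}$ and nothing about $a_0$. Your final identity, with $c_1=0$ and $c_2$ coming from differentiating $A\lambda^{-1}$, is therefore guessed rather than derived.

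The missing idea is to make the leading cancellation \emph{exact} before any limit is taken; this is what \eqref{58-31}--\eqref{58-32} do. Write $u_i:=\bar u^{(i)}_{\lambda_{i,\rho}}$ and $v_i:=\bar v^{(i)}_{\lambda_{i,\rho}}$. Testing the equations of each solution against the other solution and subtracting gives the cross identity
\begin{align*}
(\lambda_{1,\rho}-\lambda_{2,\rho})\int_{\mathbb{R}^2}(u_1u_2+v_1v_2)
&=\int_{\mathbb{R}^2}\Big(\mu_1u_1u_2(u_1^2-u_2^2)+\mu_2v_1v_2(v_1^2-v_2^2)\\
&\qquad+\beta u_1u_2(v_1^2-v_2^2)+\beta v_1v_2(u_1^2-u_2^2)\Big).
\end{align*}
Subtract this from the differenced Pohozaev identity and use $\int(u_1^2+v_1^2)=\int(u_2^2+v_2^2)=\rho^2$. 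Up to exponentially small terms, the $\lambda$-terms and the quartic terms then combine identically into
\begin{align*}
&\tfrac12(\lambda_{1,\rho}-\lambda_{2,\rho})\int_{\mathbb{R}^2}\big((u_1-u_2)^2+(v_1-v_2)^2\big)\\
&\quad-\int_{\mathbb{R}^2}\Big(\tfrac{\mu_1}{2}(u_1^2-u_2^2)(u_1-u_2)^2+\tfrac{\mu_2}{2}(v_1^2-v_2^2)(v_1-v_2)^2+\beta(u_1-u_2)(v_1-v_2)(u_1v_1-u_2v_2)\Big).
\end{align*}
Every term here is at least quadratic in the difference, so after division by $\lambda_{1,\rho}D$ it keeps a factor $D$. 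The absolute closeness of $\lambda_{1,\rho},\lambda_{2,\rho}$ and of $\xi^{(1)}_{\rho,l},\xi^{(2)}_{\rho,l}$ then makes it of lower order than $\lambda_{1,\rho}^{-5/2}$; the paper records this in \eqref{58-5}--\eqref{58-8}.

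Only the potential terms survive. Evaluated on $\tilde\kappa_l=a_0(\vartheta_0^{(1)},\vartheta_0^{(2)})$, they give
\[
a_0\sum_{l=1}^k\sum_{i=1}^2(\sigma_1^2p_{li}+\sigma_2^2q_{li})\int_{\mathbb{R}^2}|x|^2w(w+x\cdot\nabla w)=0,
\]
and $(H_3)$ forces $a_0=0$. The nonzero coefficient thus comes from the potential terms themselves. Lemma \ref{lem51} is needed only for the absolute bound on $\lambda_{1,\rho}-\lambda_{2,\rho}$.
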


\begin{proof}
For solutions \((\bar{u}_{\lambda_{\rho}}^{(i)}, \bar{v}_{\lambda_{\rho}}^{(i)})\) of \eqref{eq0}, multiplying the first equation by \(\langle x - \xi_{\rho,l}^{(1)}, \nabla\bar{u}_{\lambda_{1,\rho}}^{(1)} \rangle\) and the second by \(\langle x - \xi_{\rho,l}^{(1)}, \nabla\bar{v}_{\lambda_{1,\rho}}^{(1)} \rangle\), summing these results, and integrating on $B_{\delta}(\xi_{\rho,l}^{(1)})$, we have
\begin{align}\label{58-1}
&\int_{B_{\delta} (\xi_{\rho,l}^{(1)})} \Bigg[\left((P(x)+\lambda_{1,\rho})+\frac12\langle \nabla P(x),x - \xi_{\rho,l}^{(1)}\rangle \right)(\bar{u}_{\lambda_{1,\rho}}^{(1)})^2- \frac12 \mu_1 (\bar{u}_{\lambda_{1,\rho}}^{(1)})^4 - \beta (\bar{u}_{\lambda_{1,\rho}}^{(1)})^2 (\bar{v}_{\lambda_{1,\rho}}^{(1)})^2 \notag\\
&\quad\quad\quad+\left((Q(x)+\lambda_{1,\rho})+\frac12\langle \nabla Q(x),x - \xi_{\rho,l}^{(1)}\rangle \right)(\bar{v}_{\lambda_{1,\rho}}^{(1)})^2- \frac12 \mu_2 (\bar{v}_{\lambda_{1,\rho}}^{(1)})^4 \Bigg] \notag\\
=&\int_{\partial B_{\delta} (\xi_{\rho,l}^{(1)})} \Bigg[-\frac{\partial \bar{u}_{\lambda_{1,\rho}}^{(1)}}{\partial n}\langle x - \xi_{\rho,l}^{(1)}, \nabla \bar{u}_{\lambda_{1,\rho}}^{(1)} \rangle -\frac{\partial \bar{v}_{\lambda_{1,\rho}}^{(1)}}{\partial n}\langle x - \xi_{\rho,l}^{(1)}, \nabla \bar{v}_{\lambda_{1,\rho}}^{(1)} \rangle+\frac12\Bigg(|\nabla \bar{u}_{\lambda_{1,\rho}}^{(1)}|^2+ (P(x)+\lambda_{1,\rho})(\bar{u}_{\lambda_{1,\rho}}^{(1)})^2 \notag\\
&\quad\quad\quad-\frac{\mu_1}{2}( \bar{u}_{\lambda_{1,\rho}}^{(1)})^4 +|\nabla \bar{v}_{\lambda_{1,\rho}}^{(1)}|^2+ (Q(x)+\lambda_{1,\rho})(\bar{v}_{\lambda_{1,\rho}}^{(1)})^2-\frac{\mu_2}{2}( \bar{v}_{\lambda_{1,\rho}}^{(1)})^4 - \beta ( \bar{u}_{\lambda_{1,\rho}}^{(1)})^2 ( \bar{v}_{\lambda_{1,\rho}}^{(1)})^2\Bigg)\langle x - \xi_{\rho,l}^{(1)}, n\rangle\Bigg]dS,
\end{align}
similarly,
\begin{align}\label{58-2}
&\int_{B_{\delta} (\xi_{\rho,l}^{(1)})} \Bigg[\left((P(x)+\lambda_{2,\rho})+\frac12\langle \nabla P(x),x - \xi_{\rho,l}^{(1)}\rangle \right)(\bar{u}_{\lambda_{2,\rho}}^{(2)})^2- \frac12 \mu_1 (\bar{u}_{\lambda_{2,\rho}}^{(2)})^4 - \beta (\bar{u}_{\lambda_{2,\rho}}^{(2)})^2 (\bar{v}_{\lambda_{2,\rho}}^{(2)})^2 \notag\\
&\quad\quad\quad+\left((Q(x)+\lambda_{2,\rho})+\frac12\langle \nabla Q(x),x - \xi_{\rho,l}^{(1)}\rangle \right)(\bar{v}_{\lambda_{2,\rho}}^{(2)})^2- \frac12 \mu_2 (\bar{v}_{\lambda_{2,\rho}}^{(2)})^4 \Bigg] \notag\\
=&\int_{\partial B_{\delta} (\xi_{\rho,l}^{(1)})} \Bigg[-\frac{\partial \bar{u}_{\lambda_{2,\rho}}^{(2)}}{\partial n}\langle x - \xi_{\rho,l}^{(1)}, \nabla \bar{u}_{\lambda_{2,\rho}}^{(2)} \rangle -\frac{\partial \bar{v}_{\lambda_{2,\rho}}^{(2)}}{\partial n}\langle x - \xi_{\rho,l}^{(1)}, \nabla \bar{v}_{\lambda_{2,\rho}}^{(2)} \rangle+\frac12\Bigg(|\nabla \bar{u}_{\lambda_{2,\rho}}^{(2)}|^2+ (P(x)+\lambda_{2,\rho})(\bar{u}_{\lambda_{2,\rho}}^{(2)})^2 \notag\\
&\quad\quad\quad-\frac{\mu_1}{2}( \bar{u}_{\lambda_{2,\rho}}^{(2)})^4 +|\nabla \bar{v}_{\lambda_{2,\rho}}^{(2)}|^2+ (Q(x)+\lambda_{2,\rho})(\bar{v}_{\lambda_{2,\rho}}^{(2)})^2-\frac{\mu_2}{2}( \bar{v}_{\lambda_{2,\rho}}^{(2)})^4 - \beta ( \bar{u}_{\lambda_{2,\rho}}^{(2)})^2 ( \bar{v}_{\lambda_{2,\rho}}^{(2)})^2\Bigg)\langle x - \xi_{\rho,l}^{(1)}, n\rangle\Bigg]dS.
\end{align}
Then by \eqref{rho2}, \eqref{58-1} and \eqref{58-2}, we have
\begin{align}\label{58-3}
&\sum_{l=1}^k \int_{B_{\delta} (\xi_{\rho,l}^{(1)})} \Bigg[\lambda_{1,\rho}^{-1} (P(x)  +\frac12\langle \nabla P(x),x - \xi_{\rho,l}^{(1)}\rangle )   (\bar{u}_{\lambda_{1,\rho}}^{(1)}+\bar{u}_{\lambda_{2,\rho}}^{(2)})\kappa_{\rho}^{(1)} \notag\\
&\quad\quad\quad - \frac{1}{2}\lambda_{1,\rho}^{-1} \mu_1 (\bar{u}_{\lambda_{1,\rho}}^{(1)}+\bar{u}_{\lambda_{2,\rho}}^{(2)}) ((\bar{u}_{\lambda_{1,\rho}}^{(1)})^2+(\bar{u}_{\lambda_{2,\rho}}^{(2)})^2) \kappa_{\rho}^{(1)} \notag\\
&\quad\quad\quad- \beta \lambda_{1,\rho}^{-1} ((\bar{v}_{\lambda_{2,\rho}}^{(2)})^2 (\bar{u}_{\lambda_{1,\rho}}^{(1)}+\bar{u}_{\lambda_{2,\rho}}^{(2)}) \kappa_{\rho}^{(1)} + (\bar{u}_{\lambda_{1,\rho}}^{(1)})^2(\bar{v}_{\lambda_{1,\rho}}^{(1)} +\bar{v}_{\lambda_{2,\rho}}^{(2)})\kappa_{\rho}^{(2)}) \notag\\
&\quad\quad\quad+\lambda_{1,\rho}^{-1} (Q(x)+\frac12\langle \nabla Q(x),x - \xi_{\rho,l}^{(1)}\rangle )  (\bar{v}_{\lambda_{1,\rho}}^{(1)}+\bar{v}_{\lambda_{2,\rho}}^{(2)}) \kappa_{\rho}^{(2)} \notag\\
&\quad\quad\quad - \frac{1}{2}\lambda_{1,\rho}^{-1} \mu_2 (\bar{v}_{\lambda_{1,\rho}}^{(1)}+\bar{v}_{\lambda_{2,\rho}}^{(2)} ) ((\bar{v}_{\lambda_{1,\rho}}^{(1)})^2+(\bar{v}_{\lambda_{2,\rho}}^{(2)} )^2)\kappa_{\rho}^{(2)} \notag\\
&\quad\quad\quad +\frac{\lambda_{1,\rho}^{-1}  (\lambda_{1,\rho}-\lambda_{2,\rho}) \big((\bar{u}_{\lambda_{2,\rho}}^{(2)})^2+(\bar{v}_{\lambda_{2,\rho}}^{(2)})^2\big) }{\|\bar{u}_{\lambda_{1,\rho}}^{(1)} - \bar{u}_{\lambda_{2,\rho}}^{(2)}\|_{L^\infty(\mathbb{R}^2)} + \|\bar{v}_{\lambda_{1,\rho}}^{(1)} - \bar{v}_{\lambda_{2,\rho}}^{(2)}\|_{L^\infty(\mathbb{R}^2)}}+ \big((\bar{u}_{\lambda_{1,\rho}}^{(1)} +\bar{u}_{\lambda_{2,\rho}}^{(2)})\kappa_{\rho}^{(1)} + (\bar{v}_{\lambda_{1,\rho}}^{(1)} +\bar{v}_{\lambda_{2,\rho}}^{(2)})\kappa_{\rho}^{(2)} \big)\bigg]\notag\\
=& O( e^{-\theta\sqrt{\lambda_{1,\rho}}}).
\end{align}
By a similar argument as in the proof of Lemma \ref{lem53}, we have
\begin{align}\label{58-31}
&\frac{\lambda_{1,\rho}^{-1} (\lambda_{1,\rho}-\lambda_{2,\rho})  }{\|\bar{u}_{\lambda_{1,\rho}}^{(1)} - \bar{u}_{\lambda_{2,\rho}}^{(2)}\|_{L^\infty(\mathbb{R}^2)} + \|\bar{v}_{\lambda_{1,\rho}}^{(1)} - \bar{v}_{\lambda_{2,\rho}}^{(2)}\|_{L^\infty(\mathbb{R}^2)}}\notag\\
=&\rho^{-2}\sum_{l=1}^k \int_{B_{\delta} (\xi_{\rho,l}^{(1)})}  (\frac{\lambda_{2,\rho}}{\lambda_{1,\rho}}-1) (\bar{u}_{\lambda_{2,\rho}}^{(2)} \kappa_{\rho}^{(1)} +\bar{v}_{\lambda_{2,\rho}}^{(2)} \kappa_{\rho}^{(2)} ) \notag\\
&+ \rho^{-2}\lambda_{1,\rho}^{-1}\sum_{l=1}^k \int_{B_{\delta} (\xi_{\rho,l}^{(1)})}(\mu_1  ( (\bar{u}_{\lambda_{1,\rho}}^{(1)})^2\bar{u}_{\lambda_{2,\rho}}^{(2)}+ \bar{u}_{\lambda_{1,\rho}}^{(1)}(\bar{u}_{\lambda_{2,\rho}}^{(2)})^2 )\kappa_{\rho}^{(1)} + \mu_2  ( (\bar{v}_{\lambda_{1,\rho}}^{(1)})^2\bar{v}_{\lambda_{2,\rho}}^{(2)}+ \bar{v}_{\lambda_{1,\rho}}^{(1)}(\bar{v}_{\lambda_{2,\rho}}^{(2)})^2 )\kappa_{\rho}^{(2)})\notag \\
&+\rho^{-2} \beta \lambda_{1,\rho}^{-1}\sum_{l=1}^k \int_{B_{\delta} (\xi_{\rho,l}^{(1)})} \Big(\bar{u}_{\lambda_{2,\rho}}^{(2)}((\bar{v}_{\lambda_{1,\rho}}^{(1)})^2-(\bar{v}_{\lambda_{2,\rho}}^{(2)})^2)\kappa_{\rho}^{(1)}  +(\bar{u}_{\lambda_{2,\rho}}^{(2)})^2(\bar{v}_{\lambda_{1,\rho}}^{(1)}+\bar{v}_{\lambda_{2,\rho}}^{(2)})\kappa_{\rho}^{(2)}  \notag\\
&\qquad\qquad\qquad+\bar{v}_{\lambda_{2,\rho}}^{(2)}((\bar{u}_{\lambda_{1,\rho}}^{(1)})^2 -(\bar{u}_{\lambda_{2,\rho}}^{(2)})^2)\kappa_{\rho}^{(2)}  +(\bar{v}_{\lambda_{2,\rho}}^{(2)})^2(\bar{u}_{\lambda_{1,\rho}}^{(1)} +\bar{u}_{\lambda_{2,\rho}}^{(2)})\kappa_{\rho}^{(1)} \Big) +O\big( e^{-\theta\sqrt{\lambda_{1,\rho}}}\big)\notag.\\
\end{align}
By substituting \eqref{58-31} into \eqref{58-3}, it follows that
\begin{align}\label{58-32}
&\sum_{l=1}^k \int_{B_{\delta} (\xi_{\rho,l}^{(1)})} \Bigg[\lambda_{1,\rho}^{-1} (P(x)  +\frac12\langle \nabla P(x),x - \xi_{\rho,l}^{(1)}\rangle )   (\bar{u}_{\lambda_{1,\rho}}^{(1)}+\bar{u}_{\lambda_{2,\rho}}^{(2)})\kappa_{\rho}^{(1)} \notag\\
&\quad\quad\quad+\lambda_{1,\rho}^{-1} (Q(x)+\frac12\langle \nabla Q(x),x - \xi_{\rho,l}^{(1)}\rangle )  (\bar{v}_{\lambda_{1,\rho}}^{(1)}+\bar{v}_{\lambda_{2,\rho}}^{(2)}) \kappa_{\rho}^{(2)} \notag\\
&\quad\quad\quad - \frac{1}{2}\lambda_{1,\rho}^{-1} \mu_1 (\bar{u}_{\lambda_{1,\rho}}^{(1)}+\bar{u}_{\lambda_{2,\rho}}^{(2)}) ((\bar{u}_{\lambda_{1,\rho}}^{(1)}-\bar{u}_{\lambda_{2,\rho}}^{(2)})^2) \kappa_{\rho}^{(1)} \notag\\
&\quad\quad\quad - \frac{1}{2}\lambda_{1,\rho}^{-1} \mu_2 (\bar{v}_{\lambda_{1,\rho}}^{(1)}+\bar{v}_{\lambda_{2,\rho}}^{(2)} ) ((\bar{v}_{\lambda_{1,\rho}}^{(1)}-\bar{v}_{\lambda_{2,\rho}}^{(2)} )^2)\kappa_{\rho}^{(2)} \notag\\
&\quad\quad\quad+ \beta \lambda_{1,\rho}^{-1} (\bar{u}_{\lambda_{2,\rho}}^{(2)}((\bar{v}_{\lambda_{1,\rho}}^{(1)})^2-(\bar{v}_{\lambda_{2,\rho}}^{(2)})^2 ) \kappa_{\rho}^{(1)} + \bar{v}_{\lambda_{1,\rho}}^{(1)}((\bar{u}_{\lambda_{2,\rho}}^{(2)})^2-(\bar{u}_{\lambda_{1,\rho}}^{(1)})^2)\kappa_{\rho}^{(2)}) \notag\\
&\quad\quad\quad + \big((\bar{u}_{\lambda_{1,\rho}}^{(1)} +\frac{\lambda_{2,\rho}}{\lambda_{1,\rho}}\bar{u}_{\lambda_{2,\rho}}^{(2)})\kappa_{\rho}^{(1)} + (\bar{v}_{\lambda_{1,\rho}}^{(1)} +\frac{\lambda_{2,\rho}}{\lambda_{1,\rho}}\bar{v}_{\lambda_{2,\rho}}^{(2)})\kappa_{\rho}^{(2)} \big)\bigg]\notag\\
=& O\big( e^{-\theta\sqrt{\lambda_{1,\rho}}}\big).
\end{align}
By Lemma \ref{lem51}, we have

\begin{align*}
 \lambda_{1,\rho}-\lambda_{2,\rho}=O(\lambda_{1,\rho}^{-3}).
\end{align*}
Note that
\begin{align}\label{58-4}
&\lambda_{1,\rho}^{-1}\sum_{l=1}^k \int_{B_{\delta} (\xi_{\rho,l}^{(1)})}\Bigg[  (P(x)  +\frac12\langle \nabla P(x),x - \xi_{\rho,l}^{(1)}\rangle )   (\bar{u}_{\lambda_{1,\rho}}^{(1)}+\bar{u}_{\lambda_{2,\rho}}^{(2)})\kappa_{\rho}^{(1)}\notag\\
&\quad\qquad\qquad\qquad+ (Q(x)+\frac12\langle \nabla Q(x),x - \xi_{\rho,l}^{(1)}\rangle ) (\bar{v}_{\lambda_{1,\rho}}^{(1)}+\bar{v}_{\lambda_{2,\rho}}^{(2)}) \kappa_{\rho}^{(2)}\Bigg]\notag\\
=&\lambda_{1,\rho}^{-1} \sum_{l=1}^k \int_{B_{\delta} (\xi_{\rho,l}^{(1)})}\Bigg[ \big( (P(x)-P(\xi_{l}) ) +\frac12\langle \nabla P(x),x - \xi_{\rho,l}^{(1)}\rangle \big)   (\bar{u}_{\lambda_{1,\rho}}^{(1)}+\bar{u}_{\lambda_{2,\rho}}^{(2)})\kappa_{\rho}^{(1)}\notag\\
&\quad\qquad\qquad\qquad+\big( (Q(x)-Q(\xi_l))+\frac12\langle \nabla Q(x),x - \xi_{\rho,l}^{(1)}\rangle \big) (\bar{v}_{\lambda_{1,\rho}}^{(1)}+\bar{v}_{\lambda_{2,\rho}}^{(2)}) \kappa_{\rho}^{(2)}\notag\\
&\quad\qquad\qquad\qquad+\Big(P(\xi_{l})(\bar{u}_{\lambda_{1,\rho}}^{(1)}+\bar{u}_{\lambda_{2,\rho}}^{(2)}) \kappa_{\rho}^{(1)} +Q(\xi_l) (\bar{v}_{\lambda_{1,\rho}}^{(1)}+\bar{v}_{\lambda_{2,\rho}}^{(2)}) \kappa_{\rho}^{(2)}\Big)\Bigg].
\end{align}
By Lemma \ref{lem52} and \eqref{abi0}, we obtain
\begin{align}\label{58-41}
&\lambda_{1,\rho}^{-1} \sum_{l=1}^k \int_{B_{\delta} (\xi_{\rho,l}^{(1)})}\Bigg[ (P(x)-P(\xi_{l}) ) (\bar{u}_{\lambda_{1,\rho}}^{(1)}+\bar{u}_{\lambda_{2,\rho}}^{(2)})\kappa_{\rho}^{(1)} +(Q(x)-Q(\xi_l)) (\bar{v}_{\lambda_{1,\rho}}^{(1)}+\bar{v}_{\lambda_{2,\rho}}^{(2)}) \kappa_{\rho}^{(2)}\Bigg]\notag\\
=&2\lambda_{1,\rho}^{-\frac32} a_{l,0,\rho} \sum_{l=1}^k \int_{\mathbb{R}^2}\Bigg[ \Big(P(\frac{x}{\sqrt{\lambda_{1,\rho}}}+\xi_{l,\varepsilon}^{(1)})-P(\xi_{l}) \Big) w^*(w^{*}+x\cdot \nabla w^{*})  \notag\\
&\quad\qquad\qquad\qquad+(Q(\frac{x}{\sqrt{\lambda_{1,\rho}}}+\xi_{l,\varepsilon}^{(1)})-Q(\xi_l)) w^{\star}(w^{\star}+x\cdot \nabla w^{\star})\Bigg]+O\big(\lambda_{1,\rho}^{-\frac72}\big)\notag\\
=&2\lambda_{1,\rho}^{-\frac52} a_{l,0,\rho} \Big(\sum_{l=1}^k \sum_{i=1}^{2} (\sigma_1^2 p_{li}+ \sigma_2^2 q_{li})\Big) \int_{\mathbb{R}^2} |x|^2 w(w+ x\cdot \nabla w)  +O\big(\lambda_{1,\rho}^{-\frac72}\big)
\end{align}
and
\begin{align}\label{58-42}
&\frac12\lambda_{1,\rho}^{-1} \int_{B_{\delta}  (\xi_{\rho,l}^{(1)})} \Big(\langle \nabla P(x),x - \xi_{\rho,l}^{(1)}\rangle    (\bar{u}_{\lambda_{1,\rho}}^{(1)}+\bar{u}_{\lambda_{2,\rho}}^{(2)})\kappa_{\rho}^{(1)}+\langle \nabla Q(x),x - \xi_{\rho,l}^{(1)}\rangle  (\bar{v}_{\lambda_{1,\rho}}^{(1)}+\bar{v}_{\lambda_{2,\rho}}^{(2)}) \kappa_{\rho}^{(2)}\Big)\notag\\
=&2\lambda_{1,\rho}^{-\frac52} a_{l,0,\rho} \Big(\sum_{l=1}^k\sum_{i=1}^{2} (\sigma_1^2 p_{li}+ \sigma_2^2 q_{li})\Big)\int_{\mathbb{R}^2} |x|^2 w(w+x\cdot \nabla w) +O\big(\lambda_{1,\rho}^{-\frac72}\big),
\end{align}
in addition, by the assumption $P(\xi_{l}) =Q(\xi_{l})=0$, we have
\begin{align}\label{58-43}
&\lambda_{1,\rho}^{-1} \sum_{l=1}^k \int_{B_{\delta} (\xi_{\rho,l}^{(1)})} \Big(P(\xi_{l})(\bar{u}_{\lambda_{1,\rho}}^{(1)}+\bar{u}_{\lambda_{2,\rho}}^{(2)}) \kappa_{\rho}^{(1)} +Q(\xi_l) (\bar{v}_{\lambda_{1,\rho}}^{(1)}+\bar{v}_{\lambda_{2,\rho}}^{(2)}) \kappa_{\rho}^{(2)}\Big)=0.
\end{align}
Combining \eqref{58-4}-\eqref{58-43}, we obtain
\begin{align}\label{58-44}
&\lambda_{1,\rho}^{-1}\sum_{l=1}^k \int_{B_{\delta} (\xi_{\rho,l}^{(1)})}\Bigg[  (P(x)  +\frac12\langle \nabla P(x),x - \xi_{\rho,l}^{(1)}\rangle )   (\bar{u}_{\lambda_{1,\rho}}^{(1)}+\bar{u}_{\lambda_{2,\rho}}^{(2)})\kappa_{\rho}^{(1)}\notag\\
&\quad\qquad\qquad\qquad+ (Q(x)+\frac12\langle \nabla Q(x),x - \xi_{\rho,l}^{(1)}\rangle ) (\bar{v}_{\lambda_{1,\rho}}^{(1)}+\bar{v}_{\lambda_{2,\rho}}^{(2)}) \kappa_{\rho}^{(2)}\Bigg]\notag\\
=&4\lambda_{1,\rho}^{-\frac52} a_{l,0,\rho} \Big(\sum_{l=1}^k\sum_{i=1}^{2} (\sigma_1^2 p_{li}+ \sigma_2^2 q_{li})\Big)\int_{\mathbb{R}^2} |x|^2 w(w+x\cdot \nabla w) +O(\lambda_{1,\rho}^{-\frac72}).
\end{align}
Moreover,
\begin{align}\label{58-5}
&-\frac{1}{2}\lambda_{1,\rho}^{-1} \mu_1  \sum_{l=1}^k \int_{B_{\delta} (\xi_{\rho,l}^{(1)})} (\bar{u}_{\lambda_{1,\rho}}^{(1)}+\bar{u}_{\lambda_{2,\rho}}^{(2)}) \big((\bar{u}_{\lambda_{1,\rho}}^{(1)}-\bar{u}_{\lambda_{2,\rho}}^{(2)})^2\big) \kappa_{\rho}^{(1)} \notag\\
=&-\frac{1}{2}\lambda_{1,\rho}^{-\frac32} \mu_1  \sum_{l=1}^k \int_{\mathbb{R}^2} 2w^{*}(x)\Bigg(\frac{\lambda_{1,\rho}-\lambda_{2,\rho}}{ \sqrt{\lambda_{1,\rho}}+\sqrt{\lambda_{2,\rho}}} w^*(x) \notag\\
&\qquad\qquad\qquad+\sqrt{\lambda_{2,\rho}}\Big(w^*(x)-w^*\big(\sqrt{\frac{\lambda_{2,\rho}}{\lambda_{1,\rho}}} x + \sqrt{\lambda_{2,\rho}}(\xi_{\rho,l}^{(1)}-\xi_{\rho,l}^{(2)})\big)\Big)\Bigg)^2(w^*+ x\cdot \nabla w^*) +O(\lambda_{1,\rho}^{-4})\notag\\
=&O\big(\lambda_{1,\rho}^{-\frac72}\big),
\end{align}
similarly,
\begin{align}\label{58-6}
- \frac{1}{2}\lambda_{1,\rho}^{-1} \mu_2 \sum_{l=1}^k \int_{B_{\delta} (\xi_{\rho,l}^{(1)})} (\bar{v}_{\lambda_{1,\rho}}^{(1)}+\bar{v}_{\lambda_{2,\rho}}^{(2)} ) \big((\bar{v}_{\lambda_{1,\rho}}^{(1)}-\bar{v}_{\lambda_{2,\rho}}^{(2)} )^2\big)\kappa_{\rho}^{(2)} =O(\lambda_{1,\rho}^{-\frac72})
\end{align}
and
\begin{align}\label{58-7}
&\beta \lambda_{1,\rho}^{-1} \sum_{l=1}^k \int_{B_{\delta} (\xi_{\rho,l}^{(1)})}  \Big(\bar{u}_{\lambda_{2,\rho}}^{(2)}((\bar{v}_{\lambda_{1,\rho}}^{(1)})^2 -(\bar{v}_{\lambda_{2,\rho}}^{(2)})^2 ) \kappa_{\rho}^{(1)} + \bar{v}_{\lambda_{1,\rho}}^{(1)}((\bar{u}_{\lambda_{2,\rho}}^{(2)})^2 - (\bar{u}_{\lambda_{1,\rho}}^{(1)})^2)\kappa_{\rho}^{(2)}\Big) \notag\\
=&\beta \lambda_{1,\rho}^{-1} \sum_{l=1}^k \int_{B_{\delta}(\xi_{\rho,l}^{(1)})} (\bar{v}_{\lambda_{2,\rho}}^{(2)}-\bar{v}_{\lambda_{1,\rho}}^{(1)}) (\bar{u}_{\lambda_{1,\rho}}^{(1)} \bar{v}_{\lambda_{1,\rho}}^{(1)}- \bar{u}_{\lambda_{2,\rho}}^{(2)} \bar{v}_{\lambda_{2,\rho}}^{(2)})\kappa_{\rho}^{(1)}\notag\\
=&\beta \lambda_{1,\rho}^{-2} k \int_{\mathbb{R}^2} \Bigg(\frac{\lambda_{2,\rho}-\lambda_{1,\rho}}{ \sqrt{\lambda_{2,\rho}}+\sqrt{\lambda_{1,\rho}}} w^\star(x)+\sqrt{\lambda_{2,\rho}}\Big(w^\star\big(\sqrt{\frac{\lambda_{2,\rho}}{\lambda_{1,\rho}}} x + \sqrt{\lambda_{2,\rho}}(\xi_{\rho,l}^{(1)}-\xi_{\rho,l}^{(2)})\big)-w^\star(x)\Big)\Bigg) \notag\\
&\qquad\qquad\times\Bigg((\lambda_{1,\rho}-\lambda_{2,\rho})w^*(x)w^\star(x)+\lambda_{2,\rho} w^*(x)w^\star(x)\notag\\
&\qquad\qquad\qquad - \lambda_{2,\rho}w^*\big(\sqrt{\frac{\lambda_{2,\rho}}{\lambda_{1,\rho}}} x + \sqrt{\lambda_{2,\rho}}(\xi_{\rho,l}^{(1)}-\xi_{\rho,l}^{(2)})\big)w^\star\big(\sqrt{\frac{\lambda_{2,\rho}}{\lambda_{1,\rho}}} x + \sqrt{\lambda_{2,\rho}}(\xi_{\rho,l}^{(1)}-\xi_{\rho,l}^{(2)})\big)  \Bigg)     \notag\\
&\qquad\qquad\times (w^*+ x\cdot \nabla w^*) \notag\\
=&O\big(\lambda_{1,\rho}^{-\frac72}\big).
\end{align}
By \eqref{rho2} and \eqref{511}, we have
\begin{align}\label{58-8}
&\sum_{l=1}^k \int_{B_{\delta} (\xi_{\rho,l}^{(1)})}
\Big((\bar{u}_{\lambda_{1,\rho}}^{(1)} +\frac{\lambda_{2,\rho}}{\lambda_{1,\rho}}\bar{u}_{\lambda_{2,\rho}}^{(2)})\kappa_{\rho}^{(1)} + (\bar{v}_{\lambda_{1,\rho}}^{(1)} +\frac{\lambda_{2,\rho}}{\lambda_{1,\rho}}\bar{v}_{\lambda_{2,\rho}}^{(2)})\kappa_{\rho}^{(2)} \Big)\notag\\
=&\sum_{l=1}^k \int_{B_{\delta} (\xi_{\rho,l}^{(1)})}
\Big(\frac{\lambda_{2,\rho}-\lambda_{1,\rho}}{\lambda_{1,\rho}}\bar{u}_{\lambda_{2,\rho}}^{(2)}\kappa_{\rho}^{(1)} + \frac{\lambda_{2,\rho}-\lambda_{1,\rho}}{\lambda_{1,\rho}}\bar{v}_{\lambda_{2,\rho}}^{(2)}\kappa_{\rho}^{(2)} \Big) +O( e^{-\theta\sqrt{\lambda_{1,\rho}}})\notag\\
=& \lambda_{1,\rho} ^{-\frac12}\sum_{l=1}^k \int_{\mathbb{R}^2}
\Bigg(\frac{\lambda_{2,\rho}-\lambda_{1,\rho}}{\lambda_{1,\rho}} \Big(w^{*}(x)+ \big(\frac{\sqrt{\lambda_{2,\rho}}-\sqrt{\lambda_{1,\rho}}}{\sqrt{\lambda_{1,\rho}}} x +\sqrt{\lambda_{2,\rho}}(\xi_{\rho,i}^{(1)}-\xi_{\rho,l}^{(2)})\big)\cdot \nabla w^{*}(x)\Big) (w^*+ x\cdot \nabla w^*) \notag\\
&\qquad+ \frac{\lambda_{2,\rho}-\lambda_{1,\rho}}{\lambda_{1,\rho}}\sqrt{\frac{\lambda_{2,\rho}}{\lambda_{1,\rho}}} \Big(w^{\star}(x)+ \big(\frac{\sqrt{\lambda_{2,\rho}}-\sqrt{\lambda_{1,\rho}}}{\sqrt{\lambda_{1,\rho}}} x +\sqrt{\lambda_{2,\rho}}(\xi_{\rho,i}^{(1)}-\xi_{\rho,l}^{(2)})\big)\cdot \nabla w^{\star}(x)\Big) (w^{\star}+ x\cdot \nabla w^{\star}) \Bigg) \notag\\
&+O\big(\lambda_{1,\rho}^{-\frac72}\big)\notag\\
=&O\big(\lambda_{1,\rho}^{-\frac72}\big)
\end{align}
By combining \eqref{58-44}-\eqref{58-8} and then letting \( \rho \to \rho_0 \), it follows that
\begin{align*}
a_{l,0} \Big(\sum_{l=1}^k\sum_{i=1}^{2} (\sigma_1^2p_{li}+ \sigma_2^2 q_{li})\Big)\int_{\mathbb{R}^2} |x|^2 w(w+ x\cdot \nabla w)=0.
\end{align*}
This means that $a_{l,0}=0$.
\end{proof}

Finally, we prove Theorem \ref{main1}.
\begin{proof}[Proof of Theorem \ref{main1}]
On the one hand, Lemma \ref{lem54} shows that
\begin{align*}
\kappa_{\rho}^{(i)}(x)=o(1), \quad x\in \mathbb{R}^N \backslash \cup^{k}_{l=1} B_{\frac{R}{\sqrt{\lambda_{1,\rho}}}}(\xi_{\rho,l}^{(1)}) ,\quad i=1,2.
\end{align*}
On the other hand, by \eqref{5-31}, \eqref{kk1}, Lemmas \ref{lem55} to \ref{lem58}, we have
\begin{align*}
\kappa_{\rho}^{(i)}(x)=o(1), \quad x\in \cup^{k}_{l=1} B_{\frac{R}{\sqrt{\lambda_{1,\rho}}}}(\xi_{\rho,l}^{(1)}),\quad i=1,2.
\end{align*}
This contradicts $||\kappa_{\rho}^{(1)}||_{L^{\infty}(\mathbb{R}^N)}+||\kappa_{\rho}^{(2)}||_{L^{\infty}(\mathbb{R}^N)}=1$. The proof of Theorem \ref{main1} is completed.
\end{proof}

\end{document}